\newtheorem{thm}{Theorem}[section]
\newtheorem{proposition}[thm]{Proposition}
\newtheorem{lemma}[thm]{Lemma}
\newtheorem{corollary}[thm]{Corollary}
\newtheorem{property}[thm]{Property}
\theoremstyle{definition}
\newtheorem{definition}[thm]{Definition}
\newtheorem{remark}[thm]{Remark}
\newtheorem{notation}[thm]{Notation}
\numberwithin{equation}{section}
\newcommand{\n}{-\mathds{N}^*}
\newcommand{\h}{\hspace{5mm}}
\title{Solenoids of Split Sequences}
\author{Sarasi Jayasekara}
\date{}
\begin{document}

\begin{center}
    \begin{Large}
        \textbf{Solenoids of Split Sequences}\\
    \end{Large}
    \vspace{1mm}
    By Sarasi Jayasekara
\end{center}



\begin{abstract}

Solenoids induced by split sequences are introduced, as the inverse limit object of a sequence of fold maps. The topology of a solenoid is explored, and it is established that solenoids have naturally arising singular foliated structures. Our main goal is to answer the question: ``When is a solenoid minimal, both in a topological sense, and a measure theoretic sense?"
To aid this, we introduce the notions of leaves, partial leaves and transversals of a solenoid and explore their properties. A combinatorial criterion for topological minimality of a solenoid, is introduced. 
The primary tool we construct to study dynamics of solenoids is contained in the following theorem:
When a given solenoid $X$ doesn't contain finite partial leaves, the space of transverse measures of $X$, denoted $TM(X)$, is equal to the inverse limit of a certain sequence of linear maps on convex cones. We use this machinery to show that $TM(X)$ is a finite dimensional cone, and then to provide a combinatorial criterion called ``Semi-Normality" that allows us to recognize a wide class of uniquely ergodic solenoids.
\end{abstract} 


\tableofcontents




\pagenumbering{arabic} \pagestyle{myheadings} \markboth{}{}

\vspace{1cm}

\begin{center}
    \section{Introduction}
\end{center}
\label{introduction}

\vspace{4mm}

\h Our main inspirations lie in studies of $transverse \ structures$, such as metrics or measures, invariant under specific $dynamical \ systems$, such as iterated interval exchange maps, foliated surfaces, and toral solenoids. 
While studying a particular class of dynamical systems,
a fundamental question of interest is the following: 

\vspace{4mm}

``Given a dynamical system $\mathcal{S}$, and an invariant structure type (such as metrics, measures or transverse measures), when does the given system $\mathcal{S}$, admit only one invariant structure of that type (up to scaling)?"

\vspace{4mm}
We say that a dynamical system is ``Uniquely Ergodic" if it admits only one invariant measure up to scaling. If a system is uniquely ergodic, it follows that this system cannot be broken apart into smaller sub-systems. 
In that same vein, for a given dynamical system $\mathcal{S}$, it's natural to study the space of all invariant structures (of a given type) on  $\mathcal{S}$, since the limitations $\mathcal{S}$ puts on the structures it admits, can expose a great deal about the underlying system itself.

\vspace{4mm}

In 1978, W. Veech conducted a study of Interval Exchange Transformations (IETs) where he laid out a combinatorial criterion under which an IET is uniquely ergodic \cite{Veech1978}. In 1982, he \cite{413dc63a-6607-304b-a389-80bc0b0e5018} and H. Masur \cite{bb0e1df0-cb17-3718-8ef6-9df709f6df42} independently showed that almost all $irreducible$ IET's are uniquely ergodic. In addition to this, Masur showed that almost all $minimal$ foliations (on a compact surface) are uniquely ergodic.
In 1985, S. Kerckhoff \cite{Kerckhoff_1985} generalized Veech and Masur's techniques, and presented a tool that he called a $simplicial \ system$ (which is a sequence of linear maps between simplicies), and used them to encode information about invariant structures on a given underlying dynamical system. 

\vspace{4mm}

In this paper, we build on an idea by L. Mosher, and 
introduce ``Solenoids of Split Sequences" as a generalization of Toral Solenoids, then study their topology and dynamics, in a similar vein to Veech's explorations of IET's.

\vspace{4mm}

A $toral \ solenoid$ is the inverse limit of a sequence $S_0 \xleftarrow[]{f_{-1}} S_{-1} \xleftarrow[]{f_{-2}} S_{-2} \xleftarrow[]{f_{-3}} ...$ where, 
\begin{enumerate}
    \item for each $j \in \n$, $S_j$ is a circle (with a metric on it), and,
    \item for each $j \in -\mathds{N}$, $f_j$ is an immersion that's a local isometry. 
\end{enumerate}

\vspace{4mm}

A $solenoid \ induced \ by \ a \ split \ sequence$ (formally introduced in Definition \ref{def: Solenoids and Proper Solenoids}) is the inverse limit of a sequence $G_0 \xleftarrow[]{f_{-1}} G_{-1} \xleftarrow[]{f_{-2}} G_{-2} \xleftarrow[]{f_{-3}} ...$ where, 
\begin{enumerate}
    \item for each $j \in \n$, $G_j$ is a finite core graph [Def. \ref{Core Graphs}], and,
    \item for each $j \in -\mathds{N}$, $f_j$ is a fold [Def. \ref{def: fold}]. 
\end{enumerate}

\vspace{4mm}

\begin{figure}[htp]
    \centering
    \includegraphics[width=10cm]{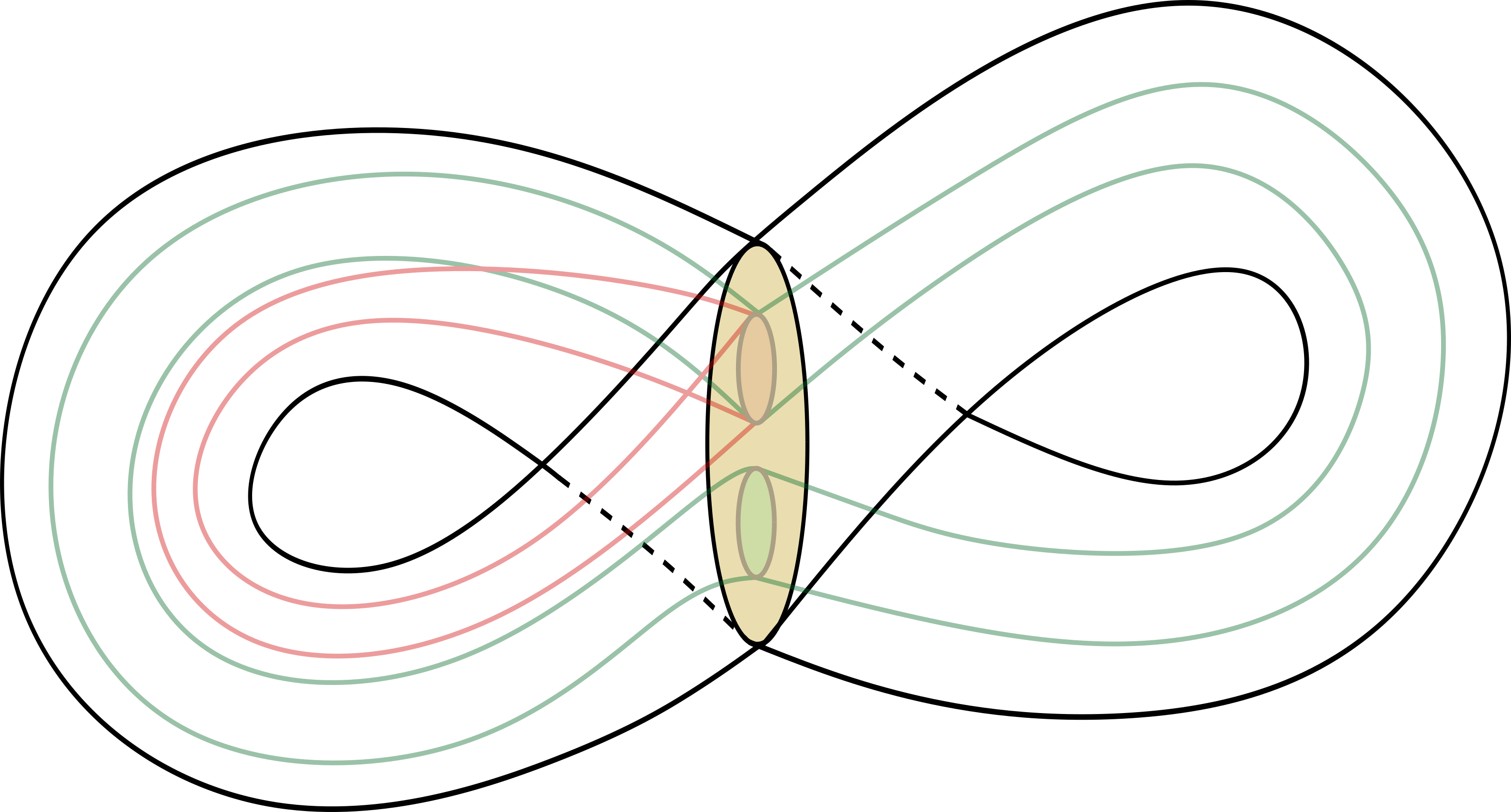}
    \caption{A Partial Visualization of a Solenoid}
    \label{fig:A Solenoid}
\end{figure}

\vspace{4mm}

The exploration of a solenoid's topology and dynamics, carried out in this paper,
is heavily inspired by studies of singular foliated surfaces \cite{Thurston2012}. As one moves from studying foliations on a torus, to studying singular foliations on other compact surfaces, the study becomes more elaborate, which will be mimicked by our generalization of toral solenoids to solenoids induced by split sequences. 

\vspace{4mm}

Our interest in studying the space of transverse measures of a solenoid, is further motivated by the eventuality of realizing spaces of solenoids inside $spaces$ $of$ $currents$ $on$ $free$ $groups$ (introduced by R. Martin in \cite{Reiner1995}, and further developed by I. Kapovich and co-authors in \cite{e27ba6e99705458aa1570a8f9450a550}, \cite{Kapovich_2009}, etc) and potentially using this perspective to investigate dynamics of $outer$ $automorphisms$ $of$ $free$ $groups$ in a similar vein to how M. Bestvina and co-authors utilized $unfolding$ $paths$ in \cite{Bestvina_2024}. While, in this text we provide a criterion for a solenoid being uniquely ergodic, we leave adopting this perspective into the context of $currents$, to future works.

\vspace{4mm}

\subsection*{Statements of Results}
\label{sec: Statements of Results}

\vspace{4mm}



In Chapter \ref{Ch:Solenoid}, we establish the fundamental concepts necessary to explore the topology and dynamics of a solenoid. 

\vspace{4mm}

In Chapter \ref{Ch: Solenoids as Foliated Spaces},
we draw inspiration from singular measured foliations on surfaces, and build a singular foliated atlas for a given solenoid. We show that, unlike in the surface case, this singular foliated structure naturally arises from the topology of the given solenoid.

\vspace{4mm}

To model the local neighborhoods of a solenoid, we use the following types of spaces. 
An ``Abstract Tunnel" $T$ is a product space $C \times I$ where $C$ is a totally disconnected set, and $I$ an open interval. For each $t \in I$, we call $C \times \{t\}$ a ``Cross Section of $T$", and for each $c \in C$, we call $\{c\} \times I$ a ``Pre-Leaf Segment of $T$".

\vspace{4mm}

To visualize the solenoid, we use two specific types of abstract tunnels, called ``Turn Tunnels, and Extended Turn Tunnels" [\ref{Turn Tunnels and Extended Turn Tunnels}].
Turn Tunnels are open in $X$ [\ref{turn tunnels are open}], and specific unions of extended turn tunnels, called ``Star Tunnel Components" [\ref{ST(O)}], are also open in $X$ [\ref{sing stnd nbhds vs star tunnel nbhds}].
Furthermore, each cross section of the aforementioned types of tunnels, is either compact or have a one point compactification in $X$ [\ref{ExTun-CS are compact}, \ref{H has a point compactification}].

\vspace{4mm}

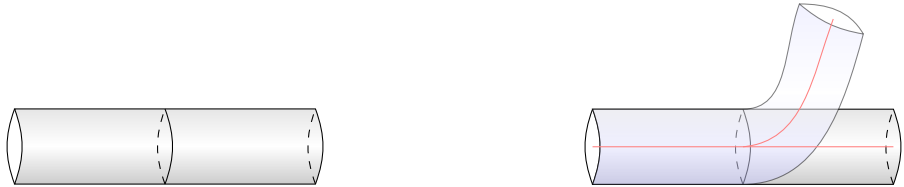
\begin{figure}[htbp]
    \centering
    \begin{minipage}{0.45\textwidth}
        \centering
        \begin{tikzpicture}
            \draw[top color=gray!20, bottom color=black!20, middle color=white!20] (0 , 0) -- (4 , 0) to[out=70, in=-70] (4 , 1) -- (0,1) to[out=-70, in=70] cycle;
            \draw (0 , 0) to[out=110, in=-110] (0 , 1); 
            \draw[dashed] (2 , 0) to[out=110, in=-110] (2 , 1); 
            \draw (2 , 0) to[out=70, in=-70] (2 , 1); 
            \draw[dashed] (4 , 0) to[out=110, in=-110] (4 , 1); 
        \end{tikzpicture}
    \end{minipage}
    \hfill 
    \begin{minipage}{0.45\textwidth}
        \centering
        \begin{tikzpicture}
            \draw[top color=gray!20, bottom color=black!20, middle color=white!20] (0 , 0) -- (4 , 0) to[out=70, in=-70] (4 , 1) -- (0,1) to[out=-70, in=70] cycle;
            \draw (0 , 0) to[out=110, in=-110] (0 , 1); 
            \draw[dashed] (2 , 0) to[out=110, in=-110] (2 , 1); 
            \draw (2 , 0) to[out=70, in=-70] (2 , 1); 
            \draw[dashed] (4 , 0) to[out=110, in=-110] (4 , 1); 

            \draw[top color=blue!10, bottom color=blue!20, middle color=white!20, opacity=0.5] (0 , 0) -- (2,0) to[out=0 , in=-105] (3.6 , 2) to[out=170 , in=-40] (2.75 , 2.4) to[out=-110 , in=0] (2,1) -- (0,1) to[out=-70, in=70] cycle;
            \draw[opacity=0.5] (3.6 , 2) to[out=120 , in=0] (2.75 , 2.4); 

            \draw[red!50] (0,0.5) -- (4,0.5);
            \draw[red!50] (2,0.5) to[out=5 , in=-110]  (3.2, 2.2);

            \draw[draw=none] (0,-1) rectangle (2,-1.4);
        \end{tikzpicture}
    \end{minipage}
    \caption{A Tunnel Neighborhood and a $3-$Pronged Star Tunnel Neighborhood}
    \label{fig: A Tunnel Neighborhood and a Star Tunnel Neighborhood}
\end{figure}

\vspace{4mm}

Let $n \in \mathds{N}$. An ``$n-$Pronged Star Set" is a quotient space obtained by taking $n$ many half closed intervals and gluing together the unique boundary point of each of the intervals. A ``Turn" of a star set $V$ is an open interval $I \subseteq V$ such that there is no open interval $\Tilde{I}$ such that $I \subset \Tilde{I} \subseteq V$. An ``$n-$Pronged Star Tunnel Set" [\ref{star tunnel sets}] is a quotient space obtained by taking an $n-$pronged star set $V$, and selecting for each turn $I$ of $V$, a totally disconnected set $C_I$, then identifying each turn $I$ of $V$ with one leaf segment in the abstract tunnel $C_I \times I$ [See Figure \ref{fig: A Tunnel Neighborhood and a Star Tunnel Neighborhood}]. The specific star tunnel sets we focus on in a solenoid $X$ ($star$ $tunnel$ $components$), will be open in $X$, and each of the underlying abstract tunnels in their constructions will have compact cross sections.

\vspace{4mm}

Before exploring a solenoid's topology, we impose the requirement that the underlying solenoid be ``Proper" [\ref{Proper Split Sequences}], which means that the corresponding split sequence,
\begin{equation*}
    \zeta: G_0 \xleftarrow[]{f_{-1}} G_{-1} \xleftarrow[]{f_{-2}} G_{-2} \xleftarrow[]{f_{-3}} ...
\end{equation*} 
satisfies the following: the subset in $G_0$ given by $\{ (f_{-1} \circ ... \circ f_j) (p) \in G_0 : j \in -\mathds{N},$ and $p$ is a natural vertex of $G_j \}$ is a finite set. Properness is meant to be a weaker criterion compared to ``Strong Properness" [\ref{Strongly Proper Split Sequences}], where we would require that all fold maps involved in $\zeta$ be graph maps. We will use properness as a baseline hypothesis while exploring the topology of solenoids (Ch \ref{Ch: Solenoids as Foliated Spaces} - Ch \ref{ch: Minimality and Mingling}), but when investigating dynamics of solenoids (Ch \ref{Ch: TM(X)} and beyond), we'll narrow our focus to strongly proper solenoids.

\vspace{4mm}

\begin{proposition}[\ref{STC is fin a open cover}, \ref{intersecting elements of STC(X)}]
    Each proper solenoid $X$ has a finite open cover $\mathcal{STC}(X)$, called the ``Star Tunnel Cover of $X$", such that each element $T \in \mathcal{STC}(X)$ is either a tunnel neighborhood [\ref{Tunnel Sets}] or a star tunnel neighborhood [\ref{star tunnel sets}] of $X$. Furthermore, for each pair of distinct elements $T, T' \in \mathcal{STC}(X)$ that intersect non-trivially, $T \cap T'$ is a finite disjoint union of tunnels in $X$. 
\end{proposition}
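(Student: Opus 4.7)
The plan is to build the cover in two layers, one layer adapted to the singular fibers of $X$ and another to the regular complementary arcs in $G_0$, then read off the intersection structure from the combinatorics of how these layers meet. First I would invoke properness to let $P \subseteq G_0$ denote the finite set $\{(f_{-1} \circ \cdots \circ f_j)(p) : j \in -\mathds{N},\ p \text{ a natural vertex of } G_j\}$ together with the (finitely many) natural vertices of $G_0$. The key heuristic is that points of $X$ whose $\pi_0$-image lies outside $P$ locally behave as products (cross section times interval), while points whose image lies in $P$ carry the prong structure described in the star tunnel discussion. Since $G_0$ is a finite core graph and $P$ is finite, the complement $G_0 \setminus P$ is a finite disjoint union of open arcs.

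Second, I would produce the cover. For each singular image $v \in P$, choose a star tunnel component centered at the fiber $\pi_0^{-1}(v)$ (using the construction from \ref{ST(O)}); by the reference \ref{sing stnd nbhds vs star tunnel nbhds} these are open in $X$, and by properness there are only finitely many such $v$. For the regular part, select for each arc in $G_0 \setminus P$ a finite cover by open subintervals whose pullbacks under $\pi_0$ are turn tunnels; these are open by \ref{turn tunnels are open}, and I would refine the choice so that each subinterval is short enough to fit strictly inside one of the arcs and to overlap each adjacent star tunnel component only along a single prong. Taking all star tunnel components together with all such turn tunnels gives a finite open cover, which I would name $\mathcal{STC}(X)$.

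Third, I would verify the intersection property by cases. A tunnel-tunnel intersection is controlled by the intersection of the two underlying open intervals in $G_0$, which is a finite disjoint union of open subintervals; the pullback of each subinterval is again a tunnel (compact cross sections persist by \ref{ExTun-CS are compact}), so the intersection is a finite disjoint union of tunnels. A tunnel-star intersection decomposes according to which prongs of the star tunnel the tunnel meets; by the shortness arrangement above, the tunnel meets each prong in at most a sub-tunnel, yielding again a finite disjoint union of tunnels. Finally, a star-star intersection, for distinct singular centers, lies entirely in the ``prong'' region (away from both central fibers) by shrinking the star tunnel components if necessary, so it reduces to a tunnel-tunnel style argument on matching prongs.

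The main obstacle I expect is the clean decomposition of star-on-star and star-on-tunnel overlaps. Because each prong of a star tunnel set is modeled on an abstract tunnel $C_I \times I$ with $C_I$ only totally disconnected (and, in the relevant case here, compact via \ref{H has a point compactification}), one has to match the cross sections $C_I$ between overlapping pieces so that the overlap really is a product on each component, not merely a union of pre-leaf segments. Handling this will rely on tracing the fold combinatorics of $\zeta$ back through finitely many stages and using the compactness of cross sections to rule out accumulation of prongs, together with the freedom to shrink both the star tunnel radii and the chosen regular subintervals so that overlaps land entirely in the cleanly-product region of each piece.
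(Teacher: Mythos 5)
There is a genuine gap in your construction of the cover: your collection does not cover all of $X$. You build turn tunnels only over open subintervals of $G_0 \setminus P$, where $P$ is the set of special points of $G_0$, and you attach star tunnel components only at the fibers over ``singular'' points of $P$. But a turn tunnel $Tun_{I'}$ with $I' \subseteq G_0 \setminus P$ contains no point of the fiber $F_v$ for $v \in P$, since every leaf segment in $Tun_{I'}$ projects homeomorphically onto $I' \not\ni v$. Meanwhile a star tunnel component $ST(O)$ is only defined when $O$ contains an actual singularity, and even then it equals $\bigcup_{I \in \mathcal{T}_{\text{sing}}} ExTun_I$ and omits the tunnels over the leftover turns at $v$. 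So every leaf interior point $x$ with $x_0 \in P$ that does not lie in $ST(O)$ for the relevant $O$ --- in particular every point of $F_v$ when the fiber over the special point $v$ contains no singularity --- is covered by nothing in your collection. The fix is what the paper does: the cover is built from turn tunnels over \emph{turns} of (optimal) star neighborhoods centered at the special points; such a turn is an open interval containing the special point, so $Tun_I$ straddles the fiber $F_v$ and coverage of $X - Sing(X)$ is automatic, with the star tunnel components needed only to pick up $Sing(X)$ and the singular segments.

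Two smaller points. First, in the tunnel--tunnel case the intersection over an interval component $J$ of $I \cap I'$ is not $Tun_J$ but only the sub-fiber tunnel of $Tun_J$ consisting of leaf segments that extend to take both $I$ and $I'$; you acknowledge sub-tunnels for the star cases but state the tunnel--tunnel case as a pullback, which overstates it (the conclusion ``finite disjoint union of tunnels'' survives because sub-fiber tunnels are tunnels). Second, your proposed remedy for matching cross sections --- shrinking radii and subintervals --- is not how the paper resolves this and is not obviously available: the product structure of the overlaps comes from the canonically given tunnel parameterizations together with the uniqueness of the $2$-pronged star chain representing each leaf segment (Lemma~\ref{how turn atlas charts intersect}), not from making the charts small. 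Relatedly, the whole argument is carried out for a \emph{stabilized} proper split sequence (every proper solenoid admits one); without stabilization the claims that each standard neighborhood contains at most one pre-singularity and that $ExTun_I - Tun_I$ is a single singular segment, which your case analysis tacitly uses, can fail.
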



\vspace{4mm}

Given a proper solenoid $X$, a ``Singularity of $X$" is a point $x \in X$ where each neighborhood of $x$ in $X$ contains a star tunnel neighborhood. We will denote by ``$Sing(X)$" the set of singularities in $X$. 

\vspace{4mm}

\begin{lemma}[\ref{finitely many singularities}]
    Each proper solenoid $X$ has finitely many singularities. 
\end{lemma}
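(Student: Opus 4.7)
The plan is to exploit the finite open cover $\mathcal{STC}(X)$ provided by Proposition \ref{STC is fin a open cover} and show that each element of this cover contributes at most one point to $Sing(X)$. This reduces the lemma to a finite cardinality count.

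First, I would observe that no point of a tunnel neighborhood $T \in \mathcal{STC}(X)$ can be a singularity: the product structure $T \cong C \times I$ gives every $x \in T$ a fundamental system of sub-tunnel neighborhoods of the form $C' \times I'$, with $C' \subseteq C$ a clopen neighborhood of the $C$-coordinate of $x$ and $I' \subseteq I$ an open subinterval around the $I$-coordinate of $x$. A genuine product tunnel cannot itself contain a star tunnel neighborhood of $x$, because each of its pre-leaf segments is an open interval, whereas the star point of any star tunnel has a local neighborhood whose complement in the underlying star set has at least three components, and this branching cannot be embedded inside any $C' \times I'$.

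Next, for a star tunnel neighborhood $T \in \mathcal{STC}(X)$ modeled on a star tunnel set built from an $n$-pronged star set $V$ and totally disconnected sets $\{C_I\}_{I}$, I would single out the unique image $c_T \in T$ of the star point of $V$ and argue that no other point of $T$ is a singularity. Indeed, any $x \in T \setminus \{c_T\}$ lies in the interior of some turn $I$ of $V$ and, by the construction of a star tunnel set, that turn is identified with a single pre-leaf segment inside the attached abstract tunnel $C_I \times I$; a sufficiently small open subset of $C_I \times I$ around $x$ that avoids the star point is a genuine product tunnel, and the first paragraph then rules out $x$ being a singularity. Consequently every singularity of $X$ must coincide with $c_T$ for some star tunnel element $T \in \mathcal{STC}(X)$, and since $\mathcal{STC}(X)$ is finite, so is $\{c_T : T \in \mathcal{STC}(X) \text{ a star tunnel}\}$, hence so is $Sing(X)$.

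The main technical obstacle is making the second step rigorous: one must ensure that the abstract star tunnel set model actually matches the realization of $T$ as an open subset of $X$ in a way that makes the star point $c_T$ well-defined as a point of $X$, and that points of $T \setminus \{c_T\}$ inherit bona fide tunnel neighborhoods from the gluing construction. I expect the definitions and results cited from earlier in Chapter \ref{Ch: Solenoids as Foliated Spaces}, in particular \ref{sing stnd nbhds vs star tunnel nbhds} and the openness statement \ref{turn tunnels are open}, to deliver exactly the local model match needed; the rest is then essentially bookkeeping against Proposition \ref{STC is fin a open cover}.
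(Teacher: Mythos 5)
There is a genuine gap: your argument is circular relative to the paper's logical structure. The machinery you lean on --- the star tunnel cover $\mathcal{STC}(X)$, Proposition \ref{STC is fin a open cover}, Proposition \ref{sing stnd nbhds vs star tunnel nbhds}, and the local product/star-tunnel models --- is all developed in Chapter \ref{Ch: Solenoids as Foliated Spaces} under the standing hypothesis that $\zeta$ is \emph{stabilized}. But the proof that every proper split sequence admits a stabilized tail (Lemma \ref{Vertex Stabilizing Criteria}, feeding into Proposition \ref{Every proper split sequence can be stabilized}) explicitly invokes Proposition \ref{finitely many singularities} to know there are only finitely many pre-singularities whose projections can be separated and pushed onto natural vertices at a deep enough level. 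So your proof of finiteness of $Sing(X)$ presupposes a construction whose existence proof already uses finiteness of $Sing(X)$. The two local claims you make (no singularity inside a turn tunnel; exactly one singularity in each star tunnel component) are correct \emph{once} stabilization is in hand, but they cannot be the route to the lemma itself.

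The paper's proof is far more elementary and avoids all of Chapter \ref{Ch: Solenoids as Foliated Spaces}. It introduces pre-singularities (points $x$ with $x_j$ a natural vertex of $G_j$ for all $j$ below some level $K_x$), bounds the number of natural vertices of any rank-$n$ core graph by $2(n-1)$ via an Euler characteristic count (Lemma \ref{upperbound to natural vertices}), and then observes that $m = 2(n-1)+1$ distinct pre-singularities would, at a sufficiently deep level $J$, project to $m$ distinct natural vertices of $G_J$ --- a contradiction. Since every singularity is a pre-singularity, this gives the explicit bound $|Sing(X)| \leq 2(n-1)$, which is also what makes the later stabilization argument go through. If you want to salvage your covering idea, you would first need an independent proof that the cover exists for arbitrary proper (not yet stabilized) split sequences, which is essentially as hard as the counting argument itself.
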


\vspace{4mm}

\begin{proposition}[\ref{how turn atlas charts intersect}]
    Let $X$ be a proper solenoid. Then $X - Sing(X)$ has a finite open cover $\mathcal{TC}(X)$, called the ``Tunnel Cover of $X$", such that each element $T \in \mathcal{TC}(X)$ is a tunnel neighborhood of $X$. Furthermore, for each pair of distinct elements $T, T' \in \mathcal{TC}(X)$ that intersect non-trivially, $T \cap T'$ is either a tunnel in $X$ or the disjoint union of two tunnels in $X$.
\end{proposition}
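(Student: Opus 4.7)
The plan is to refine the star tunnel cover $\mathcal{STC}(X)$ obtained in the previous Proposition. I keep every tunnel neighborhood of $\mathcal{STC}(X)$ unchanged (these are already disjoint from $Sing(X)$), and replace each star tunnel neighborhood $S$, which contains a unique central singularity $s$, by a finite family of tunnel neighborhoods covering $S \setminus \{s\}$. Because $Sing(X)$ is finite by the previous Lemma, and because every singularity lies at the center of some star tunnel neighborhood in $\mathcal{STC}(X)$ (a tunnel neighborhood, being locally a product, contains no singularity, and a non-central point of a star tunnel neighborhood has a tunnel-like neighborhood), this procedure produces a finite tunnel cover of $X \setminus Sing(X)$.

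To decompose an $n$-pronged star tunnel neighborhood $S$, recall that $S$ is a quotient of its core star set $V$ (with prongs $p_1, \dots, p_n$ and turns $\{I_{ij}\}_{i<j}$) glued to defining abstract tunnels $C_{ij} \times I_{ij}$ along identifications of each $I_{ij} \subset V$ with the distinguished pre-leaf segment $\{c_{ij}\} \times I_{ij}$. For each prong $p_i$, I define the \emph{prong tunnel}
\[ P_i \;:=\; \bigcup_{j \neq i} C_{ij} \times I_{ij}^{(i)}, \]
where $I_{ij}^{(i)}$ denotes the open half of $I_{ij}$ on the $p_i$-side of $s$. These $n-1$ half-tunnels are glued inside $S$ along the common pre-leaf segment $p_i \setminus \{s\}$, so $P_i \cong \tilde{C}_i \times (0,1)$ where $\tilde{C}_i$ is the one-point wedge of the $C_{ij}$ at their marked points $c_{ij}$; since each $C_{ij}$ is totally disconnected Hausdorff, so is $\tilde{C}_i$, making $P_i$ a tunnel neighborhood of $X$. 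For each turn $I = I_{ij}$, I also define the \emph{middle tunnel}
\[ M_I \;:=\; (C_I \setminus \{c_I\}) \times I, \]
which is open in $S$ (singletons are closed in the Hausdorff $C_I$) and is disjoint from $V$. A direct case check shows that $\{P_i\} \cup \{M_I\}$ covers $S \setminus \{s\}$: any point of $V \setminus \{s\}$ lies on some $p_i$ and is absorbed by $P_i$ via any adjacent tunnel's pre-leaf segment, while an off-spine point of any $C_{ij} \times I_{ij}$ is absorbed by $M_{ij}$.

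Setting $\mathcal{TC}(X)$ to be the tunnel neighborhoods of $\mathcal{STC}(X)$ together with all $P_i$ and $M_I$ produced above, I then verify the intersection condition. Inside a single star's decomposition, the intersections are at most one tunnel each: $P_i \cap P_j = \emptyset$ for $i \neq j$ (the two prong tunnels live on opposite sides of $s$); $P_i \cap M_{ab}$ equals the single sub-tunnel $(C_{ab} \setminus \{c_{ab}\}) \times I_{ab}^{(i)}$ when $i \in \{a,b\}$ and is empty otherwise; and $M_{ab} \cap M_{cd} = \emptyset$ whenever $\{a,b\} \neq \{c,d\}$, since such pieces live in distinct abstract tunnels and both avoid $V$. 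For intersections across different star decompositions, or between a decomposition piece and an original tunnel neighborhood, each is contained in a pairwise intersection of elements of $\mathcal{STC}(X)$, which is a finite disjoint union of tunnels by the previous Proposition.

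The main obstacle is sharpening this ``finite disjoint union'' bound to ``at most two tunnels''. I expect to handle this by choosing $\mathcal{STC}(X)$ from the start so that every underlying abstract tunnel projects to a short embedded arc in the base core graph $G_0$ (subdividing further if necessary). Two short embedded arcs in a finite graph meet in at most two components --- one near each possible shared end --- and this bound is preserved both by the inverse limit defining $X$ and by the passage to a prong or middle sub-tunnel (each of which is a ``convex'' sub-region of its parent tunnel, either a half bounded away from $s$ or the complement of a single pre-leaf segment). Consequently every nonempty pairwise intersection in $\mathcal{TC}(X)$ will consist of either a single tunnel or a disjoint union of two, as required.
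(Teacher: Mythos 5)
Your construction is genuinely different from the paper's, and the covering part of it is essentially sound: the paper simply takes $\mathcal{TC}(X)=\{\,Tun_I : I \text{ a level-}0\text{ optimal turn}\,\}$ and never decomposes star tunnel neighborhoods, whereas your prong tunnels $P_i$ and middle tunnels $M_I$ (note $M_I$ is just $Tun_I$ for a singular supporting turn $I$) are legitimate open tunnels, and your within-star intersection checks are correct. The genuine gap is in the last clause of the proposition, which your argument does not establish. For a pair consisting of a piece of one star's decomposition and either an original tunnel neighborhood or a piece of a different star's decomposition, you only observe that the intersection is \emph{contained in} a pairwise intersection of elements of $\mathcal{STC}(X)$, hence contained in a finite disjoint union of tunnels. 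Containment in a disjoint union of tunnels says nothing about the intersection itself being a (sub)tunnel union, and gives no bound of two on the number of pieces. You flag this yourself, but the proposed repair by subdivision is not carried out: ``short embedded arc'' is undefined, the claim that two short arcs in a finite graph meet in at most two components depends on the specific combinatorics of the arcs (it fails for arbitrary arcs when there are multiple edges or loops), and the prong tunnels introduce intersection patterns (half-turn against half-turn, half-turn against full turn) that the $\mathcal{STC}(X)$ proposition does not address at all. As written, the ``tunnel or disjoint union of two tunnels'' conclusion is asserted rather than proven.

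The missing idea is the level-$0$ combinatorics of \emph{optimal} turns that the paper uses [Remark \ref{Two Intersecting Turns in a Level Graph}, Lemma \ref{how turn atlas charts intersect}]: two distinct level-$0$ optimal turns $I,I'$ with $Tun_I\cap Tun_{I'}\neq\emptyset$ are necessarily turns of optimal star neighborhoods with distinct centers, and because each prong of an optimal star neighborhood runs exactly from its central optimal point to the adjacent optimal points, $I\cap I'$ is a disjoint union of at most two turns $J$; one then checks directly, by tracking which leaf segments take both $I$ and $I'$, that $Tun_I\cap Tun_{I'}=\bigsqcup_{J} T_J$ with each $T_J$ a sub-fiber tunnel of $Tun_J$. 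If you want to keep your decomposition, you would need to prove the analogue of this for each of the finitely many types of pairs in your cover (prong/prong, prong/turn, turn/turn, taken over distinct optimal star neighborhoods), in each case identifying the at most two interval components of the intersection of the projections in $G_0$ and exhibiting the intersection in $X$ as the corresponding sub-fiber tunnels.
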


\vspace{4mm}

In Chapter \ref{Ch: Leaves and Transversals}, 
for a given solenoid $X$, we define the notions of ``Leaves and Partial Leaves" [\ref{Leaves, Partial Leaves}], ``Transversals" [\ref{Transversals}], and ``Transverse Measures" [\ref{Transverse Measures}], then explore their properties. A precursor to defining transversals of a solenoid $X$, is a special case called ``Turn Transversals" [\ref{Turn Transversals}] which are cross sections of a specific kind of abstract tunnel (called a turn tunnel) we find in $X$. In certain instances, focusing on turn transversals shall prove to be a convenient way of observing useful facts about all transversals. 

\vspace{4mm}

\begin{property}
    Let $X$ be a proper solenoid. Then leaves and transversals of $X$ possess the following properties.
    \begin{description}
    \item[\ref{leaves vs path components}.] Given $L \subseteq X$, $L$ is a leaf of $X$ (resp. a partial leaf of $X$) $\Longleftrightarrow L$ is a path component of $X$ (resp. of $X - Sing(X)$).
    \item[\ref{leaves are immersed graphs}.] Each leaf of $X$ (resp. partial leaf of $X$) is the image of a bijective immersion of a $1-$manifold or a branched $1-$manifold (resp. a $1-$manifold) in $X$.
    \item[\ref{Transversals are totally disconnected}.] Each transversal of $X$ is totally disconnected.
    \item[\ref{H has a point compactification}.] Each turn transversal of $X$ is either compact, or has a one point compactification in $X$.
    \item[\ref{Topology of a Turn Transversal}.] The topology of each turn transversal of $X$, is generated by the collection of turn transversals contained in it.
    \end{description}
\end{property}


Since leaves of solenoids are its path components, unlike in the case of foliations on surfaces, the singular foliations on solenoids arise naturally as a byproduct of their topology.

\vspace{4mm}

In Chapter \ref{ch: Minimality and Mingling}, we explore the concept of topological minimality in the context of solenoids, and provide a criterion for a given proper solenoid being minimal.

\vspace{4mm}

Given a proper solenoid $X$, we say that $X$ is minimal [\ref{Def: Minimal Solenoids}] if each partial leaf of $X$ is dense in $X$.
Given a split sequence $\zeta : G_0 \xleftarrow[]{f_{-1}} G_{-1} \xleftarrow[]{f_{-2}} G_{-2} \xleftarrow[]{f_{-3}} ...$,
we say that $\zeta$ is ``Fully Mingling" [\ref{Full Mingling}] if, there exists
a strictly decreasing sequence $\{J_k\}_{k \in \mathds{N}} \subset \n$ of non-positive integers such that, for each $k \in \mathds{N}$, the transition matrix [\ref{transition matrix}] of the fold composition $f_{J_{2k-1}} \circ ... \circ f_{J_{2k}}$ is positive. 

\vspace{4mm}

\begin{proposition}
    [The Mingling Lemma]
    If $\zeta$ is fully mingling, then the solenoid $X(\zeta)$ induced by $\zeta$ is minimal [\ref{The Mingling Lemma}].
\end{proposition}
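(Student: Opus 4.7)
\medskip
\noindent\textbf{Proof Plan.}
Let $L$ be a partial leaf of $X := X(\zeta)$ and let $U \subseteq X$ be a non-empty open set; the goal is to show $L \cap U \neq \emptyset$, which will establish density of $L$.

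Since the sets $\pi_j^{-1}(V)$ (for $V$ open in $G_j$) form a basis for the inverse limit topology on $X$, we may shrink $U$ and assume $U = \pi_j^{-1}(V)$ with $V$ a small open interval inside the interior of a single edge $e^*$ of $G_j$. Because $\{J_k\}$ strictly decreases without bound, we can pick $m \in \mathbb{N}$ as large as we like and take $j$ to be the target level of the fold composition $F := f_{J_{2m-1}} \circ \ldots \circ f_{J_{2m}}$, whose source is $G_{J_{2m}}$. Granting for the moment the key claim that for $m$ sufficiently large the projection $\pi_{J_{2m}}(L) \subseteq G_{J_{2m}}$ contains some full edge $e_0$ of $G_{J_{2m}}$, the proof concludes quickly. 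By the fully mingling hypothesis, $F$ has positive transition matrix, i.e., for every edge $e$ of $G_{J_{2m}}$ and every edge $e'$ of the target graph, the edge path $F(e)$ traverses $e'$ at least once. Applied to $e = e_0$ and $e' = e^*$, this gives $F(e_0) \supseteq e^*$; then since $\pi_j = F \circ \pi_{J_{2m}}$,
\begin{equation*}
\pi_j(L) = F\bigl(\pi_{J_{2m}}(L)\bigr) \supseteq F(e_0) \supseteq e^* \supseteq V,
\end{equation*}
so $L \cap U \neq \emptyset$.

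The key claim is the heart of the argument and also the main obstacle. Because $L$ is a path component of $X - Sing(X)$ and $Sing(X)$ is finite by Lemma \ref{finitely many singularities}, $L$ is a connected $1$-manifold without boundary --- either a circle or homeomorphic to $\mathbb{R}$. The plan is to trace $L$ through successive charts of the tunnel cover $\mathcal{TC}(X)$ from Proposition \ref{how turn atlas charts intersect}, noting that each turn tunnel at level $J_{2m}$ contributes a ``turn'' of $G_{J_{2m}}$ (two half-edges meeting at a vertex) to $\pi_{J_{2m}}(L)$. A pigeonhole argument on the finite graph $G_{J_{2m}}$, combined with the way fold compositions progressively ``unfold'' edges as one descends the inverse limit, should show that these contributions collectively cover at least one entire edge of $G_{J_{2m}}$ once $m$ is large enough. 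The subtle case is that of ``short'' partial leaves --- such as non-compact leaves whose two ends both approach singularities --- whose projections at shallow levels might fit inside a proper subarc of a single edge; for these one must exploit deep enough unfolding to ensure that some entire (possibly very short) edge of $G_{J_{2m}}$ lies inside $\pi_{J_{2m}}(L)$. Handling this case together with the compact (circular) case, and carefully bookkeeping turns, edges, and fold preimages, is where most of the technical content of the proof would lie.
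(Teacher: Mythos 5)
Your overall strategy is the paper's: shrink the open set to $\pi_j^{-1}(V)$ with $V$ inside a natural-edge interior, and use positivity of a transition matrix to transport a piece of the partial leaf over $V$. (The paper runs the transport in the opposite direction --- it takes the level $i$ supplied directly by Definition \ref{Full Mingling}, notes $\pi_i(L)$ meets some edge interior, and pulls $V$ back to a subsegment of that edge rather than pushing a whole edge forward --- but the two are equivalent.) The genuine gap is that your ``key claim'' is left unproven, and, more importantly, the route you sketch for it (pigeonhole on $G_{J_{2m}}$, ``deep enough unfolding,'' taking $m$ large, and a special case for short leaves whose projection might sit inside a proper subarc of one edge) misidentifies where the difficulty lies and would send you down a much harder path than necessary. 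The claim is true at \emph{every} level, with no largeness assumption on $m$: by Remark \ref{Types of Partial Leaves} a partial leaf $L$ is a maximal concatenation of optimal leaf segments, each projecting homeomorphically onto an optimal turn, so $\pi_i(L)$ is a locally injective immersed path/ray in $G_i$. Such a concatenation can fail to extend past an end only when the continuing pre-leaf segment over the next optimal turn is a singular segment, i.e.\ when that end of $L$ accumulates on a singularity; and by the vertex stabilizing criteria [Remark \ref{The Stabilizing Hypotheses}] singularities project to natural vertices. Hence $\pi_i(L)$ can terminate only at natural vertices of $G_i$, so it contains the entire interior of every natural edge whose interior it meets. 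In particular your worried case --- a projection contained in a proper subarc of a single edge --- cannot occur, and the compact/circular case needs no separate treatment.

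With that fact in hand your argument closes immediately (and is essentially the paper's proof): $\pi_{J_{2m}}(L)$ contains $Interior(e_0)$ for some natural edge $e_0$, and positivity of the transition matrix of $F$ carries a subsegment of $e_0$ onto $V$. One residual caution, which the paper shares: ``the transition matrix entry is positive'' literally says $Interior(e_0)\cap F^{-1}(Interior(e^*))$ is nonempty, and to conclude that some component of it maps \emph{onto} $Interior(e^*)$ (rather than clipping only an end of $e^*$) you should note that $F|_{e_0}$ is an immersed path whose only possible ``partial'' incursions into $e^*$ occur at the two ends of $e_0$; for strongly proper sequences $F(e_0)$ is an honest edge path and this is automatic. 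So: right skeleton, but the step you flag as the heart of the proof is resolved by the structure of partial-leaf ends and stabilization, not by a counting argument.
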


\vspace{4mm}

In Chapter \ref{Ch: TM(X)}, we devise a mechanism called a ``Sequence of Pseudo-Weight Cones" that encode each (non-atomic) transverse measure on the underlying solenoid, using countably many parameters. This mechanism is meant to facilitate the usage of techniques W. Veech employed in \cite{Veech1978} (for his investigations of the dynamics of IETs) in the context of solenoids. 

\vspace{4mm}

Given a strongly proper split sequence $\zeta : G_0 \xleftarrow[]{f_{-1}} G_{-1} \xleftarrow[]{f_{-2}} G_{-2} \xleftarrow[]{f_{-3}} ...$, and $j \in \n$, ``The Level $j$ Pseudo-Weight Space of $\zeta$", denoted ``$R_j (\zeta)$", is the real vector space spanned by the set of natural edges in $G_j$. Then, ``The Sequence of Pseudo-Weight Cones of $\zeta$", is the sequence,
\begin{equation*}
    \Lambda_0 (\zeta) \xleftarrow[]{T_{-1} |_{\Lambda_{-1} (\zeta)}} \Lambda_{-1} (\zeta)  \xleftarrow[]{T_{-2} |_{\Lambda_{-2} (\zeta)}} \Lambda_{-2} (\zeta)  \xleftarrow[]{T_{-3} |_{\Lambda_{-3} (\zeta)}} ...
\end{equation*}
where,
\begin{enumerate}
    \item for each $j \in \n$, ``$\Lambda_j (\zeta) $", called the ``Level $j$ Pseudo-Weight Cone of $\zeta$", is the cone of non-negative vectors in $R_j (\zeta) $, and,
    \item for each $j \in -\mathds{N}$, ``$T_j$", called the ``Level $j$ Weight Map of $\zeta$", is the linear map (from $R_{j} (\zeta)$ to $R_{j-1} (\zeta)$) defined by the transition matrix of $f_j$.
\end{enumerate}

\vspace{4mm}

The aforementioned mechanism is similar to the independently developed techniques that appear in \cite{namazi2014ergodicdecompositionsfoldingunfolding} and \cite{BedHilLus2020}, that respectively investigate particular kinds of laminations, while we adopt a singular foliation perspective. Our proofs are grounded in the geometric setting of fold paths, (introduced in \cite{Sta1968} and further utilized in \cite{8dd38a0749a24ffa864bc39b814e7fbc}, \cite{handel2006axesouterspace}, \cite{Bestvina_2024}), and dependent on the topology of solenoids. Our applications are targeted towards providing a criterion for uniquely ergodic solenoids (which is included in this text), and proving that the said criterion is generic among all minimal solenoids (which shall be developed in future works).

\vspace{4mm}

To state the following result, we will assume that the underlying split sequence is ``Expanding" [\ref{def: Expanding Solenoids}] which is analogous to the criterion of a $graph \ tower$ being expanding in \cite{BedHilLus2020}. An expanding solenoid does not contain any finite partial leaves. It should be noted that, if $\zeta$ is a fully mingling split sequence (which assures that the induced solenoid $X(\zeta)$ is minimal), then $\zeta$ is expanding [\ref{FM implies Expanding}].



\vspace{4mm}

\begin{thm}[\ref{TM(X) is an inverse limit}]
    \label{thm1}
    Let $\zeta: G_0 \xleftarrow[]{f_{-1}} G_{-1} \xleftarrow[]{f_{-2}} G_{-2} \xleftarrow[]{f_{-3}} ...$ be a strongly proper expanding split sequence, and consider the solenoid $X=X(\zeta)$ induced by $\zeta$. 
    The space of transverse measures of $X$, $TM(X)$ can be expressed as the inverse limit of a sequence of linear maps between convex cones,
    \begin{equation*}
        \Lambda_0 (\zeta) \xleftarrow[]{T_{-1} |_{\Lambda_{-1} (\zeta)}} \Lambda_{-1} (\zeta) \xleftarrow[]{T_{-2} |_{\Lambda_{-2} (\zeta)}} \Lambda_{-2} (\zeta) \xleftarrow[]{T_{-3} |_{\Lambda_{-3} (\zeta)}} ...
    \end{equation*}
    such that,
    \begin{enumerate}
        \item[i.] for each $j \in \n$, $\Lambda_j (\zeta)$ is the non-negative cone in $\mathds{R}^{d_j}$ where $d_j$ is equal to the number of natural edges in $G_j$, and,
        \item[ii.] for each $j \in -\mathds{N}$, $T_j$ is the linear map (from $R_{j} (\zeta)$ to $R_{j-1} (\zeta)$) given by the transition matrix of the fold $f_j$. 
    \end{enumerate}
\end{thm}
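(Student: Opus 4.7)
The plan is to exhibit an explicit map $\Phi: TM(X) \to \varprojlim \Lambda_j(\zeta)$ and verify it is a linear homeomorphism of convex cones. The underlying observation is that, since $X$ is built as the inverse limit of the $G_j$, each natural edge $e$ in $G_j$ corresponds, via the canonical projection $\pi_j: X \to G_j$, to a standard family of turn transversals of $X$; given $\mu \in TM(X)$, the total transverse mass assigned to that family is a non-negative number $v_j(\mu)(e)$, and collecting these across all natural edges of $G_j$ produces a vector $v_j(\mu) \in \Lambda_j(\zeta)$. I would set $\Phi(\mu) = (v_j(\mu))_{j \in -\mathds{N}}$ and then proceed in four steps: (i) verify the compatibility $T_j \, v_j(\mu) = v_{j-1}(\mu)$, (ii) prove $\Phi$ is injective, (iii) prove $\Phi$ is surjective, (iv) record that $\Phi$ and $\Phi^{-1}$ are linear and continuous in the inverse limit topology.

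Step (i) should be essentially combinatorial: a fold $f_j : G_j \to G_{j-1}$ decomposes each natural edge of $G_{j-1}$ into a union of natural edges of $G_j$, with multiplicities recorded by the transition matrix; holonomy invariance of $\mu$ together with countable additivity converts this edge decomposition into exactly the linear identity defining $T_j$. Step (ii) uses the finite star tunnel cover from Proposition \ref{STC is fin a open cover} together with item \ref{Topology of a Turn Transversal}, which says that the topology on any turn transversal is generated by turn transversals arising at deeper levels. Since every turn transversal is captured by some natural edge at some level $j$, the numbers $v_j(\mu)$ determine $\mu$ on a generating $\pi$-system for the Borel structure of transversals. The expanding hypothesis \ref{def: Expanding Solenoids} is what ensures there are no finite partial leaves on which $\mu$ could place atoms that escape this edge-indexed data.

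For step (iii)---the main obstacle---given a compatible tower $(v_j) \in \varprojlim \Lambda_j(\zeta)$, I would construct a transverse measure on $X$ by first defining a finitely additive set function on the algebra generated by turn transversals: to each turn transversal indexed by a natural edge $e$ of $G_j$, assign mass $v_j(e)$. The compatibility relation $T_j v_j = v_{j-1}$ is precisely the statement that these local assignments agree when a level-$(j-1)$ transversal is refined into its level-$j$ sub-transversals, so the set function is well-defined and consistent across levels. The analytically delicate step is to upgrade this to a countably additive Borel measure on transversals; here I would appeal to the compactness (or one-point compactification) of cross sections guaranteed by items \ref{ExTun-CS are compact} and \ref{H has a point compactification} to run a Carath\'eodory-type extension inside each local chart, and then use the finiteness of $\mathcal{STC}(X)$ and the overlap description in Proposition \ref{intersecting elements of STC(X)} to glue these local measures into a global transverse measure. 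Holonomy invariance is automatic, because transverse translation inside any tunnel preserves the level-$j$ combinatorial label of every leaf segment and hence preserves the assigned mass. Once surjectivity is in hand, step (iv) is immediate: both $\Phi$ and its inverse are linear by construction and continuous because convergence in $\varprojlim \Lambda_j(\zeta)$ is exactly levelwise convergence of the vectors $v_j(\mu)$.
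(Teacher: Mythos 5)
Your proposal follows essentially the same route as the paper: encode $\mu$ by its edge weights (the paper's weight assignment $w(\mu)$), verify the transition-matrix compatibility via the fold-induced edge decomposition, and reconstruct a measure from a compatible tower by decomposing each turn transversal into countably many maximal edge transversals plus a measure-zero finite set and running a Carath\'eodory-type extension on the resulting pre-measure over the algebra of turn transversals. The one substantive point your sketch leaves implicit is the finiteness of the countable sum defining the mass of a turn transversal at a vertex turn, which the paper handles (Lemma \ref{Total Weight Exteding to Turn Transversals}) by using strong properness to embed such a transversal in a single edge transversal of the same level, bounding the sum by a single edge weight.
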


\vspace{4mm}

A detailed proof of this theorem is contained in Chapter \ref{Ch: TM(X)}, which relies on everything else presented in this text up to that point. We consider this to be our main result. 
One immediate consequence of the above result is that the space of transverse measures of a (strongly proper expanding) solenoid is finite dimensional. 

\vspace{4mm}

\begin{corollary}[\ref{coarse ub for dim TM(X)}]
    Let $n \in \mathds{N}$ and let $\zeta: G_0 \xleftarrow[]{f_{-1}} G_{-1} \xleftarrow[]{f_{-2}} G_{-2} \xleftarrow[]{f_{-3}} ...$ be a strongly proper expanding split sequence. We say that $\zeta$ is `of \textit{rank n}' if for each $j \in \n$, the fundamental group of $G_j$ is the free group of rank $n$.
    Now suppose that $\zeta$ is of rank $n$, and let $X := X(\zeta)$. Then, 
    \begin{equation*}
        dim(TM(X)) \leq 3(n-1).
    \end{equation*}
\end{corollary}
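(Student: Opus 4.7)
The plan is to invoke Theorem \ref{thm1} together with a combinatorial bound on the number of natural edges of a rank-$n$ core graph. By the theorem, $TM(X)$ is realized as the inverse limit $\varprojlim \Lambda_j(\zeta)$ along the linear transition maps $T_j$, so the linear span of $TM(X)$ sits inside $\varprojlim R_j(\zeta)$. I would first show that $d_j \leq 3(n-1)$ uniformly in $j$, and then argue that this bound passes to the inverse limit.

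For the combinatorial bound, fix $j \in \n$ and let $V_j^{\mathrm{nat}}$ and $E_j^{\mathrm{nat}} = d_j$ denote the numbers of natural vertices and natural edges of $G_j$. Suppressing the valence-$2$ vertices of $G_j$ yields the natural graph, which is homotopy equivalent to $G_j$ and so has Euler characteristic $V_j^{\mathrm{nat}} - E_j^{\mathrm{nat}} = 1 - n$. Every natural vertex has valence at least $3$ in this graph, so the handshake identity gives $2 E_j^{\mathrm{nat}} \geq 3 V_j^{\mathrm{nat}}$, i.e., $V_j^{\mathrm{nat}} \leq \tfrac{2}{3} E_j^{\mathrm{nat}}$. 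Substituting into the Euler relation, $E_j^{\mathrm{nat}} = V_j^{\mathrm{nat}} + n - 1 \leq \tfrac{2}{3} E_j^{\mathrm{nat}} + n - 1$, whence $d_j \leq 3(n-1)$.

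For the passage to the inverse limit, consider the stable images $W_j := \bigcap_k S_{k,j}(R_k(\zeta)) \subseteq R_j(\zeta)$, where $S_{k,j}$ denotes the appropriate composition of transition maps from level $k$ to level $j$. Each $W_j$ has dimension at most $d_j \leq 3(n-1)$. A standard Mittag--Leffler argument for inverse systems of finite-dimensional vector spaces shows that $\varprojlim R_j(\zeta) = \varprojlim W_j$, that the induced maps between the $W_j$ are surjective, and that once $\dim W_j$ stabilizes (as it must, being bounded and monotone along the tower) the restricted maps become isomorphisms; hence $\varprojlim W_j$ is isomorphic to a single $W_{j_0}$ at some stabilization level, of dimension at most $3(n-1)$. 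Since $TM(X)$ is a subcone of $\varprojlim R_j(\zeta)$, the bound $\dim TM(X) \leq 3(n-1)$ follows. The main obstacle here is that inverse limits of finite-dimensional spaces can in general be infinite-dimensional (as with formal power series), so the uniform bound from the combinatorial step is essential, and the Mittag--Leffler reasoning must carefully track the direction of the transition maps to confirm that all new dimensions have been absorbed at some finite stabilization level.
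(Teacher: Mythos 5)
Your proof is correct and follows the same overall strategy as the paper -- invoke Theorem \ref{thm1}, bound the number of natural edges of each level graph by $3(n-1)$, and show that this bound survives passage to the inverse limit -- but both supporting ingredients are established by genuinely different arguments. For the combinatorial bound, the paper's Lemma \ref{upperbound to natural vertices} proceeds by performing ``tri-valence creating splits'' until every natural vertex has valence $3$ and then computing the extremal case; your Euler-characteristic-plus-handshake computation ($2E \geq 3V$ together with $V - E = 1-n$) reaches the same conclusion more directly, the only degenerate case being rank $1$, where there are no natural vertices and $E = 0 = 3(n-1)$. For the passage to the inverse limit, the paper's Proposition \ref{TM(X) is finite dimensional} argues by contradiction: $d+1$ independent vectors in the limit would project to $d+1$ independent vectors at every sufficiently deep level, contradicting the existence of a level whose pseudo-weight space has the recurring dimension $d$. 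Your Mittag--Leffler argument via the stable images $W_j$ is the more standard linear-algebra route and is sound as stated: the chains $T^k_j(R_k)$ stabilize by finite-dimensionality, the induced maps $W_{j-1} \to W_j$ are surjective, and the sequence $\dim W_j$ is non-decreasing as $j \to -\infty$ and bounded by $3(n-1)$, hence eventually constant, so the limit is isomorphic to a single $W_{j_0}$. Your route in fact yields the slightly sharper bound $\dim TM(X) \leq \min_j \dim W_j$, which is at most the paper's ``smallest recurring dimension'' of Definition \ref{Lowest Recurring Dimension}; the paper retains the recurring-dimension formulation because it is reused directly in the unique-ergodicity argument of Theorem \ref{main thm}.
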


\vspace{4mm}

In Chapter \ref{Ch: A Criterion for Unique Ergodcity}, drawing inspiration from W. Veech's work on Interval Exchange Transformations \cite{Veech1978}, and providing an application of Theorem \ref{thm1}, we present a combinatoral criterion for unique ergodicity of solenoids of split sequences.

\vspace{4mm}

We say that a proper split sequence $\zeta$ is ``Semi-Normal" (and that the solenoid $X(\zeta)$ induced by $\zeta$, is ``Semi-Normal") if, there exist $d \in \mathds{N}$, a positive $d \times d$ matrix $M$, and
a strictly decreasing sequence $\{J_k\}_{k \in \mathds{N}}$ of non-positive integers such that, for each $k \in \mathds{N}$,
    \begin{enumerate}
        \item the number of natural edges in $G_{J_k}$ is equal to $d$, and,
        \item $M$ is the transition matrix of $f_{J_{2k-1}} \circ ... \circ f_{J_{2k}}$.
    \end{enumerate}

\vspace{4mm}

\begin{thm}[\ref{main thm}]
    Let $X$ be a strongly proper expanding solenoid. If $X$ is 
    semi-normal, then $X$ is uniquely ergodic. 
\end{thm}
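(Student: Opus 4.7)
The plan is to combine Theorem \ref{thm1} with Birkhoff's classical contraction principle for Hilbert's projective metric. Theorem \ref{thm1} realizes $TM(X)$ as the inverse limit of the pseudo-weight cones $\Lambda_j(\zeta)$ under the weight maps $T_j$, so unique ergodicity of $X$ amounts to showing this inverse limit collapses to a single ray. Semi-normality supplies a fixed positive $d \times d$ matrix $M$ and a strictly decreasing sequence $\{J_k\}_{k \in \mathds{N}} \subset \n$ such that, for each $k$, the composition of weight maps joining the $d$-dimensional pseudo-weight cones at levels $J_{2k}$ and $J_{2k-1}$ is represented by $M$ with respect to the natural-edge bases.

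Next, I would invoke Birkhoff's theorem: any positive square matrix $M$ is a strict contraction of Hilbert's projective metric on the non-negative orthant $\Lambda \subset \mathds{R}^d$, with ratio $\tau := \tanh(\Delta(M)/4) < 1$, where $\Delta(M)$ is the (finite) projective diameter of $M(\Lambda)$; and any non-negative linear map between non-negative cones is non-expansive for the Hilbert metric. Consequently, the composed weight map
\begin{equation*}
\Phi_k \;\colon\; \Lambda_{J_{2k}}(\zeta) \;\longrightarrow\; \Lambda_0(\zeta),
\end{equation*}
obtained by composing all $T_j$ from level $J_{2k}$ up to level $0$, factors through $k$ copies of $M$ interleaved with non-expansive transition maps. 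Its image therefore has Hilbert-projective diameter at most $\tau^{k-1}\Delta(M)$, which tends to $0$ as $k \to \infty$.

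The nested intersection $\bigcap_k \Phi_k\bigl(\Lambda_{J_{2k}}(\zeta)\bigr)$ is thus a closed convex subcone of projective diameter $0$, hence a single ray $\rho_0 \subset \Lambda_0(\zeta)$. By the inverse limit description, the projection $\pi_0 \colon TM(X) \to \Lambda_0(\zeta)$ takes values in this intersection, so $\pi_0(TM(X)) \subseteq \rho_0$. Running the same argument at each level $j \in \n$ shows $\pi_j(TM(X))$ lies on a single ray $\rho_j \subset \Lambda_j(\zeta)$. For any $\mu, \mu' \in TM(X)$, writing $\mu_j = c_j \mu'_j$ at each level and invoking the compatibility $T_j(\mu_j) = \mu_{j+1}$ together with the linearity of $T_j$ forces $c_j$ to be independent of $j$; hence $\mu$ and $\mu'$ are scalar multiples of one another, and $TM(X)$ is one-dimensional.

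The main obstacle I anticipate is technical: one must verify that the Birkhoff--Hilbert contraction machinery applies cleanly across the varying ambient cones $\Lambda_j$ of different dimensions, so that each $\Phi_k$ pushes its source into the open interior of the target cone (where the Hilbert metric is finite) and the projective-diameter bound propagates through each non-expansive intermediate transition map without deterioration. Positivity of $M$ takes care of the interior condition immediately, as a single application of $M$ carries the whole non-negative cone into its open interior. A secondary concern is the non-emptiness of the nested intersection, which is handled by compactness after normalizing weights along a fixed hyperplane, using the finite-dimensionality of $TM(X)$ guaranteed by Corollary \ref{coarse ub for dim TM(X)}.
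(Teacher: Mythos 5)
Your proposal is correct and follows essentially the same route as the paper: the paper likewise realizes $TM(X)$ as the inverse limit of the pseudo-weight cones and shows that the infinitely recurring positive matrix forces the projection of $TM(X)$ into each level to collapse to a single ray. The only difference is cosmetic — where you invoke Birkhoff's contraction theorem for Hilbert's projective metric, the paper uses a self-contained lemma of Veech based on the $\delta$-boundedness inequality (the same projective contraction principle in elementary form), and the varying-dimension issue you flag is disposed of by the ``smallest recurring dimension'' bound of Proposition \ref{TM(X) is finite dimensional}.
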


\vspace{4mm}

In addition to being uniquely ergodic, semi-normal solenoids are also fully mingling, assuring that they are topologically minimal as well.

\vspace{4mm}

In the studies conducted by W. Veech \cite{Veech1978}, \cite{413dc63a-6607-304b-a389-80bc0b0e5018} and S. Kerckhoff \cite{Kerckhoff_1985}, on their choice of dynamical systems (IET's and measured foliations), a criterion analogous to semi-normality was used to identify a class of uniquely ergodic  systems, and then it was shown that, with respect to a natural measure on the space of systems (the space of IET's and the space of measured foliations), the subspace of `semi-normal' dynamical systems has full measure (i.e. almost every system is semi-normal, and therefore uniquely ergodic). It still remains unknown whether almost every solenoid is semi-normal. Part of the challenge that exists in the context of solenoids, is conceptualizing the question: What shall be regarded as a `natural measure' on a space of solenoids?

\vspace{8mm}

\textbf{Acknowledgments.} The author would like to thank Lee Mosher for his ideas, inspiration, insightful conversations, support, and most of all, for having a contagious amount of enthusiasm for the subject matter. 

\vspace{4mm}





\vspace{4mm}

\begin{center}
    \section{Solenoids Induced by Split Sequences} \label{Ch:Solenoid}
\end{center}

\vspace{4mm}

\h In this chapter, we will define the ``Solenoid Induced by a Split Sequence", and explore its topological attributes. 

\vspace{4mm}

We will start by defining ``Folds", ``Splits", ``Split Sequences", ``Solenoids" and ``Proper Solenoids" [Section \ref{sec: Preliminary Definitions}]. We will limit our discussion to Proper Solenoids as it seems the most natural class of solenoids for our context. In Section \ref{sec: The Topology of a Proper Solenoid}, we will lay out a convenient basis for the topology of a proper solenoid $X$, called the ``Standard Basis of $X$", and use it to investigate some key features of $X$.

\vspace{4mm}

The following is a more detailed summary of Section \ref{sec: The Topology of a Proper Solenoid}.

\vspace{4mm}

Let $X$ be a proper solenoid. In Subsection \ref{sec: The Standard Basis for a Proper Solenoid}, we will explore the topology of a standard basis element $O$ of $X$ in terms of its path connected components called ``Plaques" [Definition \ref{def: plaque}]. More specifically, We will define ``Star Sets" [Definition \ref{Star Set}] and show (in a later part of the section) that each plaque of $O$ is a star set [Lemmas \ref{shapse of plaques 1}, and \ref{shapse of plaques 2}]. In Subsection \ref{sec: Fibers}, we will introduce ``Fibers" [Definition \ref{def: fibers}] of $X$ (that can be considered, in a loose sense, as being transverse to plaques) and show that each fiber of $X$ is compact [Property \ref{Fibers are compact}] and totally disconnected [Property \ref{Fibers are totally disconnected}]. Plaques and fibers of $X$ can be treated as precursors to the concepts of ``Leaves" and "Transversals" of $X$ that shall be explored in chapter \ref{Ch: Leaves and Transversals}.

\vspace{4mm}

In Subsection \ref{sec: Plaques, Leaf Interior Points and Singularities}, we will establish that points in a proper solenoid can be classified into two types called ``Leaf Interior Points" and ``Singularities" [Definition \ref{Singularity}]. To identify where singularities occur and then to understand the nature of those singularities, we will develop a tool called ``Star Chains" [Definition \ref{star chain}]. Then we will show that a proper solenoid has only finitely many singularities [Proposition \ref{finitely many singularities}]. 

\vspace{4mm}

Finally, in Section \ref{sec: Stabilizing Split Sequences}, we will observe that, given a proper split sequence $\zeta$, the process of studying its induced solenoid $X = X(\zeta)$ becomes easier, if $\zeta$ satisfies a collection of criteria that we shall call the ``Stabilizing Hypotheses" [Remark \ref{The Stabilizing Hypotheses}]. Then we will show that each proper solenoid can be induced by a proper stabilized split sequence [Proposition \ref{Every proper split sequence can be stabilized}]. And so it shall be justified, that from Chapter \ref{Ch: Solenoids as Foliated Spaces} onward, we will consider a proper stabilized split sequence and its induced solenoid, as our chosen objects of study. 

\vspace{4mm}

\vspace{4mm}

\subsection{Preliminary Definitions}\label{sec: Preliminary Definitions}

\subsubsection{Folds and Splits}


The concept of folds was first introduced in \cite{Sta1968} by John Stallings and used in various contexts since then. We will start with a constructive definition for a fold, since it will be the most fundamental building block of the discussion that follows.

\vspace{4mm}

\hypertarget{natural vertex}{Given} a directed graph $G$, a vertex $v$ of $G$ is called a ``Natural Vertex" if the valence of $v$ is greater than $2$. A directed edge of $G$ is called a ``Natural Edge" if both its initial and terminal vertices are natural vertices.

\vspace{4mm}

\begin{definition}[Fold]
    \label{def: fold}
    \hypertarget{fold}{Given a directed metric graph $G$, a ``Fold of $G$"} is a quotient map $f$ from $G$ to another metric graph $H$ defined as follows. We start with choosing two particular directed \hyperlink{natural vertex}{natural edges} $E_1$ and $E_2$ of $G$ that start at the same vertex $v$, a positive number $L \leq min_{i=1,2} \{ length(E_i)\}$, and two isometric embeddings $l_i : [0,L] \longrightarrow E_i $ that take $0$ to the common starting vertex $v$ such that,
    \begin{enumerate}
        \item $ l_1 ((0,L]) \bigcap l_2 ((0,L]) = \emptyset $,
        \item $f$ identifies $l_1 (t)$ with $l_2 (t)$ for every $t \in [0,L]$, and,
        \item $f$ restricted to $G - \bigcup_{i=1,2} Image(l_i)$ is an isometry onto its image in $H$.
    \end{enumerate}
\end{definition}

\vspace{4mm}

We shall call the two maps $l_1, l_2$ ``Supporting Parameterization Maps of $f$". \hypertarget{folding edges}{Furthermore,} $E_1, E_2$ shall be called the \hypertarget{folding edges and vertices}{``Folding Edges of $G$ rel $f$"}, and $v$ the ``Folding Vertex of $G$ rel $f$". Given a graph $G$, the phrase ``Folding $G$" shall refer to the act of creating another graph $H$ and a fold map $f$ from $G$ to $H$. \hypertarget{Splitting Vertex}{We} shall also give the name ``Splitting Vertex of $H$ rel $f$" to the unique point $w \in H$ determined by $w = f \circ l_1 (L) = f \circ l_2 (L) $. Note that, even in the case where $l_1(L) = l_2(L)$ is not a vertex of $G$, $w$ is a natural vertex of $H$. 

\vspace{4mm}

Note that in the above definition, the initial assumptions made while choosing $E_1$ and $E_2$ allows the possibility that they could be the same edge (directed the same way or the opposite). However, item 1. disallows the case that they are the same directed edge. While $E_1$ and $E_2$ being the same edge but directed the opposite way to each other is still allowed, item 1. makes sure that $f$ does not map a loop (a directed edge whose initial and terminal vertices are the same) into a single natural edge that is not a loop. 

\vspace{4mm}



\begin{remark}[Folds on Topological Graphs]
    \label{Folds on Topological Graphs}
    \hypertarget{Folds on Topological Graphs}{Even} though the above definition [\ref{def: fold}] of a ``Fold" was only laid out in the context of metric graphs, were $G$ just a graph without a metric, we can still define a ``Fold of $G$" by slightly altering our definition in the following way: Instead of $l_i$ being an isometric embedding we only demand that it be a homeomorphism on to its image for each $i = 1,2$. Instead of picking the interval $[0,L]$ as  the domain for those parameterizations, we may just choose the interval $[0,1]$ and demand that the image of $[0,1]$ under $l_i$ for each $i = 1,2$, be contained in $E_i$. Lastly, we will replace criterion (2.) with the requirement that $f$ restricted to $G - \bigcup_{i=1,2} Image(l_i)$ is a homeomorphism onto its image in $H$ (instead of an isometry).
\end{remark}

\vspace{4mm}

\begin{remark} [Fold Maps are Homotopy Equivalences]
    \label{Rmk: Fold Maps are Homotopy Equivalences}
    In the definition \ref{def: fold} of a fold, since we chose $L$ in such a way so that the two parameterizations $l_i$, map $L$ to two distinct points, $f$ is a homotopy equivalence. This ensures that for any $p \in G$, $f$ induces an isomorphism between the fundamental groups $\pi_1 (G,p)$ and $\pi_1 (H,f(p))$. 
    (Relevant properties of Fundamental Groups of Graphs can be found in \cite{Mosher2020TheTG} by L. Mosher.)
\end{remark}

\vspace{4mm}


\subsubsection*{Splits}
\label{sec: splits}


To make a distinction between a sequence of folds indexed forward (eg. $G_0 \xrightarrow[]{f_{0}} G_{1} \xrightarrow[]{f_{1}} G_{2} \xrightarrow[]{f_{2}} ...$) and a sequence indexed backwards (eg. $G_0 \xleftarrow[]{f_{-1}} G_{-1} \xleftarrow[]{f_{-2}} G_{-2} \xleftarrow[]{f_{-3}} ...$), we shall lay out the concept of ``Splitting a Graph" as the inverse action of ``Folding a Graph", and then we shall call a sequence of folds indexed backwards (i.e. an inverse sequence of folds) a ``Split Sequence" (to be precisely defined in the next Subsection). 

\vspace{4mm}

\begin{definition}[Split]
    \label{Split}
    \hypertarget{split}{Given} a graph $H$, ``Splitting $H$" is the act of creating another graph $G$ and a fold map $f$ from $G$ to $H$. Given graphs $H, G$, a``Split $s$ from $H$ to $G$" is the inverse relation of a fold $f$ from $G$ to $H$. 
    In this case, $f$ shall be called the ``Inverse Fold of  $s$" (denoted ``$\overline{s}$") and $s$ the ``Inverse Split of $f$" (denoted ``$\overline{f}$"). 
\end{definition}

\vspace{4mm}


\subsubsection{Split Sequences and Solenoids}
\label{sec: Split Sequences}

\vspace{4mm}

Here, we formally define ``Split Sequences", and introduce a criterion called ``Properness" to narrow down our discussion. We start with laying out some preliminary terminology.

\vspace{4mm}

\begin{definition}[Fold Compositions and Backtracking]
    \label{Fold Compositions and Backtracking}
    \hypertarget{Fold Compositions and Backtracking}{Let} $n \in \mathds{N}$, $i,j \in \n$ such that $i < j$, and let $G_j \xleftarrow[]{f_{j-1}} G_{j-1} \xleftarrow[]{f_{j-2}} G_{j-2} \xleftarrow[]{f_{j-3}} ...  \xleftarrow[]{f_i} G_{i}$ be a finite sequence such that,
    \begin{enumerate}
        \item for each $k \in \{ i , i+1 , ... , j-1, j \}$, $G_k$ is a finite graph of rank $n$
        \item for each $k \in \{ i , i+1 , ... , j-1 \}$, $f_{k} : G_k \longrightarrow G_{k+1}$ is a \hyperlink{fold}{fold}.
    \end{enumerate}
    We shall use the notation ``$f^i_j$" to indicate the composition of fold maps $f_{j-1} \circ f_{j-2} \circ ... \circ f_{i+1} \circ f_i $. We say that ``$f^i_j$ has No Backtracking" if $f^i_j$ restricted to $G_i - \{$natural vertices of $ G_i \} $ is locally injective. 
\end{definition}

\vspace{4mm}

\begin{definition}[Core Graphs of a Given Rank]
    \label{Core Graphs}
    \hypertarget{core graphs}{Let} $n \in \mathds{N}$. A graph $G$ is called a ``Core Graph of Rank $n$" or a ``Rank $n$ Core Graph" if $G$ has no valence 1 vertices, and the fundamental group of $G$ is the free group of rank $n$. 
\end{definition}
    
\vspace{4mm}

\begin{definition}[Split Sequences and Topological Split Rays of Rank $n$]
    \label{Split Sequence}
    \hypertarget{Split Sequence}{Let} $n \in \mathds{N}$. 
    A ``Split Sequence of Rank $n$" (resp. a ``Topological Split Ray of Rank $n$") is a sequence of folds between metric graphs (resp. topological graphs) $\zeta : G_0 \xleftarrow[]{f_{-1}} G_{-1} \xleftarrow[]{f_{-2}} G_{-2} \xleftarrow[]{f_{-3}} ...$ such that, 
    \begin{enumerate}
        \item for each $j \in \n$, $G_j$ is a core graph of rank $n$, and,
        \item for each $j \in -\mathds{N}$, the composition of folds $f^0_j$ has no backtracking.
    \end{enumerate}
    \hypertarget{level graph}{Furthermore,} for each $j \in \n$, $G_j$ is called the ``Level $j$ Graph of $\zeta$" and will be referred to as a ``Level Graph of $\zeta$" when specifying the level is unnecessary. 
\end{definition} 

\vspace{4mm}

Throughout the rest of this text, each time we introduce a split sequence denoted by $\zeta$, it shall be understood that it's level graphs and fold maps are denoted as in the definition above. 

\vspace{4mm}

\hypertarget{underlying split ray}{For} a given split sequence $\zeta$, the ``Underlying Topological Split Ray of $\zeta$" denoted ``$\zeta^{top}$" is the topological split ray obtained by stripping each level graph of $\zeta$ of its metric. 

\vspace{4mm}

\begin{definition}[Proper Split Sequences of Rank $n$]
    \label{Proper Split Sequences}
    \hypertarget{Proper Split Sequences}{We} say that a split sequence $\zeta$ of rank $n$, is a ``Proper Split Sequence of Rank $n$" if, 
    the set $\{ f^j_0 (v) \in G_0 : j \in - \mathds{N}, \ v$ is a \hyperlink{natural vertex}{natural vertex} of $G_j \}$ is finite.
\end{definition}

\vspace{4mm}

\begin{definition}[Strongly Proper Split Sequences of Rank $n$]
    \label{Strongly Proper Split Sequences}
    \hypertarget{Strongly Proper Split Sequences}{We} say that a split sequence $\zeta$ of rank $n$, is a ``Strongly Proper Split Sequence of Rank $n$" if, each fold in $\zeta$ is a graph map (or equivalently, for each $j \in \n$, the image of each natural vertex in $G_{j-1}$ under $f_{j-1}$ is a natural vertex in $G_j$).
\end{definition}

\vspace{4mm}




\subsubsection*{Solenoids as Inverse Limits}
\label{sec: Solenoids as Inverse Limits}

\vspace{4mm}

\begin{definition}[Solenoids and Proper Solenoids]
    \label{def: Solenoids and Proper Solenoids}
    Given a split sequence $ \zeta : G_0 \xleftarrow[]{f_{-1}} G_{-1} \xleftarrow[]{f_{-2}} G_{-2} \xleftarrow[]{f_{-3}} ... $, the ``Solenoid Induced by $ \zeta $" (denoted ``$ X ( \zeta ) $" or simply ``$X$" when there's no room for ambiguity) is the inverse limit of $ \zeta $ in the category of topological spaces. A solenoid induced by a proper (resp. strongly proper) split sequence shall be called a ``Proper (resp. Strongly Proper) Solenoid".
\end{definition}

\vspace{4mm}

From this point forward, we will limit our discussion to proper solenoids. And whenever we mention a solenoid $X(\zeta)$ or $X$, it will represent a proper solenoid.

\vspace{4mm}

Given the above setting, it's natural to think of $X$ as a purely topological object. However, we shall see that $X$  inherits a 1-dimensional singular-foliation structure from the level graphs of $\zeta$. We will explore this notion in more detail in Chapters \ref{Ch: Solenoids as Foliated Spaces} and \ref{Ch: Leaves and Transversals}. We shall lay out the groundwork necessary to aid this exploration, in the rest of this chapter. 


\vspace{4mm}

\subsection{The Topology of a Proper Solenoid}
\label{sec: The Topology of a Proper Solenoid}

\vspace{4mm}

Throughout this section, $\zeta : G_0 \xleftarrow[]{f_{-1}} G_{-1} \xleftarrow[]{f_{-2}} G_{-2} \xleftarrow[]{f_{-3}} ...$ will denote a given proper split sequence and $X = X(\zeta)$ its induced solenoid.

\vspace{4mm}

In Sub-section \ref{sec: The Standard Basis for a Proper Solenoid}, we will establish a convenient basis for the topology of $X$ called the ``Standard Basis". We will spend the next two subsections exploring key features of $X$, such as ``\hyperlink{fibers}{Fibers}" [Sub-section \ref{sec: Fibers}], ``\hyperlink{plaque}{Plaques}", ``\hyperlink{Singularity}{Leaf Interior Points}" and ``\hyperlink{Singularity}{Singularities}" [Sub-section \ref{sec: Plaques, Leaf Interior Points and Singularities}]. 

\vspace{4mm}

The following brief exposition lays out notations that we will frequently use in this text to refer to subsets and points of the solenoid $X$. It also contains, an exploration of $X$'s topology inherited from being an inverse limit object. The readers who are already familiar with the matter may skip to Sub-section \ref{sec: The Standard Basis for a Proper Solenoid} and look up the notations when relevant. 

\vspace{4mm}

\begin{remark}[$X$ as a Set and as a Topological Space]
    $X$ as a set can be expressed as $\{ (x_0 , x_{-1} , x_{-2} , ... ) \in \Pi_{j \in - \mathds{N}^*} G_j : $ for each $j \in -\mathds{N} , f_j (x_j) = x_{j+1} \}$. (We shall frequently use tuple-notation while referring to a point of the solenoid.) The Topology of $X$ is the subspace topology inherited from $\Pi_{j \in - \mathds{N}^*} G_j $. 
\end{remark}

\vspace{4mm}

\begin{notation}[Sub Inverse Sequences]
    \label{Sub Inverse Sequences}
    \hypertarget{Sub Inverse Sequences}{A} ``Sub Inverse Sequence of $\zeta$" is a sequence $\mathcal{U} :=  U_0 \xleftarrow[]{f_{-1} |_{U_{-1}}} U_{-1} \xleftarrow[]{f_{-2} |_{U_{-2}}} U_{-2} \xleftarrow[]{f_{-3} |_{U_{-1}}} ...$ such that for each $j \in \n$, $U_j \subseteq G_j$ with $f_{j-1} (U_{j-1}) = U_j$. (When expressing such a sequence, for notational convenience, for each $j \in -\mathds{N}$, we may write $f_{j}$ instead of $f_{j} |_{U_{j}}$, and it will be understood that we are referring to the relevant restriction of the fold map.)
\end{notation}

\vspace{4mm}

 We call $\mathcal{U}$ a ``Point Sub Inverse Sequence of $\zeta$" when for each $j \in \n$, $U_j$ is a single point of $G_j$. Note that a point sub inverse sequence of $\zeta$, uniquely determines a point in $X(\zeta)$.

\vspace{4mm}

\hypertarget{sub inverse limits notation}{In} the above setting, we will denote the inverse limit of $\mathcal{U}$ in the category of topological spaces ``$\varprojlim^{\zeta}_{j \in \n} U_j$", or simply ``$\varprojlim^{\zeta} \mathcal{U}$" when the context is clear.

\vspace{4mm}


\vspace{4mm}

Assume the notational set-up in Notation \ref{Sub Inverse Sequences}. Then $X(\mathcal{U}) := \varprojlim^{\zeta} \mathcal{U} \subseteq X(\zeta)$ and 
$X(\mathcal{U})$ inherits the subspace topology from $X(\zeta)$ (and in a broader sense from $ \Pi_{j \in - \mathds{N}^*} G_j $).

\vspace{4mm}

\begin{remark}[The Coordinate Projection Maps and the Topology of $X$]
    \label{The Coordinate Projection Maps and the Topology of X}
    \hypertarget{The Coordinate Projection Maps}{For} each $k \in \n$, let ``$\overline{\pi}_k$" denote the coordinate projection function from $\Pi_{j \in - \mathds{N}^*} G_j$ to  $G_k$ that takes each point of the form $(x_0 , x_{-1} , x_{-2} , ... ) \in \Pi_{j \in - \mathds{N}^*} G_j$ to $x_k \in G_k$. And we shall denote $\overline{\pi}_k |_{X(\zeta)}$, ``$\pi_k$". The topology of $\Pi_{j \in - \mathds{N}^*} G_j$  is the coarsest topology that makes the coordinate projection maps continuous. %

    \vspace{4mm}
    
    Therefore, the topology of $X$ is generated by the collection 
    $ \{ \pi^{-1}_j (U_j) \subseteq X \ | \ j \in \n, \ U_j$ is an open neighborhood of $G_j \}$.
\end{remark}

\vspace{4mm}

\begin{center}
\setlength{\unitlength}{1cm}
\begin{picture}(4,6) 
\label{Coordinate Projection Maps}
   \put(3,5.9){\vector(-1,0){2.26}}
        \put(3,5.8){\ X}
        \put(1.5,6){\ $\pi_0$}
        \put(0.1,5.8){\ $G_0$}
    \put(3,5.8){\vector(-1,-1){2.28}} 
    \put(0.37,3.67){\vector(0,1){2}} 
        \put(1.5,4.8){\ $\pi_{-1}$} 
        \put(0.1,3.3){\ $G_{-1}$} 
        \put(0.4,4.8){\ $f_{-1}$} 
    \put(3,5.7){\vector(-1,-2){2.32}} 
    \put(0.37,1.17){\vector(0,1){2}} 
        \put(1.5,2.8){\ $\pi_{-2}$} 
        \put(0.1,0.8){\ $G_{-2}$} 
        \put(0.4,2.5){\ $f_{-2}$} 
    \put(0.37,0.4){\vector(0,1){0.3}} 
    \put(0.175,0.3){\ .}
    \put(0.175,0.2){\ .}
    \put(0.175,0.1){\ .}
\end{picture}
\end{center}

\vspace{4mm}

Note that we can further refine the basis that generates the topology of $X$ to $\{ \pi^{-1}_j (U_j) \subseteq X \ | \ j \in \n, \ U_j \in \mathcal{B}_j \}$ where for each $j \in \n$, $\mathcal{B}_j$ is a selected convenient basis for the topology of $G_j$. We will carry out the process of constructing such a basis (called the ``Standard Basis of $X$") in subsection \ref{sec: The Standard Basis for a Proper Solenoid}.

\vspace{4mm}

\subsubsection{The Standard Basis for a Proper Solenoid}
\label{sec: The Standard Basis for a Proper Solenoid}

\vspace{4mm}

In this Sub-section, we will establish a basis for the topology of $X$ called the ``\hyperlink{standard basis}{Standard Basis}". 
Later, we will use this basis to investigate some key properties of the solenoid. 
But first we shall lay out some preliminary definitions and terminology. 

\vspace{4mm}

\begin{definition}[n-pronged Star Sets and Star Neighbourhoods]
\label{Star Set}
\hypertarget{star set}{For a} positive integer $ n > 1$, the ``Model $n-$Pronged Star Set" is $ \frac{\{1,...,n\} \times [0,1)}{(i,0) \sim  (j,0)} $ and is denoted by $Star(n)$. Given a topological space $Y$, an ``$n-$Pronged Star Set of $Y$" is a subset of $Y$ homeomorphic to $Star(n)$. An ``$n-$Pronged Star Neighborhood of $Y$" is an open subset of $Y$ homeomorphic to $Star(n)$. 
\end{definition}

\vspace{4mm}

\begin{definition}[The Center, Prongs and Turns of a Star Set]
    \label{center, prong, turn}
    \hypertarget{center, prong, turn}{Assume} the set-up of the above definition. The ``Center" of $Star(n)$ is the unique point created by identifying points $(j,0) \in \{1,...,n\} \times [0,1)$. For each $i \in \{1,...,n\}$, the subset of $Star(n)$ corresponding to $\{i\} \times (0,1)$ is called a ``Prong of $Star(n)$" (sometimes called an ``Open Prong" for clarity). A set that is the union of two distinct prongs and the center is called a ``Turn of $Star(n)$". When a star set $S$ is recognized by being homeomorphic to $Star(n)$, via a homeomorphism $h: Star(n) \longrightarrow S$, the center of $S$ is the unique point that the center of $Star(n)$ maps to, under $h$. A prong (resp. turn) of $S$ is a subset of $S$ that is the image of a prong (resp. turn) of $Star(n)$, under $h$. 
\end{definition}

\vspace{4mm}

\begin{remark}[Star Neighborhoods of a Graph]
    Let $G$ be a graph. Note that, in $G$, a $2-$pronged star neighborhood is simply an embedded open interval contained in a natural edge-interior of $G$. And in that case, the notion of ``Center" in that $2-$pronged star neighborhood is an artificial designation we give to one of its interior points. On the other hand, when $n>2$, an $n-$pronged star neighborhood of a graph is simply a neighborhood of a natural vertex with valence $n$. Given an $n-$pronged star set with $n>2$, the ``Center" of the star set is simply the unique natural vertex contained in the vertex.
\end{remark}

\vspace{4mm}

Note that the topology of any graph is generated by the collection of its star neighborhoods.

\vspace{4mm}



While we can use the collection $\{ \pi_j^{-1} (V) : j \in \n, V$ is a star neighborhood of $G_j \}$ as a basis for the topology of $X(\zeta)$ [Remark \ref{The Coordinate Projection Maps and the Topology of X}], it will prove to be more convenient later if we choose only the star neighborhoods of level graphs, when their forward images (under corresponding fold compositions in $\zeta$) are also star neighborhoods (in the corresponding level graphs). 

\vspace{4mm}

Since we plan to utilize the properness of our split sequences throughout this text, laying out some useful terminology is in order. Let $\zeta$ be a proper split sequence. Given a subset $U$ of $\zeta$, we introduce the term ``Shadows" of $U$ as a way of referring to both forward images and pre-images under fold compositions. Then in Definition \ref{Special and Regular Points}, given a point $p$ of a level graph of $\zeta$, we'll categorize $p$ as either a ``Special Point" or a ``Regular Point" based on a criterion depending on $p$'s images and pre-images. This terminology will be directly used in introducing the elements of ``The Standard Basis" of $X$ [Definitions \ref{Standard Star Neighbourhoods} and \ref{def: standard basis}].

\vspace{4mm}

\begin{definition}[Shadows]
    \hypertarget{shadows}{Given a proper split sequence} $\zeta$, two integers $K, j \in -\mathds{N}^*$ and a subset $U \subseteq G_K$, the ``Shadow of $U$ in Level $j$" is given by\\
     \begin{equation*}
    Shadow^{\zeta}_j (U) := \left \{
  \begin{array}{lr} 
      f^K_j (U) & \text{if} \ j>K \\
       U & \text{if} \ j = K \\
       (f^j_K)^{-1} (U) & \text{if} \ j<K
      \end{array}\\
      \right.
    \end{equation*}
    Often we will simplify the notation $Shadow^{\zeta}_j (U)$ to ``$Shadow_j (U)$" when the context is clear.
    When $U = \{p\}$ for some point $p$ in $G_K$, we will simply denote $Shadow^{\zeta}_j (U)$ by ``$Shadow^{\zeta}_j (p)$". 
\end{definition}

\vspace{4mm}



\begin{definition}[Special and Regular Points]
    \label{Special and Regular Points}
    \hypertarget{special points}{Let} $K \in \n$ and $p \in G_K$. We say that ``$p$ is a Special Point of $G_K$ rel $\zeta$" if there exists $j \in \n$ so that $Shadow_j (p)$ is a natural vertex of $G_j$. We call $p$ a ``Regular Point of $G_K$ rel $\zeta$" if $p$ is not a special point of $G_K$ rel $\zeta$. 



\end{definition}

\vspace{4mm}

\begin{definition}[Standard Star Sets, Standard Star Neighborhoods, Standard Neighborhoods]
    \label{Standard Star Neighbourhoods}
    \hypertarget{Standard Star Neighbourhoods}{Given} a proper split sequence $\zeta$, $j \in -\mathds{N}^*$, and a star set $V \subset G_j$ centered at $p$, we call $V$ a ``Standard Star Set of $\zeta$" if $V- \{p\}$ does not contain any \hyperlink{special points}{special points rel $\zeta$}. If a standard star set $V \subset G_j$ is open in $G_j$, we shall call $V$ a ``Standard Star Neighborhood of $G_j$ rel $\zeta$". 
    We call a neighborhood $O \subseteq X(\zeta)$ a ``Standard Neighborhood of $X(\zeta)$" if there exists $j \in \n$ and a standard star neighborhood $V$ in $G_j$ such that $O = \pi^{-1}_j (V)$. 
\end{definition}

\vspace{4mm}

\begin{definition}[Turns in $\zeta$]
    \label{turn in zeta}
    \hypertarget{turn in zeta}{Given} a proper split sequence $\zeta$ and $j \in \n$, a ``Level $j$ Standard Turn in $\zeta$" is a turn of a \hyperlink{Standard Star Neighbourhoods}{standard star neighborhood} of a \hyperlink{level graph}{level graph} $G_j$ of $\zeta$. 
\end{definition}

\vspace{4mm}

\begin{remark}[The Standard Basis for the Topology of $X(\zeta)$.]
    \label{def: standard basis}
    \hypertarget{standard basis}{Recall} from Remark \ref{The Coordinate Projection Maps and the Topology of X} that for each $j \in \n$, $\pi_j :  X \longrightarrow G_j$ is the projection map taking $(x_0 , x_{-1} , x_{-2} , ...)$ to $x_j$. Note that for each $j \in \n$, since each  \hyperlink{Standard Star Neighbourhoods}{standard star neighborhood} of $G_j$ is open in $G_j$, and since each star neighborhood of $G_j$ can be expressed as a union of \hyperlink{Standard Star Neighbourhoods}{standard star neighborhoods} of $G_j$, the collection of standard star neighborhoods of $G_j$, generates the topology of $G_j$. Thus, the collection of \hyperlink{Standard Star Neighbourhoods}{standard neighborhoods} of $X(\zeta)$ (which we shall name the ``Standard Basis of $X(\zeta)$") generates the topology of $X(\zeta)$.

\end{remark}

\vspace{4mm}





The next few sub-sections will explore various topological features of the solenoid. 

\vspace{4mm}


\subsubsection{Shadows and Fibers}
\label{sec: Fibers}

\vspace{4mm}

Here, we investigate the \hyperlink{shadows}{shadows} of given standard star neighborhoods of $\zeta$'s level graphs, and use that perceptive to explore a feature of the solenoid, that we shall call ``Fibers". 


\begin{figure}[htp]
    \centering
    \includegraphics[width=10cm]{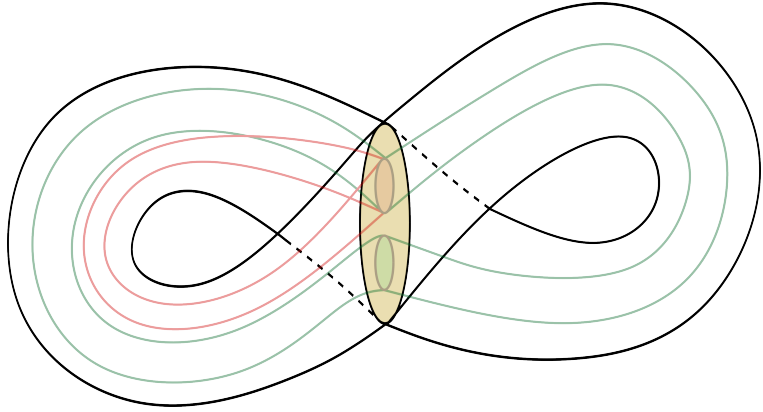}
    \caption{A Fiber Inside a Solenoid}
    \label{fig:A Fiber Inside a Solenoid}
\end{figure}

\vspace{4mm}


We shall borrow our next piece of terminology from the theory of fibrations. It will become apparent, after chapter 3, as to why this terminology is appropriate.


\vspace{4mm}

\begin{definition}[Fibers of $X$]
    \label{def: fibers}
    \hypertarget{fibers}{Let} $j \in \n$ and $q \in G_j$. Then the ``Fiber at $q$ of $X$" denoted ``$F_q$", is $\pi^{-1}_j (q) \subset X$.
\end{definition}

\vspace{4mm}


The following two remarks lay the intuitive groundwork to define a partition system of a fiber (introduced in Remark \ref{The Topology of a Fiber}), whose members, taken collectively, generates the topology of the fiber. 

\vspace{4mm}

\begin{remark}[Sequences of Shadows, and Subsets of $X$]
    Given $K \in -\mathds{N}^*$ and $U \subseteq G_K$, note that the inverse limit of the system: $\hyperlink{shadows}{Shadow}^{\zeta}_K (U) \xleftarrow[]{f_{K-1}} Shadow^{\zeta}_{K-1} (U) \xleftarrow[]{f_{K-2}} Shadow^{\zeta}_{K-2} (U) \xleftarrow[]{f_{K-3}} ...$ is precisely the subset $\pi^{-1}_K (U)$ in $X(\zeta)$. (i.e. $\pi^{-1}_K (U) = \hyperlink{sub inverse limits notation}{\varprojlim^{\zeta}_{j \in -\mathds{N}^* + K} Shadow^{\zeta}_{j} (U)}$.)  
\end{remark}

\vspace{4mm}




\begin{figure}[htbp]
    \centering

    \begin{tikzpicture}
    \tikzmath{\x1 = 0 ; \y1 = 2 ; \ya = -1 ;
    \x2 = 3 ; \y2 = 2 ; \yb = -1 ;
    \x3 = 6 ; \y3 = 2 ; \yc = -1 ;
    \x4 = 9 ; \y4 = 2 ; \yd = -1 ;
    \v = 1 ; }

    \node at (\x1, \ya - 1.5) {Case a.};
    \draw[thick] (\x1 -1 , \y1)--(\x1 +1 , \y1) ;
    \draw[thick] (\x1 , \y1)--(\x1 +1 , \y1 + 0.5) ;
    \draw[thick] (\x1 , \y1)--(\x1 +1 , \y1 - 0.5) ;
    \draw[thick] (\x1 -1 , \ya)--(\x1 +1 , \ya) ;
    \draw[thick] (\x1 , \ya)--(\x1 +1 , \ya + 0.5) ;
    \draw[thick] (\x1 , \ya)--(\x1 +1 , \ya - 0.5) ;
    \draw[->] (\x1, \ya + 0.75) to[out=70, in=-70] (\x1,\y1 -0.75);
    \draw[->][blue] (\x1 - 0.2,\y1 -0.75) to[out=-70, in=70] (\x1 -0.2, \ya + 0.75);
    \filldraw (\x1 + 0.01, \y1 ) circle (\v pt);
    \filldraw (\x1 + 0.01 , \ya) circle (\v pt);
    \node[above] at (\x1, \y1) {$p$};
    \node[below] at (\x1, \ya) {$q$};

    
    \node at (\x2, \yb - 1.5) {Case b.};
    \draw[pattern=vertical lines, pattern color=black!15, draw=none] (\x2-1,\yb) rectangle (\x2 , \yb + 0.3);
    \draw[thick] (\x2 , \y2)--(\x2 +1 , \y2) ;
    \draw[thick][teal] (\x2 -1 , \y2)--(\x2, \y2) ;
    \draw[thick][orange] (\x2 , \y2)--(\x2 +1 , \y2 + 0.5) ;
    \draw[thick] (\x2 , \y2)--(\x2 +1 , \y2 - 0.5) ;
    \draw[thick] (\x2 , \yb)--(\x2 +1 , \yb) ;
    \draw[thick][teal] (\x2 -1 , \yb)--(\x2, \yb) ;
    \draw[thick][orange] (\x2 , \yb + 0.3) to[out=0, in=-120] (\x2 +1 , \yb + 0.7) ;
    \draw[thick][orange] (\x2 , \yb + 0.3) -- (\x2-1 , \yb + 0.3);
    \draw[thick] (\x2 , \yb)--(\x2 +1 , \yb - 0.5) ;
    \draw[->] (\x2, \yb + 0.75) to[out=70, in=-70] (\x2,\y2 -0.75);
    \draw[->][blue] (\x2 - 0.2,\y2 -0.75) to[out=-70, in=70] (\x2 -0.2, \yb +0.75);
    \filldraw (\x2 + 0.01, \y2 ) circle (\v pt);
    \filldraw (\x2  + 0.01 , \yb) circle (\v pt);
    \node[above] at (\x2, \y2) {$p$};
    \node[below] at (\x2, \yb) {$q$};

    \node at (\x3, \yc - 1.5) {Case c.};
    \draw[pattern=vertical lines, pattern color=black!15, draw=none] (\x3-1,\yc) rectangle (\x3 , \yc + 0.3);
    \draw[pattern=north east lines, pattern color=black!15, draw=none] (\x3 , \yc) -- (\x3 + 0.8 , \yc - 0.8) -- (\x3 +1 , \yc - 0.5) -- cycle;
    \draw[thick][teal] (\x3 -1 , \y3)--(\x3, \y3) ;
    \draw[thick][orange] (\x3 , \y3)--(\x3 +1 , \y3 + 0.5) ;
    \draw[thick][teal] (\x3 , \y3)--(\x3 +1 , \y3 - 0.5) ;
    \draw[thick] (\x3, \y3)--(\x3 +1 , \y3) ;
    \draw[thick][teal] (\x3 -1 , \yc)--(\x3 , \yc) ;
    \draw[thick][orange] (\x3 , \yc + 0.3) to[out=0, in=-120] (\x3 +1 , \yc + 0.7) ;
    \draw[thick][orange] (\x3 , \yc + 0.3) -- (\x3-1 , \yc + 0.3); 
    \draw[thick][teal] (\x3 , \yc)--(\x3 +1 , \yc - 0.5) ; 
    \draw[thick] (\x3 , \yc)--(\x3 +1 , \yc) ; 
    \draw[->] (\x3, \yc + 0.75) to[out=70, in=-70] (\x3,\y3 -0.75);
    \draw[->][blue] (\x3 - 0.2,\y3 -0.75) to[out=-70, in=70] (\x3 -0.2, \yc +0.75);
    \draw[thick][orange] (\x3 , \yc) -- (\x3 + 0.76 , \yc - 0.76); 
    \filldraw (\x3 + 0.01, \y3 ) circle (\v pt);
    \filldraw (\x3 , \yc) circle (\v pt);
    \node[above] at (\x3, \y3) {$p$};
    \node[below] at (\x3, \yc) {$q$};
    
    \node at (\x4, \yd - 1.5) {Case d.};
    \draw[pattern=north west lines, pattern color=black!15, draw=none] (\x4 , \yd) -- (\x4 +1 , \yd + 0.5) -- (\x4 + 0.8 , \yd + 0.8) -- cycle;
    \draw[thick] (\x4 -1 , \y4)--(\x4 +1 , \y4) ;
    \draw[thick][teal] (\x4 , \y4)--(\x4 +1 , \y4 + 0.5) ; 
    \draw[thick] (\x4 , \y4)--(\x4 +1 , \y4 - 0.5) ;
    \draw[thick] (\x4 -1 , \yd)--(\x4 +1 , \yd) ;
    \draw[thick][teal] (\x4 , \yd)--(\x4 +1 , \yd + 0.5) ; 
    \draw[thick][orange] (\x4 , \yd)--(\x4 + 0.77 , \yd + 0.77); 
    \draw[thick] (\x4 , \yd)--(\x4 +1 , \yd - 0.5) ;
    \draw[->] (\x4, \yd + 0.75) to[out=70, in=-70] (\x4,\y4 -0.75);
    \draw[->][blue] (\x4 - 0.2,\y4 -0.75) to[out=-70, in=70] (\x4 -0.2, \yd + 0.75);
    \filldraw (\x4 + 0.01, \y4 ) circle (\v pt);
    \filldraw (\x4  + 0.01 , \yd) circle (\v pt);
    \node[above] at (\x4, \y4) {$p$};
    \node[below] at (\x4, \yd) {$q$};
\end{tikzpicture}
    
    \caption{Splits on Standard Star Neighborhoods}
    \label{fig:Folds on Standard Star Neighbourhoods}
\end{figure}
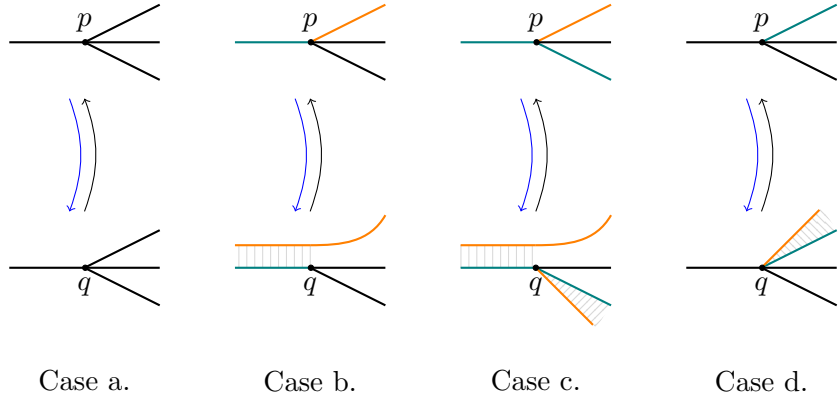

\vspace{4mm}

In Figure \ref{fig:Folds on Standard Star Neighbourhoods}, the split direction is indicated in blue, and the fold direction in black. In each case, let $V$ denote the standard star neighborhood depicted in the upper half of the figure, let $p$ be the center of $V$, and let $f$ denote the depicted fold.
\begin{enumerate}
    \item[a.] represents the case where the split does not affect $V$.  
    \item[b.] represents the case where $p$ is the \hyperlink{Splitting Vertex}{splitting vertex}, and the \hyperlink{folding edges and vertices}{folding vertex} $q$ is not in $f^{-1} (p)$. While we only depict a special sub-case of a. here, note that there is a sub-case, where there could be extra prongs attached $p$ where some pre-images of those prongs under $f$ will be attached to the orange edge. 
    \item[c.] represents the case where $p$ is the splitting vertex, and $p$ has a pre-image that is the folding vertex. Similar to Case b., here, we only depict a special sub-case while there is a sub-case, where there are extra prongs attached to $p$ so that some pre-images of those prongs under $f$ will be attached to the orange edge. 
    \item[d.] represents the case where $p$ has a unique pre-image under $f$, and that pre-image is the folding vertex.
\end{enumerate}
There is one more case that we do not explicitly depict here. That is the instance where $V$ only has two prongs, in which case, either the split $\overline{f}$ does not affect $V$ (as in Case a.), or the pre-image of $V$ under $f$ is two separate $2-$pronged star neighborhoods, or the pre-image of $V$ is a single $3-$pronged star neighborhood.
In each case, note that the pre-image of $V$ under $f$, has either one or two connected components in the domain of $f$.

\vspace{4mm}

\begin{remark}[Shadow Components, Shadows of Standard Star Neighborhoods]
    \label{Shadow Components}
    \hypertarget{Shadow Component}{Let} $K \in \n$ and let $V$ be a \hyperlink{Standard Star Neighbourhoods}{standard star neighborhood} of $G_K$. Then for each integer $j < K$, we shall call a connected component of \hyperlink{shadows}{$Shadow_j (V)$} in $G_j$, a ``Shadow Component of $V$ in Level $j$". \hypertarget{SC notation}{In} this context, for a given point $p \in Shadow_j (V)$ (resp. a path connected set $U \subseteq Shadow_j (V)$), the unique shadow component of $V$ in level $j$ containing $p$, shall be denoted by ``$SC_j (V,p)$" (resp. ``$SC_j (V,U)$").

    \vspace{4mm}
    
    Note that, due to the kind of splits that are involved, in Level $K-1$, $V$ has at most two shadow components [See Figure \ref{fig:Folds on Standard Star Neighbourhoods}]. Therefore, in level $j$, $V$ has at most $2^{K-j}$ shadow components. Furthermore, each shadow component of $V$ is a standard star neighborhood of the graph it's contained in. i.e. Each shadow of a standard star neighborhood is a disjoint union of standard star neighborhoods. 
\end{remark}

\vspace{4mm}







\begin{figure}[htbp]
    \centering
    \begin{tikzpicture}[xscale=0.8, yscale=0.8]
    \draw[thick, fill=yellow!10] (0,0) ellipse [x radius=1.9cm, y radius=4.275cm];

    \draw[thick, fill=orange!20] (0,2) ellipse [x radius=0.8cm, y radius=1.8cm];
    \draw[thick, fill=green!20] (0,-2) ellipse [x radius=0.8cm, y radius=1.8cm];

    \draw[thick, fill=blue!20] (0,-1.2) ellipse [x radius=0.3cm, y radius=0.675cm];
    \draw[thick, fill=cyan!20] (0,-2.8) ellipse [x radius=0.3cm, y radius=0.675cm];

    \node at (2.4,0) {$F_{p}$};
    \node at (1.2,2) {$F_{q_1}$};
    \node at (1.2,-2) {$F_{q_2}$};


    \draw[thick] (5,4) -- (8,4); 
    \filldraw[yellow] (6.5,4) circle (2pt); 
    \node[above] at (6.5,4.2) {$p$}; 
    \node at (9.5,4) {$\subset G_K$};
    
    \draw[thick] (5,1.5) -- (8,1.5);
    \filldraw[orange] (6.5,1.5) circle (2pt);
    \node[above] at (6.5,1.7) {$q_1$};
    \draw[thick] (5,1) -- (8,1);
    \filldraw[green] (6.5,1) circle (2pt);
    \node at (6.5,0.5) {$q_2$};
    \node at (9.5,1.25) {$\subset G_{K-1}$};
    \draw[pattern=vertical lines, pattern color=black!15, draw=none] (8,1) rectangle (5,1.5);

    \draw[thick] (5,-2.1) -- (8,-2.1);
    \filldraw[orange] (6.5,-2.1) circle (2pt);
    \node[above] at (6.5,-1.9) {$q_{11}$};

    \draw[thick] (5,-3.3) -- (8,-3.3);
    \filldraw[violet] (6.5,-3.3) circle (2pt);
    \node[above] at (6.5,-3.1) {$q_{21}$};
    \draw[thick] (5,-3.8) -- (8,-3.8);
    \filldraw[cyan] (6.5,-3.8) circle (2pt);
    \node at (6.5,-4.4) {$q_{21}$};
    \draw[pattern=vertical lines, pattern color=black!15, draw=none] (5,-3.3) rectangle (8,-3.8);
    \node at (9.5,-2.9) {$\subset G_{K-2}$};
\end{tikzpicture}
\vspace{4mm}
\caption{The Standard Partition System of a Fiber}
\label{fig:Partitioning a fiber}
\end{figure}
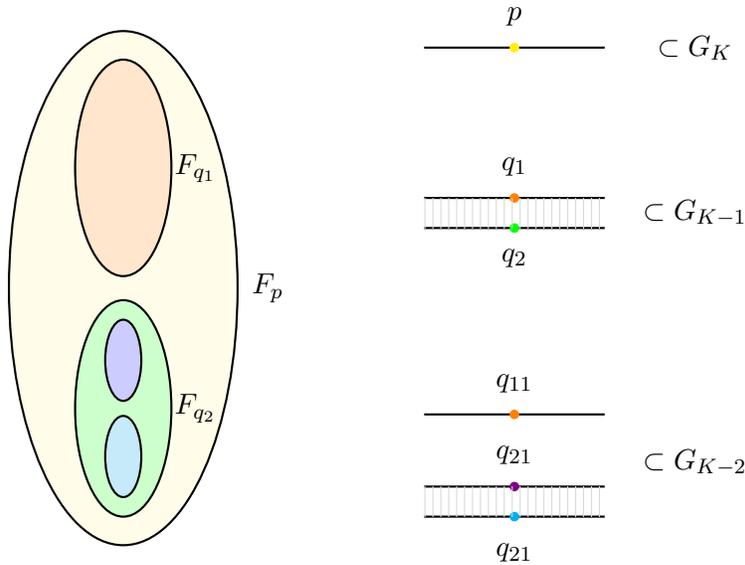

\vspace{4mm}


\begin{remark}[Sub-Fibers, and The Topology of a Fiber]
    \label{The Topology of a Fiber}
    \hypertarget{The Topology of a Fiber}{Let} $K \in \n$, and $p \in G_K$. For each integer $j \leq K$ and for each level $j$ \hyperlink{shadows}{shadow} of $p$, $F_q$ is called a ``Level $j$ Sub-Fiber of $F_p$". 
    Furthermore, for each integer $j \leq K$, the collection of all level $j$ sub-fibers of $F_p$ given by $\mathcal{P}_{j,p} := \{ F_q \}_{q \in (f^j_K)^{-1} (p)}$ (which is a partition of $F_p$), is called the ``Level $j$ Standard Partition of $F_p$". 
    \hypertarget{Sub-Fiber Basis}{The} collection $\{ F \subseteq F_p : F$ is a sub-fiber of $F_p \}$ is a basis for the topology of $F_p$, and thus shall be called the ``Sub-Fiber Basis of $F_p$". 
\end{remark}

\vspace{4mm}

Though the following notation was established earlier, we will formalize it here.

\vspace{4mm}

\begin{notation}[Projections of a Point]
    \label{Projections of a Point}
    \hypertarget{Projections of a Point}{Let} $x \in X$, and $j \in \n$. Then $``x_j " := \hyperlink{The Coordinate Projection Maps}{\pi_j} (x) \in G_j$. The point $x$ shall be often expressed in the format $(x_0 , x_{-1} , x_{-2} , ...)$ $ \in X \subset \Pi_{j \in \n} G_j$.
\end{notation}

\vspace{4mm}

Note that, for each $x = (x_0, x_{-1} , x_{-2} , ...) \in X$, $x$ is the unique intersection point of the nested sequence of fibers $F_{x_0} \supset F_{x_{-1}} \supset F_{x_{-2}} \supset ...$.

\vspace{4mm}


\vspace{4mm}

The basis mentioned in Remark \ref{The Topology of a Fiber} falls in to a larger class of bases called ``Tree Bases" [Definition \ref{Tree Bases}] which we will define and explore in Chapter \ref{Ch: Leaves and Transversals}, and utilize in Chapter \ref{Ch: TM(X)}.

\vspace{4mm}





\vspace{4mm}

We will end this sub-section with two important properties of fibers.

\vspace{4mm}

\begin{property}
    \label{Fibers are totally disconnected}
    Fibers are totally disconnected.
\end{property}
    
\begin{proof}
    Given a fiber $F$ of $X(\zeta)$, and any two distinct points $x = (x_0 , x_{-1} , x_{-2} , ...)$ and $y = (y_0 , y_{-1} , y_{-2} , ...)$ in $F$, there must be a $k \in \n$ such that $x_j \neq y_j $ for each $j<k$. Pick just one of those $j$'s and it follows that we're able to choose two small enough standard star neighborhoods $U,V$ in $G_j$ such that $x_j \in U$, $y_j \in V$, and $U \cap V = \emptyset$.
    Thus, the open neighborhoods $\pi^{-1}_j (U)$ and $\pi^{-1}_j (V)$, respectively containing $x$ and $y$, do not intersect in $X$. 
\end{proof}

\vspace{4mm}

\begin{property}
    \label{Fibers are compact}
    Fibers are compact.
\end{property}

\begin{proof}
    Consider a fiber $F$ of $X(\zeta)$. Note that for each $j \in -\mathds{N}^*$, $\pi_j (F)$ is a finite subset of $G_j$ and is therefore compact in $G_j$. Then from Tychonoff's theorem, we have that  $P_F := \prod_{j \in -\mathds{N}^*} \pi_j (F)$ is compact. 
    We intend to show that $F$ is a closed subspace of the compact space $F_P$.
    Recall from Remark \ref{The Coordinate Projection Maps and the Topology of X}, that for each $k \in \n$, $\overline{\pi}_k$ denotes the coordinate projection map from $\prod_{j \in -\mathds{N}^*} G_j$ to $G_k$.

    \vspace{4mm}
    
    Now let $y = (y_0 , y_{-1} , y_{-2} , ... ) \in P_F - F$. Then there exists $j \in -\mathds{N}^*$ such that $f_{j-1} (y_{j-1}) \neq y_j$. Therefore, the open subset of $P_F$ containing $y$ given by $O :=  \overline{\pi}^{-1}_j (y_j) \bigcap \overline{\pi}^{-1}_{j-1} (y_{j-1}) $ is entirely contained in $P_F -F$. Thus $F$ is a closed in $P_F$.
\end{proof}

\vspace{4mm}


\subsubsection{Plaques, Leaf Interior Points and Singularities}
\label{sec: Plaques, Leaf Interior Points and Singularities}

\vspace{4mm}

Let $\zeta$ be a proper stabilized split sequence and $X = X(\zeta)$ its induced solenoid. In this sub-section, we define and study some key features of $X(\zeta)$, called ``Plaques", ``Leaf Interior Points" and ``Singularities".

\vspace{4mm}


To help visualize the immediate surrounding of a point in $X$, we 
identify a specific type of \hyperlink{Sub Inverse Sequences}{sub inverse sequence} of $\zeta$, 
called a ``\hyperlink{star chain}{Star Chain}" [Definition \ref{star chain}, Lemma \ref{every point has a maximal star chain}] where, the inverse limit of a star chain is a star set in $X$. Then we use star chains to classify points in $X$ into two categories called ``\hyperlink{Singularity}{Leaf Interior Points}" and ``\hyperlink{Singularity}{Singularities}" [Definition \ref{Singularity}, Corollary \ref{point classification}].

\vspace{4mm}

We define a ``\hyperlink{plaque}{Plaque}" [Definition \ref{def: plaque}] of a \hyperlink{Standard Star Neighbourhoods}{Standard Neighborhood} $O$, as a path component of $O$, and show that each plaque of a standard neighborhood is a star set [Lemmas \ref{shapse of plaques 1}, and \ref{shapse of plaques 1}].

\vspace{4mm}

After giving an upper-bound to the number of natural vertices in a graph with a fixed Euler number [Remark \ref{upperbound to natural vertices}], we show that $X$ has only finitely many singularities [Proposition \ref{finitely many singularities}]. 

\vspace{4mm}

We start with establishing terminology.

\vspace{4mm}

\begin{definition}[Standard Plaques in $X$]
    \label{def: plaque}
    \hypertarget{plaque}{Let} $U$ be a topological space, $x \in U$, and $V$ a path connected subspace of $U$. Each path component of $U$ is called a ``Plaque of $U$". Furthermore,
    we denote by ``$Plaque(U,x)$" (resp. ``$Plaque(U,V)$") the unique path component of $U$ containing $x$ (resp. $V$). In the context of the solenoid $X$, a ``Standard Plaque in $X$" shall refer to a plaque of a \hyperlink{Standard Star Neighbourhoods}{standard neighborhood} in $X$.
\end{definition}

\vspace{4mm}

In the discussion that follows, we will be particularly interested in ``$Plaque(O,x)$" where $O$ is a standard neighborhood of $X$ and $x \in O$.

\vspace{4mm}



\vspace{4mm}

\begin{definition}[Leaf Interior Points and Singularities of $X$]
    \label{Singularity}
    \hypertarget{Singularity}{A} ``Leaf Interior Point of $X$" is a point $x \in X$ that has a standard neighborhood $O \ni x$ such that \hyperlink{plaque}{$Plaque(O,x)$} is homeomorphic to an open interval.
    A ``Singularity of $X$" is a point $x \in X$ such that, for each standard neighborhood $O \ni x$, $Plaque(O,x)$ is an \hyperlink{star set}{$n-$pronged star set} for some $n > 2$.
\end{definition}

\vspace{4mm}

Later in this Sub-section we will show  that, 
each standard plaque in $X$ is a star set [Lemma \ref{shapse of plaques 2}], and furthermore, that
for each point $x \in X$, there exists a unique integer $n_x \geq 2$ such that each standard plaque in $X$ centered at $x$, is an $n_x-$pronged star set [Prop. \ref{shapse of plaques 3}]. Therefore, it will follow that each point of $X$ is either a leaf interior point or a singularity [Cor. \ref{point classification}].

\vspace{4mm}




We shall next define a feature called a ``\hyperlink{star chain}{Star Chain}" that'll help us identify singularities in the wild.
We'll lay groundwork for this by singling out, for $x \in X$, a relevant collection of star sets containing each $x_j$.

\vspace{4mm}

Recall from Remark \ref{Shadow Components}, that for each $K \in \n$, each standard star neighborhood $V$ in $G_K$, and for each integer $j < K$, \hyperlink{shadows}{$Shadow_j (V)$} is a disjoint union of standard star neighborhoods in $G_j$. For each point $x = (x_0 , x_{-1} , x_{-2} , ... ) \in X$, we may choose a standard star neighborhood $\Tilde{V}_0$ in $G_0$ with center $x_0$, then for each $j \in -\mathds{N}$, consider the standard star neighborhood $\Tilde{V}_j := \hyperlink{SC notation}{SC_j (\Tilde{V}_0,x_j)}$. This arrangement of standard star neighborhoods, will be refined further in the definition below. 

\vspace{4mm}

\begin{definition}[$n-$pronged Star Chains and Maximal Star Chains]
    \label{star chain}
    \hypertarget{star chain}{Let} $ x = (x_0,x_{-1},x_{-2},...) \in X(\zeta) $. If there exist $k \in - \mathds{N}^* $, $n \in \mathds{N}$, and a sequence of $n-$pronged standard star sets $U_{j} \subset G_{j}$ for $j \in -\mathds{N}^* + k$, 
    \begin{equation*}
        U_{k} \xleftarrow[]{f_{k-1}} U_{k-1} \xleftarrow[]{f_{k-2}} U_{k-2} \xleftarrow[]{f_{k-3}}...
    \end{equation*}
    such that, for each $j \in \n + k$,
    \begin{enumerate}
        \item $U_j$ is centered at $x_j$ (i.e. if $U_j$ has a point with valence $> 2$, then that point is $x_j$), and,
        \item $f_{j-1}$ restricted to $U_{j-1}$ is a homeomorphism on to $U_j$,
    \end{enumerate}
    then we say that this sequence is an ``$n-$Pronged Star Chain for $x$ Starting at Level $k$ in $\zeta$". Furthermore we call such a sequence an ``$n-$Pronged Maximal Star Chain for $x$ Starting at Level $k$ in $\zeta$" if in addition to the above criteria, it also satisfies the following.
    \begin{enumerate}
        \item[3.] There does not exist an integer $K \leq k$ and an $(n+1)-$pronged star chain for $x$,
        \begin{equation*}
        V_{K} \xleftarrow[]{f_{K-1}} V_{K-1} \xleftarrow[]{f_{K-2}} V_{K-2} \xleftarrow[]{f_{K-3}}...
    \end{equation*} 
    with $U_j \subset V_j$ for each $j \in - \mathds{N}^* + K$.
    \end{enumerate}  
\end{definition}

\vspace{4mm}

We aim to treat maximal star chains of a point as an identifier as to whether the point is a leaf interior point or a singularity (and to further signify the nature of the singularity if it is one). 

\vspace{4mm}

\begin{remark}[The Inverse Limit of an $n-$pronged Star Chain]
    \label{The Inverse Limit of an n-pronged Star Chain}
    Let $x \in X$ have an $n-$pronged star chain
    \begin{equation*}
        \mathcal{U} : U_{k} \xleftarrow[]{f_{k-1}} U_{k-1} \xleftarrow[]{f_{k-2}} U_{k-2} \xleftarrow[]{f_{k-3}}...
    \end{equation*} 
    Let $U := \hyperlink{sub inverse limits notation}{\varprojlim^{\zeta} \mathcal{U}} \subset X$. \hypertarget{star chain representing a set in X}{Then} we call $\mathcal{U}$ a ``Star Chain Representing $U$ that starts with $U_k$" and that ``$U_k$ is the Starting Set of $\mathcal{U}$". 
    Since, for each $j \in \n + k$, $f_{j-1}$ restricted to $U_{j-1}$ is a homeomorphism onto $U_{j}$, it follows that $U \subset X(\zeta)$ is homeomorphic to $U_k$ and thus is a star set of $X$ with $x$ as its center.
    Note that whenever we're given a subset $V \subset X$, and are told that $\mathcal{V}$ is a star chain representing $V$, specifying the starting set of $\mathcal{V}$ uniquely determines the entire star chain $\mathcal{V}$. 
\end{remark}

\vspace{4mm}

We will use the above result in Lemmas \ref{shapse of plaques 1} and \ref{shapse of plaques 2}, to show that each plaque of a standard neighborhood is a star set. 

\vspace{4mm}

In the discussion of solenoids we will frequently go back and forth between star sets and star neighborhoods. We shall define the following term as a connecting tissue among the two types of sets and we shall use the terminology in the upcoming proofs.

\vspace{4mm}

\begin{definition}[Neighborhood Completion of a Star Set]
    \label{Neighbourhood Completion}
    \hypertarget{neighbourhood_completion}{Given} a star set $V$ centered at a vertex $p$ in a graph $G$, a ``Neighborhood Completion of $V$ in $G$" is a star neighborhood $\Tilde{V}$ of $G$ containing $V$ 
    such that each \hyperlink{center, prong, turn}{prong} of $V$ is also a prong of $\Tilde{V}$.
    Now let $j \in \n$, $G_j$ be a level graph of $\zeta$, and $V$ a standard star set of $G_j$ rel $\zeta$. Then we say that ``$\Tilde{V}$ is a Standard Neighborhood Completion of $V$ rel $\zeta$" if $\Tilde{V}$ is a standard star neighborhood of $G_j$ rel $\zeta$ that is also a neighborhood completion of $V$ in $G_j$.
\end{definition}

\vspace{4mm}

The rest of this sub-section is dedicated to proving three results. 
\begin{enumerate}
    \item Each $x \in X$ has a maximal star chain [Lemma \ref{every point has a maximal star chain}].
    \item Each plaque of a standard neighborhood of $X$ is a star set [Lemmas \ref{shapse of plaques 1}, and \ref{shapse of plaques 2}]. More specifically, for each $V \subset X$, $V$ is a standard plaque in $X$ iff $V$ is an inverse limit of a maximal star chain in $\zeta$ [Proposition \ref{shapse of plaques 3}].
    \item $X$ contains only finitely many singularities [Proposition \ref{finitely many singularities}]. 
\end{enumerate}

\vspace{4mm}

\subsubsection*{Each Point of the Solenoid has a Maximal Star Chain}

\vspace{4mm}

Here, we will show the titular result (which will later be used to define ``$n-$Pronged Singularities"). 

\vspace{4mm}



\begin{lemma}
    \label{every point has a maximal star chain}
    \hypertarget{maximal_star_chain}{Every $x \in X(\zeta)$ has a maximal star chain.}
\end{lemma}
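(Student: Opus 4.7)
The plan is to define, for each $x = (x_j)_{j \leq 0} \in X(\zeta)$, the integer
\[
n(x) := \sup\{n \geq 2 : x \text{ admits an } n\text{-pronged star chain}\},
\]
show that $n(x)$ is a finite integer at least $2$ that is attained by some chain, and observe that any such extremal chain is automatically maximal in the sense of condition 3 in Definition \ref{star chain}.

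The heart of the argument is the existence of at least one star chain for $x$, which I would establish by a compactness argument at the level of ``turns''. For each $j$, let $T_j$ denote the finite, nonempty set of $2$-pronged standard turns at $x_j$ (nonempty because every point of a core graph has valence at least $2$), and let $\phi_{j-1} : T_{j-1} \rightharpoonup T_j$ be the partial function $t \mapsto f_{j-1}(t)$, defined except when $x_{j-1}$ is the folding vertex of $f_{j-1}$ and $t$ consists of the two folded prongs. A $2$-pronged star chain for $x$ corresponds precisely to a sequence $(t_j)_{j \leq 0}$ with $\phi_{j-1}(t_{j-1}) = t_j$ for all $j$. To build such a sequence, for each $k \leq j$ define $S_{j,k} \subseteq T_j$ as the set of turns that extend down to a compatible finite chain ending at level $k$, and show $S_{j,k} \neq \emptyset$ by a bottom-up construction: start at level $k$ with any turn that avoids the unique collapsing pair of prongs (which exists because the folding vertex has valence at least $3$, being the common endpoint of two natural edges), then iteratively propagate upward using $\phi$. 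Since $\{S_{j,k}\}_{k \leq j}$ is a nested decreasing sequence in the finite set $T_j$, it stabilizes at a nonempty subset, and K\"onig's lemma then produces an infinite compatible sequence. Standardness of each $U_j$ is arranged separately by shrinking: properness of $\zeta$ forces the set of special points in each $G_j$ to be finite (forward images pool into the finite set in $G_0$ given by properness, and pre-images of finite sets under fold compositions between finite graphs remain finite), so each $U_j$ can be chosen small enough to be a standard star set without altering its underlying turn.

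For the bound on prongs, any $n$-pronged star chain with $n > 2$ requires each $x_j$ to be a natural vertex of $G_j$ of valence at least $n$. Since the rank $N$ of $\zeta$ is fixed, an Euler characteristic estimate for core graphs of rank $N$ bounds the valence of any natural vertex (hence any possible $n$) by a constant on the order of $6N - 6$. Thus $n(x)$ is a finite positive integer and, being a supremum over integers, is attained by some $n(x)$-pronged chain $\{U_j\}_{j \leq k}$. This chain is maximal: any hypothetical $(n(x)+1)$-pronged chain $\{V_j\}_{j \leq K}$ for $x$ would contradict the definition of $n(x)$ as a supremum, so in particular no such chain can contain $\{U_j\}$ via the relation $U_j \subset V_j$, and condition 3 of Definition \ref{star chain} is vacuously satisfied.

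The main obstacle I anticipate is verifying $S_{j,k} \neq \emptyset$ for all $k$. The danger arises when $x_{j-1}$ is a pre-image of the splitting vertex of $f_{j-1}$ (Cases c and d of Figure \ref{fig:Folds on Standard Star Neighbourhoods}): in that situation $\phi_{j-1}$ is not surjective onto $T_j$, so repeatedly encountering such obstructions as we go downward could in principle eliminate every surviving turn at the top level $j$. The bottom-up construction circumvents this by placing the real choices at the lowest levels, where the valence-at-least-$3$ condition at folding vertices always leaves some legitimate turn-option available; those choices are then transported upward through the $\phi$'s, and finite-set compactness together with K\"onig's lemma yields the required globally consistent infinite chain.
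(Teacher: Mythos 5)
Your proposal follows the same overall strategy as the paper's proof: establish that $x$ admits at least a $2$-pronged star chain, use the Euler characteristic of rank-$n$ core graphs to bound the possible number of prongs, and observe that a chain realizing the maximum of $\{n : x \text{ has an } n\text{-pronged star chain}\}$ is automatically maximal. Your handling of the prong bound and of standardness (shrinking the $U_j$, using properness to keep the special points finite) matches the paper, and your explicit K\"onig's-lemma extraction of a coherent infinite chain is in fact more careful than the paper's one-sentence passage from the per-level liftability of a single turn to the existence of a $2$-pronged star chain.

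The one step that does not work as written is your justification that $S_{j,k} \neq \emptyset$. You pick a turn at the bottom level $k$ avoiding the collapsing pair of $f_k$ and then ``iteratively propagate upward using $\phi$,'' but legality is relative to each individual fold: the image $f_k(t_k)$ of a turn legal for $f_k$ can perfectly well be the illegal (collapsed) turn of $f_{k+1}$, because the no-backtracking hypothesis only protects points that are not natural vertices and therefore does not prevent a composition of folds from collapsing a turn at a natural vertex over two or more steps. So a single choice at level $k$ need not transport all the way up to level $j$, and when $x_k$ is a natural vertex it is conceivable that none of its finitely many legal turns survives the full composition; your bottom-up construction does not rule this out. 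To be fair, the paper's own proof compresses exactly this multi-level consistency issue into the phrase ``due to the types of folds that are permitted in $\zeta$'' together with a one-step lifting statement, so the gap you leave is the gap the paper leaves; but since you explicitly advertise the bottom-up construction as the device that circumvents the obstruction, you should either prove directly that for every $j < k$ the set of directions at $x_j$ maps under $f^j_k$ onto at least two distinct directions at $x_k$, or reorganize the argument so that survival of turns is tracked with backtracking over the finitely many choices available at each level.
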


\begin{proof}
        Let $ x = (x_0,x_{-1},x_{-2},...) \in X(\zeta) $. Due to the types of folds that are permitted in $\zeta$, for each $j \in -\mathds{N}^*$ and for each star neighborhood $U_j$ of $x_j$, $U_j$ has at least one turn $I$ such that there exists a turn $\Tilde{I}$ in $\hyperlink{Shadow Component}{SC_{j-1} (U_j , x_{j-1})}$ that maps homeomorphically onto $I$ via $f_{j-1}$. 
        Thus, at the very least $x$ has a $2-$pronged star chain. Furthermore, since all the level graphs of $\zeta$ has the same Euler characteristic, there is an upper-bound on $\{$the number of natural edges in $G_j : j \in \n \}$ and therefore on $\{$valence of $v$ in $G_j : j \in \n$ and $v \in G_j \}$. (The aforementioned upperbound is explicitly given in Lemma \ref{upperbound to natural vertices}.) Thus the non-empty set $S_x := \{ n \in \mathds{N} : x$ has an $n-$pronged star chain$\}$ has a maximum. 
\end{proof}

\vspace{4mm}

From the definition [\ref{star chain}] of a maximal star chain, it is clear that, if $x \in X$ has an $n-$pronged maximal star chain where $n \in \mathds{N}$, then each maximal star chain of $x$ is $n-$pronged. Thus the definition of an ``\hyperlink{n-pronged sing}{$n-$Pronged Singularity}" (to be laid out in the next section) shall be well defined.

\vspace{4mm}

\subsubsection*{Each Plaque of a Standard Neighborhood is a Star Set}

\vspace{4mm}



Lemmas \ref{shapse of plaques 1} and \ref{shapse of plaques 2} will show the titular result. The aforementioned two lemmas will also show that, $V \subset X$ is a \hyperlink{plaque}{standard plaque in $X$} if and only if $V$ is the inverse limit of a \hyperlink{star chain}{maximal star chain in $\zeta$} [Prop. \ref{shapse of plaques 3}]. This will allow us to refine the definitions of plaques [Def. \ref{Singular Plaques}] and singularities [Def. \ref{n-pronged sing}], then discuss how different standard plaques intersect each other [Remark \ref{intersecting plaques}].
Remark \ref{Singular Plaques and Star Chains} will lay out several equivalent criteria that determines a singularity of $X$. Then Corollary \ref{point classification} will show that each point $x \in X$ is either a singularity or a leaf interior point.

\vspace{4mm}

\begin{lemma}
    \label{shapse of plaques 1}
        \hypertarget{shapes_of_plaques}{Let} $x \in X$, $K \in \n$, $n \in \mathds{N}$, and let
    \begin{equation*}
        \mathcal{V} : V_{K} \xleftarrow[]{f_{K-1}} V_{K-1} \xleftarrow[]{f_{K-2}} V_{K-2} \xleftarrow[]{f_{K-3}}...
    \end{equation*}
    be a maximal $n-$pronged star chain for $x$.
    Furthermore, let $\Tilde{V}_K$ be a standard neighborhood completion for $V_K$, and let $O := \pi^{-1}_K (\Tilde{V}_K)$. Then, $\hyperlink{plaque}{Plaque (O , x)} = \varprojlim^{\zeta} \mathcal{V} $  (which is homeomorphic to $V_K$, and therefore $Plaque (O , x)$ is an $n-$ pronged star set with center $x$).
\end{lemma}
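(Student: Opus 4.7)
The plan is to show the two set inclusions $\varprojlim^{\zeta} \mathcal{V} \subseteq \mathrm{Plaque}(O, x)$ and $\mathrm{Plaque}(O, x) \subseteq \varprojlim^{\zeta} \mathcal{V}$. The forward inclusion is immediate from the setup: by Remark \ref{The Inverse Limit of an n-pronged Star Chain}, $\varprojlim^{\zeta} \mathcal{V}$ is homeomorphic to the star set $V_K$, hence path-connected and containing $x$; and since $V_j \subseteq \mathrm{Shadow}_j(V_K) \subseteq \mathrm{Shadow}_j(\Tilde{V}_K)$ at every level $j \leq K$, it lies in $\pi_K^{-1}(\Tilde{V}_K) = O$, and therefore inside $\mathrm{Plaque}(O, x)$.

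For the reverse inclusion, I will take $y \in \mathrm{Plaque}(O, x)$ together with a path $\gamma : [0,1] \to O$ from $x$ to $y$, and project to get continuous paths $\gamma_j := \pi_j \circ \gamma$ in $G_j$ starting at $x_j$, satisfying $\gamma_{j+1} = f_j \circ \gamma_j$, and with image (by connectedness) inside the shadow component $\Tilde{U}_j := SC_j(\Tilde{V}_K, x_j)$. Assuming for contradiction that $y \notin \varprojlim^{\zeta} \mathcal{V}$, and using that the chain's homeomorphism property $f_j|_{V_j} : V_j \to V_{j+1}$ forces $y_j \in V_j \Rightarrow y_{j+1} \in V_{j+1}$, there exists a largest level $j_0 \leq K$ at which $y_{j_0} \notin V_{j_0}$.

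The heart of the argument is the structural observation that at level $j_0$ each prong of $V_{j_0}$ coincides with a full prong of $\Tilde{U}_{j_0}$. Chasing the chain-homeomorphism $f^{j_0}_K|_{V_{j_0}}$, each prong of $V_{j_0}$ is a homeomorphic lift of a prong of $V_K$, which by the neighborhood-completion hypothesis is a full prong of $\Tilde{V}_K$; tracking this along the fold composition shows the extents match exactly, so the lift fills a full prong of $\Tilde{U}_{j_0}$. Consequently the endpoints of $V_{j_0}$'s prongs lie on the boundary of the open set $\Tilde{U}_{j_0}$ and are unreachable by the continuous path $\gamma_{j_0} : [0,1] \to \Tilde{U}_{j_0}$. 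Therefore $\gamma_{j_0}$ can leave $V_{j_0}$ only through the center $x_{j_0}$, entering a prong $P$ of $\Tilde{U}_{j_0}$ disjoint from $V_{j_0}$; pick $t^* \in (0, 1]$ at which $\gamma_{j_0}(t^*) = x_{j_0}$ and $\gamma_{j_0}$ enters $P$ immediately after.

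To conclude, I will build an $(n+1)$-pronged star chain $\mathcal{V}' = \{V'_j\}_{j \leq j_0}$ for $x$ starting at level $j_0$ with $V_j \subseteq V'_j$ throughout, thereby contradicting the maximality clause in Definition \ref{star chain}. Set $V'_{j_0} := V_{j_0} \cup \tilde{P}$, where $\tilde{P} \subseteq P$ is a short initial subsegment. For $j < j_0$, set $V'_j := V_j \cup \tilde{P}_j$ inductively, where $\tilde{P}_j$ is a prong at $x_j$ inside $\Tilde{U}_j$ on which $f_j$ restricts to a homeomorphism onto a (possibly shrunken) initial portion of $\tilde{P}_{j+1}$. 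Such a lift exists by the fold case-analysis of Figure \ref{fig:Folds on Standard Star Neighbourhoods} (unique pre-image when $x_j$ is not the folding vertex of $f_j$; either of at most two pre-images otherwise), it lies in $\Tilde{U}_j$ because it is attached at $x_j$ and maps into $\tilde{P}_{j+1} \subseteq \Tilde{U}_{j+1}$, and it is disjoint from $V_j$'s prongs by inductive disjointness at level $j+1$. I expect the main obstacle to be the structural observation in the previous paragraph, namely pinning down that $V_{j_0}$'s prongs fill full prongs of $\Tilde{U}_{j_0}$; once this is in hand, the inductive extension to $\mathcal{V}'$ and the violation of maximality are routine local applications of the fold structure.
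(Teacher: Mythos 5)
Your forward inclusion and your identification of the largest level $j_0$ with $y_{j_0}\notin V_{j_0}$ are fine, and your ``structural observation'' --- that the far endpoints of the prongs of $V_{j}$ map to the frontier of $\Tilde{V}_K$ and hence lie outside the shadow component $\Tilde{U}_{j}$, so that a path in $\Tilde{U}_{j}$ can only exit $V_{j}$ through $x_{j}$ --- is correct and really is the key point. The gap is in the inductive construction of the $(n+1)$-pronged chain $\{V_j'\}$. You justify the existence of the lifted prong $\Tilde{P}_j$ at $x_j$ by the fold case-analysis, asserting that some pre-image of $\Tilde{P}_{j+1}$ is attached at $x_j$. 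That is false in general: the splitting vertex $w=f\circ l_1(L)=f\circ l_2(L)$ of Definition \ref{def: fold} has two distinct pre-images $l_1(L)\neq l_2(L)$, and a prong at $w$ may pull back to a prong attached only at the pre-image you did \emph{not} choose --- this is exactly the orange prong in cases b.\ and c.\ of Figure \ref{fig:Folds on Standard Star Neighbourhoods}. Nothing in the fold structure alone forces the extra direction at $x_{j+1}$ to lift to a direction at $x_j$ rather than at the other pre-image of $x_{j+1}$. A second, smaller problem: if a lift only covers a ``shrunken initial portion'' of $\Tilde{P}_{j+1}$, the resulting sequence violates condition 2 of Definition \ref{star chain} ($f_{j-1}|_{U_{j-1}}$ must be a homeomorphism \emph{onto} $U_j$), and repairing this by repeatedly shrinking the upper levels risks degenerating $\Tilde{P}_{j_0}$ in the limit.

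Both problems are repaired by using the point $y$ itself, whose coordinates exist at every level, instead of an abstract lift: for each $j\le j_0$ one has $y_j\in\Tilde{U}_j - V_j$, and (by your own argument) $x_j$ must be the center of $\Tilde{U}_j$, so the unique arc from $x_j$ to $y_j$ in $\Tilde{U}_j$ supplies a prong direction at $x_j$ disjoint from $V_j$; setting $V_j':=V_j\cup[x_j,y_j)$ gives $f_j\big([x_j,y_j)\big)=[x_{j+1},y_{j+1})$ exactly (no shrinking, since open prongs of standard star neighborhoods contain no special points and $f^j_K$ has no backtracking), hence a genuine $(n+1)$-pronged star chain contradicting maximality. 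For comparison, the paper avoids paths altogether: it argues that maximality forces the coordinates of any $y\in O-\varprojlim^{\zeta}\mathcal{V}$ to eventually leave the shadow component $SC_J(\Tilde{V}_K,V_J)$, and then separates $y$ from $\varprojlim^{\zeta}\mathcal{V}$ by the clopen partition $O=\pi_J^{-1}(\Tilde{V}_J)\sqcup\pi_J^{-1}(\Tilde{U}_J)$, which yields the stronger conclusion that $\varprojlim^{\zeta}\mathcal{V}$ is a connected (not merely path) component of $O$.
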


\vspace{4mm}


\begin{proof}
    Assume the set-up and the notations given in the lemma. Let $V := \hyperlink{sub inverse limits notation}{\varprojlim^{\zeta} \mathcal{V}}$. Since $V$ is a star set, it is path connected, and therefore is contained in $Plaque (O , x)$. We seek to show that $Plaque (O , x) = V$ by establishing that each $y \in O - V$ and $V$ are contained in different connected components of $O$. 
    Note that if $y$ and $V$ are contained in separate connected components of $O$, then they must be contained in separate path components of $O$. This, combined with the fact that $V$ is path connected, will imply that $V$ is itself a path component of $O$, and thus should be equal to $Plaque (O , x)$. 

    \vspace{4mm}

    Now let $y \in O - V$. We aim to show that there exist two disjoint open sets $O_V, O_y$ in $O$, such that $V \subseteq O_V , y \in O_y$, and $O_V \bigsqcup O_y = O$.

    \vspace{4mm}

    Since $y \notin V$, there must be an integer $k \leq K$ such that $y_k \notin V_k$. Furthermore, in order to not contradict the maximality of the star chain $\mathcal{V}$, there must exist an integer $J < K$, such that
    for each $j \leq J$, $y_j$ cannot belong to a prong of the shadow component $\hyperlink{SC notation}{SC_{J} (\Tilde{V}_K , V_{J})}$.

    \vspace{4mm}
    
    Recall from Remark \ref{Shadow Components}, that  each connected component of $\hyperlink{shadows}{Shadow_J (\Tilde{V}_K)}$ is a standard star neighborhood of $G_J$. Now let $\Tilde{V}_{J} := \hyperlink{SC notation}{SC_{J} (\Tilde{V}_K , V_{J})}$ and let $\Tilde{U}_J$ be the disjoint union of all level $J$ \hyperlink{Shadow Component}{shadow components} of $\Tilde{V}_K$, except $\Tilde{V}_{J}$. Note that $y_J \in \Tilde{U}_J$, and that both $\Tilde{V}_J$ and $\Tilde{U}_J$ are open subsets of $Shadow_J (\Tilde{V}_K)$. Furthermore, note that $Shadow_J (\Tilde{V}_K) = \Tilde{V}_{J} \bigsqcup \Tilde{U}_{J}$. Let $O_V := \pi^{-1}_J (\Tilde{V}_{J})$ and $O_y := \pi^{-1}_J (\Tilde{U}_{J})$. 
\end{proof}

\vspace{4mm}

\begin{lemma}
    \label{shapse of plaques 2}
    Let $O$ be a standard neighborhood of $X$. Then each plaque $V$ of $O$ is the inverse limit of a maximal star chain for some $x \in V$, and $V$ is therefore a star set. 
\end{lemma}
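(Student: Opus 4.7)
The plan is to reduce the statement to Lemma \ref{shapse of plaques 1} by exhibiting, for each plaque $V$ of $O = \pi_K^{-1}(\tilde V_K)$, a point $x \in V$ and a maximal star chain for $x$ whose inverse limit is $V$. Write $\tilde V_K$ as a standard star neighborhood of $G_K$ centered at some $p_K$.

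First I would produce $x \in V$ with $x_K = p_K$ via path lifting. Pick any $y \in V$ and a path $\gamma \subset \tilde V_K$ from $y_K$ to $p_K$ running along a prong. Each fold $f_{j-1}$ is a quotient map identifying only initial segments of two edges, so it admits unique continuous path lifts given an initial preimage; lifting $\gamma$ coordinate-by-coordinate through the inverse system produces a compatible family of paths that assembles into a path $\tilde\gamma \subset O$ from $y$ to some $x$ with $x_K = p_K$, which lies in $V$ because $V$ is a path component. Then Lemma \ref{every point has a maximal star chain} supplies a maximal $n$-pronged star chain $\mathcal U : U_{k_0} \xleftarrow{f_{k_0-1}} U_{k_0-1} \xleftarrow{f_{k_0-2}} \cdots$ for $x$.

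Next I would identify $V$ with the plaque of $x$ in the smaller neighborhood $\pi_{k_0}^{-1}(\tilde U_{k_0})$, where $\tilde U_{k_0} := SC_{k_0}(\tilde V_K, x_{k_0})$. The shadow components of $\tilde V_K$ at level $k_0$ partition $Shadow_{k_0}(\tilde V_K)$ into disjoint open subsets, each a standard star neighborhood of $G_{k_0}$ by Remark \ref{Shadow Components}, so $O = \bigsqcup_{\tilde U'} \pi_{k_0}^{-1}(\tilde U')$ is a clopen decomposition and $V = \text{Plaque}(O, x) = \text{Plaque}(\pi_{k_0}^{-1}(\tilde U_{k_0}), x)$. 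If $\tilde U_{k_0}$ is a standard neighborhood completion of $U_{k_0}$, Lemma \ref{shapse of plaques 1} gives $\text{Plaque}(\pi_{k_0}^{-1}(\tilde U_{k_0}), x) = \varprojlim^\zeta \mathcal U$, so $V = \varprojlim^\zeta \mathcal U$ is a star set homeomorphic to $U_{k_0}$.

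The main obstacle is establishing that $\tilde U_{k_0}$ is a standard neighborhood completion of $U_{k_0}$: both the inclusion $U_{k_0} \subseteq \tilde U_{k_0}$ and the matching of prongs. Containment is achieved by shrinking $U_{k_0}$'s prongs so that their forward image under $f^{k_0}_K$ lies inside $\tilde V_K$. Prong-matching is the delicate part: the valence of $x_j$ in the shadow-component neighborhood is non-increasing as $j$ increases (folds only identify edges at $x_j$, never split them) and uniformly bounded by properness of $\zeta$ together with the Euler-characteristic bound on natural vertices, so it stabilizes for $k_0$ sufficiently negative; by maximality of $\mathcal U$, the chain's prong count $n$ then equals this stable valence, so the $n$ prongs of $U_{k_0}$ exhaust those of $\tilde U_{k_0}$ at $x_{k_0}$, giving the desired neighborhood-completion relation.
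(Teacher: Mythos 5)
Your proof follows essentially the same route as the paper's: locate a point $x$ of the plaque over the center of $\Tilde{V}_K$ (the paper cites no-backtracking where you use path lifting), take a maximal star chain for $x$, pass to the shadow component of $\Tilde{V}_K$ at the chain's starting level to cut $V$ out of $O$ as a clopen piece, and invoke Lemma \ref{shapse of plaques 1}. You are in fact more careful than the paper about verifying the hypothesis of Lemma \ref{shapse of plaques 1} — that the shadow component is a standard neighborhood completion of the chain's starting set — which the paper applies without comment. The one imprecision is your claim that the valence of $x_j$ is non-increasing as $j$ increases: a fold can raise the valence of the splitting vertex above that of either of its preimages, so the sequence need not be monotone; however, the prong-matching you need still follows from boundedness of valences together with maximality of the star chain (condition 3 of Definition \ref{star chain} rules out a persistently larger chain containing $\mathcal{U}$), so nothing essential is lost.
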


\begin{proof}
    Let $K \in \n$, and let $\Tilde{V}_K \subset G_K$ be a standard star neighborhood such that $O = \pi^{-1}_K (\Tilde{V}_K)$. 
    Let $p$ be the center of $\Tilde{V}_K$. Note that each plaque of $O$ must have a point that projects to $p$ (since $\zeta$ permits no backtracking). Thus $O = \bigcup_{x \in F_p} Plaque (O,x)$.
    Furthermore, for each $x \in F_p$, there exists a level $J := J_x \leq K$ and a maximal star chain for $x$ that starts at $J$, $\mathcal{V} : V_{J} \xleftarrow[]{f_{J-1}} V_{J-1} \xleftarrow[]{f_{J-2}} V_{J-2} \xleftarrow[]{f_{J-3}}...$ [from Lemma \ref{every point has a maximal star chain}]. Now let $\Tilde{V}_J := SC_J (\Tilde{V}_K , V_J)$. Note that $\Tilde{U}_J := Shadow_J (\Tilde{V}_K) - \Tilde{V}_J$ is an open neighborhood of $Shadow_J (\Tilde{V}_K)$, and thus $x \in \pi^{-1}_J (\Tilde{V}_J)$ and $\pi^{-1}_J (\Tilde{U}_J)$ are disconnected in $O$.
    Thus, $Plaque (O, x) = Plaque (\pi^{-1}_J (\Tilde{V}_J) , x) = \varprojlim^{\zeta} \mathcal{V}$ is a star set [from Lemma \ref{shapse of plaques 1}].
\end{proof}

\vspace{4mm}

\begin{proposition}
    \label{shapse of plaques 3}
    Let $V \subset X$. $V$ is a \hyperlink{plaque}{standard plaque in $X$} if and only if $V$ is the inverse limit of a \hyperlink{star chain}{maximal star chain in $\zeta$}.
\end{proposition}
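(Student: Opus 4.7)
The proposition is a biconditional that packages together the two preceding lemmas, so my plan is to prove each direction by directly invoking them, with only a small amount of bookkeeping.

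For the forward direction, suppose $V$ is a standard plaque in $X$. By Definition \ref{def: plaque}, this means there exists a standard neighborhood $O \subseteq X$ such that $V$ is a path component of $O$. Then Lemma \ref{shapse of plaques 2} applies verbatim: $V$ is the inverse limit of a maximal star chain for some $x \in V$. So the forward direction requires nothing beyond citing Lemma \ref{shapse of plaques 2}.

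For the reverse direction, suppose $V = \varprojlim^{\zeta} \mathcal{V}$ where $\mathcal{V}: V_K \xleftarrow[]{f_{K-1}} V_{K-1} \xleftarrow[]{f_{K-2}} \ldots$ is a maximal $n$-pronged star chain for some $x \in X$. The plan is to produce a standard neighborhood $O$ of $X$ such that $V$ is a path component of $O$, and then Definition \ref{def: plaque} gives the conclusion. To build $O$, I choose a standard neighborhood completion $\tilde{V}_K$ of $V_K$ in $G_K$, which exists by Definition \ref{Neighbourhood Completion} since the starting set $V_K$ is a standard star set in $G_K$, and set $O := \pi_K^{-1}(\tilde{V}_K)$. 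By Remark \ref{def: standard basis}, $O$ is a standard neighborhood of $X$. Lemma \ref{shapse of plaques 1} then gives $Plaque(O, x) = \varprojlim^{\zeta} \mathcal{V} = V$, so $V$ is a path component of the standard neighborhood $O$, hence a standard plaque in $X$.

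Since both directions are essentially immediate consequences of the two preceding lemmas, I do not anticipate a genuine obstacle. The only delicate point is confirming that a standard neighborhood completion of $V_K$ exists for the reverse direction; this follows because the center and prongs of $V_K$ extend (by choosing open subsets of the ambient graph edges) to a standard star neighborhood of $G_K$ without introducing new special points beyond the center, matching the criterion in Definition \ref{Standard Star Neighbourhoods}. With that verified, the proposition follows in a few lines.
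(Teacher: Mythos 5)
Your proposal is correct and matches the paper's approach exactly: the paper's proof is simply ``Follows from Lemmas \ref{shapse of plaques 1} and \ref{shapse of plaques 2},'' and you have spelled out how each lemma supplies one direction of the biconditional, with the standard neighborhood completion of $V_K$ providing the ambient standard neighborhood needed to invoke Lemma \ref{shapse of plaques 1} in the reverse direction.
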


\begin{proof}
    Follows from Lemmas \ref{shapse of plaques 1} and \ref{shapse of plaques 2}.
\end{proof}

\vspace{4mm}

The above result allows the usage of more specific terminology for plaques and singularities. 

\vspace{4mm}

\begin{definition}[Singular Plaques, and Non-Singular Plaques]
    \label{Singular Plaques}
    \hypertarget{Singular Plaques}{Let} $V$ be a plaque of a standard neighborhood $O$ of $X$ (i.e. a standard plaque in $X$). Then $V$ is called a ``Singular Plaque of $O$ in $X$", or simply a ``Singular Plaque in $X$", if $V$ has a singularity as its center. If $V$ is not a singular plaque of $O$ (equivalently, if $V$ is a plaque of $O$ that is a $2-$pronged star set), then $V$ is  called a ``Non-Singular Plaque of $O$ in $X$", or simply a ``Non-Singular Plaque in $X$".
\end{definition}

\vspace{4mm}

Note that, for each $n-$pronged maximal star chain $\mathcal{V}$ in $\zeta$, where $n \in \mathds{N}$,
\begin{itemize}
    \item \hyperlink{sub inverse limits notation}{$\varprojlim^{\zeta} \mathcal{V}$} is a singular plaque in $X(\zeta)$ iff $n > 2$, and,
    \item $\varprojlim^{\zeta} \mathcal{V}$ is a non-singular plaque in $X(\zeta)$ iff $n = 2$.
\end{itemize}

\vspace{4mm}


Recall that for a point $x \in X$ to qualify as a singularity, it is required that each standard plaque containing $x$ is a singular plaque [Def. \ref{Singularity}]. The next remark shall show that, if $x \in X$ is the center of one singular plaque, then each standard plaque that contains $x$ is a singular plaque.

\vspace{4mm}

\begin{remark}[Intersecting Singular Plaques]
    \label{Intersecting Singular Plaques}
    \hypertarget{Intersecting Singular Plaques}{Let} $V , V'$ be two singular plaques in $X$ respectively centered at $x,x'$. Then by projecting $V$ and $V'$ to each level graph, and obtaining two \hyperlink{star chain}{maximal star chains} for $x$ and $x'$ (possibly starting at two different levels $K,K' \in \n$ respectively), we can make the following conclusion.
    Exactly one of the following is true. 
    \begin{enumerate}
        \item $V \cap V' = \emptyset$.
        \item $V \cap V' \neq \emptyset$ and $x = x'$, which implies that $V \cap V'$ is an $n-$pronged star set for some $n \in \mathds{N}$ where both $V$ and $V'$ are also $n-$pronged.
        \item $V \cap V' \neq \emptyset$ and $x \neq x'$, which implies that $V \cap V' = $ a disjoint union of finitely many \hyperlink{Singular Plaques}{non-singular plaques in $X$}.
    \end{enumerate}
\end{remark}

\vspace{4mm}

The following remark lays out several equivalent criteria that determines a singularity of $X$. While items 3. and 4. of the remark below, follow directly from the remark above, items 5. and 6. follow from the fact that, $V \subset X$ is a singular plaque in $X$ if and only if $V$ is the inverse limit of an $n-$pronged maximal star chain in $\zeta$ with $n > 2$ [Proposition \ref{shapse of plaques 3}].

\vspace{4mm}

\begin{remark}[Singularities Recognized Using Singular Plaques and Maximal Star Chains]
    \label{Singular Plaques and Star Chains}
    \hypertarget{Singular Plaques and Star Chains}{Let} $x \in X$. Then, the following are equivalent.
    \begin{enumerate}
        \item $x$ is a singularity of $X$.
        \item Each \hyperlink{Singular Plaques}{standard plaque} containing $x$ is a \hyperlink{Singular Plaques}{singular plaque}. [This is the definition of a \hyperlink{Singularity}{singularity}.]
        \item There exists an integer $n > 2$ such that each standard plaque containing $x$ is an $n-$pronged singular plaque. [Equivalent to item 2 via Remark \ref{Intersecting Singular Plaques}.]
        \item There exists a singular plaque that has $x$ as its center. [Equivalent to item 2 via Remark \ref{Intersecting Singular Plaques}.]
        \item There exists an integer $n > 2$ such that $x$ has an $n-$pronged maximal star chain. [Equivalent to item 4 via Prop. \ref{shapse of plaques 3}.]
        \item There exists an integer $n > 2$ such that each \hyperlink{star chain}{maximal star chain} for $x$ is $n-$pronged. [Equivalent to item 5 via the definition of a maximal star chain.]
    \end{enumerate}
\end{remark}

\vspace{4mm}

It follows from the above remark that the following terminology is well-defined. 

\vspace{4mm}

\begin{definition}[$n-$Pronged Singularity]
    \label{n-pronged sing}
    \hypertarget{n-pronged sing}{If} $x \in X(\zeta)$ has a maximal $n-$pronged star chain where $n > 2$, then we say that $x$ is an ``$n-$Pronged Singularity" of $X(\zeta)$.
\end{definition}

\vspace{4mm}

\begin{corollary}
    \label{point classification}
    Let $\zeta$ be a proper split sequence. Each $x \in X(\zeta)$ is either a leaf interior point or an $n-$pronged singularity for some $n > 2$.
\end{corollary}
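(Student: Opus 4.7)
The plan is to leverage the preceding results, in particular Lemma \ref{every point has a maximal star chain}, Lemma \ref{shapse of plaques 1}, and Remark \ref{Singular Plaques and Star Chains}, to perform a simple case split on the number of prongs of a maximal star chain for $x$.

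First, I would invoke Lemma \ref{every point has a maximal star chain} to obtain a maximal star chain $\mathcal{V}$ for $x$, say $n$-pronged for some $n \in \mathds{N}$. Since every point admits at least a $2$-pronged star chain (as noted in the proof of Lemma \ref{every point has a maximal star chain}), we have $n \geq 2$. The corollary then amounts to showing that $n = 2$ forces $x$ to be a leaf interior point, while $n > 2$ forces $x$ to be an $n$-pronged singularity.

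For the case $n = 2$, I would apply Lemma \ref{shapse of plaques 1}: take a standard neighborhood completion $\Tilde{V}_K$ of the starting set $V_K$ of $\mathcal{V}$, and set $O := \pi_K^{-1}(\Tilde{V}_K)$. Lemma \ref{shapse of plaques 1} then identifies $\mathrm{Plaque}(O,x)$ with $\varprojlim^{\zeta} \mathcal{V}$, which is homeomorphic to the $2$-pronged star set $V_K$, i.e.\ an open interval. By Definition \ref{Singularity}, this exhibits $x$ as a leaf interior point.

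For the case $n > 2$, the implication $5 \Rightarrow 2$ in Remark \ref{Singular Plaques and Star Chains} (combined with the equivalence of items 5 and 6 there) shows that every standard plaque containing $x$ is an $n$-pronged singular plaque. Definition \ref{Singularity} then identifies $x$ as a singularity, and Definition \ref{n-pronged sing} records $x$ as an $n$-pronged singularity. Finally, I would note mutual exclusivity: if $n = 2$ then every standard plaque containing $x$ is $2$-pronged (by Lemma \ref{shapse of plaques 2} and Proposition \ref{shapse of plaques 3}), which prevents $x$ from being a singularity, so the two alternatives in the corollary are genuinely disjoint. The proof is essentially a bookkeeping argument, and I do not anticipate a substantive obstacle — everything has already been set up; the main thing to be careful about is citing the right equivalence in Remark \ref{Singular Plaques and Star Chains} so that the $n > 2$ case transfers cleanly from the existence of one maximal star chain to a statement about every standard plaque at $x$.
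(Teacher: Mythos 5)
Your proposal is correct and follows essentially the same route as the paper: obtain a maximal star chain via Lemma \ref{every point has a maximal star chain}, split on $n=2$ versus $n>2$, and conclude via the shape-of-plaques results (Lemmas \ref{shapse of plaques 1}, \ref{shapse of plaques 2}, Proposition \ref{shapse of plaques 3}) together with the equivalences in Remark \ref{Singular Plaques and Star Chains}. The paper's own proof is just a terser version of this same bookkeeping.
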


\begin{proof}
    Each $x \in X(\zeta)$ has an $n-$pronged maximal star chain where either $n=2$ or $n>2$. The rest follows from Proposition \ref{shapse of plaques 3} and item 6. of Remark \ref{Singular Plaques and Star Chains}.
\end{proof}

\vspace{4mm}


\subsubsection*{The Solenoid has Finitely Many Singularities}

\vspace{4mm}

Here, to show the titular result, we first establish upper-bounds for the number of natural edges and vertices for a core graph of fixed rank [Lemma \ref{upperbound to natural vertices}]. The following definition and remark (to be used in the proof of the aforementioned lemma) are an exploration of how to use a core graph of a given rank, to obtain another core graph (of the same rank), that has a higher number of natural vertices (and edges) than the original graph.

\vspace{4mm}

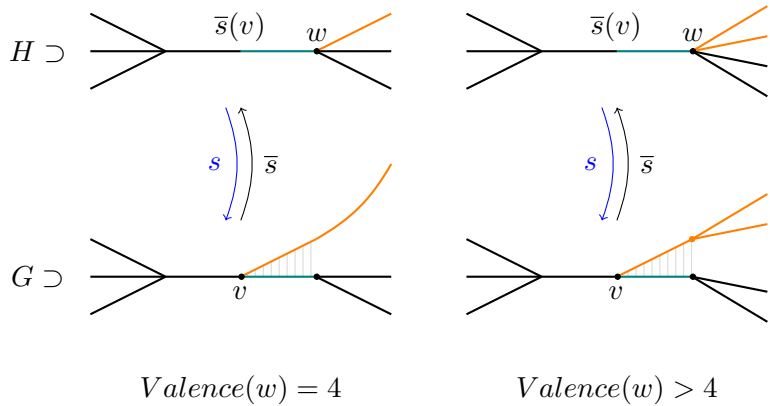
\begin{figure}[htbp]
    \centering
    \vspace{4mm}
    \begin{tikzpicture}
    \tikzmath{\x1 = 0 ; \y1 = 2 ; \ya = -1 ;
    \x2 = 5 ; \y2 = 2 ; \yb = -1 ;
    \v = 1 ; }

    \node at (\x1 -3.7 , \y1) {$H \supset$};
    \node at (\x1 -3.7 , \ya) {$G \supset$};
    \node at (\x1-1, \ya - 1.5) {$Valence (w) = 4$};
    \draw[pattern=vertical lines, pattern color=black!15, draw=none] (\x1 -1 , \ya) -- (\x1 , \ya + 0.5) -- (\x1 , \ya) -- cycle;
    \draw[thick] (\x1-3  , \y1)--(\x1 -1 , \y1) ;
    \draw[thick] (\x1  , \y1)--(\x1 +1 , \y1) ;
    \draw[thick][teal] (\x1 -1 , \y1)--(\x1 , \y1) ;
    \draw[thick][orange] (\x1 , \y1)--(\x1 +1 , \y1 + 0.5) ; 
    \draw[thick] (\x1 , \y1)--(\x1 +1 , \y1 - 0.5) ; 
    \draw[thick] (\x1-2 , \y1)--(\x1 -3 , \y1 + 0.5) ; 
    \draw[thick] (\x1-2 , \y1)--(\x1 -3 , \y1 - 0.5) ; 
    \draw[thick] (\x1-3  , \ya)--(\x1 -1 , \ya) ;
    \draw[thick] (\x1 , \ya)--(\x1 +1 , \ya) ;
    \draw[thick][teal] (\x1 -1 , \ya)--(\x1 , \ya) ;
    \draw[thick][orange] (\x1 -1 , \ya)--(\x1 , \ya + 0.5) ; 
    \draw[thick][orange] (\x1 , \ya +0.5) to[out=30, in=-120] (\x1 +1 , \ya + 1.5) ; 
    \draw[thick] (\x1 , \ya)--(\x1 +1 , \ya - 0.5) ; 
    \draw[thick] (\x1-2 , \ya)--(\x1 -3 , \ya + 0.5) ; 
    \draw[thick] (\x1-2 , \ya)--(\x1 -3 , \ya - 0.5) ; 
    \draw[->] (\x1 -1, \ya + 0.75) to[out=70, in=-70] (\x1 -1,\y1 -0.75);
    \draw[->][blue] (\x1 -1 - 0.2,\y1 -0.75) to[out=-70, in=70] (\x1 -1 -0.2, \ya + 0.75);
    \filldraw (\x1 + 0.01, \y1 ) circle (\v pt);
    \filldraw (\x1 + 0.01 , \ya) circle (\v pt);
    \filldraw (\x1 -1 + 0.01 , \ya) circle (\v pt);
    \node[above] at (\x1 + 0.01, \y1 ) {$w$};
    \node[below] at (\x1 - 1 -0.01 , \ya) {$v$};
    \node[above] at (\x1 - 1 -0.01 , \y1) {$\overline{s} (v)$};
    \node at (\x1 -1 -0.35, \y1 - 1.5) {\textcolor{blue}{$s$}};
    \node at (\x1 -1 +0.4, \y1 - 1.5) {$\overline{s}$};

    \node at (\x2-1, \yb - 1.5) {$Valence (w) > 4$};
    \draw[pattern=vertical lines, pattern color=black!15, draw=none] (\x2-1 , \yb) -- (\x2 , \yb +0.5) -- (\x2 , \yb) -- cycle;
    \draw[thick] (\x2-3  , \y2)--(\x2 -1 , \y2) ;
    \draw[thick][teal] (\x2 -1 , \y2)--(\x2 , \y2) ;
    \draw[thick][orange] (\x2 , \y2)--(\x2 +1 , \y2 + 0.6) ; 
    \draw[thick][orange] (\x2 , \y2)--(\x2 +1 , \y2 + 0.2) ; 
    \draw[thick] (\x2 , \y2)--(\x2 +1 , \y2 - 0.2) ; 
    \draw[thick] (\x2 , \y2)--(\x2 +1 , \y2 - 0.6) ; 
    \draw[thick] (\x2-2 , \y2)--(\x2 -3 , \y2 + 0.5) ; 
    \draw[thick] (\x2-2 , \y2)--(\x2 -3 , \y2 - 0.5) ; 
    \draw[thick] (\x2-3  , \yb)--(\x2 -1 , \yb) ;
    \draw[thick][teal] (\x2 -1 , \yb)--(\x2 , \yb) ;
    \draw[thick][orange] (\x2 , \yb +0.5)--(\x2 +1 , \yb + 0.6 +0.5) ; 
    \draw[thick][orange] (\x2 , \yb +0.5)--(\x2 +1 , \yb + 0.2 +0.5) ; 
    \draw[thick][orange] (\x2-1 , \yb) -- (\x2 , \yb +0.5); 
    \draw[thick] (\x2 , \yb)--(\x2 +1 , \yb - 0.2) ; 
    \draw[thick] (\x2 , \yb)--(\x2 +1 , \yb - 0.6) ; 
    \draw[thick] (\x2-2 , \yb)--(\x2 -3 , \yb + 0.5) ; 
    \draw[thick] (\x2-2 , \yb)--(\x2 -3 , \yb - 0.5) ; 
    \draw[->] (\x2 -1, \yb + 0.75) to[out=70, in=-70] (\x2 -1,\y2 -0.75);
    \draw[->][blue] (\x2 -1 - 0.2,\y2 -0.75) to[out=-70, in=70] (\x2 -1 -0.2, \yb + 0.75);
    \filldraw (\x2 + 0.01, \y2 ) circle (\v pt);
    \filldraw (\x2 + 0.01 , \yb) circle (\v pt);
    \filldraw (\x2 -1 + 0.01 , \yb) circle (\v pt);
    \filldraw[orange] (\x2 , \yb +0.5) circle (\v pt);
    \node[above] at (\x2 + 0.01, \y2 ) {$w$};
    \node[below] at (\x2 - 1 -0.01 , \yb) {$v$};
    \node[above] at (\x2 - 1 -0.01 , \y2) {$\overline{s} (v)$};
    \node at (\x2 -1 -0.35, \y2 - 1.5) {\textcolor{blue}{$s$}};
    \node at (\x2 -1 +0.4, \y2 - 1.5) {$\overline{s}$};
\end{tikzpicture}

    \caption{Tri-valence Creating Splits}
    \label{fig:Tri-valence Creating Splits}
\end{figure}

\vspace{4mm}

\begin{definition}[Tri-valence Creating Splits]
    \label{Tri-valence Creating Splits}
    \hypertarget{Tri-valence Creating Splits}{Let} $s: H \longrightarrow G$ be a split. Furthermore, let 
    $v \in G$ be the \hyperlink{folding edges and vertices}{folding vertex} of $G$ rel $\overline{s}$, and $w \in H$ the \hyperlink{folding edges and vertices}{splitting vertex} of $H$ rel $\overline{s}$.
    We call $s$ a ``Tri-valence Creating Split from $H$ to $G$" if,
    \begin{enumerate}
        \item the folding vertex $v \in G$ has valence $3$ and $\overline{s} (v) \in H$ has valence $2$, and,
        \item the valence of the splitting vertex $w \in H$ is strictly higher than $3$.
    \end{enumerate}
\end{definition}

\vspace{4mm}

\begin{notation}[The Number of Natural Edges and Vertices of a Graph]
    \label{E(G)}
    \hypertarget{E(G)}{Let} $G$ be a finite graph. ``$E(G)$" denotes the number of natural edges of $G$, and ``$V(G)$" denotes the number of natural vertices of $G$.
\end{notation}

\vspace{4mm}

\begin{remark}[Tri-valence Creating Splits Produce Graphs with Higher Numbers of Vertices and Edges]
    \label{Tri-valence Creating Splits Increase Things}
    \hypertarget{Tri-valence Creating Splits Increase Things}{Let} $n \in \mathds{N}$. Suppose that $H$ is a rank $n$ core graph with at least one vertex $w$ whose valence is $> 3$. (Note that any core graph of finite rank is a finite graph.) Then there exists a \hyperlink{Tri-valence Creating Splits}{tri-valence creating split} $s: H \longrightarrow G$ where $G$ is a rank $n$ core graph such that $E(H) < E(G)$ and $V(H) < V(G)$ [See Figure \ref{fig:Tri-valence Creating Splits}].
\end{remark}

\vspace{4mm}

\begin{lemma}
    \label{upperbound to natural vertices}
    Let $n \in \mathds{N}$, and let $G$ be a core graph of rank $n$. Then the number of natural vertices of $G$ is bounded above by $2(n-1)$, and the number of natural edges of $G$ is bounded above by $3(n-1)$. i.e. $\hyperlink{E(G)}{V(G)} \leq 2(n-1)$ and $\hyperlink{E(G)}{E(G)} \leq 3(n-1)$.
\end{lemma}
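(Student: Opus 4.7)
The plan is to combine the Euler-characteristic identity for a rank-$n$ core graph with a handshake-style degree inequality applied at natural vertices. My first step will be to pass from $G$ to its \emph{smoothed graph} $G'$, obtained by iteratively suppressing each valence-$2$ vertex of $G$ (that is, replacing two consecutive edges meeting at such a vertex by a single edge). Each such suppression is a homotopy equivalence, so $G'$ is again a rank-$n$ core graph, but now every vertex of $G'$ has valence at least $3$. By construction the vertex set of $G'$ is exactly the set of natural vertices of $G$, so $|V(G')| = V(G)$; and each edge of $G'$ corresponds to a maximal subdivision path of $G$ between two natural vertices, with the length-one such paths being precisely the natural edges of $G$. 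Thus $E(G) \leq |E(G')|$.

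Next, I will invoke Euler characteristic. Since $G' \simeq G$ is homotopy equivalent to a wedge of $n$ circles, $\chi(G') = |V(G')| - |E(G')| = 1 - n$, whence $|E(G')| = |V(G')| + (n-1)$. Combined with the handshake lemma $2|E(G')| = \sum_{v \in V(G')} \deg_{G'}(v) \geq 3|V(G')|$ (valid because every vertex of $G'$ has valence at least $3$), this yields $2(|V(G')| + (n-1)) \geq 3|V(G')|$, and hence $|V(G')| \leq 2(n-1)$ and $|E(G')| \leq 3(n-1)$.

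Transferring these bounds back to $G$, I obtain $V(G) = |V(G')| \leq 2(n-1)$ and $E(G) \leq |E(G')| \leq 3(n-1)$, which is exactly the statement to be proved.

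The only mildly delicate point will be to verify that the smoothing operation preserves the valence of each natural vertex of $G$: if $v$ has valence $k \geq 3$ in $G$, then $v$ also has valence $k$ in $G'$. This follows because smoothing only removes \emph{interior} vertices of subdivision paths, so a natural vertex $v$ is adjacent in $G'$ to exactly the same set of path-ends that it was in $G$. Once this identification is established, the rest of the proof is a routine Euler-characteristic calculation, and I do not anticipate any further obstacles.
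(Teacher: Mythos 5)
Your proof is correct, and it takes a genuinely different route from the paper's. The paper argues by cases on the valences: if every natural vertex is trivalent it computes $E(G)=\tfrac32 V(G)$ and solves against $\chi(G)=1-n$ to get equality $V(G)=2(n-1)$, $E(G)=3(n-1)$; if some vertex has valence $>3$ it performs a finite sequence of ``tri-valence creating splits,'' each of which strictly increases both $V$ and $E$ while preserving the rank, terminating at a trivalent graph and hence yielding the strict inequalities. You instead pass to the smoothed graph $G'$ and combine $\chi(G')=1-n$ with the handshake inequality $2|E(G')|=\sum_v \deg(v)\ge 3|V(G')|$, which collapses the paper's induction into a single line and also exhibits the equality case (all valences equal to $3$) for free. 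Two small remarks, neither of which is a gap: under the paper's intended reading of ``natural edge'' (the maximal subdivision paths, which is what makes $V(G)-E(G)=\chi(G)$ valid in the paper's own Case a), your inequality $E(G)\le|E(G')|$ is actually an equality; and for $n=1$ the smoothing is degenerate (a circle has no natural vertices), but there the bounds read $0\le 0$ and hold vacuously, while for $n\ge 2$ a core graph necessarily has a vertex of valence $\ge 3$ so $G'$ is a genuine graph. Your deferred verification that smoothing preserves the valence of natural vertices is indeed routine.
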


\begin{proof}
    Let $n \in \mathds{N}$, and let $G$ be a core graph of rank $n$. Then either (case a.) all natural vertices of $G$ has valence $3$, or (case b.) $G$ has at least one natural vertex whose valence is strictly higher than $3$.

    \vspace{4mm}

    (Case a.) Suppose all natural vertices of $G$ has valence $3$. Then, $G$ has $3/2$ natural edges per each natural vertex. i.e. $E(G) = \frac{3}{2} V(G)$. Then since the Euler Characteristic of $G$ is $\mathcal{X} (G) = 1 - n$, we have, $V(G) - E(G) = 1 - n \Longrightarrow V(G) = 2(n-1)$, and $E(G) = 3(n-1)$. Furthermore, since we only made one assumption about $G$ other than its rank, we can conclude that each rank $n$ core graph $H$ such that all its natural vertices have valence $3$, satisfy $V(H) = 2(n-1)$ and $E(H) = 3(n-1)$.

    \vspace{4mm}

    (Case b.) Now suppose $G$ has at least one natural vertex $w$ whose valence is strictly higher than $3$. Then recall from Remark \ref{Tri-valence Creating Splits Increase Things} and Figure \ref{fig:Tri-valence Creating Splits} that there exists a tri-valence creating split $s_1 : G \longrightarrow H_1$ such that $V(G) < V(H_1)$ and $E(G) < E(H_1)$. Since $G$ is a finite graph, $G$ has finitely many vertices whose valence is higher than $3$, and each such vertex is of a finite valence. Thus, there exists $m \in \mathds{N}$, and a finite sequence of splits $G \xrightarrow[]{s_1} H_1 \xrightarrow[]{s_1} H_2 \xrightarrow[]{s_3} ...  \xrightarrow[]{s_m} H_m $ such that, each vertex of $H_m$ has valence $3$, and for each $j \in \{1,...,m\}$, $s_j$ is a tri-valence creating split. Then, it follows from the conclusion of case a. and from Remark \ref{Tri-valence Creating Splits Increase Things}, that $V(G) < V(H_1) < V(H_2) < ... < V(H_m) = 2(n-1)$, and that $E(G) < E(H_1) < E(H_2) < ... < E(H_m) = 3(n-1)$.
\end{proof}

\vspace{4mm}

\vspace{4mm}

Note that for a rank $n$ proper split sequence $\zeta$, the statements in the above lemma would be true for each level graph. 

\vspace{4mm}

Next, we define some useful terminology that will be utelized in the upcoming proof. 

\vspace{4mm}

\begin{definition}[Pre-Singularities and Pseudo-Singularities]
    \label{Pre-Singularities}
    \hypertarget{Pre-Singularities}{Let} $\zeta$ be a proper split sequence, $X = X(\zeta)$ it's induced solenoid. Furthermore, let $x = (x_0, x_{-1}, x_{-2} , ... ) \in X$. We say that $x$ is a ``Pre-Singularity of $X$", if there exists a $K \in \n$ such that, for each integer $j \leq K$, $x_j$ is a natural vertex of $G_j$. We call $x$ a ``Pseudo-Singularity of $X$" if $x$ is a pre-singularity, but is not a singularity.
\end{definition}

\vspace{4mm}

\begin{notation}[The Set of Singularities of a Solenoid]
    \label{Sing(X)}
    \hypertarget{Sing(X)}{Let} $X$ be a proper solenoid. We denote by ``$Sing(X)$" the set of singularities of $X$. We denote by ``$PSing(X)$" the set of pre-singularities of $X$.
\end{notation}



\vspace{4mm}

\begin{proposition}
    \label{finitely many singularities}
    \hypertarget{finitely_many_singularities}{Let} $n \in \mathds{N}$, and let $\zeta : G_0 \xleftarrow[]{f_{-1}} G_{-1} \xleftarrow[]{f_{-2}} G_{-2} \xleftarrow[]{f_{-3}} ...$ be a proper split sequence of rank $n$. Then, the number of \hyperlink{Pre-Singularities}{pre-singularities} of $X(\zeta)$ is bounded above by $2(n-1)$. Therefore, the number of singularities of $X(\zeta)$ is also bounded above by $2(n-1)$.
\end{proposition}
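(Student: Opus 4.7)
The plan is to prove the stronger inequality $|PSing(X)| \leq 2(n-1)$; the bound on $|Sing(X)|$ will then follow because $Sing(X) \subseteq PSing(X)$. This inclusion holds since, by Corollary \ref{point classification}, every singularity $x$ has an $n$-pronged maximal star chain with $n > 2$ starting at some level $K$, and each starting set in that chain is centered at $x_j$ with valence $n > 2$, forcing $x_j$ to be a natural vertex of $G_j$ for all $j \leq K$.

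The key observation I would exploit is that each fold map $f_{j-1} \colon G_{j-1} \to G_j$ is a function, hence $x_j \neq y_j$ implies $x_{j-1} \neq y_{j-1}$ for any two points $x, y \in X$ (otherwise applying $f_{j-1}$ would force $x_j = y_j$). In particular, for any distinct pair $x \neq y$ in $X$ there is a largest level $j_{xy}$ with $x_{j_{xy}} \neq y_{j_{xy}}$, and then $x_j \neq y_j$ for every $j \leq j_{xy}$.

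Using these two observations, I would argue as follows. Let $S \subseteq PSing(X)$ be any finite subset; for each $x \in S$ choose $K_x \in \n$ with $x_j \in N_j$ (the natural vertices of $G_j$) for all $j \leq K_x$. Since $S$ is finite, set
\[
K := \min \left( \{K_x : x \in S\} \cup \{j_{xy} : x, y \in S, x \neq y\} \right),
\]
which exists as an integer in $\n$. At level $K$, the map $S \to N_K$ defined by $x \mapsto x_K$ is well-defined (because $K \leq K_x$ for each $x$, so $x_K \in N_K$) and injective (because $K \leq j_{xy}$ for each distinct pair, so the images are distinct). Lemma \ref{upperbound to natural vertices} gives $|S| \leq |N_K| \leq 2(n-1)$. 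Since every finite subset of $PSing(X)$ has cardinality at most $2(n-1)$, so does $PSing(X)$ itself. The argument presents no serious obstacle; the only conceptual step worth highlighting is the injection principle for folds, after which the passage from finite subsets to the entire set is routine.
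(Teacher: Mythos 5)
Your proof is correct and follows essentially the same route as the paper's: project a finite collection of pre-singularities to a sufficiently deep level where they become distinct natural vertices of a rank-$n$ core graph, and invoke Lemma \ref{upperbound to natural vertices}. You merely make explicit two points the paper leaves implicit — that distinct points of the inverse limit stay separated at all sufficiently deep levels (your injection principle for folds), and that $Sing(X) \subseteq PSing(X)$ — which is a welcome clarification rather than a departure.
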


\begin{proof}
    Assume the set-up of the lemma.
    For each pre-singularity $s \in X(\zeta)$, by definition, there exists $K_s \in \n$ such that, for each integer $j \leq K_s$, $x_j$ is a natural vertex of $G_j$. 
    
    \vspace{4mm}

    Now, assume that a subset $S_m \subseteq PSing(X)$ has $m := 2(n-1) + 1$ distinct elements. Then there must exist some integer $J \leq Min \{ K_s : s \in S_m \}$ such that, in $G_{J}$, the set of projections $\{ s_J : s \in S_m \}$ consists of $m$ distinct natural vertices. This contradicts the upper-bound established in Lemma \ref{upperbound to natural vertices} for natural vertices of a rank $n$ core graph such as $G_J$.
\end{proof}

\vspace{4mm}

In the next section, we will establish ``Stabilizing Hypotheses" for a proper split sequence, so that when $\zeta$ follows them, the usage of all the tools we established to study $X(\zeta)$ will become much more straight forward. For instance, when $\zeta$ is ``Stabilized", each singularity in $X(\zeta)$ will have a maximal star chain that starts at level $0$. What follows is a careful exploration of this stabilization process. 

\vspace{4mm}

\subsection{Stabilizing Split Sequences}
\label{sec: Stabilizing Split Sequences}

\vspace{4mm}

Given a proper split sequence $\zeta$, the process of studying its induced solenoid $X(\zeta)$ becomes easier, if $\zeta$ satisfies a collection of criteria that we shall call the ``Stabilizing Hypotheses" [Remark \ref{The Stabilizing Hypotheses}]. 

\vspace{4mm}

\begin{notation}[Tail of a Split Sequence]
    \label{Tail of a Split Sequence}
    \hypertarget{Tail of a Split Sequence}{Given} a proper split sequence $\zeta : G_0 \xleftarrow[]{f_{-1}} G_{-1} \xleftarrow[]{f_{-2}} G_{-2} \xleftarrow[]{f_{-3}} ...$, and $K \in -\mathds{N}$, we call the split sequence, $G_K \xleftarrow[]{f_{K-1}} G_{K-1} \xleftarrow[]{f_{K-2}} G_{K-2} \xleftarrow[]{f_{K-3}} ...$, the ``$K-$Tail of $\zeta$", and denote it ``$\zeta_{< K}$".
\end{notation}

\vspace{4mm}

We will lay out the ``Stabilizing Hypotheses" in Remark \ref{The Stabilizing Hypotheses}, and we will show in Lemmas \ref{Vertex Stabilizing Criteria} , \ref{Turn Stabilizing Criteria} that, for each proper split sequence $\zeta$ it is possible to find a level $K \in \n$ such that \hyperlink{Tail of a Split Sequence}{$\zeta_{<K}$} satisfies the necessary stabilizing hypotheses. Since $\zeta$ and $\zeta_{<K}$ both induce the same solenoid, to study all proper solenoids, it is enough to study all proper split sequences that satisfy the stabilization hypotheses.  

\vspace{4mm}

In the discussion in this section, assume that $\zeta$ is a given proper solenoid, and $X = X(\zeta)$ its induced solenoid.

\vspace{4mm}

\subsubsection{Preliminary Concepts and the Stabilization Hypotheses}
\label{sec: Preliminary Concepts and the Stabilization Hypotheses}

\vspace{4mm}

Here, we will establish some preliminary concepts needed to understanding the ``Stabilizing Hypotheses". 

\vspace{4mm}

We start with defining ``Leaf Segments, Pre-Leaf Segments and Singular Segments" [Definition \ref{leaf segments}],  ``Taken Turns and Sustained Turns" [Definition \ref{Taken Turns and Surviving Turns}], ``Turn Tunnels and Extended Turn Tunnels" [Definition \ref{Turn Tunnels and Extended Turn Tunnels}], before laying out the intended consequences of the Stabilizing Hypotheses in Remark \ref{The Intended Consequences of Stabilizing Hypotheses}.

\vspace{4mm}

Recall [from Lemma \ref{shapse of plaques 2}] that each \hyperlink{plaque}{standard plaque in $X$} is a \hyperlink{star set}{star set}.

\vspace{4mm}

\begin{definition}[Leaf Segments, Pre-Leaf Segments and Singular Segments]
    \label{leaf segments}
    \hypertarget{leaf segments}{Let} $L \subset X$. Then $L$ is called a ``Pre-Leaf Segment of $X$", if $L$ is a turn of a standard plaque in $X$. Furthermore, a pre-leaf segment $L$ is called a
    \begin{itemize}
        \item ``Leaf Segment of $X$" if, $L$ is a non-singular plaque in $X$.
        \item ``Singular Segment of $X$" if, $L$ is a turn of a singular plaque in $X$.
    \end{itemize}
\end{definition}



\vspace{4mm}

Since the collection of standard neighborhoods of $X$ covers $X$, the collection of all pre-leaf segments of $X$ covers $X$. In Chapter \ref{Ch: Solenoids as Foliated Spaces}, we will see that using certain sub-collections of pre-leaf segments will be useful in the process of visualizing patches of $X$. The following two definitions lay out the sub-collections of pre-leaf segments that will prove to be useful in later discussions.

\vspace{4mm}

\begin{definition}[Taken Turns and Sustained Turns]
    \label{Taken Turns and Surviving Turns}
    \hypertarget{Taken Turns}{Let} 
    $K \in - \mathds{N}^*$,
    and let $I$ be a \hyperlink{turn in zeta}{level $K$ standard turn in $\zeta$}.
    Furthermore, let $L \subset X$ be a \hyperlink{leaf segments}{pre-leaf segment} (resp. a \hyperlink{leaf segments}{leaf segment}) of $X$.
    We say that ``$L$ Sustains the Turn $I$" (resp.``$L$ Takes the Turn $I$") or that the ``Turn $I$ is Sustained by $L$" (resp. ``Turn $I$ is Taken by $L$") if there exists a homeomorphism $ l : I \longrightarrow L$ such that $\pi_K \circ l = $ the identity function on $I$.

    \vspace{4mm}

    We say that ``$I$ is a Sustained Turn in $\zeta$" (resp. ``$I$ is a Taken Turn in $\zeta$") and that ``$I$ is Sustained by $X$" (resp. ``$I$ is Taken by $X$") if there exists a pre-leaf segment (resp. leaf segment) in $X$ that sustains (resp. takes) $I$. 
\end{definition}

\vspace{4mm}

Note that since $\zeta$ permits no \hyperlink{Fold Compositions and Backtracking}{backtracking}, each standard turn in $\zeta$ is sustained by $X$. But not each standard turn in $\zeta$ is necessarily taken by $X$.




\vspace{4mm}

\begin{definition}[Turn Tunnels and Extended Turn Tunnels]
    \label{Turn Tunnels and Extended Turn Tunnels}
    \hypertarget{Def: Tun and ExTun}{Assume} the set-up of the above definition. The ``Turn Tunnel of $I$" (denoted ``$Tun_I$") is the union of all \hyperlink{leaf segments}{leaf segments} in $X$ that \hyperlink{Taken Turns}{take} the turn $I$. ``$SingSeg_I$" shall denote the union of all the \hyperlink{leaf segments}{singular segments} that sustain $I$. The ``Extended Turn Tunnel of $I$ (denoted ``$ExTun_I$") is the union of all \hyperlink{leaf segments}{pre-leaf segments} that sustain the turn $I$ (i.e. the union of $Tun_I $ and $ SingSeg_I$).
    
\end{definition}

\vspace{4mm}

We will explore turn tunnels and extended turn tunnels further in Section \ref{sec: Tunnels}, and we will use their shapes to visualize proper solenoids throughout Chapter \ref{Ch: Solenoids as Foliated Spaces}. 

\vspace{4mm}

\begin{remark}[Inverse Limits of $2-$Pronged Star Chains]
    Let $K \in \n$, and let $I$ be a level $K$ standard turn in $\zeta$.
    \begin{itemize}
        \item Each $2-$pronged star chain in $\zeta$ that \hyperlink{star chain representing a set in X}{starts with} $I$, uniquely determines a pre-leaf segment sustained by $I$ (given by $L := \varprojlim^{\zeta}  \mathcal{I}$ ), and,
        \item each pre-leaf segment $L$ that sustains $I$, uniquely determines a $2-$pronged \hyperlink{star chain representing a set in X}{star chain representing $L$ that starts with $I$} (given by $\pi_{K} (L)\xleftarrow[]{f_{K-1}} \pi_{K-1} (L) \xleftarrow[]{f_{K-2}} \pi_{K-2} (L) \xleftarrow[]{f_{K-3}}...$).
    \end{itemize}
    We will use this notion to parameterize $Tun_I$ and $ExTun_I$ in Sub-section \ref{Sec: Canonically Given Tunnel Parameterizations}.
\end{remark}


\vspace{4mm}

Having established all the necessary terminology, we will now lay out the consequences of the stabilizing hypothesis, that will be helpful in the upcoming sections. 

\vspace{4mm}


\begin{remark}[The Intended Consequences of Stabilizing Hypotheses]
    \label{The Intended Consequences of Stabilizing Hypotheses}
    The ``Stabilizing Hypotheses" [defined in the next remark] are designed so that when $\zeta$ satisfies those hypotheses, the \hyperlink{Standard Star Neighbourhoods}{standard neighborhoods}, \hyperlink{Def: Tun and ExTun}{turn tunnels} and \hyperlink{Def: Tun and ExTun}{extended turn tunnels} of $X(\zeta)$ satisfy the following properties.
\begin{enumerate}
    \item Each standard neighborhood of $X(\zeta)$ contains at most one pre-singularity [Directly follows from items 1 and 2 of Remark \ref{The Stabilizing Hypotheses}]. 
    \item For each \hyperlink{turn in zeta}{standard turn $I$ in $\zeta$}, $SingSeg_I = ExTun_I - Tun_I$ is either empty or is a single singular segment [Follows from item 1 above].
    \item Each singularity of $X(\zeta)$ has a maximal star chain that starts at level $0$ [Follows from item 3 of Remark \ref{The Stabilizing Hypotheses}]. Furthermore, each extended turn tunnel is homeomorphic to a product space $C \times I$ where $C$ is a totally disconnected set and $I$ is an open interval [Proposition \ref{ExTun are Tunnel Sets}].
    \item For each \hyperlink{turn in zeta}{standard turn $I$ in $\zeta$} such that
    $SingSeg_I , Tun_I \neq \emptyset$,
    the unique singular segment $L$ contained in $ExTun_I$, entirely consists of limit points of $Tun_I$ [Follows from item 4 of Remark \ref{The Stabilizing Hypotheses}, as shown later in Lemma \ref{limit segments consist of limit points}]. 
\end{enumerate}
\end{remark}
 
\vspace{4mm}



\begin{remark}[The Stabilizing Hypotheses]
    \label{The Stabilizing Hypotheses}
    \hypertarget{Stabilizing Hypotheses}{Given} a proper split sequence  $ \zeta : G_0 \xleftarrow[]{f_{-1}} G_{-1} \xleftarrow[]{f_{-2}} G_{-2} \xleftarrow[]{f_{-3}} ... $, we say that ``$\zeta$ Satisfies the Stabilizing Hypotheses" if, it satisfies the two ``Vertex Stabilizing Criteria" that are laid out in items 1, 2 below, and the two ``Turn Stabilizing Criteria" that are laid out in items 3, 4 below.

    \vspace{4mm}
    
    \hypertarget{VSC}{Vertex Stabilizing Criteria}: For each $j \in \n$ and for each $q \in G_j$,
    \begin{enumerate}
        \item If $q$ is a natural vertex of $G_j$, then $F_q$ contains at most one pre-singularity,
        \item If $q$ is not a natural vertex of $G_j$, then $F_q$ contains no pre-singularities,
    \end{enumerate}
    \hypertarget{TSC}{Turn Stabilizing Criteria}: For each $K \in \n$ and for each turn $I$ of a standard star neighborhood in $G_K$,
    \begin{enumerate}
        \item[3.] There is no $3-$pronged star set $U$ in $X$ such that $\pi_K (U) = I$,
        \item[4.] \hypertarget{4th hype}{If} there exists a singular segment $L$ that \hyperlink{Taken Turns}{sustains the turn} $I$, then exactly one of the following is true: Either,
        \begin{enumerate}
            \item for each integer 
            $j \in \n$, 
            there exists a leaf segment that \hyperlink{Taken Turns}{takes the turn} $\pi_j (L)$, or,
            \item No image of 
            $I$ is \hyperlink{Taken Turns}{taken by $X$}.
            (i.e. no leaf segment takes the turn $\pi_0 (L) \subset G_0$.)
        \end{enumerate}
    \end{enumerate}
\end{remark}

\vspace{4mm}

\begin{definition}[Stabilized Split Sequences]
    \label{Stabilized Split Sequence}
    \hypertarget{Stabilized Split Sequence}{A proper} split sequence $\zeta$ shall be called a ``Stabilized Proper Split Sequence" if $\zeta$ satisfies the \hyperlink{Stabilizing Hypotheses}{stabilizing hypotheses}.
\end{definition}

\vspace{4mm}

\subsubsection{Each Proper Solenoid Can be Induced by a Stabilized Split Sequence}
\label{sec: Each Proper Solenoid Can be Induced by a Stabilized Split Sequence}

\vspace{4mm}

We will show the titular result in two parts. Lemma \ref{Vertex Stabilizing Criteria} will focus on the vertex stabilizing criteria. Then we will establish some necessary terminology that are used in the proof of Lemma \ref{Turn Stabilizing Criteria} which will focus on the turn stabilizing criteria. 

\vspace{4mm}

\begin{lemma}
    \label{Vertex Stabilizing Criteria}
    Let $\zeta$ be a proper split sequence. Then there exists a $K \in \n$ such that \hyperlink{Tail of a Split Sequence}{$\zeta_{<K}$} satisfies the vertex stabilizing criteria.
\end{lemma}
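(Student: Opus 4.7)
The plan is to rely on the finiteness of pre-singularities established in Proposition \ref{finitely many singularities}: $X(\zeta)$ has at most $2(n-1)$ pre-singularities. Since the inverse limits $X(\zeta)$ and $X(\zeta_{<K})$ agree, and since the condition ``eventually, all projections are natural vertices'' is invariant under tail-truncation, the pre-singularities of $X(\zeta)$ and of $X(\zeta_{<K})$ are in canonical bijection. So it suffices to locate a single $K \in -\mathds{N}^*$ deep enough that (i) every pre-singularity projects to a natural vertex at each level $j \leq K$, and (ii) distinct pre-singularities have distinct projections at each level $j \leq K$.

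For (i), the definition of a pre-singularity supplies, for each such point $s^{(i)}$, an integer $K_i \in -\mathds{N}^*$ below which $s^{(i)}_j$ is a natural vertex of $G_j$. For (ii), I would use the observation that if two points of $X$ agree at some level $j_0$, then applying $f_{j_0}, f_{j_0+1}, \dots$ forward forces them to agree at every level $\geq j_0$; contrapositively, two distinct points of $X$ must disagree at \emph{all} sufficiently deep levels. Enumerating the pre-singularities as $s^{(1)}, \dots, s^{(m)}$ with $m \leq 2(n-1)$, this observation supplies, for each pair $i \neq i'$, an integer $K_{i,i'} \in -\mathds{N}^*$ such that $s^{(i)}_j \neq s^{(i')}_j$ for every $j \leq K_{i,i'}$. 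Define
\begin{equation*}
    K := \min\Big(\{K_i : 1 \leq i \leq m\} \cup \{K_{i,i'} : 1 \leq i < i' \leq m\}\Big),
\end{equation*}
a minimum over a finite set (taken to be $0$ if the set is empty, i.e.\ if $m \leq 1$).

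With this $K$ in hand, both vertex stabilizing criteria for $\zeta_{<K}$ follow directly. For criterion 2, if $j \leq K$ and $q \in G_j$ is not a natural vertex, then for every $i$ the relation $j \leq K \leq K_i$ forces $s^{(i)}_j$ to be a natural vertex of $G_j$, hence $q \neq s^{(i)}_j$ and $F_q$ contains no pre-singularity. For criterion 1, if $j \leq K$ and $q$ is a natural vertex, two distinct pre-singularities $s^{(i)}, s^{(i')}$ cannot both project to $q$ because $j \leq K \leq K_{i,i'}$ gives $s^{(i)}_j \neq s^{(i')}_j$. The only real obstacle here is conceptual rather than computational: one must recognize that being a pre-singularity is a tail property of a point, and that distinct points of a solenoid are separated by every sufficiently deep coordinate projection. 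Once those two facts are in view, the argument is a finite pigeonhole combined with a minimum over a finite index set, directly powered by the upper bound from Proposition \ref{finitely many singularities}.
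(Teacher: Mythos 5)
Your proposal is correct and follows essentially the same route as the paper: use the finiteness of $PSing(X)$ from Proposition \ref{finitely many singularities}, take for each pre-singularity a level below which all its projections are natural vertices, take for each distinct pair a level below which their projections disagree (the paper asserts this separation property where you justify it via the forward-agreement observation), and set $K$ to be the minimum over this finite collection. No gaps.
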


\begin{proof}
    Let $\zeta$ be a proper split sequence. Recall from Proposition \ref{finitely many singularities}, that there are only finitely many pre-singularities in $X(\zeta)$. 
    It is true by definition (of a pre-singularity) that, for each $x \in PSing(X)$, there exists $K_x \in \n$ such that, for each integer $j < K_x$, $\pi_j (x) = x_j$ is a natural vertex of $G_j$.
    Since $PSing(X)$ is a finite collection, it is also possible to choose the aforementioned level $K_x$ for each $x$, such that there exists a maximal star chain for $x$ that starts at level $K_x$. (Note that we do not use maximal star chains in the current proof, although choosing the levels in this manner will be relevant to the proof of the next lemma, in which we shall utilize the level $K$ we'll construct in the current proof.)
    Thus, for each integer $j \leq Min\{ K_x : x \in PSing(X) \}$, each pre-singularity projects to a natural vertex of $G_j$.
    Furthermore, since $PSing(X)$ is a finite collection, and since each distinct pair of pre-singularities $x,x'$ must have some level $J_{\{x,x'\}} \in \n$ such that for each integer $j \leq J_{\{x,x'\}} $, $\pi_j (x) \neq \pi_j (x')$, 
    we can find an integer $K \leq Min\{ K_x : x \in PSing(X) \}$ such that, for each integer $j \leq K$, $\pi_j |_{PSing(X)}$ is one to one. Thus, for each integer $j \leq K$, and for each $p \in G_j$, if $p \in V(G_j)$, there can be at most one pre-singularity projecting to $p$, and if $p \notin V(G_j)$, there can be no pre-singularities projecting to $p$.
\end{proof}

\vspace{4mm}

The proof of Lemma \ref{Turn Stabilizing Criteria} will be a continuation of the above proof, and it will show that there exists an integer $K' \leq$ the above-defined $K$, such that $\zeta_{<K'}$ satisfies the turn stabilizing criteria. But first, we shall define some terminology that will be used in the following proof and in the upcoming chapters.

\vspace{4mm}

We start with identifying a special type of \hyperlink{standard basis}{standard star neighbourhoods} that we will utilize later. One of our intentions here, is to single out a special kind of turn called an ``Optimal Turn" such that, given a proper split sequence $\zeta$, and a level $j \in \n$,
\begin{enumerate}
    \item[i.] $G_j$ has only finitely many optimal turns, and,
    \item[ii.] each \hyperlink{turn in zeta}{level $j$ standard turn in $\zeta$}, is contained in an optimal turn,
\end{enumerate}
so that, by studying this finite collection of optimal turns at each level, we can make conclusions about all turns in $\zeta$. These optimal neighborhoods and turns, defined below, will be heavily utilized in Section \ref{Sec: An Atlas for the Solenoid} to provide a singular foliated atlas for a given proper solenoid.

\vspace{4mm}

\begin{definition}[Optimal Star Neighborhoods and Optimal Turns]
    \label{Optimal Star Neighbourhoods}
    \hypertarget{Optimal Star Neighborhoods}{Let} $\zeta$ be a proper split sequence that satisfies the \hyperlink{VSC}{vertex stabilization criteria}, $j \in \n$ and $G_j$ the level $j$ graph of $\zeta$.
    We call a directed natural edge of a graph a ``Loop" if its initial vertex is the same as its terminal vertex. We call a point $p \in G_j$ an ``Optimal Point in $G_j$" if $p$ is either 
    a \hyperlink{special points}{special point}
    of $G_j$ or the mid-point of a loop in $G_j$. We call a \hyperlink{standard basis}{standard star neighborhood} $V$ of $G_j$ a ``Pre-Optimal Star Neighborhoods in  $G_j$ rel $\zeta$" if,
    \begin{enumerate}
        \item $V$ is centered at an optimal point in $G_j$, and,
        \item $V$ with its center removed does not contain any optimal points in $G_j$.
    \end{enumerate}
    Furthermore, we call $V$ a ``Level $j$ Optimal Star Neighborhood of $\zeta$" if $V$ isn't properly contained in any other pre-optimal star neighborhoods in $G_j$ with the same center as $V$. 

    \vspace{4mm}

    \hypertarget{optimal turns}{A turn} $I$ in $G_j$ is called a ``Level $j$ Optimal Turn in $\zeta$" or an ``Optimal Turn of $G_j$" if $I$ is a turn of an optimal star neighborhood in $G_j$. 
    \hypertarget{Optimal Neighborhood}{For} each level $j$ optimal star neighborhood $V$, we call $\pi_{j}^{-1} (V)$ a ``Level $j$ Optimal Neighborhood in $X$".
\end{definition}


\vspace{4mm}

\begin{remark}[Properties of Optimal Star Neighborhoods]
    \label{Properties of Optimal Star Neighbourhoods}
    Let $j \in \n$.
    \begin{enumerate}
        \item Since $G_j$ only contains finitely many special points, and finitely many loops,
        we have that $G_j$ contains finitely many optimal star neighborhoods (therefore finitely many optimal turns), and the collection of all optimal star neighborhoods in $G_j$ cover $G_j$. And thus, the collection of all \hyperlink{Optimal Neighborhood}{level $j$ optimal neighborhoods} cover $X$. 
        \item Since each \hyperlink{Standard Star Neighbourhoods}{standard star neighborhood} in $G_j$, is contained in a level $j$ optimal star neighborhood, 
        any \hyperlink{turn in zeta}{level $j$ standard turn} is contained in a level $j$ optimal turn. Thus any pre-leaf segment of $X(\zeta)$ is contained in a pre-leaf segment that sustains an optimal turn.
    \end{enumerate}
\end{remark}

\vspace{4mm}


\begin{lemma}
    \label{Turn Stabilizing Criteria}
    Let $\zeta$ be a proper split sequence. Then there exists a $K' \in \n$ such that $\zeta_{<K'}$ satisfies the turn stabilizing criteria.
\end{lemma}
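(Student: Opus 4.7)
The plan is to build on the level $K$ constructed in Lemma \ref{Vertex Stabilizing Criteria}, further refining it to a level $K' \leq K$ at which both turn-stabilizing criteria hold for $\zeta_{<K'}$. Recall from that lemma that below $K$ every pre-singularity $x \in X(\zeta)$ has a maximal star chain starting at some level $K_x \geq K$, and that distinct pre-singularities have distinct projections to every level $j \leq K$. These two facts will be the main ingredients.

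For the first criterion (no $3$-pronged star set projects onto a standard turn), I would argue that $\zeta_{<K}$ already satisfies it, so no additional refinement is required for this criterion alone. Suppose for contradiction there exist $K'' \leq K$, a standard turn $I \subset G_{K''}$, and a $3$-pronged star set $U \subset X$ with $\pi_{K''}(U) = I$, and let $x$ be the center of $U$. By Lemma \ref{shapse of plaques 1}, the plaque of a sufficiently small standard neighborhood at $x$ is the inverse limit of the maximal star chain at $x$; since $U$ is path-connected and contains $x$, it is locally trapped in that plaque, forcing $x$ to be an $n_x$-pronged singularity with $n_x \geq 3$ and the three prongs of $U$ to locally agree with three of the $n_x$ maximal-chain prongs at $x$. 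Since $K'' \leq K \leq K_x$, iterating the star-chain property gives that $f^j_{K''}$ restricted to the maximal star set $U^{\max}_j$ is a homeomorphism onto $U^{\max}_{K''}$ for each $j \leq K''$. Consequently $\pi_{K''}|_U$ is locally a homeomorphism onto a $3$-pronged sub-star of $U^{\max}_{K''}$, contradicting that its image is the $2$-pronged turn $I$.

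For the second criterion (the dichotomy on singular segments), I would combine a monotonicity observation with the finiteness of singularities. For each singularity $x$ and each unordered pair of prongs $p$ of $x$, let $L^{x,p}$ denote the corresponding singular segment in the maximal plaque at $x$, so that $\pi_j(L^{x,p})$ is a turn at every level $j \leq K_x$. The key monotonicity claim is: if a leaf segment $L''$ takes $\pi_{j-1}(L^{x,p})$, then $L''$ also takes $\pi_j(L^{x,p})$. Indeed, $f_{j-1}$ restricted to the turn $\pi_{j-1}(L^{x,p})$ is a homeomorphism onto $\pi_j(L^{x,p})$ by the star-chain property of the maximal chain, so $\pi_j|_{L''} = f_{j-1} \circ \pi_{j-1}|_{L''}$ is a composition of homeomorphisms. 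Define the threshold
\[
  J_{x,p} := \inf\{\, j \in \n : \pi_j(L^{x,p}) \text{ is taken by some leaf segment}\,\} \in \n \cup \{-\infty, +\infty\}.
\]
By Proposition \ref{finitely many singularities} there are finitely many singularities, and each has only finitely many prong pairs, so the set $\mathcal{J}$ of finite values of $J_{x,p}$ is finite. Choose $K' \leq K$ with $K' < \min \mathcal{J}$ (taking $K' = K$ if $\mathcal{J}$ is empty). Then for every singular segment considered in $\zeta_{<K'}$, the associated threshold is either $-\infty$ (so alternative (a) holds at all levels $\leq K'$) or strictly exceeds $K'$ or equals $+\infty$ (so by the upward-closed nature of ``taken,'' no projection at levels $\leq K'$ is taken, giving alternative (b)). Thus $\zeta_{<K'}$ satisfies criterion 4, and by the preceding paragraph it satisfies criterion 3 as well.

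The main obstacle is the local structural argument underlying criterion 3: rigorously justifying that the three prongs of an arbitrary $3$-pronged star set $U \subset X$ centered at a singularity $x$ must, near $x$, coincide with three of the $n_x$ prongs of the maximal chain at $x$. This requires identifying the local plaque via Lemma \ref{shapse of plaques 1}, using path-connectedness of $U$ to trap it inside the plaque, and then invoking maximality of the chain to pin down the prong-matching. The remaining ingredients---the star-chain homeomorphism for fold compositions, the monotonicity of ``taken,'' and the finiteness of singularities and prong pairs---are comparatively routine.
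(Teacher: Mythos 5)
Your proof is correct and follows essentially the same route as the paper: criterion 3 is ruled out by contradicting the fact that $\pi_K$ restricted to the maximal singular plaque $V^x$ is a homeomorphism (a consequence of the level $K$ built in the vertex-stabilization lemma), and criterion 4 is handled by the monotonicity of ``taken'' along levels together with taking a minimum over a finite family of thresholds. The only cosmetic difference is that you index the thresholds by (singularity, prong-pair) where the paper indexes by the finitely many optimal turns of $G_K$; the prong-matching step you flag as the main obstacle is treated with the same brevity in the paper's own argument.
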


\begin{proof}
    Assume all the notations that were established in the proof of Lemma \ref{Vertex Stabilizing Criteria}. 
    Let $K \in \n$ be what was established at the end of the earlier proof. Then for each singularity $s$ of $X$, we have a maximal star chain $\mathcal{V}^s :  V^s_{K} \xleftarrow[]{f_{K-1}} V^s_{K-1} \xleftarrow[]{f_{K-2}} V^s_{K-2} \xleftarrow[]{f_{K-3}}...$ that starts at $K$, and therefore $\pi_K$ restricted to the \hyperlink{Singular Plaques}{singular plaque} $V^s := \varprojlim^{\zeta} \mathcal{V}^s$, is a homeomorphism onto its image $V^s_K$ in $G_K$.

    \vspace{4mm}
    
    We aim to show that $\zeta_{<K}$ satisfies criterion 3 from Remark \ref{The Stabilizing Hypotheses} (and later in the proof, we will show the existence of an integer $K' \leq K$ such that $\zeta_{<K'}$ satisfies criterion 4). Note that if there exists a $3-$pronged star set $U$ in $X$ that projects to a turn in any level graph of $\zeta_{<K}$, then $U$ would project to a turn in $G_K$.
    Now suppose that there is a turn $I$ in $G_K$ and a $3-$pronged star set $U \subset X(\zeta)$ such that $\pi_K (U) = I$. Then the center $x$ of $U$ is a singularity of $X(\zeta)$. Consider the maximal star chain $\mathcal{V}^x$ for $x$ that was established earlier. Then $U' := U \cap V^x $ must be a $3-$pronged star set that projects to the interval $I \cap V^x_K$, implying that $\pi_K |_{U'}$ is not a homeomorphism. This contradicts the earlier conclusion that $\pi_K |_{V^x}$ is a homeomorphism. Thus, for each $3-$pronged star set $\Tilde{U}$ of $X$, $\Tilde{U}$ projects to no turn of $G_K$, and therefore, $\Tilde{U}$ projects to no turn of $G_j$ where $j \in \n +K$. 
    
    \vspace{4mm}

    We aim to find a $K' \in \n + K$, such that $\zeta_{<K'}$ satisfies criterion 4 from Remark \ref{The Stabilizing Hypotheses}. We will accomplish this by choosing, for each optimal turn $I$ of $G_K$, an appropriate level ``$K^I$" (as described in the next paragraph), and then letting $K' := Min \{ K^I : I$ is an optimal turn in $G_K \}$, which is well defined since $G_K$ only has finitely many optimal turns. 

    \vspace{4mm}
    
    Now, let $I$ be a \hyperlink{optimal turns}{level $K$ optimal turn of $\zeta$}. If $I$ is not sustained by any \hyperlink{leaf segments}{singular segment}, let $K^I := K$. Now suppose $I$ is sustained by a singular segment $L$ in $X$. (It follows from vertex stabilization that $L$ is unique.)
    Let $\mathcal{I}^L :  I_{K} \xleftarrow[]{f_{K-1}} I_{K-1} \xleftarrow[]{f_{K-2}} I_{K-2} \xleftarrow[]{f_{K-3}}...$ be the unique $2-$pronged \hyperlink{star chain representing a set in X}{star chain 
    representing $L$ that starts with $I$} (i.e $I_K = I$ and $\varprojlim^{\zeta} \mathcal{I}^L = L$).
    Note that, if there exists an integer $J < K$ such that $I_J$ is taken by a leaf segment, then for each integer $j \in [J , K]$, $I_j$ is taken by the same leaf segment. 
    Thus, exactly one of the following is true. Either,
    \begin{enumerate}
        \item[(a.)] for each integer $j \leq K$, there exists a leaf segment that takes $I_j$, or,
        \item[(b.)] there exists an integer $K_{L} \leq K$ such that,
        \begin{enumerate}
            \item[i.] for each integer $j \leq K_{L} $, no leaf segment takes $I_j$, and,
            \item[ii.] for each integer $j > K_L$ in $\n +K$,
        there exists a leaf segment that takes $I_j$.
        \end{enumerate}
    \end{enumerate}
    If case (a.) is true, then let $K^I := K$. If case (b.) is true, then let $K^I := K_L$. Then, let $K' := Min \{ K^I : I$ is an optimal turn in $G_K \}$.

\end{proof}

\vspace{4mm}

\begin{proposition}
    \label{Every proper split sequence can be stabilized}
    Let $\zeta$ be a proper split sequence. Then there exists $K \in \n$ such that $\zeta_{<K}$ is a stabilized proper split sequence.
\end{proposition}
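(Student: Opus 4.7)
The plan is a direct combination of the two preceding lemmas together with the observation that the vertex stabilizing criteria are monotone under further truncation. First I would invoke Lemma \ref{Vertex Stabilizing Criteria} to produce an integer $K_1 \in \n$ such that $\zeta_{<K_1}$ satisfies the two vertex criteria of Remark \ref{The Stabilizing Hypotheses}. Then I would apply Lemma \ref{Turn Stabilizing Criteria} (whose own proof is already set up to begin from the level produced by Lemma \ref{Vertex Stabilizing Criteria}) to obtain an integer $K' \leq K_1$ such that $\zeta_{<K'}$ satisfies the two turn stabilizing criteria.

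The only remaining check is that $\zeta_{<K'}$ still satisfies the vertex stabilizing criteria. This is immediate from the quantifier structure: criteria 1 and 2 both read ``for each $j \leq K_1$ and each $q \in G_j$, a condition on $F_q$ and its pre-singularities''. Since $K' \leq K_1$, the index set $\{j : j \leq K'\}$ is contained in $\{j : j \leq K_1\}$, so the condition restricts without loss. Crucially, the induced solenoid $X(\zeta_{<K'}) = X(\zeta_{<K_1}) = X(\zeta)$ and the set $PSing(X)$ of pre-singularities do not depend on the choice of truncation level, so the fibers $F_q$ and the pre-singularities they contain are the same objects whether viewed inside $\zeta_{<K_1}$ or inside $\zeta_{<K'}$.

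Setting $K := K'$, all four criteria of Remark \ref{The Stabilizing Hypotheses} hold for $\zeta_{<K}$, so by Definition \ref{Stabilized Split Sequence} the tail $\zeta_{<K}$ is a stabilized proper split sequence. The substantive work has been carried out in the two preceding lemmas, so no serious obstacle remains; the only thing to watch is the bookkeeping observation that truncating further down cannot reintroduce violations of the vertex criteria, which is automatic because those criteria are universally quantified over levels and because truncation does not alter the underlying solenoid or its pre-singularities.
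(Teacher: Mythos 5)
Your proposal is correct and follows the same route as the paper, which simply cites Lemmas \ref{Vertex Stabilizing Criteria} and \ref{Turn Stabilizing Criteria}; your extra observation that the vertex criteria persist under further truncation (because they are universally quantified over levels and the solenoid and its pre-singularities are unchanged) is a correct piece of bookkeeping the paper leaves implicit.
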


\begin{proof}
    Follows from Lemmas \ref{Vertex Stabilizing Criteria} and \ref{Turn Stabilizing Criteria}.
\end{proof}

\vspace{4mm}

The immediate consequences of Stabilization Hypotheses, were laid out in Remark \ref{The Stabilizing Hypotheses}. 

\vspace{4mm}

\begin{remark}[More Consequences of Stabilization]
    \label{More Consequences of Stabilization}
    Let $\zeta$ be a proper stabilized split sequence. Then,
    \begin{enumerate}
        \item Each maximal star chain in $\zeta$ starts at level $0$.
        \item For each $j \in \n$ and for each \hyperlink{plaque}{standard plaque} $V$ in $X$, the map $\pi_j |_{V}$ is a homeomorphism onto its image in $G_j$.
    \end{enumerate}
\end{remark}

\vspace{4mm}

In Chapter \ref{Ch: Solenoids as Foliated Spaces} and beyond, we will always assume that the split sequences being discussed are proper and stabilized.

\vspace{4mm}

\begin{center}
    \section{Solenoids as Foliated Spaces} \label{Ch: Solenoids as Foliated Spaces}
\end{center}


\vspace{4mm}

\h Throughout the discussion in this chapter, we assume that $\zeta$ is a given \hyperlink{Stabilized Split Sequence}{stabilized} proper split sequence and $X = X(\zeta)$ its induced solenoid.

\vspace{4mm}

Note that the collection of turn tunnels of $X$ cover $X - Sing(X)$, and the collection of extended turn tunnels of $X$ cover $X$. Thus understanding the shapes of these tunnel sets can shed light on understanding the shape of $X$. The purpose of this chapter is to show that we can build a naturally arising singular foliated atlas on $X$ (resp. a foliated atlas on $X - Sing(X)$) akin to singular foliated atlases (resp. foliated atlases) on surfaces. 




\vspace{4mm}

We start this discussion with some context and inspiration from foliations and singular foliations on surfaces, and a description of how atlases are used to endow the said structures on surfaces [Section \ref{Foliated Atlases}]. 

\vspace{4mm}

In Section \ref{sec: Tunnels}, we will build naturally arising parameterizations for turn tunnels and extended turn tunnels of $X$ [\ref{Sec: Canonically Given Tunnel Parameterizations}], then use those parameterizations to show properties of the aforementioned classes of tunnels [\ref{Sec: Properties of Tun}]. We will show that each turn tunnel is open in $X$ [Lem. \ref{turn tunnels are open}], and that each extended turn tunnel (and each turn tunnel) is homeomorphic to the product of a totally disconnected space and an open interval [Prop. \ref{ExTun are Tunnel Sets}, Rem. \ref{An Extended Turn Tunnel with a Few Pre-Leaf Segments Removed}]. It will also be apparent that given an extended turn tunnel (resp. turn tunnel) $T$, each pre leaf segment (resp. leaf segment) contained in $T$, is a path component of $T$ [Rem. \ref{pre-leaf Segments and Path components}]. Then, in subsection \ref{Sec: Tun in ExTun}, we will investigate how exactly turn tunnels are contained in extended turn tunnels.

\vspace{4mm}

In Section \ref{Sec: Shape of Standard Nbhds}, we shall investigate the shape of standard neighborhoods, by observing how standard neighborhoods can be constructed with extended turn tunnels. We will define ``Star Tunnel Sets" [Def. \ref{star tunnel sets}] as certain quotients of finitely many tunnels and a star set, then show that each singular standard neighborhood is the disjoint union of a star tunnels set and finitely many turn tunnels [Prop. \ref{sing stnd nbhds vs star tunnel nbhds}].

\vspace{4mm}

In Section \ref{Sec: An Atlas for the Solenoid}, we will show that, $X - Sing(X)$ has a finite open cover $\mathcal{TC}(X)$, where each element of $\mathcal{TC}(X)$ is a turn tunnel [\ref{sec: The Tunnel Atlas}], and furthermore, $X$ has a finite open cover $\mathcal{STC}(X)$ where each element of $\mathcal{STC}(X)$ is either a star tunnel neighborhood or a turn tunnel [\ref{sec: The Star Tunnel Atlas}]. We will then show that, for each distinct pair of elements $T,T' \in \mathcal{TC}(X)$ (resp. $T,T' \in \mathcal{STC}(X)$) that non-trivially intersect, $T \cap T'$ is a finite disjoint unions of \hyperlink{abstract tunnels}{tunnels} [Lem. \ref{how turn atlas charts intersect}, \ref{intersecting elements of STC(X)}].

\vspace{4mm}

\subsection{Singular Foliated Atlases on Surfaces}
\label{Foliated Atlases}

\vspace{4mm}

A foliation can arise naturally on a manifold as a family of solutions to a system of differential equations \cite{bams/1183535509}. However, in this text we aim to adopt the concept of a foliation into a non-manifold context. 
\vspace{4mm}

This is an expositional section on foliations and singular foliations on surfaces (meant as a precursor to the exploration of the naturally arising foliated-structure of solenoids), containing some definitions borrowed from the wider literature, and other definitions constructed to be equivalent to the classical definitions so that foliations are looked at from a purely topological point of view. Readers who are familiar with the aforementioned preliminary material may skip to Section \ref{sec: Tunnels}.

\vspace{4mm}


\subsubsection*{Foliations on Surfaces}

\vspace{4mm}

In the context of differentiable manifolds, a ``Coordinate Chart" is a way of endowing the notion of ``Euclidean Coordinates" on a neighborhood of a manifold, and an ``Atlas" (which is a collection of coordinate charts whose target neighborhoods cover a given manifold) is a way of formalizing the notion that `a manifold is locally Euclidean'. 

\vspace{4mm}

\begin{definition}[An Atlas on a Manifold]
    Let $d \in \mathds{N}$, and let $M$ be a manifold of dimension $d$.
    A ``$d-$Dimensional Coordinate Chart on $M$" is a homeomorphism $\phi : U \longrightarrow V$ where $U$ is a 
    neighborhood of $\mathds{R}^d$ and $V$ is a neighborhood of $M$.
    Let $\mathcal{A}$ be a set and let $Atl(M) := \{\phi_{\alpha} : U_{\alpha} \longrightarrow V_{\alpha} \}_{\alpha \in \mathcal{A}}$ be a collection of $d-$dimensional coordinate charts on $M$.
    For each distinct $\alpha , \beta \in \mathcal{A}$, if $V := V_{\alpha} \cap V_{\beta} \neq \emptyset$, we call the map $\phi_{\beta}^{-1} \circ \phi_{\alpha} |_{\phi_{\alpha}^{-1} (V)} :  \phi_{\alpha}^{-1} (V) \longrightarrow \phi^{-1}_{\beta} (V)$ a ``Transition Map of $Atl(M)$".
    We call the collection $Atl(M)$ an ``Atlas on $M$ with Charts of Dimension $d$" if,
    \begin{enumerate}
        \item the collection $\{ V_{\alpha} \}_{\alpha \in \mathcal{A}}$ is an open cover for $M$, and,
        \item each transition map of $Alt(M)$ is a homeomorphism.
    \end{enumerate}
    Item 2. above is called the ``Topological Consistency Criterion for $Atl(M)$". In addition to the above, if
    \begin{enumerate}
        \item[3.] each transition map of $Atl(M)$ is differentiable (resp. smooth),
    \end{enumerate}
    then we call $Atl(M)$ a ``Differentiable Atlas on $M$" (resp. a ``Smooth Atlas on $M$"). We call item 3. the ``Differentiable (resp. Smooth) Consistency Criterion for $Atl(M)$".
\end{definition}

\vspace{4mm}

Given a topological manifold $M$, defining a differentiable (resp. smooth) atlas on $M$ is a prominent way of endowing on $M$ a differentiable (resp. smooth) structure. By changing the nature of the consistency criterion given in item 3. above, to a specific kind of a structure-preserving map, we may endow on $M$ a structure of our choosing. The next few definitions will explore this notion where the desired structure is a foliation. 


\vspace{4mm}

\begin{definition}[Foliations and Singular Foliations on a Surface]
    \label{Foliations on a Surfaces}
    Let $S$ be a surface.
    We call $L \subset S$ an ``Injectively Immersed $1-$Manifold in $S$" (resp. Injectively Immersed Branched $1-$Manifold in $S$") if, $L$ is the image of an injective immersion from a $1-$manifold (resp. a branched $1-$manifold) to $S$. 

    \vspace{4mm}

    A ``One-Dimensional Foliation on $S$" (resp. ``One-Dimensional Singular Foliation on $S$") denoted ``$\mathcal{F}$" is an equivalence relation on $S$ such that each equivalence class of $\mathcal{F}$ is an injectively immersed $1-$manifold (resp. an injectively immersed $1-$manifold or an injectively immersed branched $1-$manifold) in $S$. The pair $(S, \mathcal{F})$ is called a ``Foliated Surface" (resp. a ``Singular Foliated Surface"), and each equivalence class of $\mathcal{F}$ is called a ``Leaf of $(S, \mathcal{F})$". 

    \vspace{4mm}

    Furthermore, if $(S, \mathcal{F})$ is a singular foliated surface, then each leaf of $(S, \mathcal{F})$ that is strictly an immersed branched $1-$manifold, is called a ``Singular Leaf of $(S, \mathcal{F})$".
\end{definition}

\vspace{4mm}

Note that in the above context, $S = \bigsqcup_{L \text{ is a leaf of } (S, \mathcal{F})} L$. 



\vspace{4mm}

The next few definitions shall lay out the concept of a foliated atlas on a surface [Def. \ref{Foliated Atlases on Surfaces}], and how we obtain a foliation out of such an atlas [Def. \ref{surface leaves}]. 

\vspace{4mm}

\begin{definition}[Foliated Atlases on Surfaces]
    \label{Foliated Atlases on Surfaces}
    Let $S$ be a surface, and let $Atlas(S) = \{\phi_{\alpha} : U_{\alpha} \longrightarrow V_{\alpha} \}_{\alpha \in \mathcal{A}}$, be an atlas on $M$ with charts of dimension $2$. 
    We call $Atlas(S)$ a ``Foliated Atlas of $S$" if,
    \begin{enumerate}
        \item for each $\alpha \in \mathcal{A}$, there exist two open intervals $C_{\alpha}$ and $I_{\alpha}$ such that $U_{\alpha} = C_{\alpha} \times I_{\alpha}$. (For each $c \in C_{\alpha}$ the set $\phi_{\alpha} (\{c\} \times I_{\alpha}) \subset V_{\alpha}$ is \hypertarget{atlas leaf segment}{called} a ``Leaf Segment of $V_{\alpha}$ Given by $Atlas(S)$".)
        \item When $\alpha \in \mathcal{A}$, and, $C , I$ are respectively open sub-intervals of $C_{\alpha}, I_{\alpha}$, \hypertarget{atlas sub-leaf segment}{we} call $\phi_{\alpha} (C \times U)$ a ``Sub-Product-Space of $V_{\alpha}$ Given by $Atlas(S)$", and for each $c \in C$, we call $\phi_{\alpha} (\{c\} \times I)$ a ``Sub-Leaf Segment of $V_{\alpha}$ Given by $Atlas(S)$".
        $Atlas(S)$ satisfies the secondary consistency criterion: 
        For each $\alpha , \beta \in \mathcal{A}$, and for each connected component $V$ of $V_{\alpha} \cap V_{\beta}$,
        \begin{enumerate}
            \item $V$ is a sub-product-space of $V_{\alpha}$ and of $V_{\beta}$ in $Atlas(S)$, and,
            \item for each $L \subseteq V$, $L$ is a sub-leaf segment of $V_{\alpha}$ if and only if $L$ is a sub-leaf segment of $V_{\beta}$. 
        \end{enumerate}
    \end{enumerate}   
\end{definition}

\vspace{4mm}
 

The following definition will precisely explain how to obtain a foliation (in terms of its leaves) out of a given foliated atlas on a surface. 

\vspace{4mm}

\begin{definition}[The Foliation Given by an Atlas, and Leaves of a Foliation]
    \label{surface leaves}
    \hypertarget{surface leaves}{Let} $S$ be a surface, and $Atlas(S) = \{\phi_{\alpha} : U_{\alpha} \longrightarrow V_{\alpha} \}_{\alpha \in \mathcal{A}}$ a foliated atlas of $S$. We call $L \in S$ a ``Concatenation of Leaf Segments in $S$ Given by $Atlas(S)$" if either $L$ is a \hyperlink{atlas leaf segment}{leaf segment given by $Atlas(S)$}, or there exist $m \in \mathds{N}+1$, and a collection of leaf segments $\{ L_i \}_{i=1}^m$ given by $Atlas(S)$ such that,
    \begin{enumerate}
        \item for each $i \in \{1,...,m-1\}$, $L_i \cap L_{i+1}$ is a \hyperlink{atlas sub-leaf segment}{sub-leaf segment given by $Atlas(S)$},
         and,
        \item $L = \bigcup_{i=1}^m L_i$.
    \end{enumerate}

    We denote by ``$\mathcal{F}_{Atlas(S)}$" the equivalence class determining the ``Foliation Given by $Atlas(S)$", defined as follows: Let $x, y \in S$. 
    We say that ``$x$ and $y$ are $\mathcal{F}_{Atlas(S)}$-Equivalent to Each Other" if, there exists a concatenation of leaf segments $L$ in $S$ given by $Atlas(S)$ such that, $x,y \in L$. 
    Then each ``Leaf Given by $Atlas(S)$" is an equivalence class of $\mathcal{F}_{Atlas(S)}$.
\end{definition}

\subsubsection*{Singular Foliations on Surfaces}

\vspace{4mm}

Here, we shall lay out the concepts necessary to define a singular foliated atlas (which can endow on a surface, a one-dimensional singular foliation). Then in later sections, we will adopt the features of singular foliations to the context of solenoids. 

\vspace{4mm}

It should be noted that, in the following discussion, we will impose the restriction that the underlying surface is compact and hyperbolic, similar to the surfaces discussed in \cite{Thurston2012}. It assures that $0-$pronged or $1-$pronged singularities do not occur. This is the surface context that is closest to the context of proper solenoids, since in a proper solenoid each singularity will have a higher number of prongs than $2$.

\vspace{4mm}

The purpose behind the following definition is to establish a model set for the ``Singular Neighborhoods" of a surface (of the aforementioned type) with a singular foliation. 


\vspace{4mm}

\begin{definition}[Model Singular Neighborhood, Singular Segment]
    \label{Model Singular Neighbourhood}
    Let $m \in \mathds{N} + 2$ and $\mathds{Z}_m$ be the additive group $\mathds{Z} / m \mathds{Z}$. Let $\{ B_i \}_{i \in \mathds{Z}_m}$ be a collection of spaces each homeomorphic to $[0,1) \times (-1,1)$ along with a collection of homeomorphisms $\{ h_i : [0,1) \times (-1,1) \longrightarrow B_i\}_{i \in \mathds{Z}_m}$. 
    Given a quotient map $q : (\bigsqcup_{i \in \mathds{Z}_m} B_i ) \longrightarrow B$ such that,
    \begin{enumerate}
        \item for each $i \in \mathds{Z}_m$ and for each $t \in [0,1)$, $q$ satisfies $q \circ h_i ((0,t)) = q \circ h_{i+1} ((0,-t))$ (gluing together $h_i ( \{0\} \times [0,1))$ and  $h_{i+1} ( \{0\} \times (-1,0])$), and,
        \item for each  $i \in \mathds{Z}_m$ and for each $(s,t) \in [0,1) \times (-1,1)$,
        \begin{equation*}
        | q^{-1} (q \circ h_i ((s,t))) | = \begin{cases}
			1, & \text{if} \ (s,t) \in (0,1) \times (-1,1)\\
            2, & \text{if} \ (s,t) \in \{0\} \times ((-1,1) -\{0\}) \\
            m, & \text{if} \ (s,t) = (0,0)
		 \end{cases}
    \end{equation*}
    \end{enumerate}
    we call $q$ a ``Star Quotient Map". An image of a star quotient map (along with the underlying structure given by $q$) is called a ``Model Singular Neighbourhood". 
\end{definition}

\vspace{4mm}

In the above context, $L \subset B$ is called a ``Leaf Segment of $(B,q)$" if $L = q \circ h_i (\{t\} \times (-1,1))$ for some fixed $t \in (0,1)$ and fixed $i \in \mathds{Z}_m$. $L \subset B$ is called the ``Singular Segment of $(B,q)$" if $L =  \bigcup_{i \in \mathds{Z}_m} ( q \circ h_i (\{0\} \times (-1,1)))$.

\vspace{4mm}

\begin{definition}[A Singular Foliated Atlas on a Surface]
    \label{singular foliation}
    \hypertarget{singular foliation}{Let $S$ be} a compact hyperbolic surface, and let $Atlas(S) = \{\phi_{\alpha} : U_{\alpha} \longrightarrow V_{\alpha} \}_{\alpha \in \mathcal{A}}$, an atlas where for each $\alpha \in \mathcal{A}$, $U_{\alpha}$ is a neighborhood of $\mathds{R}^2$ and the consistency criterion is that the transition maps are homeomorphisms. We call $Atlas(S)$ a ``Singular Foliated Atlas on $S$" if, there exists a discrete subspace $D$ of $S$, called the ``Set of Singularities Given by $Atlas(S)$", such that,
    \begin{enumerate}
        \item for each $x \in D$, there exists a unique $\alpha_x \in \mathcal{A}$ such that $V_{\alpha_x} \cap (D -\{x\}) = \emptyset$ and $U_{\alpha_x}$ is homeomorphic to a model singular neighborhood,
        \item $Atlas(S) - \{\phi_{\alpha_x} : U_{\alpha_x} \longrightarrow V_{\alpha_x} \}_{x \in D}$ is a foliated atlas on $S - D$, and,
        \item For each $x \in D$, and for each leaf segment (resp. singular segment) $L$ of $U_{\alpha_x}$, we call $\phi_{\alpha_x} (L) \subset V_{\alpha_x}$ a ``Leaf Segment (resp. Singular Segment) of $V_{\alpha_x}$ Given by $Atlas(S)$". 
        $Atlas(S)$ satisfies the additional consistency criterion: For each $\alpha , \beta \in \mathcal{A}$, for each connected component $V$ of $V_{\alpha} \cap V_{\beta}$, and for each leaf segment or singular segment $L$ of $V_{\alpha}$ that non-trivially intersects $V$, 
        \begin{enumerate}
            \item $L \cap V$ is an embedded open interval in $S - D$,
            \item $L \cap V$ is contained in a leaf segment or a singular segment of $V_{\alpha}$, and, of $V_{\beta}$.
        \end{enumerate}
    \end{enumerate}
\end{definition}

\vspace{4mm}

The following definition shows how to obtain the singular foliation determined by a given singular foliated atlas on a surface, by constructing its leaves.

\vspace{4mm}


\begin{definition}[Leaves of a Singular Foliation]
    \label{Leaves of a Singular Foliation}
    Let $S$ be a compact hyperbolic surface with a singular foliated atlas $Atlas(S) = \{\phi_{\alpha} : U_{\alpha} \longrightarrow V_{\alpha} \}_{\alpha \in \mathcal{A}}$. Let $D$ denote the set of singularities given by $Atlas(S)$.
    We call $L \subset S$ a ``Segment Given by $Atlas(S)$" if either $L$ is a leaf segment given by $Atlas(S)$ or a singular segment given by $Atlas(S)$.
    We call $L \subseteq S$ a ``Concatenation of Segments in $S$ Given by $Atlas(S)$" if either $L$ is a segment given by $Atlas(S)$, or there exist $m \in \mathds{N}+1$, and a collection of segments $\{ L_i \}_{i=1}^m$ given by $Atlas(S)$ such that,
    \begin{enumerate}
        \item for each $i \in \{1,...,m-1\}$, $L_i \cap L_{i+1}$ is an embedded open interval in $S - D$,
         and,
        \item $L = \bigcup_{i=1}^m L_i$.
    \end{enumerate}
    We denote by ``$\mathcal{F}_{Atlas(S)}$" the equivalence class determining the ``Foliation Given by $Atlas(S)$", defined as follows: Let $x, y \in S$. 
    We say that ``$x$ and $y$ are $\mathcal{F}_{Atlas(S)}$-Equivalent to Each Other" if, there exists a concatenation of segments $L$ in $S$ given by $Atlas(S)$ such that, $x,y \in L$. 
    Then each ``Leaf Given by $Atlas(S)$" is an equivalence class of $\mathcal{F}_{Atlas(S)}$. A Leaf $L$ of $(S, \mathcal{F}_{Atlas(S)})$ is called a ``Singular Leaf of $(S, \mathcal{F}_{Atlas(S)})$" if $L \cap D \neq \emptyset$.
\end{definition}

\vspace{4mm}

In Section \ref{sec: leaves}, we shall use the above definition as a blueprint in defining ``Leaves" of the solenoid, that spring up not as a result of a structure endowed upon the solenoid by an atlas, but as naturally occurring features of the solenoid due to it's topology. 
In the next few sections we will lay the groundwork to build a singular foliated atlas for a solenoid.

\vspace{4mm}

In Section \ref{sec: Star Tunnel Sets}, we will define ``Star Tunnel Sets" [Definition \ref{star tunnel sets}] that will model a singular neighborhood of a solenoid, analogous to the Model Singular Set [Definition \ref{Model Singular Neighbourhood}] of a singular foliated surface. But first we will explore the building blocks (called Tunnels) out of which the Star Tunnel Sets will be built.

\vspace{4mm}


\subsection{Tunnels}
\label{sec: Tunnels}

\vspace{4mm}

For the rest of this chapter, we will assume that $X = X(\zeta)$ is the solenoid induced by a stabilized proper split sequence [Remark \ref{The Stabilizing Hypotheses}, Definition \ref{Stabilized Split Sequence}] $\zeta  : G_0 \xleftarrow[]{f_{-1}} G_{-1} \xleftarrow[]{f_{-2}} G_{-2} \xleftarrow[]{f_{-3}} ...$.

\vspace{4mm}

Before we establish a singular foliated atlas for $X$, as an intermediate step, we will establish a foliated atlas for the non-singular potion of the solenoid, $X -  Sing(X)$, called the ``\hyperlink{Def: The Tunnel Atlas}{Tunnel Atlas}". All this will be laid out in Section \ref{Sec: An Atlas for the Solenoid}. In this section, we focus on understanding the building blocks out of which a typical coordinate chart of these atlases are built.

\vspace{4mm}

Recall turn tunnels and extended turn tunnels [Definition \ref{Turn Tunnels and Extended Turn Tunnels}] from Section \ref{sec: Stabilizing Split Sequences}. We will start with defining a wider class of spaces called ``Tunnels" [Definition \ref{Tunnel Sets}] that both turn tunnels and extended turn tunnels fall under. Then in Sub-section \ref{Sec: Canonically Given Tunnel Parameterizations}, we will lay out a way of recognizing tunnels in $X$ called ``Canonically Given Tunnel Parameterizations". 

\vspace{4mm}

In Sub-section \ref{Sec: Properties of Tun}, we will show that, for each \hyperlink{turn in zeta}{turn $I$ in $\zeta$}, $Tun_I$ and $ExTun_I$ are each homeomorphic to a product space of the form $C \times I$ where $C$ is a totally disconnected space [Proposition \ref{ExTun are Tunnel Sets}, Remark \ref{An Extended Turn Tunnel with a Few Pre-Leaf Segments Removed}]. 

\vspace{4mm}

In Sub-section \ref{Sec: Tun in ExTun}, we will explore how turn tunnels are embedded in extended turn tunnels. 
More specifically, we will classify singular segments in to ``\hyperlink{limit vs isolated segments}{Limit Segments}" (a singular segment that entirely consists of limits points of a turn tunnels) and ``\hyperlink{limit vs isolated segments}{Isolated Segments}" (a singular segment that is not a limit segment). And we will show that, 
for a turn $I$ of a level $j$ standard star neighborhood $V$, 
such that $Tun_I \neq \emptyset$, $ExTun_I$ is the closure of $Tun_I$ in $\pi^{-1}_j (V)$ [Proposition \ref{ExTun is closed in O}, Corollary \ref{ExTun_is_Cl(Tun)}].

\vspace{4mm}

We will start with laying out some preliminary definitions.


\vspace{4mm}

\begin{definition}[Tunnel Parameterization]
    \label{Tunnel Parameterization}
    Given a proper split sequence $\zeta$, $j \in \n$, a \hyperlink{Standard Star Neighbourhoods}{standard star neighborhood} $\Tilde{V} \subset G_j$ centered at $p$, a turn $I$ of $\Tilde{V}$, a totally disconnected set $C$ and a map $h : C \times I \longrightarrow X(\zeta)$, we call $h$ a ``Tunnel Parameterization of $X(\zeta)$ rel $I$" if, $h$ is a homeomorphism on to a set $T \subseteq \pi^{-1}_j (I) \subset X(\zeta)$ such that,
    for each $q \in I$, $ (h (C \times \{q\})) = \pi_j^{-1} (q) \cap T  $.
\end{definition}

\vspace{4mm}

\hypertarget{cross-section}{For} each $q \in I$, we shall call $h (C \times \{q\})$ a ``Cross-Section of $T$".
Furthermore, we shall call $h (C \times \{p\})$ the ``Central Cross-Section of $T$ rel $\Tilde{V}$". 


\vspace{4mm}

\begin{definition}[Tunnels and Tunnel Neighborhoods]
    \label{Tunnel Sets}
    \hypertarget{Tunnel Sets}{Given} a proper solenoid $X(\zeta)$, a subset $T$ of it is called a ``Tunnel of $X(\zeta)$" if $T$ is the image of a tunnel parameterization. A tunnel of $X(\zeta)$ is called a ``Tunnel Neighborhood of $X(\zeta)$" if it's open in $X(\zeta)$. 
\end{definition}

\hypertarget{abstract tunnels}{To be} used in a more general context, we shall define any product space of the format $a \ totaly \ disconnected \ space \times an \ open \ interval$ an ``Abstract Tunnel".

\vspace{4mm}

In Remark \ref{Canonical Tunnel Parameterizations}, we will lay out a specific example of tunnel parameterizations that arise as a byproduct of $X's$ topology. 



\vspace{4mm}


\subsubsection{Canonically Given Tunnel Parameterizations}
\label{Sec: Canonically Given Tunnel Parameterizations}

\vspace{4mm}

This section is dedicated to establishing the concept of ``Canonically Given Tunnel Parameterizations" [Remark \ref{Canonical Tunnel Parameterizations}], which will help identify \hyperlink{Def: Tun and ExTun}{turn tunnels} and \hyperlink{Def: Tun and ExTun}{extended turn tunnels} of the solenoid. 



\vspace{4mm}

The first part of the discussion [Def. \ref{pre-turns}, Lemma \ref{pre-turn_shadows_are_star_set_unions}, Rem. \ref{Fold Compositions Restricted to Pre-Turns}] will establish terminology necessary to identifying pre-leaf segments that belong to the same extended turn tunnel.
Then we will introduce ``Parameterizations Synchronous to $\zeta$" for pre-leaf segments [Definition \ref{Synchronous Turn Parameterizations}] as a precursor to parameterizing extended turn tunnels. 

\vspace{4mm}

We then lay out the following two straight forward results.
\begin{itemize}
    \item Let $I$ be a standard turn in $\zeta$, and $L, L'$ two distinct pre-leaf segments that sustain $I$. Then, $L \cap L' = \emptyset$ [Lemma \ref{Intersecting Pre Leaf Segments}].
    \item For each pre-leaf segment $L$ in $X$, and for each fiber $F$ of $X$ that non-trivially intersects $L$, $F \cap L$ is a single point [Lemma \ref{Intersectig Fibers and Pre Leaf Segments}].
\end{itemize}
The aforementioned two lemmas make sure that ``Canonically Given Tunnel Parameterizations" [Remark \ref{Canonical Tunnel Parameterizations}] are bijective.


\vspace{4mm}

\begin{definition}[Pre-Turns and Pre-Turn Shadows]
    \label{pre-turns} 
    \hypertarget{pre-turns}{Let $I$} be a \hyperlink{turn in zeta}{level $K$ standard turn in $\zeta$} for some $K \in \n$. Then for $j \in - \mathds{N} + K$, a level $j$ turn $\Tilde{I}$ in $\zeta$, is called a ``Level $j$ Pre-Turn of $I$ in $\zeta$" if $f^j_K$ maps $\Tilde{I}$ homeomorphically on to $I$. We call $I$ the ``Trivial Pre-Turn of $I$".
    For each $j \in \n +K$, the ``Pre-Turn Shadow of $I$ in Level $j$" is the subset of \hyperlink{shadows}{$Shadow_j (I)$} given by $PTS_j (I) :=$ the union of all level $j$ pre-turns of $I$ in $\zeta$.
\end{definition}

\vspace{4mm}

\begin{lemma}
    \label{pre-turn_shadows_are_star_set_unions}
    \hypertarget{pre-turn_shadows_are_star_set_unions}{Let $I$ be a turn of a standard star neighborhood $\Tilde{V}_K$ in $G_K$ and let $j < K$. Then $PTS_j (I)$ is a disjoint union of finitely many standard star sets in $G_j$.}
\end{lemma}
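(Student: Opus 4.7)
The plan is to decompose $PTS_j(I)$ as a disjoint union indexed by the finite set $F := (f^j_K)^{-1}(p)$, where $p$ is the center of $I$. First I would observe that $F$ is finite, since $f^j_K = f_{K-1} \circ \cdots \circ f_j$ is a composition of finitely many folds between finite graphs. For each $q \in F$, I would define $T_q \subseteq G_j$ as the union of all level $j$ pre-turns of $I$ centered at $q$. Writing $I = \{p\} \cup I^L \cup I^R$ with $I^L, I^R$ the two open prongs of $I$ at $p$, a pre-turn centered at $q$ is $\{q\}$ together with one prong lifting $I^L$ and one prong lifting $I^R$ under $f^j_K$. Each such prong, being a prong of the standard star neighborhood containing the pre-turn, avoids all special points rel $\zeta$, and by no-backtracking of $\zeta$ it is uniquely determined by its initial germ at $q$. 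Consequently $T_q$ is either empty or an $n$-pronged star set centered at $q$ with $n \geq 2$. To verify $T_q$ is \emph{standard}, I would argue by contrapositive: if $s \in T_q \setminus \{q\}$ were special rel $\zeta$ with witness level $l$, a short case split on the position of $l$ relative to $j$ and $K$ shows, using the appropriate factorization of fold compositions and the shadow definition, that $f^j_K(s) \in G_K$ is special as well; but $f^j_K(s) \in I^L \cup I^R \subset \Tilde{V}_K \setminus \{p\}$, contradicting standardness of $\Tilde{V}_K$.

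For pairwise disjointness of $\{T_q\}_{q \in F}$, I would assume $q \neq q'$ and $s \in T_q \cap T_{q'}$ and derive a contradiction. First, neither $q$ nor $q'$ can lie in a prong of the other's star set, since those prongs image into $I^L \cup I^R$ which avoids $p = f^j_K(q) = f^j_K(q')$. So $s$ must lie on prongs $\pi \subseteq T_q$ and $\pi' \subseteq T_{q'}$ with $s \neq q, q'$, and $\pi, \pi'$ must map under $f^j_K$ to the \emph{same} prong of $I$, since otherwise $f^j_K(s) \in I^L \cap I^R = \emptyset$. Since $T_q$ is standard, $s$ is not a natural vertex, and no-backtracking then gives local injectivity of $f^j_K$ at $s$, so $\pi$ and $\pi'$ agree on a neighborhood of $s$ in $G_j$. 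Parameterizing each of $\pi, \pi'$ by the common prong of $I$, the standard clopen argument — the agreement set is open by local injectivity and closed by continuity — extends this to $\pi = \pi'$ as embedded open intervals of $G_j$, whose distinguished limit endpoints then force $q = q'$.

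Assembling the pieces, $PTS_j(I) = \bigsqcup_{q \in F} T_q$, since every pre-turn has its center in $F$ and each $T_q$ is by construction a union of pre-turns at $q$; combined with $|F| < \infty$ this gives the required finite disjoint union of standard star sets. The main obstacle will be the disjointness argument, because $f^j_K$ is not a covering map and folds create identifications at natural vertices, so global uniqueness of lifts is not automatic. The argument is saved only by the observation that an overlap point must avoid all natural vertices by standardness of the $T_q$'s, which is precisely the hypothesis needed to invoke local injectivity and to propagate lift-uniqueness from local to global. A secondary technical point, handled in passing in the first paragraph, is verifying that specialness in $G_j$ pushes forward under $f^j_K$ to specialness in $G_K$ across all three relative positions of the witnessing level.
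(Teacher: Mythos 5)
Your proof is correct and arrives at the same decomposition the paper uses: the pieces of $PTS_j(I)$ are indexed by the finitely many preimages of the center $p$ of $I$, which are precisely the centers of the shadow components of $\Tilde{V}_K$ in level $j$. The paper's own proof is a one-line reduction to Remark \ref{Shadow Components} (shadows of standard star neighborhoods are finite disjoint unions of standard star neighborhoods); your lift-uniqueness (clopen) and specialness-pushforward arguments in effect re-derive the relevant portion of that remark directly, which is sound but does more work than the paper needs.
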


\begin{proof}
    Let $I$ be a turn of a standard star neighborhood $\Tilde{V}_K$ in $G_K$ and let $j < K$. Recall from Remark \ref{Shadow Components} that, $Shadow_j (\Tilde{V}_K)$ is a disjoint union of finitely many standard star neighborhoods of $G_j$. And $PTS_j (I)$  is a union of open interval segments $\subseteq Shadow_j (\Tilde{V}_K)$.
\end{proof}

\vspace{4mm}


\begin{remark}[Fold Compositions Restricted to Pre-Turns]
    \label{Fold Compositions Restricted to Pre-Turns}
    Let $j,K \in \n$ such that $j < K$, let $I$ be a \hyperlink{turn in zeta}{level $K$ standard turn in $\zeta$}, and $\Tilde{I}$ a \hyperlink{pre-turns}{level $j$ pre-turn} of $I$. Note that $I$ and $\Tilde{I}$ each contains at most one special point. Therefore, $f^j_K |_{\Tilde{I}}$ is a homeomorphism onto $I$. 
\end{remark}


\vspace{4mm}

\begin{definition}[Parameterizations Synchronous to $\zeta$]
    \label{Synchronous Turn Parameterizations}
    \hypertarget{Synchronous Turn Parameterizations}{Let} $K \in \n$, and let $I$ be a \hyperlink{turn in zeta}{level $K$ standard turn in $\zeta$}. Then for an integer $j < K$, and for a pre-turn $\Tilde{I}$ of $I$, the homeomorphism $l^K_{j, \Tilde{I}} : I \longrightarrow \Tilde{I}$ defined by $l^K_{j, \Tilde{I}} := (f^j_K |_{\Tilde{I}})^{-1}$ is called the ``Parameterization of $\Tilde{I}$ Synchronous to $\zeta$ Based at $I$". (It follows from Remark \ref{Fold Compositions Restricted to Pre-Turns} that $l^K_{j, \Tilde{I}}$ is well defined.)

    \vspace{4mm}

    Now, let $L$ be a pre-leaf segment that sustains the turn $I$. Recall that there exists a $2-$pronged star chain $\mathcal{I}_L : I_{K} \xleftarrow[]{f_{K-1}} I_{K-1} \xleftarrow[]{f_{K-2}} I_{K-2} \xleftarrow[]{f_{K-3}}...$, \hyperlink{star chain representing a set in X}{representing} $L$, where,
    \begin{enumerate}
        \item $I_K = I$, and,
        \item $\varprojlim^{\zeta}_{j \in K \n} I_j = L$.
    \end{enumerate}
   Since for each $j \in \n +K$, parameterization of $I_j$ synchronous to $\zeta$ based at $I$, is uniquely determined, we have a unique parameterization $l^K_{L} : I \longrightarrow L$ given by $l^K_{L} (t) = (t , l^K_{K-1 , I_{K-1}} (t) , l^K_{K-2 , I_{K-2}} (t) , ... )$, 
   such that $\pi_K \circ l^K_{L} =$ the identity map on $I$. We shall call this parameterization the ``Parameterization of $L$ Synchronous to $\zeta$ Based at $I$".

   \vspace{4mm}

   The collection of parameterizations synchronous to $\zeta$ based at $I$ of,
   \begin{itemize}
       \item all pre-turns of $I$, and,
       \item all pre-leaf segments that sustains the turn $I$,
   \end{itemize}
   shall be called the ``System of $\zeta-$Synchronous Parameterizations Based at $I$".
\end{definition}

\vspace{4mm}

We will use the aforementioned $\zeta-$synchronous parameterizations to construct the tituler object in Remark \ref{Canonical Tunnel Parameterizations}.
But first we lay out the following two lemmas meant to ensure that, the canonically given tunnel parameterizations are bijective.

\vspace{4mm}

\begin{lemma}
    \label{Intersecting Pre Leaf Segments}
    Let $I$ be a \hyperlink{turn in zeta}{level $K$ standard turn in $\zeta$} for some $K \in \n$. Let $L , L'$ be two distinct pre-leaf segments that take $I$. Then $L \cap L = \emptyset$.
\end{lemma}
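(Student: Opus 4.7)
The plan is to derive a contradiction by assuming $x \in L \cap L'$ and showing $L = L'$. The essential mechanism is that stabilization forces $\pi_K$ to be injective on each standard plaque of a suitable neighborhood, leaving no room for two distinct pre-leaf segments to share a point while projecting onto the same turn $I$.

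First, since $I$ is a level $K$ standard turn in $\zeta$, by Definition~\ref{turn in zeta} it is a turn of some standard star neighborhood $\Tilde{V} \subset G_K$; set $O := \pi_K^{-1}(\Tilde{V})$, which is a standard neighborhood of $X$. Because each of $L$ and $L'$ sustains $I$, their $\pi_K$-images equal $I \subset \Tilde{V}$, so $L, L' \subseteq O$. Moreover, pre-leaf segments are turns of standard plaques and are therefore themselves $2$-pronged star sets (by Lemma~\ref{shapse of plaques 2} combined with Definition~\ref{center, prong, turn}), hence path-connected. Since both $L$ and $L'$ contain $x$, both are contained in $V_x := Plaque(O,x)$, the unique path component of $O$ through $x$.

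The final step invokes Remark~\ref{More Consequences of Stabilization}: because $\zeta$ is stabilized, $\pi_K$ restricted to $V_x$ is a homeomorphism onto its image, in particular injective. The $\zeta$-synchronous parameterizations $l^K_L, l^K_{L'} : I \to X$ of Definition~\ref{Synchronous Turn Parameterizations} satisfy $\pi_K \circ l^K_L = \pi_K \circ l^K_{L'} = \mathrm{id}_I$, so for every $t \in I$ the points $l^K_L(t)$ and $l^K_{L'}(t)$ both lie in $V_x$ and both project to $t$. Injectivity of $\pi_K|_{V_x}$ therefore forces $l^K_L(t) = l^K_{L'}(t)$ for every $t \in I$, whence $L = L'$, contradicting their distinctness.

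I do not foresee any real obstacle in this argument: the only verification that merits attention is that each pre-leaf segment sustaining $I$ genuinely lies inside the single neighborhood $O$, which is immediate from its $\pi_K$-projection equalling $I \subset \Tilde{V}$. After that observation, packaging $L$ and $L'$ into the common plaque $V_x$ and then applying the stabilization consequence finishes the proof.
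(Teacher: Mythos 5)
Your proof is correct, and its core move is the same as the paper's: since $L$ and $L'$ are path-connected and share a point, they both lie in the single standard plaque $Plaque(O,x)$ of $O=\pi_K^{-1}(\Tilde V_K)$, and stabilization then forbids two distinct lifts of $I$ inside that plaque. Where you diverge is in the final step. The paper argues directly that $L\cup L'$ would be a $3$- or $4$-pronged star set projecting to $I$, contradicting stabilization hypothesis 3 of Remark \ref{The Stabilizing Hypotheses}; you instead invoke item 2 of Remark \ref{More Consequences of Stabilization}, namely that $\pi_K$ restricted to a standard plaque is a homeomorphism onto its image, and conclude $l^K_L=l^K_{L'}$ from injectivity. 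These are two faces of the same fact (the remark is itself a repackaging of the stabilization hypotheses via maximal star chains starting at level $0$), so your route is logically equivalent but slightly cleaner: it avoids the case split on whether the union is $3$- or $4$-pronged and leans on an already-stated consequence. The one point worth being careful about is that Remark \ref{More Consequences of Stabilization} is asserted without proof in the paper, so your argument's rigor ultimately rests on that remark being justified by Proposition \ref{shapse of plaques 3} together with item 1 of the same remark; the paper's version depends only on the hypothesis itself.
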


\begin{proof}
    Assume the set up of the lemma. Let $\Tilde{V}_K$ be the level $K$ standard star neighborhood that $I$ is a turn of, and let $O := \pi^{-1}_K (\Tilde{V}_K) \subset X$. Then $L, L'$ each is a turn of a plaque of $O$. (Recall from Lemma \ref{shapse of plaques 2} that each plaque of $O$ is a star set.) Assume that $L \cap L' \neq \emptyset$. Then $L$ and $L'$ must be turns of the same plaque $V$ of $O$. Since $L \neq L' $, $L \cup L'$ can only be either $3-$pronged or $4-$pronged star set that projects to $I$. This contradicts stabilization hypotheses 3. [Remark \ref{The Stabilizing Hypotheses}].
\end{proof}

\vspace{4mm}

\begin{lemma}
    \label{Intersectig Fibers and Pre Leaf Segments}
    Let $K \in \n$, let $I$ be a level $K$ standard turn in $\zeta$, and let $L$ be a pre-leaf segment that sustains $I$. Then for each $q \in I$, $F_q$ intersects $L$ at exactly one point.
\end{lemma}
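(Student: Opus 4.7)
The plan is to read the conclusion directly off the definition of ``$L$ sustains $I$'' given in Definition~\ref{Taken Turns and Surviving Turns}. By that definition, the hypothesis that $L$ sustains $I$ provides a homeomorphism $l \colon I \to L$ satisfying $\pi_K \circ l = \mathrm{id}_I$. This identity is the entire engine of the proof: since $l$ is a bijection and $\pi_K \circ l$ is the identity, $l$ is in fact the two-sided inverse of the restriction $\pi_K|_L \colon L \to I$, so $\pi_K|_L$ is a bijection onto $I$.

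Once this is in hand, for an arbitrary $q \in I$ I would simply unpack the intersection
\begin{equation*}
L \cap F_q \;=\; L \cap \pi_K^{-1}(q) \;=\; (\pi_K|_L)^{-1}(q),
\end{equation*}
and conclude from the bijectivity of $\pi_K|_L$ that this set is the singleton $\{l(q)\}$. I do not expect any real obstacle here; the content of the lemma is essentially a restatement of the sustaining condition, and the only mildly subtle point worth writing out carefully is that $\pi_K \circ l = \mathrm{id}_I$ together with the bijectivity of $l$ forces $l$ to be the inverse of $\pi_K|_L$, rather than merely a one-sided section.
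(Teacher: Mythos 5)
Your proposal is correct and is essentially the paper's argument: the paper also deduces the conclusion from the existence of a bijection $l^K_L\colon I \to L$ with $\pi_K \circ l^K_L = \mathrm{id}_I$, only it invokes the canonically constructed synchronous parameterization (via Remark~\ref{Fold Compositions Restricted to Pre-Turns}) rather than the abstract homeomorphism supplied directly by Definition~\ref{Taken Turns and Surviving Turns}. Your observation that such a section forces $\pi_K|_L$ to be a two-sided bijection, so that $L \cap F_q = (\pi_K|_L)^{-1}(q)$ is a singleton, is exactly the intended content.
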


\begin{proof}
    Assume the set-up of the lemma.
    It follows from Remark \ref{Fold Compositions Restricted to Pre-Turns} that the Parameterization $l^K_L : I \longrightarrow L$ constructed in Definition \ref{Synchronous Turn Parameterizations} is a well defined bijection. So, there can be one point in $L$ that can project to $p$.
\end{proof}

\vspace{4mm}

\begin{proposition}
    \label{Point in Cross Section uniqely given by pre leaf segment}
    Let $K \in \n$, let $I$ be a level $K$ standard turn in $\zeta$. Then for each $p \in I$, and for each pre-leaf segment $L$ that sustains $I$, there is a point $y^L (p) \in ExTun_I$ uniquely determined by $p$ and $L$.
\end{proposition}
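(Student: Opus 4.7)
The plan is to extract $y^L(p)$ directly from the $\zeta$-synchronous parameterization of $L$ already constructed in Definition \ref{Synchronous Turn Parameterizations}, and then invoke Lemma \ref{Intersectig Fibers and Pre Leaf Segments} to pin down uniqueness.

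First I would recall that, since $L$ sustains the level $K$ turn $I$, Definition \ref{Synchronous Turn Parameterizations} supplies a well-defined homeomorphism $l^K_L : I \longrightarrow L$ satisfying $\pi_K \circ l^K_L = \mathrm{id}_I$. I then simply set $y^L(p) := l^K_L(p)$. By definition $y^L(p) \in L$, and since $L \subseteq ExTun_I$ (as $L$ is a pre-leaf segment sustaining $I$, see Definition \ref{Turn Tunnels and Extended Turn Tunnels}), we get $y^L(p) \in ExTun_I$ as required. Moreover, by construction $\pi_K(y^L(p)) = p$, so $y^L(p) \in F_p \cap L$.

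For uniqueness, the intended interpretation is that $y^L(p)$ is the unique point of $L$ lying over $p$, i.e., the unique point of $L \cap F_p$. This is exactly the content of Lemma \ref{Intersectig Fibers and Pre Leaf Segments}, which asserts that $F_p \cap L$ is a singleton. Hence any point determined jointly by $p$ and $L$ in this sense must coincide with $l^K_L(p)$.

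There is no serious obstacle here; the statement is essentially a book-keeping consequence of the synchronous parameterization together with Lemma \ref{Intersectig Fibers and Pre Leaf Segments}. The only mild subtlety is making explicit what ``determined by $p$ and $L$'' means, which I would resolve by identifying $y^L(p)$ with the unique element of $F_p \cap L$; the remainder is immediate.
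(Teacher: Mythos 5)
Your proposal is correct and matches the paper's (one-line) proof, which simply cites Lemmas \ref{Intersecting Pre Leaf Segments} and \ref{Intersectig Fibers and Pre Leaf Segments}; your identification of $y^L(p)$ with $l^K_L(p)$ and the unique element of $F_p \cap L$ is exactly the intended reading. The only cosmetic difference is that the paper also cites Lemma \ref{Intersecting Pre Leaf Segments} (disjointness of distinct pre-leaf segments), which is not needed for the statement as literally worded but matters for the injectivity of the canonical tunnel parameterization built from this proposition in Remark \ref{Canonical Tunnel Parameterizations}.
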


\begin{proof}
    Follows from lemmas \ref{Intersecting Pre Leaf Segments} and \ref{Intersectig Fibers and Pre Leaf Segments}.
\end{proof}

\vspace{4mm}

\begin{remark}[The Canonically Given Tunnel Parameterizations]
    \label{Canonical Tunnel Parameterizations}
    \hypertarget{Canonical Tunnel Parameterizations}{Let} $I$ be a turn of a \hyperlink{Standard Star Neighbourhoods}{standard star neighborhood} $V$ centered at $p$ in some level graph of $\zeta$. Let $T_I \in \{ Tun_I , ExTun_I \}$. Furthermore, let $C_I := F_p \cap T_I$. Then for each \hyperlink{leaf segments}{pre-leaf segment} $L$ that takes $I$, there is a unique point $y \in F_p$ such that $y \in L$ [Follows from Prop. \ref{Point in Cross Section uniqely given by pre leaf segment}]. And for each $y \in T_I$, there exists a unique pre-leaf segment $L^y \subseteq T_I$ that contains $y$.

    \vspace{4mm}

    For each $y \in C_I$, 
    we will use the notation ``$l^I_y$" to refer to the unique \hyperlink{Synchronous Turn Parameterizations}{parameterization of $L$ synchronous to $\zeta$ based at $I$} (i.e. $l^K_{L} : I \longrightarrow L$ as laid out in Remark \ref{Synchronous Turn Parameterizations}) where $L$ is the unique pre-leaf segment $L$ in $ExTun_I$ that contains $y$. 
    Then the ``Canonically Given Tunnel Parameterization of $T_I$ rel $C_I$" is the map $h_I : C_I \times I \longrightarrow T_I$ defined by the criterion: for each $(y,t) \in C_I \times I$, $h_I (y,t) := l^I_y (t)$.

    Since,
    \begin{enumerate}
        \item for each $c \in C_I$, $h$ restricted to $\{ c \} \times I$ is a homeomorphism,
        \item the images of all such restrictions cover $ExTun_I$, and,
        \item no pair of distinct pre-leaf segments in $ExTun_I$ intersect each other [Lemma \ref{Intersecting Pre Leaf Segments}],
    \end{enumerate}
    $h_I$ is a bijection. We will show in Proposition \ref{ExTun are Tunnel Sets}, Remark \ref{An Extended Turn Tunnel with a Few Pre-Leaf Segments Removed} respectively, that $h$ is a homeomorphism when $T_I = ExTun_I , Tun_I$ resp. 
\end{remark}

\vspace{4mm}

\hypertarget{Central Cross-Section}{In} the above context, we shall call $C_I$ the ``Central Cross-Section of $T_I$ rel $p$". Note that when $V$ is not $2-$pronged, the notion of the ``Center" is canonically determined, and in that case, we shall unambiguously call $C_I$ the ``Central Cross-Section of $T_I$". 


\vspace{4mm}

Finally, we define a notion of sub tunnels that will prove to be useful later in the text.

\vspace{4mm}

\begin{definition}[Sub Tunnels]
    \label{Sub Tunnels}
    \hypertarget{Sub Tunnels}{Let} $I$ be a \hyperlink{turn in zeta}{level $j$ standard turn in $\zeta$} for some $j \in \n$ and let $T_I \in \{ Tun_I , ExTun_I \}$. Let $h_I : C_I \times I \longrightarrow T_I$ be the canonically given tunnel parameterization for $T_I$ as defined above. Let $T'_I \subset T_I$. 
    \begin{itemize}
        \item If $T'_I = h_I (C'_I \times I)$ for some $C'_I \subset C_I$, we call $T'_I$ the ``Sub-Fiber Tunnel of $T_I$ rel $C'_I$".
        \item If $T'_I = h_I (C_I \times I')$ where $I'$ is a proper open sub-interval of $I$, we call $T'_I$ the ``Sub-Turn Tunnel of $T_I$ rel $I'$".
        \item Generally, we shall call $T'_I$ a ``Sub Tunnel of $T_I$" if $T'_I = h_I (C'_{I'} \times I')$ where $C'_I \subset C_I$ and $I'$ is a proper open sub-interval of $I$.
    \end{itemize}
\end{definition}

\vspace{4mm}

A more special type of sub tunnels (called base tunnels) will be introduced in Definition \ref{Base Tunnels}, that will be utilized in observing the topology of an extended turn tunnel.


\vspace{4mm}


\subsubsection{Properties of Tunnels}
\label{Sec: Properties of Tun}



\vspace{4mm}

In this sub-section we shall show that each turn tunnel of $X$ is open in $X$ [Lemma \ref{turn tunnels are open}] and each extended turn tunnel of $X$ is a tunnel of $X$ [Proposition \ref{ExTun are Tunnel Sets}] which will directly follow by showing that each path component of an extended turn tunnel is a pre-leaf segment [Remark \ref{pre-leaf Segments and Path components}] and each turn tunnel of $X$ is a tunnel of $X$ [Remark \ref{An Extended Turn Tunnel with a Few Pre-Leaf Segments Removed}].

\vspace{4mm}

\begin{lemma}
    \label{turn tunnels are open}
    \hypertarget{tun_is_open}{Each turn tunnel of $X(\zeta)$ is open in $X(\zeta)$.}
\end{lemma}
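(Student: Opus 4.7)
My plan is to show that each leaf segment is itself open in $X$, from which the lemma follows immediately: by Definition \ref{leaf segments} and Definition \ref{Turn Tunnels and Extended Turn Tunnels}, $Tun_I$ is the union of all leaf segments that take the turn $I$, so it is a union of open sets and hence open.

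The first step is to unpack the definitions. A leaf segment $L$ is, by definition, a non-singular plaque of $X$, i.e.\ a plaque of some standard neighborhood $O = \pi_j^{-1}(\tilde V)$ of $X$. The key input is the separation argument already carried out inside the proof of Lemma \ref{shapse of plaques 1}: given a plaque $V$ of $O$ and any point $y \in O \setminus V$, one exhibits disjoint open subsets $O_V, O_y$ of $O$ with $V \subseteq O_V$, $y \in O_y$, and $O_V \sqcup O_y = O$. Running this argument with the ``center'' $x$ of the plaque ranging over all of $O$ shows that the plaque partition of $O$ is in fact a partition by clopen subsets of $O$. In particular our $L$ is open in $O$, and since $O$ is by construction open in $X$, the leaf segment $L$ is open in $X$.

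Therefore $Tun_I = \bigcup_{L} L$ is a union of open subsets of $X$ and is itself open in $X$; the case $Tun_I = \emptyset$ is vacuously open. No substantial obstacle is anticipated: the real topological content lies in the separation argument of Lemma \ref{shapse of plaques 1}, and the present lemma is essentially a re-packaging of it. The only care needed is to note that Lemma \ref{shapse of plaques 1} is phrased for one specific plaque $Plaque(O,x)$, but since its separation argument works for any choice of $x \in O$, it uniformly yields openness of every plaque of $O$, and therefore of every leaf segment contained in any standard neighborhood of $X$.
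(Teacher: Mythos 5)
There is a genuine gap, and it is fatal to the approach: leaf segments are \emph{not} open in $X$. A leaf segment is a single path component $\{c\}\times I$ of a tunnel $C\times I$, and such a set is open only when $c$ is an isolated point of the totally disconnected cross-section $C$; in the typical case $C$ is a perfect (Cantor-like) set and every leaf segment has empty interior in $X$. The misstep is in your reading of the separation argument of Lemma \ref{shapse of plaques 1}. That argument produces, for each point $y\in O - V$, a clopen set $O_V^{(y)}$ containing the plaque $V$ and missing $y$. This exhibits $V$ as the intersection $\bigcap_{y\in O-V} O_V^{(y)}$ of clopen sets, which shows $V$ is \emph{closed} in $O$ (and equal to its quasi-component), but an intersection of infinitely many clopen sets need not be open, and here it generally is not. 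So the plaque partition of $O$ is a partition into closed, not clopen, pieces, and the conclusion "each leaf segment is open" fails. A union of (typically uncountably many) closed leaf segments gives you no openness for free.

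The paper's proof has to work at the level of whole bundles of leaf segments rather than individual ones: for each $x\in Tun_I$ it produces a deep enough level $j$ and a subset $U_j\subseteq G_j$ (an edge-interior interval in the case where the leaf segment through $x$ contains no pseudo-singularity, or a carefully chosen standard neighborhood completion $\Tilde I_J$ otherwise) such that the entire basic open set $\pi_j^{-1}(U_j)$ — which contains a whole cross-section's worth of leaf segments — sits inside $Tun_I$. The stabilization hypotheses are what guarantee that no singular segment sneaks into $\pi_j^{-1}(U_j)$, so that it lands in $Tun_I$ rather than merely in $ExTun_I$. That is the real content of the lemma, and it is not recoverable from the separation argument you cite.
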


\begin{proof}
    Let $K \in \n$, and
    let $\Tilde{V}_K$ be a standard star neighborhood in $G_K$ centered at a point $p$. Furthermore, let $I$ be a turn of $\Tilde{V}_K$ that is taken by at least one leaf segment of $X$. Let $x \in Tun_I$. We seek to show that $x$ has an open neighborhood contained in $Tun_I$.
    
    \vspace{4mm}

    If $\Tilde{V}_K$ is a $2-$pronged star neighborhood, then $I = \Tilde{V}_K$, and thus $\pi^{-1}_K (\Tilde{V}_K) = ExTun_I = Tun_I$ (since $ExTun_I$ having a singular segment would imply that there is a $3-$pronged star set in $X$ that projects to $I$, which violates $\zeta$'s stability).
    Now suppose that  $\Tilde{V}_K$ is not $2-$pronged. 

    \vspace{4mm}
    
    Note that $x$ is part of a leaf segment $L$ that projects homeomorphically to $I$ via $\pi_K$, and therefore $x$ has a $2-$pronged maximal star chain $\mathcal{I}^x : I_{K} \xleftarrow[]{f_{K-1}} I_{K-1} \xleftarrow[]{f_{K-2}} I_{K-2} \xleftarrow[]{f_{K-3}}...$ such that $I_K = I$. Then either (Case a.) $L$ does not contain any \hyperlink{Pre-Singularities}{pseudo singularities}, or (Case b.) $L$ contains a pseudo singularity.

    \vspace{4mm}

    (Case a.) Suppose $L$ does not contain any pseudo singularities. Then there exists an integer $j < K$ such that $I_j$ does not contain any natural vertices. i.e. $I_j$ is a $2-$pronged star neighborhood in $G_j$. Thus we have $x \in \pi_j^{-1} (I_j) \subseteq Tun_I$. 

    \vspace{4mm}

    (Case b.) Suppose $L$ contains a pseudo singularity. Then for each integer $j \leq K$, $I_j$ contains a natural vertex of $G_j$.
    For each integer $j < K$, let the connected component of $(f^j_K)^{-1} (\Tilde{V}_K)$ containing $I_j$ be denoted by $\Tilde{I}_j$. (Note that in this context, $\Tilde{I}_j$ is a \hyperlink{neighbourhood_completion}{standard neighborhood completion} of $I_j$ centered at $x_j$.) We seek to find an integer $J<K$ so that $\Tilde{I}_J$ does not intersect $(f^J_K)^{-1} (\Tilde{V}_K - I)$.

    \vspace{4mm}
    
    Let $B := \partial \Tilde{V}_K - \partial I$. Now, let $q \in B$. $(p,q)$ represents an open prong of $\Tilde{V}_K$ that does not intersect $I$.  Since 
    $L$ is a \hyperlink{Singular Plaques}{non-singular plaque} in $X$, and since $\mathcal{I}^x$ is a $2-$pronged maximal star chain \hyperlink{star chain representing a set in X}{representing $L$},
    there must be an integer $J_q < K$ such that 
    $\Tilde{I}_{J_q}$ does not intersect $(f^{J_q}_K )^{-1} ((p,q))$.
    Furthermore, for each $j<J_q$, $\Tilde{I}_{j}$ does not intersect $(f^{j}_K )^{-1} ((p,q))$. Since $B$ is finite, we can define $J := min \{ J_q : q \in B \}$. 
    
    \vspace{4mm}

    Now, we have the standard star neighborhood $\Tilde{I}_{J}$ such that $f^{J}_K (\Tilde{I}_{J}) = I$. Therefore $x \in \pi^{-1}_{J} (\Tilde{I}_{J}) \subseteq ExTun_I$. 
    If there is a singular segment in $ExTun_I$ that  projects to $\Tilde{I}_{J}$, then there are two pre-singularities projecting to the unique special point in $\Tilde{I}_{J}$, which violates $\zeta$'s stability.
    Thus, we have $x \in \pi^{-1}_{J} (\Tilde{I}_{J}) \subseteq Tun_I$.

\end{proof}

\vspace{4mm}


The next three results should be perceived as one continuous discussion intended to show that \hyperlink{Extended Turn Tunnels}{Extended Turn Tunnels} are \hyperlink{Tunnel Sets}{Tunnel}, and that canonically given tunnel parameterizations are homeomorphisms.

\vspace{4mm}

Given a \hyperlink{turn in zeta}{standard turn $I$ in $\zeta$}, consider $ExTun_I$. 

\begin{itemize}
    \item After defining ``Base Tunnels of $ExTun_I$" in Definition \ref{Base Tunnels}, in Lemma \ref{Base Tunnel Open}, we show that each base tunnel of $ExTun_I$ is open in $ExTun_I$.
    \item In Lemma \ref{Base Tunnel Basis for ExTun}, we show that each open subset of $ExTun_I$ is a union of base tunnels of $ExTun_I$, and therefore the collection of base tunnels of $ExTun_I$ generates the topology of $ExTun_I$.
    \item In Proposition \ref{ExTun are Tunnel Sets}, we use the aforementioned basis to show that the \hyperlink{Canonical Tunnel Parameterizations}{canonically given tunnel parameterization} $h_I : C_I \times I \longrightarrow ExTun_I$ for $ExTun_I$ is a homeomorphism, and thus $ExTun_I$ is a \hyperlink{Tunnel Sets}{tunnel} of $X$.
\end{itemize}

\vspace{4mm}


Given the goal of showing $h_I$ is a homeomorphism, the collection of \hyperlink{Base Tunnels}{base tunnels} of $ExTun_I$ is meant to be the collection of images (under $h_I$) of product sets in $C_I \times I$ of the format $\Tilde{C} \times \Tilde{I}$, where $\Tilde{I}$ is a sub-interval of $I$, and $\Tilde{C} = C_I \bigcap$ (a \hyperlink{fibers}{fiber} of $X$ that intersects $C_I$ non-trivially).
\vspace{4mm}




\begin{definition}[Base Tunnels]
\label{Base Tunnels}
    \hypertarget{Base Tunnels}{Let} $I$ be a \hyperlink{turn in zeta}{standard turn in $\zeta$} and consider $ ExTun_I$. For each sub-interval $\Tilde{I}$ of $I$, $j \in K  \n$, and connected component $U$ of \hyperlink{pre-turns}{$PTS_j (I)$}, the ``Base Tunnel of $ExTun_I$ rel $(j ,  \Tilde{I}, U)$" is given by $BT_I (j , \Tilde{I}, U) := \bigcup_{J \in \{Pre-Turns \ of \ \Tilde{I} \ contained \ in \ U\}} ExTun_J$. 
\end{definition}

\vspace{4mm}

\begin{figure}
    \centering
    \begin{tikzpicture}
    \tikzmath{\x1 = 0 ; \y1 = 0 ; \ya = -3 ;
    \x2 = 4 ; \y2 = 0 ; \yb = -3 ;
    \v = 1 ; }

    \draw (\x1 -1 , \y1) -- (\x1 +1 , \y1);
    \draw[dashed] (\x1 , \y1) -- (\x1 +1 , \y1 + 0.5);
    \draw[dashed] (\x1 , \y1) -- (\x1 +1 , \y1 - 0.5);
    \draw[line width=3pt, opacity=0.5][cyan] (\x1 - 0.7 , \y1 ) -- (\x1 - 0.3 , \y1);
    \draw[line width=3pt, opacity=0.5][cyan] (\x1 - 0.7 , \ya ) -- (\x1 - 0.3 , \ya);
    \draw[line width=3pt, opacity=0.5][cyan] (\x1 - 0.7 , \ya + 0.35) -- (\x1 - 0.3 , \ya + 0.15);

    \draw (\x1 -1 , \ya) -- (\x1 +1 , \ya);
    \draw[dashed] (\x1 , \ya) -- (\x1 +1 , \ya + 0.5);
    \draw (\x1 -1 , \ya + 0.5) -- (\x1 , \ya);
    
    \draw (\x1 -1 , \ya - 0.5) -- (\x1 +1 , \ya - 0.5);
    \draw[dashed] (\x1 , \ya - 0.5) -- (\x1 +1 , \ya - 1);

    \draw (\x2 -1 , \y2) -- (\x2 +1 , \y2);
    \draw[dashed] (\x2 , \y2) -- (\x2 +1 , \y2 + 0.5);
    \draw[dashed] (\x2 , \y2) -- (\x2 +1 , \y2 - 0.5);

    \draw (\x2 -1 , \yb) -- (\x2 +1 , \yb);
    \draw[dashed] (\x2 , \yb) -- (\x2 +1 , \yb + 0.5);
    \draw (\x2 -1 , \yb + 0.5) -- (\x2 , \yb);
    
    \draw (\x2 -1 , \yb - 0.5) -- (\x2 +1 , \yb - 0.5);
    \draw[dashed] (\x2 , \yb - 0.5) -- (\x2 +1 , \yb - 1);

    \draw[line width=3pt, opacity=0.5][cyan] (\x2 - 0.4 , \y2 ) -- (\x2 + 0.4 , \y2);
    \draw[line width=3pt, opacity=0.5][cyan] (\x2 - 0.4 , \yb ) -- (\x2 + 0.4 , \yb);
    \draw[line width=3pt, opacity=0.5][cyan] (\x2 - 0.4 , \yb + 0.2) -- (\x2 , \yb );

    \node at (\x1 -2, \y1 ) {Level $K$ :};
    \node at (\x1 -2, \ya ) {Level $j$ :};
    \node at (\x1 + 2 , \y1) {$\Tilde{I} \subseteq \Tilde{V}_K$};
    \node at (\x1 + 2, \ya + 0.2) {$\Tilde{I}_j \subseteq U$};
    \node at (\x1 , \ya - 2) {Case a.};
    \node at (\x2 , \yb - 2) {Case b.};
\end{tikzpicture}
    \caption{Projections of Base Tunnels}
    \label{fig:Projections of Base Tunnels}
\end{figure}
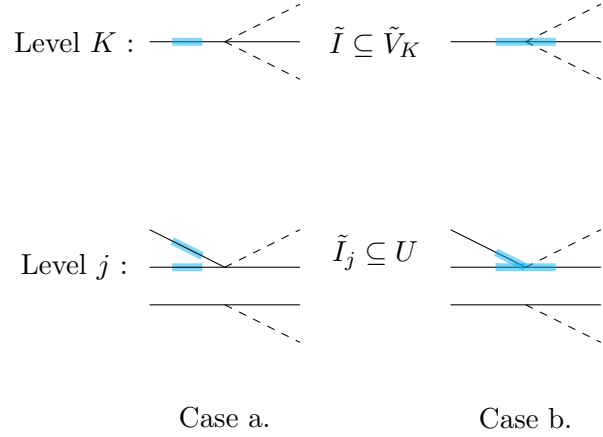

\vspace{4mm}

\begin{lemma}
    \label{Base Tunnel Open}
    Given a \hyperlink{turn in zeta}{standard turn $I$ in $\zeta$}, each \hyperlink{Base Tunnels}{base tunnel} of $ExTun_I$ is open in $ExTun_I$.
\end{lemma}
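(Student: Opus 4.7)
The plan is to identify the base tunnel with a visibly open subset of $ExTun_I$, namely
\begin{equation*}
BT_I(j,\tilde{I},U) \;=\; ExTun_I \,\cap\, \pi_j^{-1}(U) \,\cap\, \pi_K^{-1}(\tilde{I}),
\end{equation*}
where $K$ is the level at which the turn $I$ lives. Once this identity is in hand, openness in $ExTun_I$ follows from continuity of $\pi_j,\pi_K$ together with the facts that $\tilde{I}$ is open in $G_K$ and that $U$, being a connected component of the open set $PTS_j(I)$ (a finite disjoint union of standard star sets by Lemma \ref{pre-turn_shadows_are_star_set_unions}), is open in $G_j$.

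For the forward inclusion, given $x \in BT_I(j,\tilde{I},U)$, I would unpack the definition to find a pre-leaf segment $L\ni x$ sustaining a pre-turn $J \subseteq U$ of $\tilde{I}$. The $\zeta$-synchronous parameterization producing $L$ over $J$ extends uniquely, by taking the pre-turn of $I$ at level $j$ containing $J$ and running the synchronous maps over all deeper levels, to a pre-leaf segment $\widehat{L} \supseteq L$ sustaining $I$. Thus $x \in \widehat{L} \subseteq ExTun_I$, while $\pi_j(x)\in J\subseteq U$ and $\pi_K(x)=f^j_K(\pi_j(x))\in \tilde{I}$, so $x$ lies in the right-hand set.

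For the reverse inclusion, take $x \in ExTun_I$ with $\pi_j(x)\in U$ and $\pi_K(x)\in \tilde{I}$. There is a pre-leaf segment $\widehat{L}\ni x$ sustaining $I$, and the synchronous parameterization setup (Definition \ref{Synchronous Turn Parameterizations}) guarantees that $\pi_j$ carries $\widehat{L}$ homeomorphically onto a level $j$ pre-turn $J'$ of $I$. The one substantive step is a pre-turn containment observation: since $J'$ is a connected subset of $PTS_j(I)$ and meets $U$ at $\pi_j(x)$, it must lie entirely in the connected component $U$. Then $J := (f^j_K|_{J'})^{-1}(\tilde{I}) \subseteq U$ is a pre-turn of $\tilde{I}$ contained in $U$, and the restriction $L := \widehat{L}\cap \pi_j^{-1}(J)$ sustains $J$, placing $x$ in $ExTun_J \subseteq BT_I(j,\tilde{I},U)$.

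No deep obstacle is expected: the pre-turn containment step is immediate from connectedness, and everything else is routine bookkeeping with the $\zeta$-synchronous parameterizations and their restrictions and extensions. The only place one must be careful is in treating the ``pre-turns of $\tilde{I}$'' and the associated $ExTun_J$ in the generalized sense implicit in Definition \ref{Base Tunnels}, where $\tilde{I}$ need not be a full turn of a star neighborhood.
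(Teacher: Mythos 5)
Your reduction of openness to ``preimage of an open set under a continuous map'' has a genuine gap: neither $U$ nor $\tilde{I}$ is open in its ambient graph in general, so $\pi_j^{-1}(U)$ and $\pi_K^{-1}(\tilde{I})$ need not be open in $X$. Lemma \ref{pre-turn_shadows_are_star_set_unions} says that $PTS_j(I)$ is a disjoint union of standard star \emph{sets}, not star \emph{neighborhoods}, and that distinction is the whole point here. Concretely, let $\tilde{U} = SC_j(\tilde{V}_K, U)$ be the component of $(f^j_K)^{-1}(\tilde{V}_K)$ containing $U$, centered at a natural vertex $q$. If some prong of $\tilde{U}$ maps under $f^j_K$ to a prong of $\tilde{V}_K$ that is not a prong of $I$, then $U$ omits that prong of $\tilde{U}$, so $U$ is a star set at $q$ with strictly fewer prongs than the valence of $q$; every $G_j$-neighborhood of $q$ meets the omitted prong, and $U$ is not open. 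The same problem afflicts $\tilde{I}$ at level $K$ whenever $\tilde{I}$ contains the center of $\tilde{V}_K$ and that center is a natural vertex. So the last sentence of your first paragraph does not go through, and the failure occurs precisely in the only nontrivial case of the lemma.

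The repair is what the paper's proof does in its Case b: pad the non-open star set out to an honest star neighborhood and then observe that intersecting with $ExTun_I$ removes the padding. Writing $\tilde{I}_j$ for the union of the pre-turns of $\tilde{I}$ contained in $U$, the set $\tilde{V} := (\tilde{U} - U) \cup \tilde{I}_j$ is an open star neighborhood of $x_j$ in $G_j$ (it retains every prong direction of $\tilde{U}$), and since every point of $ExTun_I$ projects at level $j$ into $PTS_j(I)$, one gets $\pi_j^{-1}(\tilde{V}) \cap ExTun_I = \pi_j^{-1}(\tilde{I}_j) \cap ExTun_I \subseteq BT_I(j,\tilde{I},U)$; the case where $x_j$ lies in a natural edge interior is handled directly with an open sub-interval and Lemma \ref{turn tunnels are open}. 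Your set identity $BT_I(j,\tilde{I},U) = ExTun_I \cap \pi_j^{-1}(U) \cap \pi_K^{-1}(\tilde{I})$ is a reasonable description of the base tunnel, and the two inclusions you sketch are essentially the bookkeeping the paper also relies on, but by itself it does not yield openness: you must first replace $U$ and $\tilde{I}$ by open supersets whose preimages have the same intersection with $ExTun_I$.
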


\begin{proof}
    Let $K \in \n$, and let $I$ be a level $K$ standard turn.
    Furthermore, let $\Tilde{I}$ be a sub-interval of $I$, $\ j \in \n +K$, and $U$ a connected component of $PTS_j (I)$. Let $T :=$ \hyperlink{Sub-Tunnels}{$BT_I (j ,  \Tilde{I}, U)$} $ = \bigcup_{J \in \{Pre-Turns \ of \ \Tilde{I} \ contained \ in \ U\}} ExTun_J$. Let $x \in T$. We must show that there exists an open set $O$ in $X$ such that $x \in O \cap ExTun_I \subseteq T$. Note that $\hyperlink{Projections of a Point}{x_j} \in U$. Let $\Tilde{I}_j$ denote the union of all pre-turns of $\Tilde{I}$ in $U$ (See Figure \ref{fig:Projections of Base Tunnels}). 

    \vspace{4mm}
    
    (Case a.) Suppose that $x_j$ is contained in the interior of a natural edge $E$ of $G_j$.
    Then there exists an open interval $I'$ embedded in $Interior(E) \subset G_j$, such that $x_j \in I' \subseteq \Tilde{I}_j$. Thus we have $x \in Tun_{I'} \subseteq BT_I (j ,  \Tilde{I}, U)$. (Recall from Lemma \ref{turn tunnels are open} that $Tun_{I'}$ is open.)

    \vspace{4mm}

    (Case b.) Suppose $x_j$ is a natural vertex of $G_j$.
    Let $\Tilde{U}$ be the unique \hyperlink{neighbourhood_completion}{standard neighborhood completion} of $U$ in $G_j$ that is the unique connected component of $(f^j_K)^{-1} (\Tilde{V}_K)$ containing $U$. (i.e. $\Tilde{U} := \hyperlink{Shadow Component}{SC_j (\Tilde{V}_K , U)}$.) Then $x_j$ must be the center of $\Tilde{U}$.
    Recall that $\Tilde{U}$ is a standard star neighborhood rel $\zeta$ [Remark \ref{Shadow Components}].
    Then let $\Tilde{V} := (\Tilde{U} - U) \cup \Tilde{I}_j$. Note that $x_j \in \Tilde{I}_j \subseteq \Tilde{V}$. Since $\Tilde{V}$ is a subset of $\Tilde{U}$ that is (as a star set) centered at the same point as $\Tilde{U}$, and has the same number of prongs as $\Tilde{U}$, $\Tilde{V}$ is also a star neighbourhood of $x_j$ in $G_j$.
    Thus we have, $x \in \pi^{-1}_j (\Tilde{V}) \cap ExTun_I = \pi^{-1}_j (\Tilde{I}_j) \cap ExTun_I \subseteq T$.
\end{proof}

\vspace{4mm}

To assist with the proof of the next lemma, we introduce the following terminology.

\vspace{4mm}

\begin{notation}[Pre-Turn Components]
    \label{Pre-Turn Components}
    \hypertarget{Pre-Turn Components}{Let} $K \in \n$, and let $I$ be a level $K$ standard turn in $\zeta$. Let $j \in \n +K$ and let $q \in \hyperlink{pre-turns}{PTS_j (I)}$. Then 
    the ``Level $j$ Pre-Turn Component of $I$ Containing $q$" denoted
    ``$PTC_j (I ,q)$" is the unique connected component of $PTS_j (I)$ in $G_j$ that contains $q$.
\end{notation}

\vspace{4mm}


\begin{lemma}
    \label{Base Tunnel Basis for ExTun}
    Given a \hyperlink{turn in zeta}{standard turn $I$ in $\zeta$}, the collection of \hyperlink{Base Tunnels}{base tunnels} of $ExTun_I$ generates the topology of $ExTun_I$.
\end{lemma}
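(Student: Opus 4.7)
The plan is to show that for every $x \in ExTun_I$ and every open neighborhood $W$ of $x$ in $ExTun_I$, some base tunnel $B$ satisfies $x \in B \subseteq W$; combined with Lemma \ref{Base Tunnel Open}, this yields the claim. Pulling standard neighborhoods back along fold compositions, I may assume $W = \pi_k^{-1}(V) \cap ExTun_I$ for some $k \leq K$ and some standard star neighborhood $V \subset G_k$ with $x_k \in V$ (a higher-level neighborhood becomes a union of lower-level ones, so this loses no generality). Let $L$ be the unique pre-leaf segment through $x$ sustaining $I$ (unique by Lemma \ref{Intersecting Pre Leaf Segments}), and set $I_k^L := \pi_k(L)$. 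Since $L$ sustains $I$, $f^k_K|_{I_k^L} : I_k^L \to I$ is a homeomorphism; by openness of $V$ I choose an open sub-interval $\tilde{I} \subseteq I$ containing $x_K$ such that $(f^k_K|_{I_k^L})^{-1}(\tilde{I}) \subseteq V$.

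Next I produce $j \leq K$ and a connected component $U$ of $PTS_j(I)$ with $x_j \in U$ such that every pre-leaf segment $L^c \subseteq ExTun_I$ with $\pi_j(L^c) \subseteq U$ satisfies $\pi_k(L^c) = I_k^L$. Granted this, the canonical parameterization $h_I$ from Remark \ref{Canonical Tunnel Parameterizations} together with the choice of $\tilde{I}$ yields $h_I(c,t)_k = (f^k_K|_{I_k^L})^{-1}(t) \in V$ for every such $c$ and every $t \in \tilde{I}$, so $BT_I(j, \tilde{I}, U) \subseteq \pi_k^{-1}(V) \cap ExTun_I$; membership $x \in BT_I(j, \tilde{I}, U)$ is immediate from $x_K \in \tilde{I}$ and the choice $U = PTC_j(I, x_j)$.

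The crux is producing $j$ and $U$. Let $p$ be the center of $\tilde{V}_K$, let $c_0 := L \cap F_p$ denote the corresponding point of the central cross-section $C_I \subseteq F_p$, and let $S \subseteq C_I$ be the set of $c$ whose pre-leaf segment projects onto $I_k^L$ at level $k$. I would show $S$ is clopen in $C_I$: the finitely many level-$k$ pre-turns of $I$ partition $C_I$, and each piece equals $C_I \cap \pi_j^{-1}(W_j)$ where $W_j \subseteq G_j$ is the open union of those level-$j$ pre-turns of $I$ that map under $f^j_k$ to a fixed level-$k$ pre-turn. Since the sub-fiber basis generates the topology of $F_p$ (Remark \ref{The Topology of a Fiber}), some sub-fiber $F_{p_j}$ with $c_0 \in F_{p_j}$ satisfies $F_{p_j} \cap C_I \subseteq S$, where $p_j$ is the pre-image of $p$ at level $j$ lying on $L$. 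Setting $U := PTC_j(I, x_j)$, I identify $F_{p_j} \cap C_I$ with the set of $c \in C_I$ whose pre-leaf segment's level-$j$ projection lies in $U$, using from Remark \ref{Shadow Components} that all pre-turns forming one connected component of $PTS_j(I)$ share the common natural vertex $p_j$. The main obstacle is this final identification together with verifying openness of $W_j$; both reduce to careful bookkeeping with shadow components under the stabilization hypotheses.
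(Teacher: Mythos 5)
Your overall architecture matches the paper's: both arguments reduce to showing that for each $x \in ExTun_I$ and each sub-basic open set $\pi_k^{-1}(V)$ containing $x$, there is a base tunnel $BT_I(j,\Tilde{I},U)$ with $x \in BT_I(j,\Tilde{I},U) \subseteq \pi_k^{-1}(V)\cap ExTun_I$, where $U$ is the pre-turn component of $x_j$ at a suitably deep level $j$ and $\Tilde{I}$ is the image of a small interval around $x_k$ inside $\pi_k(L)\cap V$. The difference lies in how the level $j$ is produced, and that is where your argument has a gap.

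The crux of your proof is the claim that $S:=\{c\in C_I:\pi_k(L^c)=I_k^L\}$ is open in $C_I$, which you justify by asserting that each piece of the partition of $C_I$ by level-$k$ pre-turns ``equals $C_I\cap\pi_j^{-1}(W_j)$,'' with $W_j$ the union of the level-$j$ pre-turns of $I$ mapping under $f^j_k$ to a fixed level-$k$ pre-turn. Only one containment of that equality is automatic: the piece lies inside $C_I\cap\pi_j^{-1}(W_j)$ because $\pi_j(c)\in\pi_j(L^c)\subseteq W_j$. The reverse containment can fail for a fixed $j$: the point $\pi_j(c)$ may be the common center of two level-$j$ pre-turns, one mapping onto $I_k^L$ and one onto a different level-$k$ pre-turn, with $\pi_j(L^c)$ being the latter; such a $c$ lies in $C_I\cap\pi_j^{-1}(W_j)$ but not in $S$. (Your worry about $W_j$ being open in $G_j$ is a red herring: $C_I\cap\pi_j^{-1}(W_j)$ is a finite union of sub-fibers meeting $C_I$ and hence open in $C_I$ regardless; the problem is that it is in general strictly larger than the partition piece.) Ruling out this coincidence at some level depending on the point is precisely the content of the paper's key step, which invokes the third stabilizing hypothesis [Remark \ref{The Stabilizing Hypotheses}] --- no $3$-pronged star set of $X$ projects to $I$ --- to produce a level $J$ at which the pre-turn component of $x_J$ is disjoint from the pre-turn shadows of every level-$k$ pre-turn other than $\pi_k(L)$. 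You point at ``the stabilization hypotheses'' as the relevant tool but never make this argument, so the local constancy of $c\mapsto\pi_k(L^c)$, and with it the existence of a sub-fiber $F_{p_j}\cap C_I\subseteq S$, is unproved. The remainder of your proposal (the identification of $F_{p_j}\cap C_I$ with $\{c:\pi_j(L^c)\subseteq U\}$ via the common center of the pre-turns in a component of $PTS_j(I)$, and the final inclusion $BT_I(j,\Tilde{I},U)\subseteq\pi_k^{-1}(V)\cap ExTun_I$) is sound once that separation level is supplied.
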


\begin{proof}
    Let $K \in \n$, $\Tilde{V}_K$ a standard star neighborhood in $G_K$, and 
    $I$ a turn of $\Tilde{V}_K$.
    Since we've already shown that each base tunnel of $ExTun_I$ is open, all that's left to show is the claim: each open set of $ExTun_I$ is a union of base tunnels of $ExTun_I$.
    Note that the topology of $X(\zeta)$ can be generated by the collection $\{ \pi^{-1}_j (\Tilde{V}_j) \subset X \ : \  j \in \n +K , V_j$ is a standard star neighborhood of $G_j \}$. 

    \vspace{4mm}
    
    Now let $j \in \n +K $. Furthermore, let $\Tilde{V}_j$ be a standard star neighborhood of $G_j$ where $O := \pi^{-1}_j (\Tilde{V}_j)$ and $O \cap ExTun_I \neq \emptyset$. Let $x \in O \cap ExTun_I$. We must show that there exists a base tunnel $T$ of $ExTun_I$ such that $x \in T \subseteq O \cap ExTun_I$.

    \vspace{4mm}

    Recall that a base tunnel depends on a level, a sub-interval of $I$, and a connected component of $PTS_j (I)$. We will find each of the aforementioned three in steps. Let $U :=$ the connected component of \hyperlink{pre-turns}{$PTS_j (I)$} that contains $x_j$ (i.e. $U := \hyperlink{Pre-Turn Components}{PTC_j (I, x_j)}$). Note that $U = \bigcup_{\alpha = 1}^n I^{\alpha}$ for some $n \in \mathds{N}$ where, for each $\alpha= 1,...,n$, $I^{\alpha}$ is a pre-turn of $I$ at level $j$.

    \vspace{4mm}

    Let $L$ be the unique pre-leaf segment in $ExTun_I$ that contains $x$. And let $\beta$ be the unique index in $\{1,...,n \}$ such that $\pi_j (L) = I^{\beta}$. Then, from the stability of $\zeta$ (more specifically, since no $3-$pronged star set of $X$ projects to $I$), there exists an integer $J \leq j$ such that $PTC_J (I , x_J)$ does not intersect $PTS_J (I^{\alpha})$ for any $\alpha \in \{1,...,n\} - \{\beta\}$.

    \vspace{4mm}

    Note that, since $x_j \in I^{\beta} \bigcap \Tilde{V}_j$, there exists an embedded open interval $I_j$ in $G_j$ such that $x_j \in I_j \subseteq I^{\beta} \bigcap \Tilde{V}_j$. Now let $\Tilde{I} := f^j_K (I_j) \subseteq I \subset G_K$. Furthermore, let $U_J := PTC_J (I , x_J)$.
    Consider $T := BT_I (J , \Tilde{I}, U_J)$. 

    \vspace{4mm}
    
    For each pre-turn $I_J$ of $\Tilde{I}$ contained in $U_J$, $f^J_j (I_J) = I_j \subset \Tilde{V}_j$.
    Therefore, $x_J \in $ (the union of level $J$ pre-turns of $\Tilde{I}$ contained in $U_J ) \subseteq PTS_J (I) \bigcap ((f^J_j)^{-1} (\Tilde{V}_j))$. Thus, $x \in \bigcup_{I' \ is \ a \ pre-turn \ of \ \Tilde{I} \ contained \ in \ U_J} ExTun_{I'} \subseteq ExTun_I \bigcap \pi^{-1} (\Tilde{V}_j)$.
\end{proof}

\vspace{4mm}

\vspace{4mm}

\begin{proposition}
    \label{ExTun are Tunnel Sets}
    \hypertarget{stable_turns_induce_tunnel_Sets}{Let} $I$ be a \hyperlink{turn in zeta}{standard turn in $\zeta$}. $ExTun_I$ is a tunnel set.
\end{proposition}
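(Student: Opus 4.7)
The plan is to show that the bijection $h_I : C_I \times I \longrightarrow ExTun_I$ constructed in Remark~\ref{Canonical Tunnel Parameterizations} is in fact a homeomorphism, which (by Definitions~\ref{Tunnel Parameterization} and~\ref{Tunnel Sets}) will exhibit $ExTun_I$ as a tunnel. Let $K \in -\mathds{N}^*$ be the level at which $I$ lives and let $p$ be the center of the standard star neighborhood $\Tilde{V}_K$ whose turn is $I$. The key underlying observation is that for each $y \in C_I$, the restriction $\pi_j|_{L^y}$ is a homeomorphism onto a pre-turn $J^y$ of $I$ at level $j$, and the condition $y_K = p$ forces $y_j$ to be the unique preimage of $p$ in $J^y$, namely the center of $J^y$.

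For continuity of $h_I$, I would invoke Lemma~\ref{Base Tunnel Basis for ExTun} to reduce to showing that $h_I^{-1}(BT_I(j, \Tilde{I}, U))$ is open in the product topology for every admissible triple $(j, \Tilde{I}, U)$. Using the observation above, together with the compatibility of the $\zeta$-synchronous parameterizations across levels $K$ and $j$, the point $h_I(y, t)$ lies in $BT_I(j, \Tilde{I}, U)$ exactly when $t \in \Tilde{I}$ and $J^y \subseteq U$; and since pre-turns are connected, the latter is equivalent to $y_j \in U$. Hence
\[
h_I^{-1}\bigl(BT_I(j, \Tilde{I}, U)\bigr) \;=\; \bigl(C_I \cap \pi_j^{-1}(U)\bigr) \times \Tilde{I},
\]
which is manifestly open.

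For openness of $h_I$, using the sub-fiber basis for $F_p$ from Remark~\ref{The Topology of a Fiber}, it suffices to exhibit, for every basic product $(F_{q_0} \cap C_I) \times \Tilde{I}$ (with $F_{q_0}$ a sub-fiber of $F_p$ determined by a level $j_0$ shadow $q_0$ of $p$) and every $(y, t)$ in it, a base tunnel $BT_I(j', \Tilde{I}', U')$ satisfying $h_I(y, t) \in BT_I(j', \Tilde{I}', U') \subseteq h_I\bigl((F_{q_0} \cap C_I) \times \Tilde{I}\bigr)$. Decomposing $PTC_{j_0}(I, q_0)$ into its finitely many constituent pre-turns $I^1, \ldots, I^n$, with $I^\beta$ the one centered at $q_0$, I would mirror the stability argument used in Lemma~\ref{Base Tunnel Basis for ExTun}: stabilization hypothesis~3 (no $3$-pronged star set of $X$ projects to $I$) produces a level $j' \leq j_0$ at which $U' := PTC_{j'}(I, y_{j'})$ is disjoint from $PTS_{j'}(I^\alpha)$ for every $\alpha \neq \beta$. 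A short connectedness argument then forces $f^{j'}_{j_0}(U') \subseteq I^\beta$, so every pre-turn center in $U'$ maps under $f^{j'}_{j_0}$ to the unique pre-turn center $q_0$ of $I^\beta$; hence $C_I \cap \pi_{j'}^{-1}(U') \subseteq F_{q_0} \cap C_I$, and choosing any open sub-interval $\Tilde{I}' \subseteq \Tilde{I}$ containing $t$ yields the desired base tunnel. The continuity half is essentially bookkeeping once $y_j$ has been identified as the center of $\pi_j(L^y)$; the main obstacle is the deep-level extraction in the openness half, which is precisely where stabilization hypothesis~3 is doing the work.
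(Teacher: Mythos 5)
Your proposal is correct and takes essentially the same approach as the paper: both establish that the canonically given parameterization $h_I : C_I \times I \longrightarrow ExTun_I$ is a homeomorphism by matching the product basis of $C_I \times I$ against the base-tunnel basis of $ExTun_I$ supplied by Lemma \ref{Base Tunnel Basis for ExTun}, the paper doing so in one stroke by computing the forward image $h_I(\Tilde{C} \times \Tilde{I})$ and identifying it with the base tunnel $BT_I(j, \Tilde{I}, PTC_j(I,q))$. Your separate openness argument via deep-level extraction is more work than necessary --- since each connected component $U$ of $PTS_{j_0}(I)$ meets $(f^{j_0}_K)^{-1}(p)$ in a single point, one already has $C_I \cap \pi_{j_0}^{-1}(PTC_{j_0}(I,q_0)) = F_{q_0} \cap C_I$ without passing to a deeper level --- but it is not incorrect.
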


\begin{proof}
    Let $\Tilde{V}_K$ be a standard star neighbourhood centered at $p \in G_K$ and let $I$ be a turn of $\Tilde{V}_K$. Let $C_I := ExTun_I \bigcap \hyperlink{fibers}{F_p}$. Recall from Remark \ref{Canonical Tunnel Parameterizations} that \hyperlink{Canonical Tunnel Parameterizations}{the canonically given tunnel parameterization} of $ExTun_I$, $h_I : C_I \times I \longrightarrow ExTun_I$ is defined by $h_I ((y,t)) := l^I_y (t)$ for each $(y,t) \in C_I \times I$, and is a bijection. For convenience of notation, let $h := h_I$ and for each $y \in C_I$, let $l_y := l^I_y$.

    \vspace{4mm}
    
    To show that $h$ is a homeomorphism, we will use the basis for the topology of $ExTun_I$ that we established in lemma \ref{Base Tunnel Basis for ExTun}.

    \vspace{4mm}
    
    Note that the topology of $C_I \times I$ is generated by the collection $\{ \Tilde{C} \times \Tilde{I} \subseteq C_I \times I : \Tilde{I} $ is an open sub-interval of $ I , \Tilde{C} = \pi^{-1}_j (q) \bigcap C_I $ for some integer $ j < K $ and $ q \in (f^j_K)^{-1} (p)\}$. 

    \vspace{4mm}
    
    Now let $\Tilde{I}$ be an open sub-interval of $I$, $j$ be an integer less than $K$ and $q \in (f^j_K)^{-1} (p)$. Furthermore let $\Tilde{C} := \pi^{-1}_j (q) \bigcap C_I$. 
    Let $U := PTC_j (I , q)$.
    Note that $h (\Tilde{C} \times \Tilde{I}) = \{ l_y (t) : y \in \Tilde{C} , t \in \Tilde{I} \} = \bigcup_{J \in \{pre-turns \ of \ \Tilde{I} \  contained \ in \ U \}} ExTun_J$. 
 \end{proof}

 \vspace{4mm}

 The above result leads to some usefull observations.

 \vspace{4mm}

 \begin{remark}[Pre-Leaf Segments are Path Components of Tunnels]
     \label{pre-leaf Segments and Path components}
    Let $I$ be a \hyperlink{turn in zeta}{standard turn in $\zeta$}, and consider $ExTun_I$. 
    Since according to the above proposition $h_I : C_I \times I \longrightarrow ExTun_I$ (where $C_I$ is totally disconnected), is a homeomorphism, it follows that,
    for each $c \in C_I$, $h_I$ takes the path component $\{c\} \times I$ of $C_I \times I$ to a unique pre-leaf segment in $ExTun_I$. Since homeomorphisms preserve path components, each pre-leaf segment in $ExTun_I$ is a path component of $ExTun_I$. 
 \end{remark}

  \vspace{4mm}

  \begin{remark}[An Extended Turn Tunnel with a Few Pre-Leaf Segments Removed]
  \label{An Extended Turn Tunnel with a Few Pre-Leaf Segments Removed}
      It follows from the above remark that, an extended turn tunnel with a collection of it's pre-leaf segments removed, is still a \hyperlink{Tunnel Sets}{tunnel of $X$}. Most significantly, this implies that for a \hyperlink{turn in zeta}{standard turn $I$ in $\zeta$}, $Tun_I$ is a tunnel of $X$. 
  \end{remark}

\begin{corollary}
    Each turn tunnel of $X$ is a tunnel neighbourhood of $X$.
\end{corollary}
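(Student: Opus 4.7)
The plan is to assemble the corollary directly from the two results immediately preceding it, together with the definition of a tunnel neighborhood. By Definition \ref{Tunnel Sets}, a tunnel neighborhood of $X$ is precisely a tunnel of $X$ that happens to be open in $X$. Thus there are exactly two things to verify for an arbitrary turn tunnel $Tun_I$: that $Tun_I$ is a tunnel, and that $Tun_I$ is open.

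First I would invoke Lemma \ref{turn tunnels are open} to obtain openness of $Tun_I$ in $X$ for free. Next I would invoke Remark \ref{An Extended Turn Tunnel with a Few Pre-Leaf Segments Removed} to conclude that $Tun_I$ is a tunnel of $X$. If one wants to spell that invocation out, the argument runs as follows: by Proposition \ref{ExTun are Tunnel Sets}, the canonical parameterization $h_I \colon C_I \times I \longrightarrow ExTun_I$ is a homeomorphism, and by Remark \ref{pre-leaf Segments and Path components} the pre-leaf segments in $ExTun_I$ are exactly the images $h_I(\{c\} \times I)$ for $c \in C_I$. The set $Tun_I$ is obtained from $ExTun_I$ by deleting all singular segments, and by stabilization (item 2 of Remark \ref{The Intended Consequences of Stabilizing Hypotheses}) this amounts to removing a finite subset $C_I^{sing} \subset C_I$ of indices, so that $h_I$ restricts to a homeomorphism $C_I' \times I \longrightarrow Tun_I$, where $C_I' := C_I \setminus C_I^{sing}$ is still totally disconnected. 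Hence $Tun_I$ fits Definition \ref{Tunnel Sets} as a tunnel.

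Combining the two bullets gives that $Tun_I$ is simultaneously a tunnel of $X$ and open in $X$, which is exactly the defining property of a tunnel neighborhood. There is no real obstacle here; the corollary is a one-line consequence of the lemma and remark cited just above it, and writing out the argument only requires being careful that deletion of finitely many pre-leaf segments does not destroy either the product structure inherited from $h_I$ or the property of being open.
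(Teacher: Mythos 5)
Your proposal is correct and follows exactly the paper's route: the paper's proof cites precisely Lemma \ref{turn tunnels are open}, Proposition \ref{ExTun are Tunnel Sets}, and Remark \ref{An Extended Turn Tunnel with a Few Pre-Leaf Segments Removed}, which are the same three ingredients you assemble. Your expanded justification of the remark (deleting the at most one singular segment preserves the product structure over a totally disconnected index set) is a faithful unpacking of what the paper leaves implicit.
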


\begin{proof}
    Follows from Proposition \ref{ExTun are Tunnel Sets}, Lemma \ref{turn tunnels are open} and Remark \ref{An Extended Turn Tunnel with a Few Pre-Leaf Segments Removed}.
\end{proof}
 

\vspace{4mm}




\subsubsection{Turn Tunnels Inside Extended Turn Tunnels}
\label{Sec: Tun in ExTun}

\vspace{4mm}

We will dedicate this sub-section to explore the relationship between turn tunnels and extended turn tunnels. In Definition \ref{limit vs isolated segments}, we will categorize singular segments of $X$ into ``Limit Segments" and ``Isolated Segments". We will show in Lemma \ref{limit segments consist of limit points} that, for a given \hyperlink{turn in zeta}{standard turn $I$ in $\zeta$} that is taken by $X$, and a limit segment $L'$ that sustains $I$, $L'$ entirely consists of limit points of $Tun_I$.

\vspace{4mm}

Given $K \in \n$, a standard star neighborhood $V$ in $G_K$, a turn $I$ of $V$ and $O := \pi^{-1}_K (V)$, Proposition \ref{ExTun is closed in O} will show that $ExTun_I$ is closed in $O$. Then we will show in Corollary \ref{ExTun_is_Cl(Tun)} that, when $Tun_I \neq \emptyset$, $ExTun_I$ is the closure of $Tun_I$ in $O$.



\vspace{4mm}

Recall that $\zeta$ is a \hyperlink{Stabilized Split Sequence}{stabilized} proper split sequence and $X = X(\zeta)$ its induced solenoid. 

\vspace{4mm}

\begin{remark}[Each Extended Turn Tunnel Contains at Most One Singular Segment]
    \label{ExTun_I - Tun_I is a singular shadpw segment}
    Let $V$ be a \hyperlink{Standard Star Neighbourhoods}{standard star neighborhood of $\zeta$} and $I$ a turn of $V$. Recall that it follows from the \hyperlink{Stabilizing Hypotheses}{stabilization hypotheses} [Remark \ref{The Stabilizing Hypotheses}], that the \hyperlink{Standard Star Neighbourhoods}{standard neighborhood} in $X$ that projects to $V$, contains at most one singularity. Thus $SingSeg_I = ExTun_I - Tun_I$ is either empty or equal to a single singular segment that sustains $I$. 
\end{remark}

\vspace{4mm}

The following definition refines the notion of a singular segment in $X$.


\vspace{4mm}

\begin{definition}[Limit Segments and Isolated Segments]
    \label{limit vs isolated segments}
    \hypertarget{limit vs isolated segments}{Let} $K \in \n$ and let $I$ be a \hyperlink{turn in zeta}{level $K$ standard turn in $\zeta$}. Suppose that there exists a singular segment $L$ that takes $I$. 
    Since $\zeta$ satisfies the \hyperlink{4th hype}{4th stabilization hypothesis}, exactly one of the following is true: Either,
        \begin{enumerate}
            \item for each $j \in \n$, there exists a leaf segment that takes $\pi_j (L)$, or,
            \item no leaf segment takes the turn $\pi_0 (L) \subset G_0$ (thus no leaf segment takes the turn $\pi_j (L)$ for any $j \in \n$).
        \end{enumerate}
    We shall call $L$ a ``Limit Segment in $X$ that Sustains $I$" if case 1. is true. And if case 2. is true, we shall call $L$ an ``Isolated Segment in $X$ that Sustains $I$". 
\end{definition}

\vspace{4mm}

Given that the terms $limit$ and $isolated$ come with preconceived notions and implications, without delay, we shall show with the next lemma that a limit segment that sustains a turn $I$ does in fact consist of limit points of $Tun_I$ [Lemma \ref{limit segments consist of limit points}]. Note that, if $L$ is an isolated segment that sustains a turn $I$, 
then by definition, $L$ is the only pre-leaf segment in $X$ that sustains $I$ (i.e. $ExTun_I = L$ and $Tun_I = \emptyset$). 


\vspace{4mm}

\begin{lemma}
    \label{limit segments consist of limit points}
    Let $K \in \n$, $I$ a \hyperlink{turn in zeta}{level $K$ standard turn in $\zeta$} and $L'$ a \hyperlink{limit vs isolated segments}{limit segment in $X$ that sustains $I$}. Then $L'$ is entirely made up of limit points of $Tun_I$.
\end{lemma}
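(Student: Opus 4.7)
My plan is, for an arbitrary point $x \in L'$, to exhibit $x$ as an accumulation point of $Tun_I$ by showing that $x \notin Tun_I$ and that every basic open neighborhood of $x$ in $X$ meets $Tun_I$. The first fact is immediate from Remark~\ref{ExTun_I - Tun_I is a singular shadpw segment}, which gives $L' \subseteq SingSeg_I = ExTun_I - Tun_I$. For the second, writing $x = (x_0, x_{-1}, x_{-2}, \ldots)$, I would appeal to Remark~\ref{The Coordinate Projection Maps and the Topology of X} to reduce the problem to verifying that, for every $j \in \n$ and every star neighborhood $V_j$ of $x_j$ in $G_j$, the set $\pi^{-1}_j(V_j)$ meets $Tun_I$.

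The key step is to use the limit-segment hypothesis to produce a good leaf segment at each level. Fix $j$ and $V_j$ as above. Because $L'$ sustains $I$, the projection $\pi_j(L')$ is a level-$j$ pre-turn of $I$ (Definition~\ref{pre-turns}), and $f^j_K$ restricts to a homeomorphism from $\pi_j(L')$ onto $I$ by Remark~\ref{Fold Compositions Restricted to Pre-Turns}. Because $L'$ is a \emph{limit} segment, case $1$ of Definition~\ref{limit vs isolated segments} yields a leaf segment $L_j \subset X$ that takes the turn $\pi_j(L')$; let $\tilde l_j : \pi_j(L') \to L_j$ be the corresponding homeomorphism, so that $\pi_j \circ \tilde l_j = \mathrm{id}_{\pi_j(L')}$. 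I would then verify that $L_j$ itself takes the turn $I$ by composing: setting $l_j := \tilde l_j \circ (f^j_K|_{\pi_j(L')})^{-1} : I \to L_j$, one checks $\pi_K \circ l_j = f^j_K \circ \pi_j \circ \tilde l_j \circ (f^j_K|_{\pi_j(L')})^{-1} = \mathrm{id}_I$. Thus $L_j$ takes $I$, and hence $L_j \subseteq Tun_I$.

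To finish, I would observe that $x_j \in \pi_j(L')$ (since $x \in L'$) and that $V_j$ is a neighborhood of $x_j$, so $V_j \cap \pi_j(L')$ is a non-empty open subset of $\pi_j(L')$; since $\pi_j|_{L_j}$ is a homeomorphism onto $\pi_j(L')$, its preimage under this restriction produces a non-empty subset of $L_j \cap \pi^{-1}_j(V_j) \subseteq Tun_I \cap \pi^{-1}_j(V_j)$, as desired. The argument is mostly bookkeeping rather than creative; the one point that demands care will be keeping track of the three layers of parameterization (the taking parameterization $\tilde l_j$ of $L_j$, the fold-composition identification of $\pi_j(L')$ with $I$, and the coordinate projection $\pi_j$) to cleanly realize $L_j$ as a subset of $Tun_I$.
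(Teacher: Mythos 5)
Your proposal is correct and follows essentially the same route as the paper's proof: fix a basic open set $\pi_j^{-1}(V_j)$ around $x$, invoke the limit-segment hypothesis to produce a leaf segment taking $\pi_j(L')$, observe that it therefore takes $I$ and hence lies in $Tun_I$, and intersect it with the neighborhood. The only cosmetic difference is that you dispose of the "point other than $x$" requirement up front via $L' \cap Tun_I = \emptyset$, while the paper does so at the end via the disjointness of the chosen leaf segment from $L'$ (Lemma \ref{Intersecting Pre Leaf Segments}); both are valid.
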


 \begin{proof}

    
    
    Assume the set-up of the lemma. For each $j \in \n + K$, let $I_j := \pi_j (L')$. Note that $L'$ is the inverse limit of $I_K \xleftarrow[]{f_{K-1}} I_{K-1} \xleftarrow[]{f_{K-2}} I_{K-2} \xleftarrow[]{f_{K-3}}...$ in $X(\zeta)$.
    

    \vspace{4mm}
    
    Let $x \in L'$. We shall show that $x$ is a limit point of $Tun_I$ in $X(\zeta)$.
    Note that the topology of $X(\zeta)$ can be generated by the collection $\mathcal{B} := \{ \pi^{-1}_j (V) : j \in \n +K , V$ is a \hyperlink{Standard Star Neighbourhoods}{standard star neighborhood} in $G_j \}$. 
    Now let $J \in \n +K$, $\Tilde{U}_J$ be a standard star neighborhood in $G_J$ and let $O := \pi^{-1}_J (\Tilde{U}_J)$.
    We aim to show that $(O - \{x\}) \cap Tun_I \neq \emptyset$. 

    \vspace{4mm}
    
    Consider $I_J$. Note that $x_J \in I_J \cap \Tilde{U}_J$. Since $I_J$ is an embedded open interval of $G_J$ and $\Tilde{U}_J$ a star neighborhood, there must exist another (perhaps smaller) embedded open interval $\Tilde{I}$ in $G_J$ such that $x_J \in \Tilde{I} \subseteq I_J \cap \Tilde{U}_J$.

    \vspace{4mm}

    Since $L'$ is a limit segment, by definition, for each $j \in \n + K$ there exists a leaf segment that takes $I_j$. Let $L$ be a leaf segment that takes $I_J$. Then since $\Tilde{I} \subseteq I_J$, there exists a leaf segment $\Tilde{L} \subseteq L$ that takes $\Tilde{I}$, and satisfies $\Tilde{L} \subseteq \pi^{-1}_J (\Tilde{U}_J) = O$. Therefore $\Tilde{L} \subseteq Tun_{I_J} \cap \pi^{-1} (\Tilde{U}_J) \subseteq Tun_I \cap O$. It follows from $L \cap L' = \emptyset$ [Lemma \ref{Intersecting Pre Leaf Segments}], and $x \in L'$, that $x \notin \Tilde{L} \subseteq L$. Thus we have
    $\emptyset \neq \Tilde{L} \subseteq (O - \{x\}) \cap Tun_I$. 
\end{proof}

\vspace{4mm}

Consider $K \in \n$, a standard star neighborhood $V$ in $G_K$, a turn $I$ of $V$ and $O := \pi^{-1}_K (V)$.
The next two results will show that, 
\begin{itemize}
    \item $ExTun_I$ is closed in $O$ [Proposition \ref{ExTun is closed in O}], and therefore,
    \item when $Tun_I \neq \emptyset$, $ExTun_I$ is the closure of $Tun_I$ in $O$ [Corollary \ref{ExTun_is_Cl(Tun)}].
\end{itemize}

\vspace{4mm}

\begin{proposition}
    \label{ExTun is closed in O}
    Let $K \in \n$, and $I$ a turn of a standard star neighborhood $\Tilde{V}_K$ in $G_K$. Then $ExTun_I$ is closed in $O := \pi^{-1}_K (\Tilde{V}_K)$.
\end{proposition}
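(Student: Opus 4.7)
The strategy is to show the complement $O - ExTun_I$ is open by producing, for each $y \in O - ExTun_I$, an open neighborhood of $y$ in $O$ disjoint from $ExTun_I$. Write $\Tilde{V}_K$ with center $p$ and $I = P_a \cup \{p\} \cup P_b$, where $P_a, P_b$ are the two prongs of $\Tilde{V}_K$ forming $I$. The complement $\Tilde{V}_K - I = \bigcup_{i \neq a,b} P_i$ is open in $\Tilde{V}_K$, so $I$ is closed in $\Tilde{V}_K$ and $\pi_K^{-1}(I)$ is closed in $O$. If $y_K \notin I$, the open set $\pi_K^{-1}(\Tilde{V}_K - I) \cap O$ is the desired neighborhood, since $ExTun_I \subseteq \pi_K^{-1}(I)$. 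The remaining task is $y_K \in I$, which I handle by descending to a sufficiently deep level of $\zeta$.

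The main step is a K\"onig-type compactness argument. For each $j \leq K$, let $\mathcal{P}_j$ denote the finite collection of pre-turns of $I$ at level $j$ containing $y_j$ (Definition \ref{pre-turns}). I claim some $j \leq K$ has $\mathcal{P}_j = \emptyset$. Suppose otherwise. Given $\Tilde{I} \in \mathcal{P}_{j-1}$, the map $f_{j-1}|_{\Tilde{I}}$ is injective, because $f^{j-1}_K|_{\Tilde{I}}$ is a homeomorphism onto $I$ that factors as $f^j_K \circ f_{j-1}|_{\Tilde{I}}$; consequently $f_{j-1}(\Tilde{I})$ is a $2$-pronged star in $G_j$ mapping homeomorphically onto $I$ under $f^j_K$, and it contains $y_j = f_{j-1}(y_{j-1})$, so $f_{j-1}(\Tilde{I}) \in \mathcal{P}_j$. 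This gives an inverse system of finite non-empty sets $(\mathcal{P}_j)_{j \leq K}$, so by K\"onig's lemma there exists a compatible sequence $(\Tilde{I}_j)_{j \leq K}$ with $y_j \in \Tilde{I}_j$ and $f_{j-1}(\Tilde{I}_{j-1}) = \Tilde{I}_j$. Since the bonding maps are all homeomorphisms between $2$-pronged stars, the inverse limit $\hat{L} := \varprojlim_j \Tilde{I}_j \subseteq X$ projects homeomorphically onto $\Tilde{I}_K = I$ via $\pi_K$, and hence is a pre-leaf segment of $X$ sustaining $I$ (interpreting the chain via Proposition \ref{shapse of plaques 3} after extending, if necessary, to a maximal star chain). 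But $y \in \hat{L} \subseteq ExTun_I$ then contradicts the hypothesis on $y$.

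Fix $j \leq K$ with $\mathcal{P}_j = \emptyset$, i.e., $y_j \notin PTS_j(I)$. I then strengthen this to $y_j \notin \overline{PTS_j(I)}^{G_j}$. Each pre-turn at level $j$ is a $2$-pronged star whose two prongs map homeomorphically to the open intervals $P_a, P_b$; passing to closure in $G_j$ only adds the far ends of these prongs, which are the $f^j_K$-preimages of the far ends of $P_a$ and $P_b$ in $G_K$. Those far ends lie outside the open prongs $P_a, P_b$ and so outside $I$. Since $y_K = f^j_K(y_j) \in I$, the point $y_j$ is not a far-end preimage, so $y_j \notin \overline{PTS_j(I)}$. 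Pick a standard star neighborhood $W$ of $y_j$ in $G_j$ disjoint from $PTS_j(I)$; then $\pi_j^{-1}(W) \cap O$ is an open neighborhood of $y$ in $O$, and it is disjoint from $ExTun_I$ because $\pi_j(ExTun_I) \subseteq PTS_j(I)$ (each pre-leaf segment sustaining $I$ projects at level $j$ to a pre-turn of $I$). The hardest point is the K\"onig argument, whose soundness depends on correctly identifying the inverse-limit chain $\hat{L}$ with a pre-leaf segment via Proposition \ref{shapse of plaques 3} and on the homeomorphism property of $f_{j-1}$ restricted to each pre-turn.
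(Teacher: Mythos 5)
Your proof is correct, and it takes a genuinely different route from the paper's. The paper organizes the argument around the plaque structure of $O$: it splits into cases according to whether $O$ contains a singularity and whether $I$ is a turn of the singular plaque, and for each $y$ in the complement it uses either the openness of the other turn tunnels [Lemma \ref{turn tunnels are open}] or the maximality of the singular star chain to manufacture a separating neighborhood. You instead give a uniform point-set argument with no singular/non-singular dichotomy: the key step is the K\"onig-type observation that the sets $\mathcal{P}_j$ of level-$j$ pre-turns of $I$ containing $y_j$ form an inverse system of finite sets (finiteness is Lemma \ref{pre-turn_shadows_are_star_set_unions}), so either some $\mathcal{P}_j$ is empty — and then $y_j$ avoids the closure of $PTS_j(I)$, yielding the separating neighborhood since $\pi_j(ExTun_I) \subseteq PTS_j(I)$ — or a compatible chain exists, whose inverse limit is a pre-leaf segment sustaining $I$ through $y$ (this is exactly the content of the paper's remark on inverse limits of $2$-pronged star chains, which you can cite directly rather than routing through Proposition \ref{shapse of plaques 3}), contradicting $y \notin ExTun_I$. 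Your approach buys uniformity and avoids invoking the classification of plaques of $O$; the paper's buys reuse of machinery already in place (openness of turn tunnels, maximal star chains) and makes the role of the singularity explicit. Two small points worth tightening: the well-definedness of the transition $\mathcal{P}_{j-1} \to \mathcal{P}_j$ requires checking that $f_{j-1}(\Tilde{I})$ is a genuine level-$j$ turn (a turn of a standard star neighborhood), not merely a $2$-pronged star set mapping homeomorphically onto $I$ — this holds because its $f^j_K$-image lies in $\Tilde{V}_K$ and so avoids special points away from the center — and the claim that the closure of a pre-turn adds only the two far-end points uses that prongs of standard star neighborhoods are embedded arcs avoiding natural vertices, which is implicit in the definition of a standard star neighborhood.
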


\begin{proof}
    Assume the set-up of the lemma. Suppose $O$ does not contain any singularities. Then clearly $ExTun_I = Tun_I$. Furthermore, $O$ is the union of finitely many turn tunnels $\{ Tun_{I'} : I'$ is a turn of $\Tilde{V}_K \}$ each of which is open in $O$ [Lemma \ref{turn tunnels are open}] and has no intersection with each other. Thus $O - Tun_I =$ a union of turn tunnels, and is therefore open, implying $ExTun_I = Tun_I$ is closed in O. 

    \vspace{4mm}
    
    Now assume $O$ contains a singularity $x$. Let $\mathcal{V} := V_{K} \xleftarrow[]{f_{K-1}} V_{K-1} \xleftarrow[]{f_{K-2}} V_{K-2} \xleftarrow[]{f_{K-3}}...$ be the unique maximal star chain for $x$ such that $\Tilde{V}_K$ is a \hyperlink{neighbourhood_completion}{standard neighborhood completion} of $V_K$.
    Either (Case i.) $I \nsubseteq V_K$ or (Case ii.) $I \subseteq V_K$.

    \vspace{4mm}
    
    (Case i.): Suppose $I \nsubseteq V_K$. Then since the unique singularity in $O$ is not contained in $ExTun_I$, we have $ExTun_I = Tun_I$. Once again, we seek to show that $O - Tun_I$ is open. We will find for each $y \in O - Tun_I$, an open set $U$ in $O - Tun_I$ such that $y \in U \subseteq O - Tun_I$. Now let $y \in O - Tun_I$. Note that $y$ either belongs to an interval-shaped plaque of $O$ or it belongs to the unique singular plaque of $O$ [Lemma \ref{shapse of plaques 2}]. We will handle the two sub-cases separately.
    
    \vspace{4mm}
    
    (Case i-a): Suppose $Plaque(O,y)$ is homeomorphic to an interval. Then since $y \notin Tun_I$, $y \in Tun_{\Tilde{I}}$ for some turn $\Tilde{I}$ of $\Tilde{V}_K$ that is different from $I$. Then we have $y \in Tun_{\Tilde{I}} \subseteq O - Tun_I$.

    \vspace{4mm}
    
    (Case i-b): Now suppose $y \in \varprojlim^{\zeta}_{j \in -\mathds{N}^* + K} V_{j}$. Since the star chain $\mathcal{V} = V_{K} \xleftarrow[]{f_{K-1}} V_{K-1} \xleftarrow[]{f_{K-2}} V_{K-2} \xleftarrow[]{f_{K-3}}...$ is maximal, and since $I \nsubseteq V_K$, 
    there must exist an integer $J<K$ such that no level $J$ pre-turn of $I$ is contained in $SC_J (\Tilde{V}_K , V_J)$. 
    i.e. $\hyperlink{pre-turns}{PTS_J (I)} \bigcap \Tilde{V}_J \neq \emptyset$ where $\Tilde{V}_J := \hyperlink{Shadow Component}{SC_J (\Tilde{V}_K , V_J)} =$ the connected component of $(f^J_K)^{-1}(\Tilde{V}_K)$ that contains $V_J$.
    So, we have $y \in \pi^{-1}_J (\Tilde{V}_J) \subseteq O - \pi^{-1}_J (PTS_J (I)) \subseteq O - Tun_I$. Thus $Tun_I$ is closed in $O$.

    \vspace{4mm}
    
    (Case ii.): Now suppose $I \subseteq V_K$. 
    Let $y \in O - ExTun_I$.
    (Case ii-a): $y$ belongs to an interval-shaped plaque of $O$ and therefore, $y \in Tun_{\Tilde{I}}$ for some turn $\Tilde{I}$ of $\Tilde{V}_K$ that is different from $I$. Then we have $y \in Tun_{\Tilde{I}} \subseteq O - ExTun_I$.
    (Case ii-b): $y \in \varprojlim^{\zeta}_{j \in -\mathds{N}^* + K} V_{j} =: V$.  Let $L' \subset V$ be the unique singular segment that sustains $I$.
    Since $y \notin ExTun_I$, $y \notin L'$. 
    Since $\pi_K |_V $ is a homeomorphism onto $V_K$, and since $\pi_K |_V $ takes $L'$ to $I$, $y \notin L'$ implies that $\pi_K (y)$ belongs to an open prong $\Tilde{P}$ of $V_K$ that does not intersect $I$.
    Thus we have $y \in \pi^{-1}_K (\Tilde{P}) \subseteq O - \pi^{-1}_K (I) \subseteq O - ExTun_I$. 
\end{proof}

\vspace{4mm}

\begin{corollary}
    \label{ExTun_is_Cl(Tun)}
    \hypertarget{ExTun_is_Cl(Tun)}{Let} $K \in \n$ and $I$ a turn in a standard star neighborhood $\Tilde{V}_K$ in $G_K$. Suppose $Tun_I \neq \emptyset$.
    Then $ExTun_I$ is the closure of $Tun_I$ in $O := \pi^{-1}_K (\Tilde{V}_K)$.
\end{corollary}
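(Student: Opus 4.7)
The plan is to prove the two inclusions $\overline{Tun_I} \subseteq ExTun_I$ and $ExTun_I \subseteq \overline{Tun_I}$ (where closures are taken in $O$), both of which follow immediately from results established earlier in the chapter.

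For the first inclusion, I would simply invoke Proposition \ref{ExTun is closed in O}, which tells us that $ExTun_I$ is closed in $O$. Since $Tun_I \subseteq ExTun_I$ by definition, this gives $\overline{Tun_I} \subseteq ExTun_I$.

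For the reverse inclusion, I would decompose $ExTun_I = Tun_I \sqcup SingSeg_I$. The part $Tun_I$ is trivially contained in $\overline{Tun_I}$, so it suffices to show that $SingSeg_I \subseteq \overline{Tun_I}$. By Remark \ref{ExTun_I - Tun_I is a singular shadpw segment}, $SingSeg_I$ is either empty (in which case we are done) or equal to a single singular segment $L$ sustaining $I$. In the latter case, $L$ must be a \emph{limit} segment rather than an \emph{isolated} segment in the sense of Definition \ref{limit vs isolated segments}: indeed, were $L$ isolated, then by definition no leaf segment would take $I$, forcing $Tun_I = \emptyset$ and contradicting our hypothesis. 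Once we know $L$ is a limit segment sustaining $I$, Lemma \ref{limit segments consist of limit points} states that $L$ consists entirely of limit points of $Tun_I$ in $X(\zeta)$, and hence in $O$. Thus $L \subseteq \overline{Tun_I}$.

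There is no substantive obstacle here; the statement is essentially a packaging of Proposition \ref{ExTun is closed in O}, Remark \ref{ExTun_I - Tun_I is a singular shadpw segment}, and Lemma \ref{limit segments consist of limit points}. The only small point requiring care is the observation that the hypothesis $Tun_I \neq \emptyset$ rules out the isolated-segment case, which is what allows the limit-segment lemma to apply. Combining the two inclusions yields $\overline{Tun_I} = ExTun_I$ in $O$, as desired.
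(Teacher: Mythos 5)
Your proposal is correct and follows essentially the same route as the paper's proof: one inclusion from Proposition \ref{ExTun is closed in O}, and the other by noting that the hypothesis $Tun_I \neq \emptyset$ forces the singular segment (if any) to be a limit segment, so Lemma \ref{limit segments consist of limit points} applies. Your explicit unpacking of why the isolated-segment case is excluded is a slightly more detailed version of the paper's appeal to stabilization, but the argument is the same.
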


\begin{proof}
    Assume the set up of the corollary. If $ExTun_I = Tun_I$, then from Proposition \ref{ExTun is closed in O}, we have that $ExTun_I = Tun_I$ is it's own closure in $O$. Now suppose that $ExTun_I - Tun_I \neq \emptyset$. 
    i.e. $ExTun_I - Tun_I = L'$ is a singular segment. 
    Now let $C := $ closure of $Tun_I$ in $O$. From Proposition \ref{ExTun is closed in O}, we have that $ExTun_I$ is a closed subset of $O$ that contains $Tun_I$, and thus $C \subseteq ExTun_I$. 
    Since $\zeta$ is \hyperlink{Stabilized Split Sequence}{stabilized}, $Tun_I \neq \emptyset$ implies that $L'$ is a \hyperlink{limit vs isolated segments}{limit segment}. 
    It follows from Lemma \ref{Intersecting Pre Leaf Segments} that $ExTun_I = Tun_I \bigsqcup L'$.
    Since $L'$ entirely consists of limit points of $Tun_I$ [Lemma \ref{limit segments consist of limit points}] we have that, $ExTun_I \subseteq C$. 
\end{proof}

\vspace{4mm}

We will show in Lemma \ref{ExTun-CS are compact} that each cross section of an extended turn tunnel, is compact in $X$. 

\vspace{4mm}

The rest of this chapter is dedicated to establishing a foliated atlas for $X - Sing(X)$ [Sub-section \ref{sec: The Tunnel Atlas}], and then a singular foliated atlas for $X$ [Sub-section \ref{sec: The Star Tunnel Atlas}]. In the next section we will investigate the shape of a standard neighborhood, paying special attention to studying the case when a standard neighborhood contains a singularity.

\vspace{4mm}


\subsection{The Shape of Standard Neighborhoods}
\label{Sec: Shape of Standard Nbhds}

\subsubsection{Star Tunnel Sets}
\label{sec: Star Tunnel Sets}


Here we introduce the class of sets that will function as a model for a typical neighborhood of a singularity in $X$, called a ``Star Tunnel Set". In Sub-section \ref{sec: Star Tunnel Sets in St Nbhds}, we will show that 
each singularity in $X$ is contained in a neighborhood that is a \hyperlink{star tunnel sets}{star tunnel set}. 

\vspace{4mm}

To create a star tunnel set, we start with a \hyperlink{star set}{star set}, and finitely many \hyperlink{abstract tunnels}{abstract tunnels}, then select a particular path component from each abstract tunnel that gets glued to a particular turn in the star set, in the following way. 

\vspace{4mm}

\begin{figure}[htbp]
    \centering
    \begin{minipage}{0.45\textwidth}
        \centering
        \begin{tikzpicture}
            \draw[top color=gray!20, bottom color=black!20, middle color=white!20] (0 , 0) -- (4 , 0) to[out=70, in=-70] (4 , 1) -- (0,1) to[out=-70, in=70] cycle;
            \draw (0 , 0) to[out=110, in=-110] (0 , 1); 
            \draw[dashed] (2 , 0) to[out=110, in=-110] (2 , 1); 
            \draw (2 , 0) to[out=70, in=-70] (2 , 1); 
            \draw[dashed] (4 , 0) to[out=110, in=-110] (4 , 1); 
        \end{tikzpicture}
    \end{minipage}
    \hfill 
    \begin{minipage}{0.45\textwidth}
        \centering
        \begin{tikzpicture}
            \draw[top color=gray!20, bottom color=black!20, middle color=white!20] (0 , 0) -- (4 , 0) to[out=70, in=-70] (4 , 1) -- (0,1) to[out=-70, in=70] cycle;
            \draw (0 , 0) to[out=110, in=-110] (0 , 1); 
            \draw[dashed] (2 , 0) to[out=110, in=-110] (2 , 1); 
            \draw (2 , 0) to[out=70, in=-70] (2 , 1); 
            \draw[dashed] (4 , 0) to[out=110, in=-110] (4 , 1); 

            \draw[top color=blue!10, bottom color=blue!20, middle color=white!20, opacity=0.5] (0 , 0) -- (2,0) to[out=0 , in=-105] (3.6 , 2) to[out=170 , in=-40] (2.75 , 2.4) to[out=-110 , in=0] (2,1) -- (0,1) to[out=-70, in=70] cycle;
            \draw[opacity=0.5] (3.6 , 2) to[out=120 , in=0] (2.75 , 2.4); 

            \draw[red!50] (0,0.5) -- (4,0.5);
            \draw[red!50] (2,0.5) to[out=5 , in=-110]  (3.2, 2.2);

            \draw[draw=none] (0,-1) rectangle (2,-1.4);
        \end{tikzpicture}
    \end{minipage}
    \caption{A Tunnel Set and a Star Tunnel Set}
    \label{fig: A Tunnel Set and a Star Tunnel Set}
\end{figure}
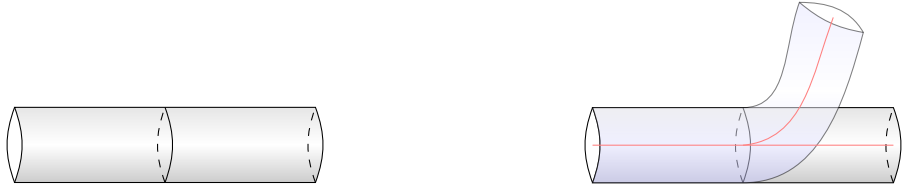

\vspace{4mm}

\begin{definition}[Star Tunnel Sets]
    \label{star tunnel sets}
    \hypertarget{star tunnel sets}{Let} $n \in \mathds{N}$, and let $V$ be an \hyperlink{star set}{$n-$pronged star set} centered at a point $p$, whose set of \hyperlink{Turn and Prong}{turns} is $\Tilde{\mathcal{T}}$. Let $\{ T_I := C_I \times I \}_{I \in \Tilde{\mathcal{T}}}$ be a collection of $\binom{n}{2}$  \hyperlink{abstract tunnels}{abstract tunnels}, where for each  $I \in \Tilde{\mathcal{T}}$, $C_I$ is a totally disconnected set. Furthermore, let $A := (\bigsqcup_{I \in \Tilde{\mathcal{T}}} T_I ) \bigsqcup V $. Let $q : A \longrightarrow B$ be a quotient map. We call $q$ an ``$n-$Pronged Star Tunnel Quotient Map" if, for each $I \in \Tilde{\mathcal{T}}$ there exists a unique $c_I \in C_I$ such that, 

\begin{enumerate}
    \item for each $t \in I$, $q |_{T_I} (( c_I , t)) = q |_V (t)$ 
    (i.e. $q$ identifies $\{ c_I \} \times I \subseteq T_I$ with $I \subseteq V$),  
    
    \item for each $x \in T_I$,  
     \begin{equation*}
        | q^{-1} (q (x)) | = \begin{cases}
			1, & \text{if} \ x \in (C_I - \{ c_I \} ) \times I\\
            n, & \text{if} \ x \in \{ c_I \} \times (I -\{p\}) \\
            \binom{n}{2} + 1, & \text{if} \ x = (c_I,p)
		 \end{cases}
    \end{equation*}
    
    \item for each $x \in V$, 
    \begin{equation*}
        | q^{-1} (q (x)) | = \begin{cases}
            n, & \text{if} \ x \in V - \{p\}\\
            \binom{n}{2} + 1, & \text{if} \ x = p
		 \end{cases}
    \end{equation*} 
    and, 
    
    \item if $C_I \neq \{ c_I \}$, then $q (\{c_I\} \times I)$ entirely consists of limit points of $q ((C_I - \{ c_I \} ) \times I)$.\\
\end{enumerate}
    
The image of an $n-$pronged star tunnel quotient map, is called an ``$n-$Pronged Star Tunnel Set". If a star tunnel set $S$ contained in a space $Y$, is open in $Y$, then $S$ is called a ``Star Tunnel Neighborhood of $Y$".  
\end{definition}

Note that for some $I \in \mathcal{T}$, $C_I$ is permitted to be a single point, making the tunnel $T_I$ just an open interval, in which case, the criterion 4 above is vacuously true.

 \vspace{4mm}


\subsubsection{Star Tunnel Sets and Standard Neighborhoods}
\label{sec: Star Tunnel Sets in St Nbhds}

\vspace{4mm}

\begin{definition}[Singular Standard Neighborhoods]
    \label{Singular Standard Neighborhoods}
    \hypertarget{Singular Standard Neighborhoods}{A} \hyperlink{Standard Star Neighbourhoods}{standard neighborhood} $O$ in $X$ shall be called a ``Singular Standard Neighborhood in $X$" if $O$ contains a singularity.
\end{definition}

\vspace{4mm}


Here, we will show that each \hyperlink{Singular Standard Neighborhoods}{singular standard neighborhood} $O$ of $X$, is the disjoint union of a \hyperlink{star tunnel sets}{star tunnel neighborhood} and finitely many \hyperlink{Def: Tun and ExTun}{turn tunnels} [Proposition \ref{sing stnd nbhds vs star tunnel nbhds}].

\vspace{4mm}

Note that a \hyperlink{Singular Plaques}{singular plaque in $X$} does not necessarily always project to a star neighborhood in $G_j$ for each level $j \in \n$. Some such projections may have a non-trivial neighborhood completion inside the corresponding level graph. We will now establish terminology that lets us separate the standard turns in $\zeta$ that are \hyperlink{Taken Turns}{sustained by} some \hyperlink{leaf segments}{singular segment}, from the standard turns that aren't.


\vspace{4mm}

\begin{notation}[Singular Supporting Turns, and Leftover Turns]
    \label{sing vs left}
    \hypertarget{sing vs left}{Let} $K \in \n$, $\Tilde{V}_K$ a standard star neighborhood in $G_K$ such that, $O := \pi^{-1}_K (\Tilde{V}_K)$ contains a singularity. We denote the unique singular plaque of $O$, ``$V_{\text{sing}} (O)$" and its projection $\pi_K (V_{\text{sing}} (O))$ in $G_K$, ``$V^O_K$".

    \vspace{4mm}
    
    A turn $I$ of $\Tilde{V}_K$ shall be called a ``Singular Supporting Turn of $\Tilde{V}_K$ rel $O$" if $I$ is a turn of $V^O_K$ (equivalently $I$ is sustained by a singular segment in $X$), and $I$ is called a ``Leftover Turn of $\Tilde{V}_K$ rel $O$" if $I$ is not a turn of $V^O_K$ (equivalently $I$ is not sustained by a singular segment in $X$). We shall denote by ``$\mathcal{T}_{\text{sing}} (\Tilde{V}_K)$" the collection of all singular supporting turns of $\Tilde{V}_K$, and ``$\mathcal{T}_{\text{left}} (\Tilde{V}_K)$" shall denote the collection of all the leftover turns of $\Tilde{V}_K$. 
\end{notation}

\vspace{4mm}

\begin{definition}[Star Tunnel Component of a Standard Neighborhood]
    \label{ST(O)}
    \hypertarget{ST(O)}{Let} $K \in \n$, $\Tilde{V}_K$ a standard star neighborhood in $G_K$ such that $O := \pi^{-1}_K (\Tilde{V}_K)$ contains a singularity (i.e. $O$ is a singular standard neighborhood of $X$). Then
    ``The Star Tunnel Component of $O$ in $X$" denoted ``$ST(O)$" is the subset of $O$ given by $\bigcup_{I \in \hyperlink{sing vs left}{\mathcal{T}_{\text{sing}} (\Tilde{V}_K)}} ExTun_I$.
\end{definition}

\vspace{4mm}

We will now use the notion of canonically given tunnel parameterizations [Remark \ref{Canonical Tunnel Parameterizations}] to construct a parameterization for each star tunnel component in $X$. In Lemma \ref{canonical star tun paras are star tun quotients}, we shall show that the parameterization defined below is a \hyperlink{star tunnel sets}{star tunnel quotient map}. 

\vspace{4mm}

\begin{definition}[Canonically Given Star Tunnel Parameterizations]
    \label{Canonically Given Star Tunnel Parameterization}
    \hypertarget{Canonically Given Star Tunnel Parameterization}{Assume} the set-up of the above definition. Let $V := \hyperlink{sing vs left}{V_{\text{sing}} (O)}$. (Recall from Remark \ref{More Consequences of Stabilization}, that $\pi_K |_V$ is a homeomorphism onto $\hyperlink{sing vs left}{V^O_K} \subseteq \Tilde{V}_K$.)
    For each $I \in \mathcal{T}_{\text{sing}} (\Tilde{V}_K)$, let $h_I : C_I \times I \longrightarrow ExTun_I$ be the canonically given tunnel parameterizations defined in Remark \ref{Canonical Tunnel Parameterizations}. Let $A := (\bigsqcup_{I \in \mathcal{T}_{\text{sing}} (\Tilde{V}_K)} (C_I \times I )) \bigsqcup V^O_K$ . Then define $h_O : A \longrightarrow ST(O)$ as follows.
    \begin{enumerate}
        \item $h_O |_{V^O_K} := (\pi_K |_V )^{-1} $ onto $V$, and,
        \item for each $I \in \mathcal{T}_{\text{sing}} (\Tilde{V}_K)$, $h_O |_{C_I \times I} := h_I$ onto $ExTun_I$.
    \end{enumerate}
    The above-defined $h_O$ is called the ``Canonically Given Star Tunnel Parameterization of $ST(O)$ in $X$".
\end{definition}


\vspace{4mm}

The following lemma shows that, for each singular standard neighborhood $O$, $ST(O)$ is a \hyperlink{star tunnel sets}{star tunnel set}.

\vspace{4mm}

 \begin{lemma}
    \label{canonical star tun paras are star tun quotients}
    Each canonically given star tunnel parameterization in $X$ is a \hyperlink{star tunnel sets}{star tunnel quotient map}. 
 \end{lemma}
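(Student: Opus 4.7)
The plan is to verify, for the map $h_O : A \to ST(O)$ of Definition \ref{Canonically Given Star Tunnel Parameterization}, each of the four conditions of Definition \ref{star tunnel sets} together with the underlying quotient-map property. Throughout, write $V := V_{\text{sing}}(O)$, let $s \in V$ denote the singularity, and let $p := \pi_K(s)$ be the center of the star set $V^O_K$, with valence $n$. By Remark \ref{More Consequences of Stabilization}, $\pi_K|_V$ is a homeomorphism onto $V^O_K$; hence $V^O_K$ has $n$ prongs and $\binom{n}{2}$ turns, each prong lying in exactly $n-1$ turns.

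First I would pin down the distinguished point $c_I \in C_I$ for each $I \in \mathcal{T}_{\text{sing}}(\Tilde{V}_K)$. By Remark \ref{ExTun_I - Tun_I is a singular shadpw segment}, $ExTun_I$ contains a unique singular segment $L_I \subset V$; under the canonical parameterization $h_I$ the points of $C_I$ correspond bijectively to pre-leaf segments of $ExTun_I$ (Remark \ref{Canonical Tunnel Parameterizations}), so there is a unique $c_I \in C_I$ with $h_I(\{c_I\} \times I) = L_I$. Condition $(1)$ is then immediate: for each $t \in I$, both $h_I((c_I,t))$ and $(\pi_K|_V)^{-1}(t)$ are the unique point of $V$ projecting to $t$, and this point lies on $L_I$.

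Next, conditions $(2)$ and $(3)$ follow from a combinatorial preimage count. For $x^* \in V^O_K - \{p\}$ lying on a unique prong $P$, the image $h_O(x^*)$ equals $h_O((c_I,x^*))$ for precisely the $n-1$ turns $I$ containing $P$; adding the copy in $V^O_K$ itself gives $n$ preimages. When $x^* = p$, every turn contributes, yielding $\binom{n}{2}+1$. The dual counts inside a tunnel $T_I$ on the slice $\{c_I\} \times I$ are the same, since $h_O((c_I,t)) = h_O(t)$. For $(c,t) \in T_I$ with $c \neq c_I$, the image $h_I((c,t)) \in Tun_I$ lies in no other pre-leaf segment (Lemma \ref{Intersecting Pre Leaf Segments}), lies in no other extended turn tunnel (Remark \ref{pre-leaf Segments and Path components}), and lies outside $V$, so its preimage is a singleton. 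Condition $(4)$ uses stabilization: if $C_I \neq \{c_I\}$, then $Tun_I \neq \emptyset$, so $L_I$ is not an isolated segment, whence it is a limit segment by the 4th stabilizing hypothesis \ref{4th hype} and Definition \ref{limit vs isolated segments}; Lemma \ref{limit segments consist of limit points} then says $L_I = h_I(\{c_I\} \times I)$ consists of limit points of $Tun_I = h_I((C_I - \{c_I\}) \times I)$.

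Finally, to establish that $h_O$ is actually a quotient map, I would rely on the fact that each restriction $h_I$ is a homeomorphism onto the open set $ExTun_I \subset X$ (Proposition \ref{ExTun are Tunnel Sets}, Lemma \ref{turn tunnels are open}) and that $(\pi_K|_V)^{-1}$ is a homeomorphism onto $V$. Continuity of $h_O$ and surjectivity onto $ST(O)$ follow piecewise. I expect the main obstacle to be the converse direction: showing that if $h_O^{-1}(U)$ is open in $A$, then $U$ is open in $ST(O)$ as a subspace of $X$, precisely at points of the singular segments where several tunnels are glued to $V$. The natural approach is to build a neighborhood basis of each point $z \in V \cap ST(O)$ from matched base tunnels (Definition \ref{Base Tunnels}) indexed by the $n-1$ turns through the prong containing $\pi_K(z)$ (or all $\binom{n}{2}$ turns when $z = s$), together with a star-set neighborhood of $\pi_K(z)$ in $V^O_K$; one verifies that such packets exhaust the saturated open sets in $A$ meeting the gluing locus, which closes the argument.
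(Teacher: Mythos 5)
Your verification of the four criteria matches the paper's proof essentially step for step: you make the same choice of $c_I$ (the singularity in the central cross-section, equivalently the point parameterizing the unique singular segment $L(I)$), derive criteria (2) and (3) from the same incidence counts that the paper extracts from Lemma \ref{Intersecting Pre Leaf Segments} and the fact that $\pi_K|_V$ is a homeomorphism, and obtain criterion (4) from the dichotomy of isolated versus limit segments together with Lemma \ref{limit segments consist of limit points}. Your final paragraph, on checking that $h_O$ is genuinely a topological quotient map, addresses a point the paper's proof passes over entirely (it only verifies the four combinatorial criteria), but that part of your argument is still a sketch rather than a proof; if you want to carry it out, the base tunnels of Definition \ref{Base Tunnels} together with the openness of $ST(O)$ established in Proposition \ref{sing stnd nbhds vs star tunnel nbhds} are indeed the right tools.
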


 \begin{proof}
     Assume the set-up of Definition \ref{ST(O)}. Suppose that the unique singular plaque $V := \hyperlink{sing vs left}{V_{\text{sing}} (O)}$ of $O$ is $n-$pronged for some $n \in \mathds{N}$. We must show that $h_O$ satisfies the 4 criteria listed in Definition \ref{star tunnel sets}. For each $I \in \mathcal{T}_{\text{sing}} (\Tilde{V}_K)$, let $L(I) := (\pi_K |_V )^{-1} (I)$ and choose $c_I \in C_I$ to be the unique singularity contained in the \hyperlink{Central Cross-Section}{central cross-section} $C_I$.
     Then, we have that for each $I \in \mathcal{T}_{\text{sing}} (\Tilde{V}_K)$, $h_O$ identifies $I \subset \hyperlink{sing vs left}{V^O_K}$ with $\{c_I\} \times I \subseteq C_I \times I$ (Criterion 1.). Criteria 2. and 3. follow from the five facts listed below (that are immediate consequences of Lemma \ref{Intersecting Pre Leaf Segments} and  the fact that $\pi_K |_V$ is a homeomorphism). 
     \begin{enumerate}
         \item[i.] For each $I \in \mathcal{T}_{\text{sing}} (\Tilde{V}_K)$, $V \cap Tun_I = \emptyset$.
         \item[ii.] For each $I \in \mathcal{T}_{\text{sing}} (\Tilde{V}_K)$, 
         $L(I)$ is equal to $V \cap ExTun_I$.
         \item[iii.] For each distinct pair of turns $I, I' \in \mathcal{T}_{\text{sing}} (\Tilde{V}_K)$, $ExTun_I \cap Tun_{I'} = \emptyset$, and $ExTun_I \cap ExTun_{I'} = ( L(I) \cap L(I') ) \subset V$. 
         \item[iv.] For each $I \in \mathcal{T}_{\text{sing}} (\Tilde{V}_K)$, and for each $y \in L(I) - the \ singularity$, there are exactly $(n-1)$ many turns $I' \in \mathcal{T}_{\text{sing}} (\Tilde{V}_K)$ such that $y \in ExTun_{I'}$.
         \item[v.] For each of the $\binom{n}{2}$ turns $I$ of $V^O_K$,  $ExTun_I$ contains the singularity of $O$. 
     \end{enumerate}
     For each turn $L$ of $V$, $L$ is either an \hyperlink{limit vs isolated segments}{isolated segment} (in which case $Tun_{\pi_K(L)} = \emptyset$ from \hyperlink{Stabilizing Hypotheses}{stabilization hypothesis 4}) or $L$ is a \hyperlink{limit vs isolated segments}{limit segment}, in which case $L$ entirely consists of limit points of $Tun_{\pi_K(L)}$ [Lemma \ref{limit segments consist of limit points}]. Thus criterion 4. is satisfied. 
 \end{proof}

\vspace{4mm}

 \vspace{4mm}

\begin{proposition}
    \label{sing stnd nbhds vs star tunnel nbhds}
    Each star tunnel component of a singular standard neighborhood in $X$ is open in $X$.
    Furthermore, each \hyperlink{Singular Standard Neighborhoods}{singular standard neighborhood in $X$} is the disjoint union of a \hyperlink{star tunnel sets}{star tunnel neighborhood} and finitely many turn tunnels. 
\end{proposition}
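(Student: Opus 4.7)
The plan is to establish the decomposition $O = ST(O) \sqcup \bigsqcup_{I \in \mathcal{T}_{\text{left}}(\Tilde{V}_K)} Tun_I$, and then deduce that $ST(O)$ is open in $X$ because its complement in $O$ is a finite union of closed sets.

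First I would decompose $O$ via its plaque structure. By Lemma \ref{shapse of plaques 2}, $O$ is the disjoint union of its plaques. The stabilization hypotheses guarantee exactly one singular plaque $V := V_{\text{sing}}(O)$ in $O$, while every other plaque is a non-singular plaque, i.e.\ a leaf segment taking a unique turn of $\Tilde{V}_K$; hence each such plaque lies in $Tun_I$ for a unique turn $I$. Splitting the turn set of $\Tilde{V}_K$ as $\mathcal{T}_{\text{sing}} \sqcup \mathcal{T}_{\text{left}}$ and using the identity $ExTun_I = Tun_I \cup L(I)$ for $I \in \mathcal{T}_{\text{sing}}$, where $L(I) = (\pi_K |_V)^{-1}(I)$ is the unique singular segment sustaining $I$ (Remark \ref{ExTun_I - Tun_I is a singular shadpw segment}), I get
\begin{equation*}
    ST(O) \;=\; \bigcup_{I \in \mathcal{T}_{\text{sing}}} ExTun_I \;=\; V \,\cup\, \bigcup_{I \in \mathcal{T}_{\text{sing}}} Tun_I,
\end{equation*}
since $V = \bigcup_{I \in \mathcal{T}_{\text{sing}}} L(I)$. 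Combining with the plaque partition yields $O = ST(O) \cup \bigcup_{I \in \mathcal{T}_{\text{left}}} Tun_I$, and disjointness of this union comes from Lemma \ref{Intersecting Pre Leaf Segments} (distinct pre-leaf segments sustaining the same turn are disjoint, so distinct turn tunnels cannot overlap) together with the fact that $V$ consists solely of singular segments while each $Tun_I$ consists solely of leaf segments.

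Next I would prove openness of $ST(O)$ by showing its complement in $O$ is closed. For each $I \in \mathcal{T}_{\text{left}}$, no singular segment sustains $I$, so $ExTun_I - Tun_I = \emptyset$ by Remark \ref{ExTun_I - Tun_I is a singular shadpw segment}, giving $ExTun_I = Tun_I$. Proposition \ref{ExTun is closed in O} then tells us $Tun_I$ is closed in $O$. Since $\mathcal{T}_{\text{left}}$ is finite, the disjoint union $\bigsqcup_{I \in \mathcal{T}_{\text{left}}} Tun_I$ is closed in $O$, so its complement $ST(O)$ is open in $O$, and hence open in $X$ because $O$ is open in $X$.

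Finally, to promote ``star tunnel set'' to ``star tunnel neighborhood,'' I would invoke Lemma \ref{canonical star tun paras are star tun quotients}, which already identifies the canonical parameterization $h_O$ as a star tunnel quotient map, so $ST(O)$ is a star tunnel neighborhood; each $Tun_I$ for $I \in \mathcal{T}_{\text{left}}$ is a turn tunnel neighborhood by Lemma \ref{turn tunnels are open}. The main subtlety I expect is bookkeeping: I must verify carefully that every point of $O$ outside $V$ lies in a $Tun_I$ for a uniquely determined turn $I$ of $\Tilde{V}_K$ (which relies on the backtracking-free property of $\zeta$ and stabilization hypothesis 3), and that the union defining $ST(O)$ really exhausts all pre-leaf segments touching $V$.
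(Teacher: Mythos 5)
Your proposal is correct and follows essentially the same route as the paper: decompose $O$ into $ST(O)$ together with the leftover turn tunnels, observe that each leftover $Tun_I$ equals $ExTun_I$ and is therefore closed in $O$ by Proposition \ref{ExTun is closed in O}, conclude openness of $ST(O)$ from finiteness of $\mathcal{T}_{\text{left}}(\Tilde{V}_K)$, and invoke Lemma \ref{canonical star tun paras are star tun quotients} for the star tunnel neighborhood structure. Your extra bookkeeping via the plaque partition only makes explicit the disjointness that the paper asserts directly.
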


\begin{proof}
    Let $K \in \n$, $\Tilde{V}_K$ a standard star neighborhood in $G_K$ such that $O := \pi^{-1}_K (\Tilde{V}_K)$ is a singular standard neighborhood in $X$. Note that for each $I \in \mathcal{T}_{\text{left}} (\Tilde{V}_K)$, $Tun_I$ does not contain any singular segments and thus $ExTun_I = Tun_I$. Thus $O = \bigcup_{I \ \text{is a turn of} \ \Tilde{V}_K} ExTun_I = (\bigsqcup_{I \in \mathcal{T}_{\text{left}} (\Tilde{V}_K)} Tun_I ) \bigsqcup (ST(O))$.
    Since for each $I \in \mathcal{T}_{\text{left}}$, $ExTun_I = Tun_I$ is closed in $O$,
    $ST(O)$ is open in $X$. Then it follows from Lemma \ref{canonical star tun paras are star tun quotients} that $ST(O)$ is a star tunnel neighborhood. 
\end{proof}



\vspace{4mm}

In the next section, we will complete everything necessary to start viewing each proper solenoid as a foliated object. 

\vspace{4mm}


\subsection{An Atlas for the Solenoid}
\label{Sec: An Atlas for the Solenoid}

\vspace{4mm}

Since solenoids are far from being manifolds, the pursuit of a $foliated \ atlas$ for a solenoid will have significant differences from the manifold-case. It's worth pointing out that we have already defined the notion of \hyperlink{leaf segments}{leaf segments} for solenoids that do not depend on any atlas. The way we define a foliated atlas for $X$ will necessarily be restricted by the need that this very natural notion of leaf segments risen as a result of $X$'s topology, must coincide with the notion of leaf segments given by the atlas. 

\vspace{4mm}

We will find a foliated atlas for $X -  Sing(X)$ in Sub-section \ref{sec: The Tunnel Atlas}, and then a singular foliated atlas for $X$ in Sub-section \ref{sec: The Star Tunnel Atlas}. 

\vspace{4mm}

\subsubsection{The Tunnel Atlas}
\label{sec: The Tunnel Atlas}

\vspace{4mm}

Here, we will build a foliated atlas for the non-singular potion of the solenoid. Throughout this section, we assume that the solenoid $X(\zeta)$ is induced by a proper \hyperlink{Stabilized Split Sequence}{stabilized} split sequence $\zeta$. We will start with laying out the particular open cover of $X - Sing(X)$ associated with the atlas that will be introduced in Definition \ref{Def: The Tunnel Atlas}.

\vspace{4mm}

Recall Optimal Star Neighbourhoods of $X$ [Definition \ref{Optimal Star Neighbourhoods}]. 

\vspace{4mm}

\begin{definition}[The Tunnel Cover of $X - Sing(X)$]
    \label{Def: Tunnel Cover}
    ``The Tunnel Cover of $X - Sing(X)$" is given by 
    $\mathcal{TC} (X) := \{ Tun_I \subset X : I$ is a \hyperlink{optimal turns}{level $0$ optimal turn} of $\zeta \}$. 
\end{definition}

\vspace{4mm}

Recall that the collection of all (finitely many) \hyperlink{optimal turns}{level $0$ optimal turns of $X$} covers $G_0$ [Remark \ref{Properties of Optimal Star Neighbourhoods}], and therefore $\mathcal{TC}(X)$ is a finite open cover for $X -Sing(X)$ [Lemma \ref{turn tunnels are open}].

\vspace{4mm}


\begin{definition}[The Tunnel Atlas of $X - Sing(X)$]
    \label{Def: The Tunnel Atlas}
    \hypertarget{Def: The Tunnel Atlas}{``The Tunnel Atlas of $X - Sing(X)$"} is given by $\mathcal{TA}(X) := \{ h_I : h_I$ is the \hyperlink{Canonical Tunnel Parameterizations}{canonically given tunnel parameterization} of $Tun_I$ where $I$ is \hyperlink{optimal turns}{level $0$ optimal turn} of $\zeta \}$. 
\end{definition}

\vspace{4mm}

Following the terminology widely used in discussing atlases, by ``Coordinate Chart" we refer to an element of the atlas. In the case of foliated surfaces [Definition \ref{Foliated Atlases on Surfaces}], the requirement that different coordinate charts must preserve the concept of leaf segments is a crucial part of being a foliated atlas. In the context of solenoids, the tunnel atlas naturally satisfies an analogous requirement. The rest of this sub-section is a careful treatment of expressing this fact.

\vspace{4mm}

The next lemma [Lemma \ref{how turn atlas charts intersect}] shows that when two elements of \hyperlink{Def: Tunnel Cover}{the tunnel cover} intersect, the intersection itself is a disjoint union of \hyperlink{Tunnel Sets}{tunnels} (where in each of these tunnels, each path component is a leaf segment of the solenoid, as implied by Remark \ref{pre-leaf Segments and Path components}). 

\vspace{4mm}

The following remark will show that two ``meaningfully" intersecting level $0$ optimal turns, intersect at a disjoint union of finitely many turns. By a ``meaningful" intersection we mean that the two corresponding turn tunnels have a non-empty intersection.
This result will be utilized in the proof of Lemma \ref{how turn atlas charts intersect}. 

\vspace{4mm}


\begin{remark}[Two Intersecting Optimal Turns]
    \label{Two Intersecting Turns in a Level Graph}
    Let $I, I'$ be two distinct \hyperlink{optimal turns}{level $0$ optimal turns} such that $Tun_I \cap Tun_{I'} \neq \emptyset$. This implies that $I \cap I' \neq \emptyset$. Let $I$ and $I'$ respectively be turns of the level $0$ optimal star neighborhoods $V$ and $V'$.
    The case where $V = V'$ implies that $Tun_I \cap Tun_{I'} =  \emptyset$. Thus $V \neq V'$. Two distinct level $0$ optimal star neighborhoods intersect at a disjoint union of finitely many turns. Thus $I \cap I'$ is the disjoint union of up to two turns. 
\end{remark}

\vspace{4mm}

\begin{definition}[Interval Components]
    \label{Interval Components}
    \hypertarget{Interval Components}{Assume} the set-up of the above remark. (i.e. $I, I'$ are two distinct \hyperlink{optimal turns}{level $0$ optimal turns} such that $Tun_I \cap Tun_{I'} \neq \emptyset$.) Then a connected component $U$ of $I \cap I'$ shall be called an ``Interval Component of $I \cap I'$" if $U$ is an open interval.
\end{definition}

\vspace{4mm}



The following lemma describes the intersection $Tun_I \cap Tun_{I'}$ for the aforementioned types of turns.

\vspace{4mm}

\begin{lemma}
    \label{how turn atlas charts intersect}
    Let $I, I'$ be two distinct \hyperlink{optimal turns}{level $0$ optimal turns} such that $Tun_I \cap Tun_{I'} \neq \emptyset$. Let $\mathcal{C} = \mathcal{C}_{I,I'}$ be the collection of \hyperlink{Interval Components}{interval components of $I \cap I'$}. (Note that each element of $\mathcal{C}$ is a \hyperlink{Turns and Prongs}{level 0 standard turn in $\zeta$} and that $\mathcal{C}$ contains at most $2$ elements.) Then $Tun_I \cap Tun_{I'} = \bigsqcup_{J \in \mathcal{C}} T_J$ where, for each $J \in \mathcal{C}$, $T_J$ is a \hyperlink{Sub Tunnels}{sub-fiber tunnel} of $Tun_J$.
\end{lemma}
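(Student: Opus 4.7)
The plan is to show that for every $x \in Tun_I \cap Tun_{I'}$, the projection $x_0 := \pi_0(x)$ lies in some (unique) interval component $J$ of $I \cap I'$, and that the leaf segment through $x$ in $Tun_I$ and the leaf segment through $x$ in $Tun_{I'}$ agree over all of $J$. From this the stated decomposition follows.

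First I would fix $x \in Tun_I \cap Tun_{I'}$ and let $L_I$ (resp.\ $L_{I'}$) denote the unique leaf segment in $Tun_I$ (resp.\ $Tun_{I'}$) containing $x$, which exists and is unique by Lemma \ref{Intersecting Pre Leaf Segments} and Lemma \ref{Intersectig Fibers and Pre Leaf Segments}. Since $x$ is a leaf interior point, it has a tunnel neighborhood $W$ in which the local leaf piece through $x$ is a unique open arc; both $L_I \cap W$ and $L_{I'}\cap W$ are open sub-arcs of this common piece. Applying $\pi_0$, we conclude that $\pi_0(L_I)$ and $\pi_0(L_{I'})$ coincide on some open neighborhood of $x_0$ in $G_0$, i.e.\ a neighborhood of $x_0$ is contained in $I \cap I'$. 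Hence $x_0$ must lie in the interior of some interval component $J \in \mathcal{C}$, ruling out the possibility of $x_0$ being an isolated point of $I \cap I'$.

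Next I would extend this local agreement from a neighborhood of $x_0$ to all of $J$. Define
\[
S := \{\, t \in J \;:\; L_I \cap \pi_0^{-1}(t) = L_{I'} \cap \pi_0^{-1}(t)\, \}.
\]
Since $\pi_0$ restricted to each of $L_I, L_{I'}$ is a homeomorphism onto $I, I'$ respectively (Lemma \ref{Intersectig Fibers and Pre Leaf Segments}), for each $t \in J$ the fibers of $\pi_0$ meet $L_I$ and $L_{I'}$ in a single point each. The set $S$ is non-empty by the previous paragraph, open by the local-agreement argument applied at any $t \in S$, and closed by the continuity of the inverses of $\pi_0|_{L_I}$ and $\pi_0|_{L_{I'}}$. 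Since $J$ is connected, $S = J$, so $L_I \cap \pi_0^{-1}(J) = L_{I'} \cap \pi_0^{-1}(J)$, and this common set is a leaf segment taking $J$, hence contained in $Tun_J$.

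It then follows that each $x \in Tun_I \cap Tun_{I'}$ lies in $Tun_J \cap \pi_0^{-1}(J)$ for exactly one $J \in \mathcal{C}$; set $T_J := Tun_I \cap Tun_{I'} \cap \pi_0^{-1}(J)$. Disjointness of the $T_J$'s is immediate from disjointness of distinct interval components. To recognize $T_J$ as a sub-fiber tunnel of $Tun_J$, I would pick a basepoint $p \in J$, use the canonical tunnel parameterization $h_J : C_J \times J \to Tun_J$ from Remark \ref{Canonical Tunnel Parameterizations}, and set $C'_J := T_J \cap F_p \subseteq C_J$; the work above shows $T_J = h_J(C'_J \times J)$, matching Definition \ref{Sub Tunnels}. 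The main obstacle is the open--closed argument that extends local agreement of $L_I$ and $L_{I'}$ across the entire interval component $J$; once that is in place, everything else is straightforward bookkeeping.
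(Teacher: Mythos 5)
Your proposal is correct and follows essentially the same route as the paper: decompose $Tun_I \cap Tun_{I'}$ over the interval components of $I \cap I'$ and exhibit each piece, via the canonically given tunnel parameterization, as $h_J(C'_J\times J)$ for a subset $C'_J$ of the central cross-section. The only difference is in the central step — that two leaf segments taking $I$ and $I'$ which meet at a point must coincide over the whole interval component $J$ — which the paper justifies by comparing the two $2$-pronged star chains representing $L$ and $L'$ level by level, while you give an equivalent open-closed connectedness argument at level $0$ using that $\pi_0$ restricts to a homeomorphism on each leaf segment; both are valid.
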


\begin{proof}
    Assume the set-up of the above lemma. Let $J \in C_{I,I'}$.  
    Choose $q_J \in J \subseteq I \cap I'$. 
    Since $J$ is a turn that cannot contain any \hyperlink{special points}{special points}, we may designate $q_J$ as the ``center" of the $2-$pronged star neighborhood $J$, and denote $C_J := Tun_J \cap F_q$ the \hyperlink{Central Cross-Section}{central cross-section of $Tun_J$ rel $q$}. Furthermore, let $h_J : C_J \times J \longrightarrow Tun_J$ be the canonically given tunnel parameterization for $Tun_J$ rel $C_J$. 
    Now, let $\Tilde{C}_J := $
    $ \{ y \in F_{q_J} : y \in $ a leaf segment that takes $I$ and $y \in $ a leaf segment that takes $I' \}$. 
    Let $T_J := h_J (\Tilde{C}_J \times J)$.

    \vspace{4mm}

    From the way we defined $T_J$ for each $J \in \mathcal{C}$, it follows that $\bigsqcup_{J \in \mathcal{C}} T_J \subseteq Tun_I \cap Tun_{I'}$.
    To show that $Tun_I \cap Tun_{I'} \subseteq \bigsqcup_{J \in \mathcal{C}} T_J$, suppose $L , L'$ is a pair of leaf segments such that, $L$ takes $I$, $L'$ takes $I'$, and $L \cap L' \neq \emptyset$. Let $\mathcal{I}$ (resp. $\mathcal{I}'$) denote the unique $2-$pronged star chain \hyperlink{star chain representing a set in X}{representing} $L$ (resp. $L'$) \hyperlink{star chain representing a set in X}{starting at} $I$ (resp. $I'$).
    Then by observing $\mathcal{I}$ and $\mathcal{I}'$, we can conclude that $L \cap L'$ must be a leaf segment that takes $J$ for some $J \in \mathcal{C}_{I,I'}$. 
\end{proof}

\vspace{4mm}

\begin{definition}[Turn Tunnel Components]
    Let $I, I'$ be two distinct \hyperlink{optimal turns}{level $0$ optimal turns} such that $Tun_I \cap Tun_{I'} \neq \emptyset$. And let $\mathcal{C} = \mathcal{C}_{I,I'}$ be the collection of \hyperlink{Interval Components}{interval components of $I \cap I'$}. For each $J \in \mathcal{C}$, we call the \hyperlink{Sub Tunnels}{sub-fiber tunnel} of $Tun_J$ given by $T_J := Tun_J \cap (Tun_I \cap Tun_{I'})$ a ``Turn Tunnel Component of $Tun_I \cap Tun_{I'}$". 
\end{definition}

\vspace{4mm}

It follows from Lemma \ref{how turn atlas charts intersect} that, $Tun_I \cap Tun_I'$ is the disjoint union of its turn tunnel components. 

\vspace{4mm}

In the next sub-section we mimic the layout of this sub-section to define a singular foliated atlas for $X$.

\vspace{4mm}

\subsubsection{The Star Tunnel Atlas}
\label{sec: The Star Tunnel Atlas}


Once again, we assume that the solenoid $X(\zeta)$ is induced by a proper \hyperlink{Stabilized Split Sequence}{stabilized} split sequence $\zeta$.

\vspace{4mm}

Here, we will build a singular foliated atlas for $X$. We will start with laying out the particular open cover of $X$ that will be associated with the atlas we will lay out in Definition \ref{Def: Star Tunnel Atlas}.

\vspace{4mm}

Recall Optimal Star Neighborhoods of $X$ [Definition \ref{Optimal Star Neighbourhoods}], and the Tunnel Cover of $X- Sing(X)$ denoted $\mathcal{TC}(X)$ [Definition \ref{Def: Tunnel Cover}].

\vspace{4mm}



\begin{definition}[The Star Tunnel Cover of $X$]
    \label{Def: Star Tunnel Cover}
    \hypertarget{Star Tunnels of X}{The} ``Collection of Optimal Star Tunnels of $X$" is given by $\mathcal{ST}(X) := \{ ST(O) \subset X : O$ is a level $0$ optimal neighborhood of $X$ that is singular$\}$.
    \hypertarget{Def: Star Tunnel Cover}{The} ``Star Tunnel Cover of $X$" is given by $\mathcal{STC}(X) := (\mathcal{ST}(X)) \bigcup (\mathcal{TC}(X))$. 
\end{definition}

\vspace{4mm}

\begin{remark}[The Star Tunnel Cover is a Finite Open Cover of $X$]
    \label{STC is fin a open cover}
    Note that $\mathcal{ST}(X)$ is a finite collection [Rem. \ref{Properties of Optimal Star Neighbourhoods}], and that each element of it is an open neighborhood of a singularity in $X$ [Prop. \ref{sing stnd nbhds vs star tunnel nbhds}].
    Since $\mathcal{TC}(X)$ is a finite open cover of $X - Sing(X)$, and since $Sing(X)$ is contained in the union of elements of $\mathcal{ST}(X)$,  $\mathcal{STC}(X)$ is a finite open cover of $X$.
\end{remark}


\vspace{4mm}

Recall canonically given star tunnel parameterizations from Definition \ref{Canonically Given Star Tunnel Parameterization}.

\vspace{4mm}

\begin{definition}[The Star Tunnel Atlas of $X$]
    \label{Def: Star Tunnel Atlas}
    \hypertarget{Def: Star Tunnel Atlas}{``The Star Tunnel Atlas of $X$"} is given by $\mathcal{STA}(X) := \hyperlink{Def: The Tunnel Atlas}{\mathcal{TA}(X)} \ \bigcup \ \{ h_O \ | \ ST(O) \in \mathcal{ST} (X)$, and $h_O$ is the
    \hyperlink{Canonically Given Star Tunnel Parameterization}{canonically given star tunnel parameterization of $ST(O)$}$\}$.
\end{definition}



\vspace{4mm}

Next, we will generalize Lemma \ref{how turn atlas charts intersect} in the context of the star tunnel cover.

\vspace{4mm}

\begin{lemma}
    \label{intersecting elements of STC(X)}
    Let $T, T'$ be two distinct elements of $\mathcal{STC}(X)$ such that $T \cap T' \neq \emptyset$. Then $T \cap T' = \bigsqcup_{i=1}^m T_i$ for some $m \in \mathds{N}$ where for each $i = 1,...,m$, there exists a turn $I_i$ in $G_0$ such that,
    \begin{enumerate}
        \item $I_i$ is a connected component of $\pi_0 (T) \cap \pi_0 (T')$, and,
        \item $T_i$ is a \hyperlink{Sub Tunnels}{sub-fiber tunnel} of $Tun_{I_i}$.
    \end{enumerate}
\end{lemma}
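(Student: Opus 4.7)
The plan is to prove this by case analysis on whether each of $T, T'$ lies in $\mathcal{TC}(X)$ or $\mathcal{ST}(X)$, ultimately reducing the structural description to Lemma \ref{how turn atlas charts intersect} together with the canonical tunnel parameterization machinery from Subsection \ref{Sec: Canonically Given Tunnel Parameterizations}. There are three cases to consider: (i) both $T = Tun_I$ and $T' = Tun_{I'}$; (ii) one is a turn tunnel and the other is a star tunnel component, say $T = Tun_I$ and $T' = ST(O')$; and (iii) $T = ST(O)$ and $T' = ST(O')$ with $O \neq O'$. Case (i) is exactly Lemma \ref{how turn atlas charts intersect}, which provides both the interval components of $I \cap I'$ and the sub-fiber tunnel description.

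For cases (ii) and (iii), I will first show $T \cap T' \subseteq X - Sing(X)$. In (ii) this is immediate since turn tunnels contain no singularities. In (iii) it follows from stabilization: by Lemma \ref{Vertex Stabilizing Criteria} each level $0$ optimal singular neighborhood contains exactly one singularity, so two distinct elements of $\mathcal{ST}(X)$ have disjoint singularity sets. I will then enumerate $I_1, \ldots, I_m$, the open interval components of $\pi_0(T) \cap \pi_0(T')$ in $G_0$; finiteness follows from $\pi_0(T)$ and $\pi_0(T')$ each being a union of turns of level $0$ optimal star neighborhoods together with Remark \ref{Properties of Optimal Star Neighbourhoods} and the combinatorial observation of Remark \ref{Two Intersecting Turns in a Level Graph}. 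Setting $T_i := (T \cap T') \cap \pi_0^{-1}(I_i)$, disjointness across $i$ is automatic since the $I_i$ are disjoint in $G_0$.

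The main step will be to show each $T_i$ is a sub-fiber tunnel of $Tun_{I_i}$. Every $x \in T_i$ is a leaf interior point, and since $T$ and $T'$ are each unions of pre-leaf segments (turn tunnels are unions of leaf segments by Definition \ref{Turn Tunnels and Extended Turn Tunnels}, and a star tunnel component is a union of extended turn tunnels by Definition \ref{star tunnel sets}), there exist pre-leaf segments $L_x \subseteq T$ and $L_x' \subseteq T'$ passing through $x$. Each projects under $\pi_0$ homeomorphically onto a turn containing $I_i$, so restricting either pre-leaf segment over $\pi_0^{-1}(I_i)$ produces a leaf segment taking $I_i$; these restrictions agree by the uniqueness of the local leaf direction at a leaf interior point, controlled by Lemma \ref{Intersecting Pre Leaf Segments} and Lemma \ref{Intersectig Fibers and Pre Leaf Segments}. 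Using the canonical tunnel parameterization $h_{I_i} : C_{I_i} \times I_i \to Tun_{I_i}$, I will then express $T_i$ as $h_{I_i}(\tilde{C}_i \times I_i)$, where $\tilde{C}_i \subseteq C_{I_i}$ consists of those central cross-section points whose entire leaf segment over $I_i$ lies in both $T$ and $T'$. The main obstacle I expect is handling points $x$ lying on a singular segment $L(J) \subseteq ST(O)$: such an $x$ is a leaf interior point through which a singular pre-leaf segment passes, and I will need Lemma \ref{limit segments consist of limit points} (limit-segment points are limits of leaf-segment points in the ambient turn tunnel) together with the canonical parameterization of $ExTun_J$ to verify that $x$ sits in $Tun_{I_i}$ for the correct interval component and that the fiber identification is consistent.
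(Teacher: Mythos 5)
Your proposal is correct and takes essentially the same approach as the paper's proof: the same three-way case analysis (both turn tunnels, one of each, both star tunnel components), with the first case reduced to Lemma \ref{how turn atlas charts intersect} and the remaining cases handled by decomposing $\pi_0 (T) \cap \pi_0 (T')$ into interval components. If anything, your treatment of points lying on singular segments and of the sub-fiber tunnel structure is more careful than the paper's, which in its cases (b) and (c) only verifies the containment $Tun_{I_i} \subseteq T \cap T'$.
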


\begin{proof}
    Assume the set-up of the lemma and let $\mathcal{C}$ be the collection of connected components of $T \cap T'$. Then either (case a.) both $T$ and $T'$ are turn tunnels, or (case b.) one of $T, T'$ is a turn tunnel and the other is a star tunnel neighborhood of a singularity,
    or (case c.) both $T$ and $T'$ are star tunnel neighborhoods of singularities. If case a. is true, then the conclusion follows from Lemma \ref{how turn atlas charts intersect}.

    \vspace{4mm}

    (Case b.) Suppose $T$ is a turn tunnel and $T'$ a star tunnel neighborhood. Let $I$ be the level $0$ optimal turn such that $T = Tun_I$ and let $V$ be the level $0$ optimal star neighborhood such that $O := \pi^{-1}_0 (V)$ and $T' = ST(O)$. (Case b-i.) Suppose $I$ is a turn of $\pi_0 (T')$. Then $T \cap T' = T = Tun_I$. 
    (Case b-ii.) Suppose $I$ is a turn of $V$ but not of $\pi_0 (T')$. i.e. $I$ is a \hyperlink{sing vs left}{leftover turn} of $V$ rel $O$.
    Then it follows from the proof of Prop. \ref{sing stnd nbhds vs star tunnel nbhds} that, $Tun_I \cap ST(O) = \emptyset$. Thus this is not a valid case.
    (Case b-iii.) Suppose $I$ is a turn of a level $0$ optimal star neighborhood $\Tilde{V} \neq V$. Then $V$ and $\Tilde{V}$ have different centers. Thus $V \cap I = \bigsqcup_{i=1}^m I_i$ where $m \in \{ 1,2 \}$ such that, for each $i=1,...,m$, $I_i$ is a level $0$ turn. Then $\pi^{-1}_0(V) \supset Tun_{I_i}$ for each $i=1,...,m$. 

    \vspace{4mm}

    (Case c.) Suppose both $T$ and $T'$ are star tunnel neighborhoods. Let $V , V'$ respectively be the level $0$ optimal star neighborhoods such that $O = \pi^{-1}_0 (V) , O' = \pi^{-1}_0 (V')$ where $T = ST(O)$ and $T' = ST(O')$. Since $T$ and $T'$ are distinct, so are $V$ and $V'$, which means $V$ and $V'$ have different centers. Since $T \cap T' \neq \emptyset$, $\pi_0 (T) \cap \pi_0 (T')$ is a non-trivial disjoint union of embedded open intervals in $G_0$. Now let $m$ be the number of connected components of $\pi_0 (T) \cap \pi_0 (T')$. Then we can express $\pi_0 (T) \cap \pi_0 (T')$ as $\bigsqcup_{i=1}^m I_i$ where, for each $i = 1,...,m$, $I_i$ is a level $0$ turn. Let $i = 1,...,m$. Since $I_i \subset \pi_0 (T) \cap \pi_0 (T')$, we have $Tun_{I_i} \subset T \cap T'$.
\end{proof}

\vspace{4mm}

From now on, we will use the \hyperlink{Def: Star Tunnel Atlas}{star tunnel atlas} to envision $X$ as a singular foliated object, and the \hyperlink{Def: The Tunnel Atlas}{tunnel atlas} to envision $X - Sing(X)$ as a foliated object. In the next chapter, we will use these atlases along with the concepts of \hyperlink{leaf segments}{leaf segments} and \hyperlink{plaque}{plaques of standard neighborhoods} to introduce ``Leaves" [Section \ref{sec: leaves}] and then ``Transversals" [Section \ref{sec: Transversals}] of a proper solenoid. 

\vspace{4mm}



\begin{center}
    \section{Leaves and Transversals} \label{Ch: Leaves and Transversals}
\end{center}

\vspace{4mm}

\h Throughout the discussion in this chapter, we assume that $\zeta$ is a given \hyperlink{Stabilized Split Sequence}{stabilized} proper split sequence and $X = X(\zeta)$ its induced solenoid. We will heavily utilize 
the star tunnel cover $\mathcal{STC}(X)$ established in Definition \ref{Def: Star Tunnel Cover}.

\vspace{4mm}








\subsection{Leaves}
\label{sec: leaves}

In this section, we shall inspect the features of the proper solenoid $X$ that are analogues to leaves of a foliated surface [Definition \ref{surface leaves}] and of a singular foliated surface [Definition \ref{Leaves of a Singular Foliation}]. Just like in the case of singular foliated surfaces, in solenoids, leaves will turn out to be immersed graphs [Lemma \ref{leaves are immersed graphs}]. However, unlike the surface case, leaves of a proper solenoid $X$ will turn out to be path components of $X$ [Lemma \ref{leaves vs path components}].

\vspace{4mm}

We begin with some preliminary definitions.

\vspace{4mm}

\subsubsection{Leaves and Partial Leaves}
\label{Leaves and Partial Leaves}

\vspace{4mm}

This sub-section is dedicated to defining ``Leaves" and ``Partial Leaves" of the solenoid [Definition \ref{Leaves, Partial Leaves}], by first laying out certain equivalence relations [Definition \ref{Leaf Equivalence}]. To build these definitions we use the terminology laid out earlier of optimal neighborhoods and optimal turns [Definition \ref{Optimal Star Neighbourhoods}]. 



\vspace{4mm}

\begin{definition}[Optimal Plaques, and Optimal Leaf Segments]
    \label{Optimal Plaques and Leaf Segments}
    \hypertarget{Optimal Plaques and Leaf Segments}{A} subset $V \subset X$ is called an ``Optimal Plaque in $X$" if $V$ is a  plaque of a level $0$ optimal neighborhood of $X$. 
    A subset $L \subset X$ is called an ``Optimal Leaf Segment in $X$" if $L$ is an optimal plaque that is \hyperlink{Singular Plaques}{non-singular} (equivalently, $L$ is a leaf segment that takes a \hyperlink{turn in zeta}{level $0$ optimal turn in $\zeta$}).
\end{definition}

\vspace{4mm}


The collection of all optimal plaques (resp. optimal leaf segments) cover $X$ (resp. $X - Sing(X)$) 
[Remark \ref{Properties of Optimal Star Neighbourhoods}]. 

\vspace{4mm}

We plan to use certain unions of optimal plaques (resp. optimal leaf segments) to define ``\hyperlink{Leaves, Partial Leaves}{Leaves}" (resp. ``\hyperlink{Leaves, Partial Leaves}{Partial Leaves}") of $X$. Note that since each standard plaque in $X$ is contained in an optimal plaque in $X$ (resp. since each leaf segment in $X$ is contained in an optimal leaf segment in $X$), when we build the discussion of leaves (resp. partial leaves), we can limit our focus to optimal plaques (resp. optimal leaf segments) instead of considering the broader definitions. 

\vspace{4mm}

\begin{remark}[Intersecting Optimal Plaques and Intersecting Optimal Leaf Segments]
    \label{intersecting plaques}
    Let $V$ and $V'$ be two distinct optimal plaques that intersect non-trivially. Let $O$ (resp. $O'$) be the optimal neighborhood that $V$ (resp. $V'$) is a plaque of. If $O = O'$ then $V \cap V' = \emptyset$, thus $O$ and $O'$ are distinct. Since any two distinct optimal star neighborhoods (in $G_0$) that intersect non-trivially, intersect at a disjoint finite union of turns, and since $\pi_0 |_V , \pi_0 |_{V'}$ both are homeomorphisms onto star sets in $G_0$ with \hyperlink{Neighbourhood Completion}{standard neighborhood completions} that are optimal,
    $V \cap V'$ is a disjoint union of finitely many leaf segments. 

    \vspace{4mm}

    Let $L$ and $L'$ be two distinct optimal leaf segments that intersect non-trivially. Since they intersect, $\pi_0 (L)$ and $\pi_0 (L')$ are turns of two distinct optimal star neighborhoods in $G_0$. Thus $L \cap L'$ is either one leaf segment or the disjoint union of two leaf segments. 
\end{remark}

\vspace{4mm}

\begin{definition}[Concatenations of Plaques/Leaf Segments]
    \hypertarget{Concatenations}{Let} $\mathcal{V} := \bigcup_{i=1}^m V_i$ be a union of optimal plaques (resp. optimal leaf segments) of $X$ for some $m \in \mathds{N}$. We call $\mathcal{V}$ a ``Concatenation of Plaques in $X$" (resp. a ``Concatenation of Leaf Segments in $X$") if, when $m \neq 1$, each $i \in \{1,...,m-1\}$, $V_i \cap V_{i+1}$ contains a leaf segment. 
\end{definition}


\vspace{4mm}

\begin{definition}[Plaque Equivalence and Leaf Equivalence]
    \label{Leaf Equivalence}
    \hypertarget{Leaf Equivalence}{For} each $x,y \in X$, we say that ``$x$ and $y$ are Plaque Equivalent" if there exists a \hyperlink{Concatenations}{concatenation of plaques} in $X$ that contains both $x$ and $y$.
    For each $x,y \in X - Sing(X)$, we say that ``$x$ and $y$ are Leaf Equivalent" if there exists a \hyperlink{Concatenations}{concatenation of leaf segments} in $X$ that contains both $x$ and $y$.
\end{definition}

\vspace{4mm}

\begin{lemma}
    Plaque equivalence is an equivalence relation on $X$.
\end{lemma}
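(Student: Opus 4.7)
The plan is to verify the three equivalence-relation axioms in turn. Reflexivity is immediate: since the collection of optimal plaques covers $X$ (Remark \ref{Properties of Optimal Star Neighbourhoods}), every $x \in X$ belongs to some optimal plaque $V$, and the one-term sequence $V$ is trivially a concatenation of plaques (the consecutive-overlap condition is vacuous when $m = 1$), so $x$ is plaque equivalent to itself.

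Symmetry is obtained by reversal. If $\mathcal{V} = V_1 \cup \cdots \cup V_m$ is a concatenation witnessing that $x$ and $y$ are plaque equivalent, then the reversed list $V_m, V_{m-1}, \ldots, V_1$ also satisfies the overlap condition (the requirement that $V_i \cap V_{i+1}$ contain a leaf segment is symmetric in the two indices), and its underlying union is unchanged, so it still contains $x$ and $y$.

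For transitivity, suppose $\mathcal{V} = V_1 \cup \cdots \cup V_m$ witnesses plaque equivalence of $x$ and $y$, and $\mathcal{W} = W_1 \cup \cdots \cup W_n$ witnesses plaque equivalence of $y$ and $z$. Pick indices with $x \in V_a$, $y \in V_b$, $y \in W_c$, $z \in W_d$. After truncating each sequence, and reversing where necessary, the sub-sequences $V_a, \ldots, V_b$ and $W_c, \ldots, W_d$ remain concatenations. The natural candidate for a concatenation containing $x$ and $z$ is the spliced sequence $V_a, \ldots, V_b, W_c, \ldots, W_d$, and the only overlap not already verified is $V_b \cap W_c$, which contains the point $y$. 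If $V_b = W_c$ we may delete the duplicate and the spliced sequence is visibly a concatenation; otherwise $V_b$ and $W_c$ are distinct optimal plaques with nonempty intersection, and Remark \ref{intersecting plaques} identifies $V_b \cap W_c$ as a disjoint union of finitely many leaf segments, which in particular contains a leaf segment as required.

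The main obstacle is the case where $y \in \mathrm{Sing}(X)$. Since leaf segments are non-singular plaques, $y$ cannot lie on any leaf segment, so it would be internally inconsistent to have $V_b \neq W_c$ while $y \in V_b \cap W_c$ and Remark \ref{intersecting plaques} describes that intersection as a union of leaf segments; the scenario $V_b \neq W_c$ must therefore be ruled out when $y$ is singular. The resolution is structural: $\pi_0(y)$ is a natural vertex of $G_0$, hence an optimal point in the sense of Definition \ref{Optimal Star Neighbourhoods}, so by the defining property of pre-optimal star neighborhoods (whose interiors contain no optimal points other than their centers) $\pi_0(y)$ can appear only as the center of an optimal star neighborhood, and that optimal star neighborhood is the unique one containing $y$ in its central fiber. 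Consequently a singularity $y$ lies in a unique optimal plaque, which forces $V_b = W_c$ whenever $y$ is a singularity, reducing this case to the harmless $V_b = W_c$ sub-case handled above and completing the verification of transitivity.
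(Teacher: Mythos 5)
Your proof is correct and follows essentially the same route as the paper's: the only substantive axiom is transitivity, which you handle by the same dichotomy at the junction point $y$ — if $y$ is a singularity it lies in a unique optimal plaque (forcing the two adjacent plaques to coincide), and otherwise Remark \ref{intersecting plaques} supplies a leaf segment in the overlap. Your explicit verification of reflexivity and symmetry, and your observation that the singular case must be separated because a singularity cannot lie in a leaf segment, are just more detailed renderings of steps the paper treats as immediate.
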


\begin{proof}
    The only criterion for being an equivalence relation that doesn't directly follow from the definition of plaque equivalence is transitivity. Let $x,y \in X$ (resp. $y,z \in X$) be contained in a finite concatenation of plaques $\bigcup_{i=1}^m V_i$ for some $m \in \mathds{N}$ (resp. $\bigcup_{i=1}^{m'} V'_i$ for some $m' \in \mathds{N}$). Without loss of generality we may assume that $x \in V_1$, $y \in V_m$, $y \in V'_1$ and $z \in V'_{m'}$. 

    \vspace{4mm}

    (Case a.) Suppose $y$ is a singularity. Then there is a unique \hyperlink{Optimal Neighborhood}{level $0$ optimal neighborhood} $O$ containing $y$, and a unique plaque $V$ of $O$ containing $y$. So we have $V = V_m = V'_1$.

    \vspace{4mm}

    (Case b.) Now suppose $y$ is not a singularity. Then since intersecting optimal plaques in $X$, intersect at a disjoint union of leaf segments [Remark \ref{intersecting plaques}], there exists a leaf segment $L$ in $X$ such that $y \in L \subseteq  V_m \cap V'_1$. 
\end{proof}

\vspace{4mm}

\begin{lemma}
    Leaf equivalence is an equivalence relation on $X - Sing(X)$.
\end{lemma}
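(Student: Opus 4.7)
The plan is to mirror the proof of the preceding lemma (that plaque equivalence is an equivalence relation on $X$), taking advantage of the fact that points of $X - Sing(X)$ are, by hypothesis, not singularities; this avoids the delicate ``singular case'' that complicated the plaque version. Reflexivity follows because the collection of optimal leaf segments covers $X - Sing(X)$ (by Remark \ref{Properties of Optimal Star Neighbourhoods} combined with the fact that each non-singular optimal plaque is an optimal leaf segment), so for any $x \in X - Sing(X)$ I can pick an optimal leaf segment $L \ni x$ and use the singleton concatenation $\mathcal{L} = L$ to witness $x \sim x$. Symmetry is immediate: if $L_1 \cup \cdots \cup L_m$ is a concatenation of leaf segments containing $x$ and $y$, then $L_m \cup L_{m-1} \cup \cdots \cup L_1$ is also such a concatenation.

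The substantive step is transitivity. Suppose $x,y \in X - Sing(X)$ are joined by a concatenation $\bigcup_{i=1}^m L_i$ of optimal leaf segments with $x \in L_1$ and $y \in L_m$, and that $y, z \in X - Sing(X)$ are joined by a concatenation $\bigcup_{i=1}^{m'} L'_i$ with $y \in L'_1$ and $z \in L'_{m'}$. I would show that the juxtaposition $L_1, \ldots, L_m, L'_1, \ldots, L'_{m'}$ is itself a valid concatenation of leaf segments; all consecutive adjacencies inside each original sequence are already good, so the only new condition to verify is that $L_m \cap L'_1$ contains a leaf segment.

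To handle this one intersection, split into cases. If $L_m = L'_1$, the condition is trivial (the intersection is a leaf segment itself). Otherwise, $L_m$ and $L'_1$ are two distinct optimal leaf segments with $y \in L_m \cap L'_1$. By Remark \ref{intersecting plaques}, the intersection of two distinct non-trivially intersecting optimal leaf segments is either a single leaf segment or a disjoint union of two leaf segments; in particular, it contains a leaf segment, as required. Concatenating yields a concatenation of leaf segments containing both $x$ and $z$, establishing $x \sim z$.

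I do not expect a genuine obstacle. The only conceptual point worth flagging is why the argument is strictly easier than in the plaque case: in the plaque lemma one had to treat separately the possibility that $y$ is a singularity (where the conclusion followed from uniqueness of the singular plaque in a level $0$ optimal neighborhood), but that branch is excluded here by the standing assumption $y \in X - Sing(X)$, so the second half of Remark \ref{intersecting plaques} alone finishes the proof.
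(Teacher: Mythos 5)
Your proposal is correct and follows essentially the same route as the paper: transitivity is the only nontrivial axiom, and it is settled by observing that two distinct intersecting optimal leaf segments meet in a (nonempty, finite) disjoint union of leaf segments, so the juxtaposed sequence is again a valid concatenation. The paper cites Remark \ref{Two Intersecting Turns in a Level Graph} where you invoke the second half of Remark \ref{intersecting plaques}, but the content used is identical.
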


\begin{proof}
    The only criterion for being an equivalence relation that doesn't directly follow from the definition of leaf equivalence is transitivity. Let $x,y \in X - Sing(X)$ (resp. $y,z \in X - Sing(X)$) be contained in a finite concatenation of leaf segments $\bigcup_{i=1}^m L_i$ for some $m \in \mathds{N}$ (resp. $\bigcup_{i=1}^{m'} L'_i$ for some $m' \in \mathds{N}$). Without loss of generality we may assume that $x \in L_1$, $y \in L_m$, $y \in L'_1$ and $z \in L'_{m'}$.

    \vspace{4mm}

    Since any two level $0$ optimal leaf segments that intersect, intersect at a disjoint union of finitely many leaf segments [Remark \ref{Two Intersecting Turns in a Level Graph}], there exists a leaf segment $L$ in $X$ such that $y \in L \subseteq L_m \cap L'_1$. 
\end{proof}

\vspace{4mm}

\begin{definition}[Leaves, and Partial Leaves]
    \label{Leaves, Partial Leaves}
    \hypertarget{Leaves, Partial Leaves}{$L \subseteq X$} is called a ``Leaf of $X$" if $L$ is a plaque equivalence class of $X$. $L \subseteq X - Sing(X)$ is called a ``Partial Leaf of $X$" if $L$ is a leaf equivalence class of $X - Sing(X)$.
\end{definition}

\vspace{4mm}


The next definition gives a name to the particular unions of concatenations of plaques (resp. leaf segments) that form leaves (resp. partial leaves) of the solenoid. \hypertarget{maximal}{Recall} that given a set $Y$, a property $P$ that can be potentially satisfied by some subsets of $Y$, and $S \subseteq Y$ that satisfies $P$, $S$ is said to be ``Maximal in $Y$ with respect to $P$" if there are no subsets of $Y$ satisfying $P$ that properly contain $S$. 

\vspace{4mm}

\begin{definition}[Webs of Leaf Segments and Webs of Plaques]
    \label{def: Webs}
    \hypertarget{def: Webs}{Let} $L \subseteq X$. $L$ is called a ``Web of Plaques in $X$" (resp. a ``Web of Leaf Segments in $X$") if,
    \begin{enumerate}
        \item for each $x,y \in L$, there exists a concatenation of plaques (resp. leaf segments) $V$ such that $x,y \in V \subseteq L$, and,
        \item $L$ is \hyperlink{maximal}{maximal} in $X$ with respect to the property stated in item 1.
    \end{enumerate}
\end{definition}

\vspace{4mm}

Note that the collection of webs of plaques in $X$ partition $X$. Meanwhile the collection of webs of leaf segments in $X$ partition $X- Sing(X)$. 

\vspace{4mm}

\begin{remark}
    The two facts below follow immediately from the above definition.
    \begin{enumerate}
        \item Let $x, y \in X$. $x$ and $y$ are in the same leaf of $X$ $\Longleftrightarrow$ $x$ and $y$ are in the same web of plaques in $X$.
        \item Let $x, y  \in X - Sing(X) $. $x$ and $y$ are in the same partial leaf of $X$ $\Longleftrightarrow$ $x$ and $y$ are in the same web of leaf segments in $X$.
    \end{enumerate}
\end{remark}

\vspace{4mm}

In the next sub-section, we will explore further how leaves and partial leaves are embedded in the solenoid. 

\vspace{4mm}

\subsubsection{The Shapes of Leaves and Partial Leaves}
\label{sec: The Shapes of Leaves and Partial Leaves}

\vspace{4mm}

The two main results here [Lemma \ref{leaves vs path components} and Lemma \ref{leaves are immersed graphs}], will show that leaves (resp. partial leaves) are the path components of $X$ (resp. $X - Sing(X)$), and that leaves (resp. partial leaves) are immersed graphs (resp. $1-$manifolds) in $X$ (resp. $X - Sing(X)$).

\vspace{4mm}




\begin{lemma}
    \label{leaves vs path components}
    Let $L \subseteq X$. $L$ is a path component of $X$ (resp. of $X - Sing(X)$) if and only if $L$ is a leaf (resp. a partial leaf) of $X$.
\end{lemma}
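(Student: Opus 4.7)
The plan is to show, for each statement separately, that the two sides are equal by proving both inclusions, using the finite open covers constructed in Chapter~\ref{Ch: Solenoids as Foliated Spaces}. For the leaf/path-component statement I will use the star tunnel cover $\mathcal{STC}(X)$ together with the fact [Lemma \ref{shapse of plaques 2}] that plaques of standard neighborhoods are star sets (in particular path connected) and are the path components of their ambient standard neighborhood. For the partial-leaf/path-component-of-$X-Sing(X)$ statement I will run the same argument using the tunnel cover $\mathcal{TC}(X)$, after observing that leaf segments are the path components of turn tunnels [Remark \ref{pre-leaf Segments and Path components}].

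\textbf{Leaf $\subseteq$ path component.} Given a leaf $L$ and $x,y \in L$, by Definitions \ref{Leaf Equivalence}, \ref{Leaves, Partial Leaves} there is a concatenation of optimal plaques $V_1, \dots, V_m$ with $x \in V_1$, $y \in V_m$, and $V_i \cap V_{i+1}$ containing a leaf segment. Each $V_i$ is a star set, hence path connected. Because consecutive plaques share a (non-empty) leaf segment, a path from $x$ to $y$ in $X$ is obtained by concatenating paths inside each $V_i$ and meeting at a shared point. Thus $L$ lies in a single path component of $X$. The partial leaf case is identical, replacing plaques by optimal leaf segments and observing that leaf segments are themselves path connected.

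\textbf{Path component $\subseteq$ leaf.} Let $\gamma:[0,1]\to X$ be a path from $x$ to $y$. Since $\mathcal{STC}(X)$ is a \emph{finite} open cover of $X$, and since the collection of level $0$ optimal neighborhoods is itself a finite open cover by Remark \ref{Properties of Optimal Star Neighbourhoods}, the Lebesgue number lemma yields a partition $0 = t_0 < t_1 < \dots < t_n = 1$ with $\gamma([t_{i-1}, t_i]) \subseteq O_i$ for some level $0$ optimal neighborhood $O_i$. Because $\gamma([t_{i-1}, t_i])$ is connected, it lies inside a single path component of $O_i$, namely a unique optimal plaque $V_i \ni \gamma(t_{i-1}), \gamma(t_i)$. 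At the junction point $\gamma(t_i) \in V_i \cap V_{i+1}$ there are two cases. If $\gamma(t_i) \notin Sing(X)$, then by Remark \ref{intersecting plaques} the intersection $V_i \cap V_{i+1}$ (being non-empty) is a disjoint union of finitely many leaf segments and in particular contains one. If $\gamma(t_i) \in Sing(X)$, then since the definition of optimal star neighborhood forces an optimal point to be the center of any pre-optimal star neighborhood containing it, $\gamma(t_i)$ lies in a unique level $0$ optimal neighborhood; hence $O_i = O_{i+1}$, and consequently $V_i = V_{i+1}$. After deleting consecutive duplicates the surviving sequence $V_1, \dots, V_{m}$ is a concatenation of plaques witnessing that $x$ and $y$ are plaque equivalent, i.e.\ belong to the same leaf. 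The same argument works for partial leaves, using $\mathcal{TC}(X)$ in place of $\mathcal{STC}(X)$: the piece of $\gamma$ in each turn tunnel lies in a single path component, which is a leaf segment; at a junction point, Lemma \ref{how turn atlas charts intersect} shows the two adjacent turn tunnels meet in a disjoint union of sub-fiber tunnels, so the two leaf segments share the sub-leaf segment through $\gamma(t_i)$.

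\textbf{Main obstacle.} The one place that requires care is the behavior of $\gamma$ near singularities for the leaf statement: a priori, as $\gamma$ passes through a singular point it could change prongs of a singular plaque, and one needs the partition to recognize this as \emph{remaining in one plaque} rather than jumping. This is resolved precisely by the uniqueness of the level $0$ optimal neighborhood containing a given singularity, which forces the two adjacent pieces of $\gamma$ to be assigned the same (multi-pronged) singular plaque. For partial leaves this issue disappears because $\gamma$ avoids $Sing(X)$ by hypothesis.
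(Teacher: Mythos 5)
Your proof is correct and follows essentially the same route as the paper: one inclusion from the path-connectedness of (webs of) plaques, and the other by showing that the image of any path must lie in a concatenation of plaques (resp.\ leaf segments). The paper states this second step in one sentence without detail; your Lebesgue-number decomposition and the case analysis at singular junction points simply make that step explicit.
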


\begin{proof}
    Recall that the collection of \hyperlink{def: Webs}{webs of plaques} (resp. \hyperlink{def: Webs}{webs of leaf segments}) in $X$ partition $X$ (resp. $X - Sing(X)$) and that each web of plaques (resp. leaf segments) is by design path connected in $X$ (resp. in $X -Sing(X)$). If two distinct webs of plaques (resp. leaf segments) $L$ and $L'$ had a path $\gamma$ connecting a point $x \in L$ and $y \in L'$, then $Image(\gamma)$ will necessarily have to lie in a concatenation of plaques (resp. leaf segments) $V \subseteq L \cap L'$, contradicting that $L$ and $L'$ are distinct. Thus we have that for each pair of leaves (resp. partial leaves) $L , L'$ that are contained in the same path component $C$ of $X$ (resp. of $X - Sing(X)$), $L = L' = C$. 
\end{proof}



\vspace{4mm}

In Definition \ref{Graph Immersion} we will establish, for each leaf $L$, a graph $\Gamma_L$ that maps onto $L$ under a bijective immersion.
But first, we lay out a method of identifying a topological graph in a way that'll prove to be convenient in this context. By a ``Topological Graph" what we mean is the underlying topological space of a graph.

\vspace{4mm}

\begin{remark}[The Topology of a Graph]
    \hypertarget{maximal star sets}{Recall} the definition of a \hyperlink{star set}{star set} from definition \ref{Star Set}. We say that a star set $V$ belonging to a topological space $Y$ is ``Prong-Maximal in $Y$" if there are no other star sets in $Y$ containing $V$, that have a higher number of \hyperlink{Turn and Prong}{prongs} than $V$. Now let $Y$ be a second countable Hausdorff space. Then, $Y$ is a topological graph if and only if the topology of $Y$ can be generated by the collection of prong-maximal star sets of $Y$. 
\end{remark}

\vspace{4mm}


\begin{definition}[The Graph-Immersion of $L$]
    \label{Graph Immersion}
    \hypertarget{Graph Immersion}{Let} $L$ be a leaf or a partial leaf of $X(\zeta)$. The ``The Graph-Immersion of $L$" denoted ``$\Gamma_L$" is the topological graph given by the set $L$ with the topology generated by the collection of $L$'s \hyperlink{maximal star sets}{prong-maximal star sets}. (In other words, to construct $\Gamma_L$, we first take $L$ merely as a set, but we use $L$'s subspace topology to determine the collection $\mathcal{B}$ of $L$'s prong-maximal star sets, then forget the subspace topology, and implement on $L$ the set, the topology generated by $\mathcal{B}$.) 
\end{definition}

\vspace{4mm}

\hypertarget{Topological Immersion}{Note} that a continuous map $f:Y \longrightarrow Z$ is called a ``Topological Immersion", if for each $y \in Y$, there exists a neighborhood $U$ of $y$ in $Y$ such that $f|_U : U \longrightarrow f(U)$ is a homeomorphism.

\vspace{4mm}

\begin{lemma}
    \label{leaves are immersed graphs}
    Let $L$ be a leaf of $X(\zeta)$. The identity map $i :$ \hyperlink{Graph Immersion}{$ \Gamma_L$} $ \longrightarrow L$ is a \hyperlink{Topological Immersion}{topological immersion}.
\end{lemma}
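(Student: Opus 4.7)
The plan is to fix an arbitrary $x \in L$ and produce a neighborhood $U$ of $x$ in $\Gamma_L$ on which $i$ restricts to a homeomorphism onto its image in $L$ (with the subspace topology from $X$). The natural candidate for the local coordinate patch is a plaque: using Proposition \ref{finitely many singularities}, I would choose a standard neighborhood $O \ni x$ in $X$ containing no singularity of $X$ other than possibly $x$ itself, and set $V := Plaque(O, x)$. By Lemma \ref{shapse of plaques 2}, $V$ is an $n_x$-pronged star set with center $x$, and the subspace topology on $V$ inherited from $X$ is the star-set topology.

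The core step is to realize $V$ (or at least an open subset of $V$ containing $x$) as an open set in $\Gamma_L$, i.e. as a union of finite intersections of prong-maximal star sets of $L$. When $n_x > 2$, I would argue that $V$ itself is prong-maximal in $L$: any star set $W \subseteq L$ with $V \subseteq W$ must have $x$ as its center, because a non-center point of a star set has valence $2$ while $x$ has valence $n_x > 2$; this forces $W$ to have at most $n_x$ prongs, matching $V$. When $n_x = 2$, I would construct an open subset $U$ of $V$ containing $x$ as a finite intersection of prong-maximal star sets of $L$: for each singularity of $L$ reachable from $x$ along the partial leaf through $x$, take a prong-maximal star set centered there whose prong along that partial leaf is truncated, so that the intersection cuts $V$ down to a sub-interval around $x$.

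Once such an open $U \subseteq V$ in $\Gamma_L$ is in hand, both the subspace topology on $U$ from $\Gamma_L$ and its subspace topology from $L$ (equivalently, from $X$) agree with the star-set topology on $U$ viewed as a sub-star-set of $V$. Consequently $i|_U$ is the identity as a set map between spaces carrying the same topology on $U$, hence a homeomorphism onto its image; this establishes the immersion property at $x$, and since $x$ was arbitrary $i$ is a topological immersion.

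The principal obstacle will be the case $n_x = 2$ when the partial leaf through $x$ has at most one end reaching a singularity of $L$. In that degenerate setting one must use star sets whose prongs exhaust the partial leaf in one direction, and carefully verify that a suitable finite intersection still yields an open sub-interval of $V$ around $x$ (rather than overshooting into a neighboring region or failing to localize $x$ at all); this may require supplementing the argument with prong-maximal $2$-pronged star sets sitting inside singularity-free pieces of $L$, which are automatically prong-maximal since $L$ then hosts no star set with more than $2$ prongs that could contain them.
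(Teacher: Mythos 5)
Your overall strategy --- localize at $V := Plaque(O,x)$ and show that the $\Gamma_L$-topology and the $L$-subspace topology agree there --- is the paper's strategy, and your treatment of the case $n_x > 2$ matches the paper's. The gap is exactly where you flag it: the case $n_x = 2$. The paper does not case-split at all; it invokes Proposition \ref{shapse of plaques 3} (every plaque of a standard neighborhood is the inverse limit of a \emph{maximal} star chain) to conclude that $V$ is itself a prong-maximal star set --- even when $V$ is $2$-pronged --- and is therefore already a subbasic open set of $\Gamma_L$, with no auxiliary construction needed. Your substitute construction does not close the gap: a finite intersection of prong-maximal star sets centered at singularities adjacent to the partial leaf through $x$ need not be contained in $V$, because the \emph{other} prongs of those star sets can meet one another far from $x$, so the intersection picks up extraneous components outside $V$ (it is open in $\Gamma_L$ but fails to localize); and when the partial leaf through $x$ terminates at no singularity of $L$ your construction has nothing to anchor it, while $L$ may still contain singularities elsewhere, so the claim that the relevant arcs are ``automatically prong-maximal'' is not justified under the paper's definition. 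The repair is to discard the workaround and argue, via the maximal star chain representing $V$, that $V$ is prong-maximal regardless of $n_x$.

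A second omission: even once an open $U \subseteq V$ of $\Gamma_L$ is in hand, you assert rather than prove that the two subspace topologies on $U$ coincide. This requires the two-sided basis comparison the paper carries out --- every prong-maximal star set contained in $V$ equals (a standard neighborhood) $\cap\, V$, and conversely every nonempty (standard neighborhood) $\cap\, V$ is a prong-maximal star set --- where the converse direction again rests on Proposition \ref{shapse of plaques 3}. Without that step, ``both topologies agree with the star-set topology on $U$'' is the statement to be proved, not an observation.
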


\begin{proof}
    For each $x \in \Gamma_L$, choose an \hyperlink{Optimal Neighborhood}{optimal neighborhood} $O_x$ of $i(x)$. Let $\Tilde{V}_x$ be the \hyperlink{Optimal Star Neighborhoods}{optimal star neighborhood} in $G_0$ such that $O_x = \pi^{-1}_0 (\Tilde{V}_x)$. Let $V := Plaque(O_x,x)$. $V$ is a prong-maximal star set in $X$ and thus is open in $\Gamma_L$. We aim to show that $i|_{V}$ from $V \subset \Gamma_L$ to $V \subset L$ is a homeomorphism. Note that $\mathcal{B} := \{ O \cap V : O$ is a standard neighborhood of $X\}$ generates the topology of $V \subset L$ and $\mathcal{B}_{\Gamma} := \{ U : U$ is a prong-maximal star set contained in $V\}$ generates the topology of $V \subset  \Gamma_L$. We aim to show that $\mathcal{B}_{\Gamma} = \mathcal{B}$.

    \vspace{4mm}

    Let $U$ be a prong-maximal star set contained in $V$ (i.e. $U \in \mathcal{B}_{\Gamma}$). If $\Tilde{V} := \pi_0 (U)$ does not contain a natural vertex, then $\Tilde{V}$ is a $2-$pronged star neighborhood such that $ U = \pi^{-1}_0 (\Tilde{V}) \cap V \in \mathcal{B}$. Now suppose  $\pi_0 (U)$ contains a natural vertex. Then choose a \hyperlink{neighbourhood_completion}{standard neighborhood completion} $\Tilde{V}$ of $\pi_0 (U)$ in $\Tilde{V}_x$. Then we have $ U = \pi^{-1}_0 (\Tilde{V}) \cap V \in \mathcal{B}$.

    \vspace{4mm}

    Now let $O$ be a standard neighborhood such that $W := O \cap V \neq \emptyset$ (i.e. $W \in \mathcal{B}$). Recall that each plaque of a standard neighborhood is the inverse limit of a maximal star chain [Prop. \ref{shapse of plaques 3}], and therefore is a prong-maximal star set of $X$. If $W$ does not contain a singularity, then $W$ is a leaf segment and is thus a prong-maximal star set in $V$. If $W$ contains a singularity $x$, both $W$ and $V$ share $x$ as their center, and each singular plaque of $x$ is a star set with the same number of prongs. Since $V$ is prong-maximal, so is $W$. Thus $W \in \mathcal{B}_{\Gamma}$.
\end{proof}

\vspace{4mm}

From the above result it quickly follows that each partial leaf of $X$ is a $1-$manifold immersed in $X - Sing(X)$. The following two remarks elaborate on this fact. 

\vspace{4mm}

\begin{remark}[The Topology of a 1-Manifold]
    Let $Y$ be a second countable Hausdorff space. $Y$ is a one dimensional manifold if and only if the topology of $Y$ is generated by its collection of embedded open intervals. 
\end{remark}

\vspace{4mm}



Let $\Tilde{L}$ be a leaf of $X$. Then natural vertices of $\Gamma_{\Tilde{L}}$ correspond exactly to the singularities of $X$ contained in $\Tilde{L}$. Let $L$ be a path component of $\Tilde{L} - Sing(X)$. Then $L$ is a partial leaf of $X$. 

\vspace{4mm}

\begin{remark}[Partial Leaves are Immersed 1-Manifolds in $X$]
    \label{Partial Leaves are Immersed 1-Manifolds in X}
    Note that, given a graph $\Gamma$, any path component of $\Gamma - \{$The natural vertices of $\Gamma \}$ is a one dimensional manifold. Let $L$ be a partial leaf. Let $\Tilde{L}$ be the unique leaf containing $L$. Then $\Gamma_L$ is the set $L$ along with the subspace topology given to it by $\Gamma_{\Tilde{L}}$. Since a partial leaf, by definition, contains no singularities, each prong-maximal star set contained in $L$ is $2-$pronged (i.e. an open interval). So, $\Gamma_L$ is a $1-$manifold.
    Furthermore, the identity map from $\Gamma_L$ to $L$ is a restriction of the identity map from $\Gamma_{\Tilde{L}}$ to $\Tilde{L}$, that was proven to be an immersion in Lemma \ref{leaves are immersed graphs}. Thus, $\Gamma_L$ is a one dimensional manifold that maps onto $L$ under a bijective immersion. 
\end{remark}

\vspace{4mm}




\vspace{4mm}

\begin{remark}[Types of Partial Leaves]
    \label{Types of Partial Leaves}
    \hypertarget{Types of Partial Leaves}{Let} $L$ be a partial leaf of $X$.
    It follows from Lemma \ref{how turn atlas charts intersect} and Remark \ref{Partial Leaves are Immersed 1-Manifolds in X}, that there exist,
    \begin{enumerate}
        \item[i.] 
        a countable index set $\mathds{J} \subseteq \mathds{Z}$ such that, $\mathds{J} \in \{ \mathds{Z} , \mathds{N} \} \bigcup \{ \{ 1, ... , m \} \}_{m \in \mathds{N}}$, and,
        \item[ii.] a collection of optimal leaf segments $\{ L^i \}_{ i \in \mathds{J}}$,
    \end{enumerate}
    such that, 
    \begin{enumerate}
        \item for each $i , i+1 \in \mathds{J}$, $L^i \cap L^{i+1}$ contains a leaf segment in $X$, and,
        \item $L = \bigcup_{ i \in \mathds{J}} L^i$.
    \end{enumerate}
    In the above context, if $\mathds{J} \in \{ \{ 1, ... , m \} \}_{m \in \mathds{N}}$, we call $L$ a ``Finite Partial Leaf with Level-$0$-length $m$". If $\mathds{J} \in \{ \mathds{Z} , \mathds{N} \}$, we call $L$ an ``Infinite Partial Leaf". More specifically, when $\mathds{J} = \mathds{N}$ we say that $L$ is a ``One-Sided Infinite Partial Leaf" and when $\mathds{J} = \mathds{Z}$ we say that $L$ is a ``Bi-Infinite Partial Leaf".
\end{remark}


\vspace{4mm}

This understanding of leaves, and specially partial leaves, will be heavily utilized in defining transversals (in the next section) and in discussing various concepts of minimality (in the upcoming chapters). 


\subsection{Transversals}
\label{sec: Transversals}

\vspace{4mm}


In the context of surface foliations, a transversal is defined as a path segment that is transverse to each leaf it intersects. A surfaces upon which a foliation is typically defined, has an underlying differentiable structure, and thus has a well defined concept of two path segments being transverse to each other. In the context of solenoids however, we work in a purely topological setting. And thus we will use the term ``transverse" with a broader meaning. 

\vspace{4mm}

Throughout this section, we consider $\zeta$ to be a \hyperlink{Stabilized Split Sequence}{stabilized} proper split sequence and $X = X(\zeta)$ its induced solenoid.

\vspace{4mm}

\subsubsection{Transversals and Transverse Measures}

\vspace{4mm}

Recall that $X - Sing(X)$ is covered by turn tunnels. And each turn tunnel, being a product space of a totally disconnected set and an open interval, has an inherent notion of transversality. Namely, in a tunnel set parameterized by $C \times I$ (where $C$ is a totally disconnected set and $I$ an open interval) we can consider any cross-section $C \times \{ t \}$ (for some $t \in I$) as being transverse to any interval shaped section $\{ c\} \times I$ (for some $c \in C$). It is worth noting that a cross-section is merely a one type of transversal where the general definition of a transversal should be much broader. 

\vspace{4mm}

Before we can define transversals in general, we lay out a restricted definition.

\vspace{4mm}

\begin{definition}[Turn Transversals]
    \label{Turn Transversals}
    \hypertarget{Turn Transversals}{Let} $I$ be a \hyperlink{turn in zeta}{standard turn in $\zeta$} and $q \in I$. Then the ``Turn Transversal rel $I,q$ of $X(\zeta)$" denoted ``$H_{I,q}$" is given by $h_I (C_I \times \{q\} )$ where $h_I$ is the \hyperlink{Canonical Tunnel Parameterizations}{the canonically given tunnel parameterization} of $Tun_I$, and $C_I$ is the central cross-section of $Tun_I$.
\end{definition}


\vspace{4mm}


\begin{definition}[Flow Map]
    \label{Flow Map}
    \hypertarget{Flow Map}{Let} $I$ be a \hyperlink{turn in zeta}{standard turn in $\zeta$}, $h_I : C_I \times I \longrightarrow Tun_I$ the canonically given tunnel parameterization of $Tun_I$, where $C_I$ is the central cross-section of $Tun_I$. Then a homeomorphism $\phi : C_I \longrightarrow Tun_I$ is called a ``Flow Map of $Tun_I$ in $X(\zeta)$" if there exists a continuous map $\sigma : C_I \longrightarrow I$ such that, for each $y \in C_I$, $\phi (y) = h_I (y, \sigma(y))$. 
\end{definition}


\vspace{4mm}

\begin{definition}[Transversals]
    \label{Transversals}
    \hypertarget{Transversals}{A} set $F \subset X(\zeta)$ is called a ``Transversal of $X(\zeta)$" if there exist,
    \begin{enumerate}
        \item a \hyperlink{turn in zeta}{standard turn $I$ in $\zeta$}, where $C_I$ denotes the central cross-section of $Tun_I$,
        \item a \hyperlink{Flow Map}{flow map} $\phi : C_I \longrightarrow Tun_I$, and,
        \item a borel subset $H$ of $C_I$, 
    \end{enumerate}  
    such that $\phi (H) = F$.
\end{definition}


\vspace{4mm}


\begin{definition}[Flow Equivalence]
    \label{Flow Equivalence}
    \hypertarget{Flow Equivalence}{Let} $F$ and $F'$ be two transversals of $X$. A homeomorphism $\phi : F \longrightarrow F'$ is called a ``Flow Map Composition in $X$ from $F$ to $F'$" if 
    there exist, 
    \begin{enumerate}
        \item $m \in \mathds{N}$,
        \item a collection of transversals $\{F_i\}_{i=0}^m$ with $F_0 = F$, $F_m = F'$, where, for each $i \in \{1,...,m-1\}$, $F_i$ is contained in a turn transversal,
        \item a collection of maps $\{ {\Phi}_i \}_{i=1}^m$ where, for each $i=1,...,m$, ${\Phi}_i$ is either a flow map in $X$ or the inverse of a flow map in $X$, with $\phi_i := \Phi_i |_{F_{i-1}}$ taking $F_{i-1}$ to $F_i$,
    \end{enumerate}
    such that $\phi = \phi_m \circ ... \circ \phi_1$.
    \hypertarget{flow equivalence}{If two transversals} $F, F'$ have a flow map composition taking one of them to the other, then we say that $F$ and $F'$ are ``Flow Equivalent to Each Other".
\end{definition}

\vspace{4mm}

Note that flow equivalence in fact is an equivalence relation among transversals of $X$. We shall use this equivalence relation to define a notion of a transverse measure on $X$ akin to transverse measures on singular foliated surfaces. 

\vspace{4mm}

\subsubsection*{Transverse Measures}

\vspace{4mm}

We will first define the notion of a ``Transverse Semi-Measure on $X$" [Definition \ref{Transverse Semi-Measures}], which comprises of a borel measure defined on each \hyperlink{Transversals}{transversal} so that those measures are preserved under \hyperlink{Flow Equivalence}{flow equivalence} (which will be made precise in item 1. of the definition), and an atomic mass given to each singularity, so that each $singular \ mass$ is compatible with any atomic masses carried by each point on the prongs attached to the corresponding singularity (which will be made precise in item 2. of the definition). Then in Definition \ref{Transverse Measures}, we will introduce one more criterion that should be satisfied by a transverse semi-measure in order to be called a ``Transverse Measure on $X$". 

\vspace{4mm}

\begin{definition}[Transverse Semi-Measures on X]
    \label{Transverse Semi-Measures}
    \hypertarget{support of a tm}{Let} $\mathcal{T} (X) := \{ \{x\} : x \in Sing(X) \} \ \bigcup \ $ the collection of all transversals of $X$. \hypertarget{sigma algebra of a transversal}{Furthermore}, for each transversal $H$ of $X$, let ``$\Tilde{\mathcal{B}}_H$" denote the borel sigma algebra of $H$.
    \hypertarget{transverse semi-measure}{A ``Transverse Semi-Measure"} on $X$
    is a function $\mu : \mathcal{T} (X) \longrightarrow [0, \infty)$, such that,
    \begin{enumerate}
        \item for each transversal $H$ of $X$, $\mu$ restricted to $\Tilde{\mathcal{B}}_H$, denoted ``$\mu_H$", is a non-negative finite borel measure on $H$,
        \item for each pair of transversals $H , H'$ of $X$ that are flow equivalent to each other, and for each flow map composition $\phi$ taking $H$ to $H'$, $\phi$ is measure preserving with respect to $\mu_H , \mu_{H'}$, and,
        \item for each singularity $s$ of $X$, 
        $\mu (s) := \mu (\{s\})$ is a non-negative real number, that satisfies the following: 
        for each singular plaque $V$ of $X$ centered at $s$, and for each subset $A \subset V - \{x\}$ such that, for each open prong $P$ of $V$, $A$ contains exactly one point from $P$, we have 
        $\mu (s) = ( \sum_{y \in A} \mu(\{y\}) ) / 2$.
    \end{enumerate}
\end{definition}

\vspace{4mm}

In order for a \hyperlink{transverse semi-measure}{transverse semi-measure} $\mu$ on $X$ to be called a \hyperlink{transverse measure}{transverse measure}, we require that $\mu$ be compatible with something called a ``Turn Weight System on $X$" [Definition \ref{Turn Weight System}], which is designed to capture more constraints from the structure of $X$ that a semi-measure would not necessarily adhere to. But first we need to lay out some preliminary terminology.




\vspace{4mm}


\begin{definition}[Prong-Germs]
    \label{Prong-Germs}
    \hypertarget{Prong-Germs}{Let} $x$ be a singularity of $X$. For any singular plaque $V$ of $X$ centered at $x$, if an open prong of $V$ is denoted $P$, then let the closure of $P$ in $V$ be denoted ``$Cl_V (P)$". 
    Given two singular plaques $V$ and $V'$ in $X$ centered at $x$, and two open prongs $P$ and $P'$ respectively of $V$ and $V'$, we say that ``$P$ and $P'$ are Prong-Germ Equivalent to Each Other via $x$" if, $Cl_V (P) \cap Cl_{V'} (P')$ is an embedded half-closed interval in $X$ that contains $x$. (When the context is clear, we will forego mentioning the singularity $x$.) Note that on the collection of all open prongs of all singular plaques centered at $x$, the prong-germ equivalence is an equivalence relation. Each equivalence class of the prong-germ equivalence relation shall be called a ``Prong-Germ in $X$ Attached to $x$".
\end{definition}

\vspace{4mm}

\begin{definition}[Turn Weight System]
    \label{Turn Weight System}
    \hypertarget{Turn Weight System}{For} each singularity $s$ of $X$, let the set of \hyperlink{Prong-Germs}{prong-germs} attached to $s$ be denoted ``$\mathcal{P}_s$". 
    Given a singularity $s$ of $X$, a ``Turn Weight Function on $s$" is a function $W^s : \mathcal{P}_s \times \mathcal{P}_s \longrightarrow [0, \infty)$, such that,
    \begin{enumerate}
        \item for each $P \in \mathcal{P}_s$, $W^s (P,P) = 0$, and,
        \item for each $P,P' \in \mathcal{P}_s$, $ W^s (P,P') = W^s (P',P)$
    \end{enumerate}
    A ``Turn Weight System on $X$" is a function $W$ on the collection $\bigsqcup_{x \in Sing(X)} (\mathcal{P}_x \times \mathcal{P}_x)$ such that, $W$ restricted to $\mathcal{P}_{s} \times \mathcal{P}_{s}$ for any $s \in Sing(X)$, is a turn weight function on $s$.
\end{definition}

\vspace{4mm}

\begin{definition}[Transverse Measures on $X$]
    \label{Transverse Measures}
    \hypertarget{transverse measure}{Given} a \hyperlink{transverse semi-measure}{transverse semi-measure} $\mu$ on $X$ and a \hyperlink{Turn Weight System}{turn weight system} $W$ on $X$, we say that ``$\mu$ is Compatible with $W$" if, for each singularity $x$, for each prong-germ $P'$ attached to $x$, and for each point $y$ in any prong in $P'$, we have $\mu (\{y\} ) =  \sum_{P \in \mathcal{P}_x} W (P' , P)$.
    We say that a transverse semi-measure $\mu$ on $X$ is a ``Transverse Measure on $X$" if there exists a turn weight system on $X$ that is compatible with $\mu$.
\end{definition}

\vspace{4mm}


Note that if we're in a context in which a transverse measure on a solenoid does not carry atomic masses, the turn weights mentioned in the above definition will all have to be zero, and the transverse measure is, in a sense, supported by only the non-singular potion of the solenoid. Thus for a transverse measure $\mu$ that doesn't carry atomic masses, $\mu$ is entirely determined by 
all borel measures $\mu$ endows on transversals.

\vspace{4mm}

\begin{definition}[Non Atomic Transverse Measures, and Non Singular Transverse Measures]
    \label{Non Atomic TM}
    \hypertarget{Non Atomic TM}{A} ``Non Atomic Transverse Measure on $X$" is a transverse measure $\mu$ on $X$ that does not support atomic masses (i.e. for each $y \in X$, $\mu (\{y\}) = 0$). A ``Non Singular Transverse Measure on $X$" 
    is a transverse measure $\mu$ on $X$ such that, for each $s \in Sing(X)$, $\mu(s) = 0$.
\end{definition}


\vspace{4mm}

Note that non atomic transverse measures are non singular. However, there can exist non singular solenoids that may support atomic masses on finite leaves that do not contain any singularities.

\vspace{4mm}


In the next chapter, we will introduce a type of solenoid called $expanding$. We will show that, if $X$ is expanding, then $X$ does not have any finite leaves, and therefore, each transverse measure on $X$ is non-atomic, and non-singular [Prop. \ref{Expanding solenoids do not support atomic masses}]. Thus, to study transverse measures on expanding solenoids, it's reasonable to focus our attention on the non-singular potion of the solenoid. 

\vspace{4mm}

\begin{notation}[The Space of Transverse Measures]
    \label{def: TM(X)}
    \hypertarget{def: TM(X)}{The} Space of all transverse measures on $X$ is denoted ``$TM(X)$".
\end{notation}

\vspace{4mm}

While it is clear what $TM(X)$ is as a set, we have to clarify what we mean precisely when we use the word ``Space" in this context. The elements of $TM(X)$ have an inherit notion of addition, and scalar multiplication by non-negative reals. We will show in Chapter \ref{Ch: TM(X)} that $TM(X)$ has the structure of a convex cone inside a vector space. 

\vspace{4mm}

The discussions in Chapters \ref{ch: Minimality and Mingling} and \ref{Ch: TM(X)}, will lay the groundwork for establishing criteria that ensures a solenoid's minimality (in a topological context in Chapter \ref{ch: Minimality and Mingling}, and then later in Chapter \ref{Ch: A Criterion for Unique Ergodcity}, in a measure theoretic context). 

\vspace{4mm}

Since the discussions of minimality conducted in future chapters rely on a through understanding of transversals, we shall spend the rest of this chapter, laying out some useful properties of transversals.




\vspace{4mm}


\subsubsection{Properties of Transversals}
\label{sec: Properties of Transversal}

\vspace{4mm}

Here, we explore properties of transversals, giving special attention to turn transversals. The following is a list of results included in this section.
\begin{itemize}
    \item Transversals of $X$ are totally disconnected [Property \ref{Transversals are totally disconnected}].
    \item Each cross-section of an extended turn tunnel is compact [Lemma \ref{ExTun-CS are compact}], and therefore, each turn transversal of $X$ is either compact, or has a one point compactification [Property \ref{H has a point compactification}].
    \item The topology of each turn transversal $H$ of $X$, is generated by the collection of its sub turn transversals [Property \ref{Topology of a Turn Transversal}] called the ``Sub-Turn Basis of $H$" [Definition \ref{Sub-Turn Basis}]. 
    \item Sub turn bases of transversals fall under a general class of bases that we shall call ``Tree Bases for Topological Spaces" [Definition \ref{Tree Bases}]. Remark \ref{Properties of Sub-Turn Bases} lays out a list of straight forward properties of sub turn bases that shall be utilized in later chapters.
\end{itemize}

\vspace{4mm}

\begin{property}
    \label{Transversals are totally disconnected}
    Transversals of $X$ are totally disconnected.
\end{property}

\begin{proof}
    Since each transversal of $X$ is homeomorphic (via a flow map composition) to a borel subset of a turn transversal, it is enough to show that each turn transversal of $X$ is totally disconnected. Each turn transversal is contained in a fiber and we have from Property \ref{Fibers are totally disconnected} that fibers are totally disconnected.
\end{proof}

\vspace{4mm}

In the study of transverse measures on $X$, a convenient restriction, is limiting our investigations to turn transversals. Thus, the rest of this sub-section is focused on properties of turn transversals. 


\vspace{4mm}

Property \ref{H has a point compactification} states that each turn transversal has a one point compactification in $X$, and the following lemma will lay out a supporting argument for this property. 

\vspace{4mm}

\begin{lemma}
    \label{ExTun-CS are compact}
    Let $I$ be a standard turn in $\zeta$. Each cross-section of $ExTun_I$ is compact.
\end{lemma}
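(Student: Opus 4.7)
The plan is to reduce the compactness of an arbitrary cross-section of $ExTun_I$ to the already-established compactness of fibers, using the fact that $ExTun_I$ is closed in the ambient standard neighborhood.

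First, I fix notation: let $K \in -\mathds{N}^*$ and let $\tilde{V}_K \subset G_K$ be the standard star neighborhood having $I$ as a turn, and set $O := \pi^{-1}_K(\tilde{V}_K)$. By Definition \ref{Tunnel Parameterization} applied to the canonically given tunnel parameterization $h_I : C_I \times I \longrightarrow ExTun_I$ of Remark \ref{Canonical Tunnel Parameterizations}, the cross-section of $ExTun_I$ over any $q \in I$ equals $h_I(C_I \times \{q\}) = \pi^{-1}_K(q) \cap ExTun_I = F_q \cap ExTun_I$. So it suffices to show that $F_q \cap ExTun_I$ is compact for each $q \in I$.

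Next, I would observe that because $q \in I \subseteq \tilde{V}_K$, the fiber $F_q = \pi^{-1}_K(q)$ is contained in $O = \pi^{-1}_K(\tilde{V}_K)$. By Proposition \ref{ExTun is closed in O}, $ExTun_I$ is closed in $O$, so $F_q \cap ExTun_I$ is a closed subset of $F_q$ in the subspace topology.

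Finally, by Property \ref{Fibers are compact} the fiber $F_q$ is compact, so any closed subset of it is compact. Hence $F_q \cap ExTun_I$ is compact, completing the proof. There is no substantive obstacle here; the lemma is essentially a direct synthesis of the fact that fibers are compact (Property \ref{Fibers are compact}) with the closedness of $ExTun_I$ in $O$ (Proposition \ref{ExTun is closed in O}), together with the cross-section formula built into the definition of a tunnel parameterization.
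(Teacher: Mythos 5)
Your proof is correct, and it takes a genuinely different route from the paper's. You identify the cross-section over $q$ with $F_q \cap ExTun_I$ via the defining property of a tunnel parameterization, note that $F_q \subseteq O$, and then invoke Proposition \ref{ExTun is closed in O} to conclude that the cross-section is a closed subset of the compact fiber $F_q$. There is no circularity: Proposition \ref{ExTun is closed in O} is established in Chapter \ref{Ch: Solenoids as Foliated Spaces}, well before this lemma, and its proof does not use compactness of cross-sections. The paper instead argues directly inside the fiber by a case analysis (whether $\Tilde{V}_K$ is $2$-pronged, and whether $I$ is sustained by a singular segment), exhibiting $F_q$ as the disjoint union of the cross-section $C_{I,q}$ with finitely many sets that are open in $F_q$ (coming from the openness of turn tunnels and of $ST(O)$), which amounts to re-deriving by hand the relative closedness of $C_{I,q}$ in $F_q$. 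Your argument is shorter and delegates all the case analysis to the already-proved global closedness statement; the paper's version is more self-contained at this point in the text but duplicates work. Either way the final step is the same: a closed subset of the compact set $F_q$ (Property \ref{Fibers are compact}) is compact.
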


\begin{proof}
     Let $K \in \n$, $\Tilde{V}_K$ a level $K$ standard star neighborhood in $\zeta$ centered at $p \in G_K$, and $I$ a turn of $\Tilde{V}_K$. Let $q \in I$ and consider the cross section of $ExTun_I$ given by $C_{I,q} := ExTun_I \bigcap F_q$.

     \vspace{4mm}

     (Case i.) Suppose $\Tilde{V}_K$ is $2-$pronged. Then $I=\Tilde{V}_K$ is contained in the interior or a natural edge of $G_K$. Then from Property \ref{Fibers are compact}, we have that $C_{I,q} = F_q$ is compact.

     \vspace{4mm}

     (Case ii.) Suppose that $\Tilde{V}_K$ has more than $2$ prongs. Then $p$ is a natural vertex of $G_K$. Furthermore, (Case ii-a.) Suppose each pre-leaf segment that sustains $I$ is a leaf segment. Then $ExTun_I = Tun_I$ where $O := \pi^{-1}_K (\Tilde{V}_K) = \hyperlink{ST(O)}{ST(O)} \bigsqcup Tun_I \bigsqcup T$, where $T$ is either empty or is a finite disjoint union of turn tunnels [Prop. \ref{sing stnd nbhds vs star tunnel nbhds}]. Recall that $ST(O)$ is open in $X$ [Prop. \ref{sing stnd nbhds vs star tunnel nbhds}], and that each turn tunnel is open in $X$ [Lemma \ref{turn tunnels are open}]. Then we have $ F_q = (F_q \cap Tun_I) \bigsqcup (F_q \cap ST(O)) \bigsqcup (F_q \cap T) = C_{I,q} \bigsqcup$ an open set in $F_q$. Thus, $C_{I,q}$ is closed in $F_q$. Now (Case ii-a.) suppose that, there exists a singular segment that sustains $I$. Let $m \in \mathds{N}^*$ be the number of turns of $\Tilde{V}_K$ that contains $q$, and let $\{ I_i \}_{i=1}^{m-1}$ be the collection of turns of $\Tilde{V}_K$ that contains $q$ that isn't $I$. Then $F_q = (C_{I,q} )\bigsqcup (\bigsqcup_{i=1}^{m-1} H_{I_i , q} )$ where $\bigsqcup_{i=1}^{m-1} H_{I_i , q} $ is a union of open sets in $F_q$. Thus $C_{I,q}$ is closed in the compact set $F_q$.


\end{proof}

\vspace{4mm}

\begin{property}
    \label{H has a point compactification}
    Let $H$ be a turn transversal of $X$. Then $H$ is either compact or has a one point compactification in $X$.
\end{property}

\begin{proof}
    Let $I$ be a standard turn in $\zeta$ and $H$ a cross section of $Tun_I$.
    The cross section $\overline{H}$ of $ExTun_I$ that contains $H$ is compact [from Lemma \ref{ExTun-CS are compact}], and since $\zeta$ is stabilized,  $\overline{H}$ contains at most one point that isn't already contained in $H$.
\end{proof}



\vspace{4mm}

\begin{property}
    \label{Topology of a Turn Transversal}
    Let $H$ be a turn transversal of $X$.
    Then the topology of $H$ is generated by the collection of turn transversals contained in $H$.
\end{property}

\begin{proof}
    Let $K \in \n$, $I$ a level $K$ \hyperlink{turn in zeta}{standard turn in $\zeta$} and $p \in I$. Consider the \hyperlink{Turn Transversals}{turn transversal rel $I,p$}, $H_{I,p} =: H$. Let $Q_p := \bigcup_{j \in -\mathds{N} + K} (f^j_K)^{-1} (p)$. From being a subspace of $X$, the topology of $H$ is generated by the collection $ \mathcal{B} := \{ \hspace{0.8mm} \hyperlink{fibers}{F_q} \hspace{0.8mm} \cap \hspace{0.8mm} H \hspace{0.8mm} \}_{q \in Q_p}$. Note that since each turn tunnel is open in $X$ [Lemma \ref{turn tunnels are open}], each turn transversal contained in $H$ is open in $H$. To show that the collection of turn transversals contained in $H$ generates its topology, all that remains to be shown is that each element of $\mathcal{B}$ can be written as a union of turn transversals. 

    \vspace{4mm}

    Now let $j \in -\mathds{N} + K$ and $q \in (f^j_K)^{-1} (p)$ 
    such that $F_q \cap H \neq \emptyset$. 
    The aforementioned intersection being non-empty implies that there exists at least one level $j$ \hyperlink{pre-turns}{pre-turn} of $I$ containing $q$. Let $\{ I_i \}_{i=1}^m$, $m \in \mathds{N}$ be the collection of all level $j$ pre-turns of $I$ that contains $q$. Then $H \cap F_q = \bigsqcup_{i=1}^m \hyperlink{Turn Transversals}{H_{I_i , q}}$.
\end{proof}

\vspace{4mm}

\begin{definition}[Sub-Turn Basis]
    \label{Sub-Turn Basis}
    \hypertarget{Sub-Turn Basis}{Let} $H$ be a turn transversal of $X$. Then the collection of turn transversals contained in $H$ shall be called the ``Sub-Turn Basis of $H$ in $X$" and denoted ``$\mathcal{B}_H$".
\end{definition}

\vspace{4mm}

We now define a general class of bases for a topological spaces, that captures the special properties possessed by a sub-turn basis. 

\vspace{4mm}

\begin{definition}[Tree Bases for Topological Spaces]
    \label{Tree Bases}
    \hypertarget{Tree Bases}{Let} $Y$ be a set. A ``Tree Partition System for $Y$" is a sequence of finite partitions of $Y$, $\{ \mathcal{P}_j \}_{j \in \mathds{N}^*} = \{ \{ P^1_j , ... , P^{m_j}_j \} \}_{j \in \mathds{N}^*}$ where $ \{ m_j \}_{j \in \mathds{N}^*}$ is a monotonically non-decreasing sequence of positive integers, and,
    \begin{enumerate}
        \item $\mathcal{P}_0 = \{ \hspace{0.8mm} Y \hspace{0.8mm} \}$, 
        \item for each $j \in \mathds{N}^*$, 
        \begin{enumerate}
            \item $\bigsqcup_{i=1}^{m_j} P^i_j = Y$, and,
            \item there exists a surjection $\sigma_j : \mathcal{P}_{j+1} \longrightarrow \mathcal{P}_j$ such that, for each $\Tilde{P} \in \mathcal{P}_j$, we have $\Tilde{P} = \bigsqcup_{P \in (\sigma_j^{-1} (\Tilde{P}))} P$.
        \end{enumerate}
    \end{enumerate}
    If in the above definition, for each $j \in \mathds{N}^*$,
    each $\Tilde{P} \in \mathcal{P}_j$
    has at most two pre-images under $\sigma_j$,
    then we call $\{ \mathcal{P}_j \}_{j \in \mathds{N}^*}$ a ``Binary Tree Partition System for $Y$". 
    Now suppose $Y$ is a topological space, and $\mathcal{B}$ is a basis for its topology. If there exists a tree partition system $\{ \mathcal{P}_j \}_{j \in \mathds{N}^*}$, such that $\mathcal{B} = \bigcup_{j \in \mathds{N}^*} \mathcal{P}_j$, we call $\mathcal{B}$ a ``Tree Basis for the Topology of $Y$" and we call $\{ \mathcal{P}_j \}_{j \in \mathds{N}^*}$ a ``Tree Partition Breakdown of $\mathcal{B}$". 
\end{definition}

\vspace{4mm}

Note that, for each \hyperlink{fibers}{fiber} $F$ of $X$ (resp. for each turn transversal $H$ of $X$), the \hyperlink{Sub-Fiber Basis}{sub-fiber basis} of $F$ (resp. the sub-turn basis of $H$), is a tree basis for $F$ (resp. $H$), which can be refined into a binary tree partition system.

\vspace{4mm}

\begin{remark}[Properties of Sub-Turn Bases]
    \label{Properties of Sub-Turn Bases}
    \hypertarget{Properties of Sub-Turn Bases}{Let} $H$ be a turn transversal of $X$. Then \hyperlink{Sub-Turn Basis}{$\mathcal{B}_H$} is a tree basis for the topology of $H$, and has a binary tree partition breakdown. Thus $\mathcal{B}_H$ has the following useful properties:
    \begin{enumerate}
        \item $H \in \mathcal{B}_H$,
        \item for each $A \in \mathcal{B}_H$, $H-A$ is a finite union of elements of $\mathcal{B}_H$,
        \item for each $A, B \in \mathcal{B}_H$, either $A \cap B = \emptyset$ or one of $A, B$ is contained in the other.
    \end{enumerate}
\end{remark}

\vspace{4mm}

In Chapter \ref{Ch: TM(X)}, we will use the above remark and Lemma \ref{Topology of a Turn Transversal}, to show that there exists a sub-class of turn transversals called ``Edge Transversals" that we can use to encode a given transverse measure. 

\vspace{4mm}

We will show later in this text that, given a transverse measure $\mu$, merely recording the total measures of all edge transversals is enough to recover $\mu$ [Proposition \ref{turn transversals in terms of edge transversals}]. We will use this fact to study the space of transverse measures, and in Chapter \ref{Ch: A Criterion for Unique Ergodcity}, to provide a criterion under which a given solenoid is measure theoretically minimal (i.e uniquely ergodic). But first, in the next chapter, we will provide a criterion under which a proper solenoid is topologically minimal.

\vspace{4mm}

\begin{center}
    \section{Minimality and Mingling} \label{ch: Minimality and Mingling}
\end{center}

\vspace{4mm}


\h Throughout this chapter, we will assume that $X = X(\zeta)$ is the solenoid induced by a \hyperlink{Stabilized Split Sequence}{stabilized} proper split sequence $\zeta  : G_0 \xleftarrow[]{f_{-1}} G_{-1} \xleftarrow[]{f_{-2}} G_{-2} \xleftarrow[]{f_{-3}} ...$.

\vspace{4mm}

In this chapter, we shall answer the questions: what criterion makes a proper solenoid topologically minimal [Section \ref{sec: A criterion for topological minimality}], and then we shall explore some features of solenoids that help study aspects of non-minimality [Section \ref{sec: Features, and Types of Solenoids}]. 

\vspace{4mm}

In Subsection \ref{sec: The Concept of Minimality}, we will provide context on the concept of minimality (in a topological sense, and in a measure theoretic sense) as it appears in the wider literature. In Subsection \ref{sec: Minimal Solenoids}, we will define ``Minimal Solenoids", then 
we will provide a criterion called ``Full Mingling" [Definition \ref{Full Mingling}] so that when $\zeta$ satisfies full mingling, $X(\zeta)$ is minimal [Proposition \ref{The Mingling Lemma}].

\vspace{4mm}

In Subsection \ref{sec: Stable Cores of a Solenoid}, we will defined a feature of $X$ called ``Stable Cores of $X$" then use it to explore different types of non-minimality that can occur in $X$. In Subsection \ref{sec: Expanding Solenoids}, we introduce a type of solenoids called ``Expanding Solenoids" that have the useful property of each partial leaf being infinite.


\vspace{4mm}

In Chapters \ref{Ch: TM(X)} and \ref{Ch: A Criterion for Unique Ergodcity} we will assume that the underlying solenoid in question is expanding, to narrow our focus when we study spaces of transverse measures. 

\vspace{4mm}

\subsection{A Criterion for Topological Minimality}
\label{sec: A criterion for topological minimality}

\vspace{4mm}

We start with an exposition on how the concept of minimality is laid out and studied in the wider literature. 

\vspace{4mm}

\subsubsection{The Concept of Minimality}
\label{sec: The Concept of Minimality}

\vspace{4mm}


In broad generality, consider studying an $object$ that consists of a $structure$, upon which there is a ``structure preserving" $self-map$. More specifically, this system of (object, structure, self-map) could be of the forms (space, topology, homeomorphism), (measurable space, measure, measure-preserving map), (foliated surface, collection of transverse arcs, the flowing of transversals along leaves). In each case, a system being ``minimal" refers to it being indecomposable in a certain sense (meaning that the underlying object cannot be broken into two parts such that the action of the structure preserving map restricted to each part will be contained in the said part).

\vspace{4mm}

We shall first lay out some definitions and facts about minimal systems borrowed from classic settings without proof (more context can be found in \cite{walters1982introduction}) and then in the next subsection, we will discuss what minimality means in the context of Solenoids.

\vspace{4mm}

A homeomorphism $T:Y \longrightarrow Y$ is said to be ``Minimal" if for each $y \in Y$, the set $ \{ T^n (y) : n \in \mathds{Z} \}$ is dense in Y. This sense of topological minimality is equivalent to the statement: For every non-empty open subset $U$ of $Y$, ${\bigcup}^{\infty}_{-\infty} T^n (U) = Y$.

\vspace{4mm}

Let Y be a measurable space, $m$ a finite measure on Y and $T: Y \longrightarrow Y$ a measure preserving transformation. The system $(Y,m,T)$ is called ``Ergodic" if each measurable subset $B$ of $Y$ satisfying $T^{-1} (B) = B$ have either 0 measure or full measure. This sense of measure theoretic minimality (ergodicity) of the system is equivalent to the following statement: For every measurable subset $B$ of $Y$ with non-zero measure, we have $m({\bigcup}^{\infty}_{n=1} T^{-n} (B)) = m(Y)$.

\vspace{4mm}

If we take the system in the above definition, and strip it of its measure, we can ask the question: How many ergodic measures can we put on $Y$ that is preserved by $T$? The answer to this question obviously only depends on $T$, $Y$, and its underlying collection of measurable sets. If it's clear from the context that the underlying $\sigma-$algebra of this measurable space is the borel sigma-algebra, then the possible ergodic measures only depend on the topological space and the transformation.

\vspace{4mm}

In many different types of systems consisting of an underlying space $Y$ with a structure and a structure preserving transformation $T$, a thing of interest to seek is the space of all possible 
finite measures (or something analogous to measures) that is preserved by $T$. We shall denote this space $M(Y,T)$. It is of further interest to find if $M(Y,T)$ has any natural structure. Since a finite linear combination of finite measures produce another finite measure, and since this resulting measure remains preserved by $T$, $M(Y,T)$ naturally has the form of a convex cone (and can be imagined as a subspace of a vector space). 

\vspace{4mm}

For a system that has the format: (measurable space, structure, structure-preserving transformation or flow), if there exists only one ergodic measure (up to scaling), then we call the system ``uniquely ergodic". We shall explore what this notion means in the context of Solenoids in Chapter \ref{Ch: A Criterion for Unique Ergodcity}. For now, we shall focus on what makes a solenoid topologically minimal.

\vspace{4mm}



\subsubsection{Minimal Solenoids, and the Mingling Lemma}
\label{sec: Minimal Solenoids}

\vspace{4mm}

Unlike in the case of: (Space, Topology, Homeomorphism), in our context we do not work with a homeomorphism that preserves the structure. Instead we have the relation of flow equivalence [Definition \ref{Flow Equivalence}] serving a similar purpose. In a sense our system takes the form of: (Solenoid, Collection of transversals, Flow equivalence). 

\vspace{4mm}

Before we explore the measure theoretic minimality of a solenoid, we will lay out the groundwork needed to elaborate more precisely what it means for a proper solenoid to be topologically minimal.

\vspace{4mm}

\begin{definition}[Long Tunnels]
    Given a transversal $F$ of a proper solenoid $X$, we call any transversal contained in $F$ a ``Sub-Transversal of $F$".
    The ``Long Tunnel Generated by $F$ in $X$" (denoted ``$LTun(F)$") is the union of all transversals flow equivalent to a sub-transversal of $F$. The ``Closed Long Tunnel Generated by $F$ in $X$" (denoted ``$CLTun(F)$") is the closure of ``$LTun(F)$" in $X$.
\end{definition}

\vspace{4mm}



\begin{lemma}
    Let $L$ be a partial leaf of $X$ and $x \in L$. Then $L = LT(x)$.
\end{lemma}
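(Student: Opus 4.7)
The plan is to interpret $LT(x)$ as $LTun(\{x\})$, noting that the singleton $\{x\}$ is itself a transversal: since $x$ is a leaf interior point (being in a partial leaf), any turn tunnel $Tun_I$ containing $x$ gives a canonically given tunnel parameterization $h_I:C_I\times I\to Tun_I$ with $x=h_I(c,t)$ for some $c\in C_I$, $t\in I$, and the map $\phi:C_I\to Tun_I$ with $\sigma\equiv t$ restricted to $\{c\}$ realizes $\{x\}$ as the image of a Borel subset of $C_I$ under a flow map. With this interpretation in hand, I would prove the two inclusions separately.

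For the inclusion $LTun(x)\subseteq L$, the key observation is that a single flow map preserves the partial-leaf equivalence class. Indeed, for any flow map $\phi:C_I\to Tun_I$ and any $c\in C_I$, the point $\phi(c)=h_I(c,\sigma(c))$ lies on the pre-leaf segment $h_I(\{c\}\times I)$; because $Tun_I$ contains no singular segments (Remark \ref{An Extended Turn Tunnel with a Few Pre-Leaf Segments Removed}), this pre-leaf segment is in fact a leaf segment, and by Definition \ref{Leaf Equivalence} the points $c$ and $\phi(c)$ lie in the same partial leaf. The same argument applies to inverses of flow maps, and by induction on the length $m$ of a flow map composition (Definition \ref{Flow Equivalence}), any point flow-equivalent to $\{x\}$ belongs to $L$.

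For the reverse inclusion $L\subseteq LTun(x)$, I would invoke Remark \ref{Types of Partial Leaves} to write $L=\bigcup_{i\in\mathbb{J}}L^i$ as a chain of optimal leaf segments where consecutive $L^i,L^{i+1}$ share a leaf segment. Given $y\in L$, a finite sub-chain $L^{i_1},\ldots,L^{i_m}$ connects $x$ to $y$. Within each $L^{i_k}\subset Tun_{I_k}$, any two points $a,b$ lying on the same leaf segment can be connected by a flow-map composition of length two: writing $a=h_{I_k}(c,s)$ and $b=h_{I_k}(c,t)$ for the appropriate $c\in C_{I_k}$, one flows $\{a\}$ backward via the inverse of a flow map to $\{c\}\subset C_{I_k}$ and then forward via a flow map with $\sigma(c)=t$ to $\{b\}$. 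Concatenating these local compositions along the chain produces a flow map composition taking $\{x\}$ to $\{y\}$, whence $y\in LTun(x)$.

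The argument is essentially a direct unfolding of definitions, and the only mild obstacle is bookkeeping: verifying that each intermediate transversal in the concatenated composition is a singleton contained in a turn transversal, as required by item 2 of Definition \ref{Flow Equivalence}. This is automatic in our setting because every intermediate object is a single point of $X - Sing(X)$, and every such point lies inside some turn transversal by Property \ref{Topology of a Turn Transversal}.
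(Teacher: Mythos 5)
Your proposal is correct and follows essentially the same route as the paper: the paper's (much terser) proof also reduces the statement to the equivalence between leaf equivalence via concatenations of optimal leaf segments and reachability by flow map compositions, which is exactly the two-inclusion argument you spell out. Your version simply makes explicit the bookkeeping (that $\{x\}$ is a transversal, that flow maps move points along leaf segments, and that points on a common leaf segment are joined by a length-two flow composition through the central cross-section) that the paper leaves implicit.
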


\begin{proof}
    Let $L$ and $x$ be as above. Clearly $x$ is a leaf interior point. Let $y \in X - Sing(X)$. $x$ and $y$ are leaf equivalent to each other $\Longleftrightarrow$ there exists $m \in \mathds{N}$ and a concatenation of level $0$ optimal leaf segments $\bigcup_{i=1}^m L_i$ such that $x \in L_1$ and $y \in L_m \Longleftrightarrow$ there exists a flow map composition taking $x$ to $y$. 
\end{proof}

\vspace{4mm}

Next, we lay out three equivalent criteria that we shall use to define the notion of a ``Minimal Solenoid" [Definition \ref{Def: Minimal Solenoids}]. 


\vspace{4mm}

\begin{lemma}
    \label{equivalent criteria for minimality}
    Let $\zeta$ be a proper stabilized split sequence, and $X = X(\zeta)$. The following are equivalent.
    \begin{enumerate}
        \item For each transversal $F$ of $X$, $CLTun(F) = X$.
        \item For each transversal $F$ of $X$, $LTun(F)$ is dense in $X$.
        \item Each partial leaf of $X$ is dense in $X$.
    \end{enumerate} 
\end{lemma}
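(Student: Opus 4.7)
The plan is to establish $(1)\iff(2)$, $(2)\Rightarrow(3)$, and $(3)\Rightarrow(2)$, using two simple inputs: the immediately preceding lemma (which identifies $L=LTun(\{x\})$ for any $x$ in a partial leaf $L$), and the observation that a singleton lying on any transversal is itself a transversal, obtained by restricting the ambient flow map to a borel singleton in the central cross-section. The equivalence $(1)\iff(2)$ is immediate from the definition $CLTun(F) = $ closure of $LTun(F)$ in $X$: that closure equals $X$ iff $LTun(F)$ is dense.

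For $(2)\Rightarrow(3)$, fix a partial leaf $L$ and a point $x\in L$. Since $L\subseteq X-Sing(X)$, $x$ is a leaf interior point, so by the point-classification corollary together with stabilization (each maximal star chain starts at level $0$), $x$ admits a $2$-pronged maximal star chain starting at level $0$. This produces a level-$0$ turn $I\subset G_0$ and a leaf segment through $x$ that takes $I$. Writing $p:=\pi_0(x)\in I$, the canonical tunnel parameterization $h_I:C_I\times I\to Tun_I$ sends $(c,p)\mapsto x$ for a unique $c\in C_I$; the map $\phi(y):=h_I(y,p)$ is then a flow map of $Tun_I$ with $\phi(\{c\})=\{x\}$, so $\{x\}$ is a transversal. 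The preceding lemma identifies $LTun(\{x\})$ with $L$, and hypothesis (2) applied to $\{x\}$ shows $L$ is dense in $X$.

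For $(3)\Rightarrow(2)$, take a non-empty transversal $F=\phi(H)\subseteq Tun_I$ and pick $x\in F$. The borel singleton $\phi^{-1}(\{x\})\subseteq H\subseteq C_I$ gives $\{x\}=\phi(\phi^{-1}(\{x\}))$ as a transversal and a sub-transversal of $F$; consequently $LTun(\{x\})\subseteq LTun(F)$. The preceding lemma identifies $LTun(\{x\})$ with the partial leaf through $x$, which is dense in $X$ by (3), so $LTun(F)$ is dense as well. There is no substantial obstacle — the only subtle bookkeeping is verifying that singletons on transversals are themselves transversals, handled uniformly by the flow-map restriction used above.
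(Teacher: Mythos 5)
Your proposal is correct and follows essentially the same route as the paper's proof: $(1)\Leftrightarrow(2)$ from the definition of closure, $(2)\Rightarrow(3)$ by treating a leaf interior point as a (singleton) transversal whose long tunnel is the partial leaf, and $(3)\Rightarrow(2)$ by picking $x\in F$ and using $LTun(\{x\})\subseteq LTun(F)$. The extra care you take in verifying that singletons on transversals are themselves transversals (via restricting a flow map to a Borel singleton) is a detail the paper leaves implicit, but it is the same argument.
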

    
\begin{proof}
    (1.) $\Longleftrightarrow$ (2.) follows from the simple fact that any set is dense in its closure. Suppose (2.) is true. Since each leaf interior point of $X$ is a transversal of $X$ and each partial leaf of $X$ takes the form of a long tunnel of a leaf interior point, we have $(2.) \Longrightarrow (3.)$. Now suppose (3.) is true. Let $F$ be a transversal of $X$. Each $x \in F$ in a leaf interior point. Choose $x \in F$. 
    $LTun(x) \subseteq LTun(F)$ coupled with (3.), imply that $X = CLTun(x) \subseteq CLTun(F)$. 
\end{proof}


\vspace{4mm}

\begin{definition}[Minimal Solenoids]
    \label{Def: Minimal Solenoids}
    \hypertarget{Def: Minimal Solenoids}{Let} $\zeta$ be a proper stabilized split sequence, and $X = X(\zeta)$. We say that $X$ is a ``Minimal Solenoid" if each partial leaf of $X$ is dense in $X$ (or equivalently if any of the itemized criteria in Lemma \ref{equivalent criteria for minimality} is satisfied).
\end{definition}

\vspace{4mm}




\subsubsection*{The Mingling Lemma}
\label{sec: The Mingling Lemma}

\vspace{4mm}


Here, we will introduce ``\hyperlink{Full Mingling}{Full Mingling}" as a property of certain split sequences, that ensures minimality [Proposition \ref{The Mingling Lemma}].

\vspace{4mm}

Recall that we consider $\zeta  : G_0 \xleftarrow[]{f_{-1}} G_{-1} \xleftarrow[]{f_{-2}} G_{-2} \xleftarrow[]{f_{-3}} ...$ to be a proper stabilized split sequence.

\vspace{4mm}

\begin{definition}[Mingling]
    \label{Mingling}
    \hypertarget{Mingling}{Given} $i,j \in \n$ with $j>i$, and natural edges $E_k$ of $G_k$ for $k \in \{i,j\}$, we say that ``$E_i$ mingles with $E_j$" if $f^i_j (Interior(E_i))$ and $Interior(E_j)$ has a non-empty intersection. Furthermore, for $m \in \mathds{N}^*$, we say that ``$E_i$ mingles with $E_j$ $m-$times" if, $(Interior(E_i)) \bigcap ((f^i_j)^{-1} (Interior(E_j)))$ has $m$ connected components.
\end{definition}


\vspace{4mm}

\begin{definition}[Full Mingling]
    \label{Full Mingling}
    \hypertarget{Full Mingling}{We} say that ``$\zeta$ Displays Full Mingling" (or that ``$\zeta$ is a Fully Mingling Split Sequence") if, for each $j \in \n$, there exists an integer $k<j$ such that each natural edge in $G_k$ mingles with each natural edge in $G_j$.
\end{definition}

\vspace{4mm}

\begin{proposition}[The Mingling Lemma]
    \label{The Mingling Lemma}
    \hypertarget{The Mingling Lemma}{Fully} mingling split sequences induce minimal solenoids.   
\end{proposition}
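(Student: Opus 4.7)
The plan is to prove directly that every partial leaf $L$ of $X(\zeta)$ is dense in $X$, which yields minimality via Lemma \ref{equivalent criteria for minimality}. Since the \hyperlink{standard basis}{standard basis} generates the topology of $X$, it suffices to show that $L \cap \pi_j^{-1}(V) \ne \emptyset$ for every $j \in \n$ and every standard star neighborhood $V$ of $G_j$. By shrinking $V$ to a small open sub-interval contained in the interior of some natural edge $E_j$ of $G_j$, this reduces to showing that $\pi_j(L)$ meets every open sub-interval of $\mathrm{Int}(E_j)$ for every natural edge $E_j$ of every level graph $G_j$.

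First, I would apply full mingling at level $j$ to obtain a level $k < j$ at which every natural edge of $G_k$ mingles with $E_j$. The argument then splits into two parts: (i) locate a natural edge $E_k$ of $G_k$ whose interior is entirely contained in the projected walk $\pi_k(L)$; and (ii) transport this containment up to level $j$ via $f^k_j$, using $\pi_j(L) = f^k_j(\pi_k(L))$ together with mingling to conclude that $\pi_j(L)$ covers the prescribed sub-interval of $\mathrm{Int}(E_j)$.

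For (i), I would start with any leaf segment of $L$, which projects at level $k$ to a standard turn---two prongs meeting at a center. Extending along $L$ using its 1-manifold structure (Lemma \ref{leaves are immersed graphs}, Remark \ref{Partial Leaves are Immersed 1-Manifolds in X}) traces out a longer immersed walk $\pi_k(L)$ in $G_k$; because $\zeta$ permits no backtracking, any natural vertex of $G_k$ that this walk crosses forces a complete traversal of at least one adjacent natural edge. The required auxiliary fact is that $\pi_k(L)$ is long enough to cross a natural vertex in the first place. I would justify this directly: were the projections of $L$ at every sufficiently deep level $k$ trapped inside a single star neighborhood, then $L$ would project at level $0$ to a proper sub-graph of $G_0$ that avoids most natural-edge interiors of $G_0$, contradicting the edge-to-edge mixing demanded by full mingling (this argument is the topological precursor of the later observation [\ref{FM implies Expanding}] that full mingling implies expanding).

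For (ii), once $\mathrm{Int}(E_k) \subseteq \pi_k(L)$, the walk $f^k_j(\mathrm{Int}(E_k))$ lies inside $\pi_j(L)$. By mingling this walk meets $\mathrm{Int}(E_j)$, and by no backtracking any incursion into $\mathrm{Int}(E_j)$ is a full traversal from one endpoint of $E_j$ to the other---save possibly for a partial traversal at the walk's two endpoints, where $f^k_j$ sends the endpoints of $E_k$ to points that may land inside some edge interior. Extending $L$ further in $G_k$ before applying $f^k_j$ then produces a walk at level $j$ long enough that these end effects are irrelevant, so $\pi_j(L)$ contains at least one complete traversal of $\mathrm{Int}(E_j)$, covering the prescribed sub-interval. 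The principal obstacle is step (i): rigorously arguing that $\pi_k(L)$ must extend beyond a single star neighborhood at some sufficiently deep level $k$, since this is where the full mingling hypothesis carries the essential weight.
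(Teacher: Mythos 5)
Your proposal follows the same route as the paper's proof: reduce density to hitting an arbitrary open sub-interval $I$ of a natural-edge interior of $G_j$, use full mingling to drop to a level $k<j$ at which every natural edge of $G_k$ mingles with the edge containing $I$, and then push a traversal of a natural edge by $\pi_k(L)$ forward through $f^k_j$ to land in $I$. The paper's own argument is terser at exactly the point you flag as the principal obstacle --- it simply asserts that $\pi_i(L)$ meets some natural edge $E_i$ and that the relevant segment of $L$ lies over the preimage $I_i$ of $I$ --- so your more careful handling of step (i) (ruling out finite partial leaves because full mingling forces expansion, cf.\ Remark \ref{FM implies Expanding} and Proposition \ref{Expanding solenoids do not support atomic masses}, and then extracting a complete edge-traversal from an infinite locally injective projected walk) supplies detail the paper leaves implicit rather than diverging from it.
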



\begin{proof}
    Suppose $\zeta$ is fully mingling. Let $L$ be a leaf or a partial leaf of $X(\zeta)$. Let $O \subset X$ be open. (We must show that $L$ intersects $O$.) Then for some $j \in \n$, $O$ contains a set of the form $\pi^{-1}_j (V)$ where $V$ is a star neighborhood of $G_j$. Choose an embedded open interval $I \subset V$ that is contained in the interior of a natural edge of $G_j$. We have $\pi^{-1}_j (I) \subset O$. Due to full mingling, there exists an integer $i<j$ such that every natural edge in $G_i$ mingles with every natural edge in $G_j$. Consider the projection of $L$ into $G_i$. $\pi_i (L)$ intersects at least one natural edge $E_i$ of $G_i$, and since $f^i_j (E_i)$ intersects each natural edge interior of $G_j$, there is an edge-interior segment $I_i$ in $E_i$ that gets mapped homeomorphically to $I$ via $f^i_j$. And we have $\pi^{-1}_i (I_i) \subseteq \pi^{-1}_j (I)$ and therefore the segment of $L$ contained in $\pi^{-1}_i (I_i)$ is also contained in $\pi^{-1}_j (I)$ and thus in $O$ as well.
\end{proof}

\vspace{4mm}

In Chapter \ref{Ch: A Criterion for Unique Ergodcity}, we will lay out a criterion called ``Semi-Normality" under which a proper solenoid is measure theoretically minimal (i.e. uniquely ergodic).
After defining semi-normality, we will immediately observe that semi-normal split sequences satisfy full mingling, and thus semi-normality will assure topological minimality of a solenoid as well.


\vspace{4mm}

\subsection{Features, and Types of Solenoids}
\label{sec: Features, and Types of Solenoids}

\vspace{4mm}

Here, we explore a useful feature of a given solenoid $X$, called ``Minimal Cores of $X$" to recognize the types of non-minimality that could exist. We then introduce a type of solenoids called ``Expanding Solenoids" that do not allow the primitive form of non-minimality of containing finite core graphs.

\vspace{4mm}

\subsubsection{Stable Cores of a Solenoid}
\label{sec: Stable Cores of a Solenoid}

\vspace{4mm}

\begin{definition}[Stable Cores of $X$]
    \label{Stable Cores}
    \hypertarget{Stable Cores}{Let} $n' \in \mathds{N}$, $K \in \n$, and let $\zeta'  : G'_K \xleftarrow[]{f_{K-1}} G'_{K-1} \xleftarrow[]{f_{K-2}} G'_{K-2} \xleftarrow[]{f_{K-3}} ...$ be a \hyperlink{Sub Inverse Sequences}{sub inverse system} of $\zeta$ such that, for each integer $j \leq K$, $G'_j$ is a rank $n'$ core graph. Then we call the solenoid induced by $\zeta'$, $X(\zeta') \subseteq X(\zeta)$ a ``Rank $n'$ Stable Core of $X(\zeta)$". Additionally, we say that ``$\zeta'$ Stabilizes at Level $K$" or that ``$K$ is a Starting Level for $X(\zeta')"$.
    If either $K = 0$, or if $f_K (G_K) \subseteq G_{K+1}$ is not a rank $n'$ core graph, then we say that ``$K$ is the Maximal Starting Level for $X(\zeta')$".
\end{definition}

\vspace{4mm}

We consider both $X$ and $\emptyset \subset X$ to be trivial stable cores of $X$. Furthermore, we consider $\emptyset \subset X$ to be the unique rank $0$ stable core of $X$. Where $\zeta$ is a rank $n$ split sequence, $X$ itself is the unique rank $n$ stable core of $X$. 

\vspace{4mm}

\begin{remark}[Non-Trivial Stable Cores Violate Minimality]
    Suppose $X$ has a non-trivial stable core $X'$ where $K \in \n$ is a starting level for $X'$. Then since $\pi_K (X') \subset G_K$ is a core graph, and $G_K - \pi_K (X') \neq \emptyset$, we have that $G_K - \pi_K (X')$ contains the interior of a natural edge $E$ in $G_K$. Let $I := Interior (E)$. 
    Then there exists a leaf $L$ that is entirely contained in $X'$ and thus $L$ does not intersect the neighborhood $\pi^{-1}_K (I) \subset X$. Therefore $X$ cannot be minimal.
\end{remark}

\vspace{4mm}

Note that when $X$ isn't 
minimal, 
studying the stable cores of $X$ can, in a sense, tell us about the nature of $X$'s non-minimality. 

\vspace{4mm}

\begin{definition}[Minimal Cores of $X$]
    A stable core $X'$ of $X$ is called a ``Minimal Core of $X$" if $X'$ is a minimal solenoid.
\end{definition}

\vspace{4mm}

\begin{definition}[Consecutively Closed Subsets]
    \label{Consecutively Closed Subsets}
    \hypertarget{Consecutively Closed Subsets}{Given} $ A \in \{ \mathds{N}, \mathds{N}^* , - \mathds{N}, -\mathds{N}^* , \mathds{Z} \}$ a ``Consecutively Closed Subset $\mathds{J}$ of $A$", is a subset of $A$ such that,
    \begin{enumerate}
        \item for each distinct $i, j \in \mathds{J}$, $[i,j] \cap A$ is contained in $\mathds{J}$, and,
        \item if $A$ has a lower bound (resp. an upper bound) $B$, then $ B \in \mathds{J}$.
    \end{enumerate}
\end{definition}

\vspace{4mm}

\begin{remark}[Nested Sequences of Stable Cores, and Minimal Cores of $X$]
    Note that from the nature of minimality, a minimal core of $X(\zeta)$ cannot properly contain another minimal core of $X$. Let $\{ X_i \}_{i \in \mathds{J}}$ be a sequence of stable cores of $X$, such that, $\mathds{J}$ is a consecutively closed subset of $\mathds{N}^*$, $X_0 = X$, and for each $i, i+1 \in \mathds{J}$, $X_i \supset X_{i+1}$.

    \vspace{4mm}
    
    For each $i \in \mathds{N}^*$, let $n_i \in \mathds{N}^*$ be the rank of $X_i$. Let $n \in \mathds{N}$ denote the rank of $\zeta$. Then $n_0 = n$. Furthermore, for each $i \in \mathds{N}^*$ that satisfies $X_{i+1} \neq \emptyset$, there must exist a level $K_i \in \n$ such that in $G_{K_i}$,
    \begin{enumerate}
        \item[i.] $\pi_{K_i} (X_i)$ is a core graph of rank $n_i$,
        \item[ii.] $\pi_{K_i} (X_{i+1})$ is a core graph of rank $n_{i+1}$, and,
        \item[iii.] $\pi_{K_i} (X_i) \supset \pi_{K_i} (X_{i+1})$,
    \end{enumerate}
    implying that $n_i > n_{i+1}$. Thus, $\mathds{J}$ must have an upper bound $B$ such that $X_B = \emptyset$.

    \vspace{4mm}

    A stable core $X' \neq \emptyset$ of $X$, is called an ``Innermost Stable Core of $X$" if, $X'$ does not properly contain any non-trivial stable cores of $X$. Any Stable Core of $X$ that is a minimal solenoid, shall be called a ``Minimal Core of $X$". The above discussion makes it apparent that,
    \begin{enumerate}
        \item given $X' \subseteq X$, $X'$ is an innermost stable core of $X$ if and only if $X'$ is a minimal core of $X$,
        \item $X =$ its unique innermost stable core if and only if $X$ is minimal,
        \item $X$ has at least one minimal core.
    \end{enumerate}
\end{remark}

\vspace{4mm}


\subsubsection{Expanding Solenoids}
\label{sec: Expanding Solenoids}

\vspace{4mm}

Here, we carry out the process of identifying what seems to be the widest natural class of solenoids that do not support transverse measures with atoms. 

\vspace{4mm}

We start with a definition of the titular object [Def. \ref{def: Expanding Solenoids}]. We then lay out some preliminary terminology needed to show that, if $X$ is an expanding solenoid, $X$ does not contain any finite partial leaves, and therefore, each transverse measure on $X$ is \hyperlink{Non Atomic TM}{non-atomic} [Prop. \ref{Expanding solenoids do not support atomic masses}]. Then we shall show that solenoids induced by fully mingling split sequences, are also expanding [Rem. \ref{FM implies Expanding}].

\vspace{4mm}





\begin{definition}[Expanding Solenoids]
    \label{def: Expanding Solenoids}
    \hypertarget{Expanding Solenoids}{A} split sequence $\zeta :  G_{0} \xleftarrow[]{f_{-1}} G_{-1} \xleftarrow[]{f_{-2}} G_{-2} \xleftarrow[]{f_{-3}}...$ is called an ``Expanding Split Sequence", and $X(\zeta)$ an ``Expanding Solenoid" if, for each $j \in \n$ and for each $E_j \in E(G_j)$, there exists an integer $k < j$ such that for each $E_k$ that mingles with $E_j$, $f^k_j |_{E_k}$ is not a homeomorphism.
\end{definition}

\vspace{4mm}

We will proceed to show that each partial leaf of an expanding solenoid $X$ is infinite, and therefore $X$ does not support transverse measures with atomic masses [Prop. \ref{Expanding solenoids do not support atomic masses}]. To make the recognition of finite partial leaves easier we introduce the following terminology.




\vspace{4mm}

\begin{definition}[Mingling Edge Sequences, and Strongly Mingling Edge Sequences]
    \label{Mingling Edge Sequences}
    \hypertarget{Mingling Edge Sequences}{A} sequence of natural edges $\{ E_j \}_{j \in \n}$ where for each $j \in \n$, $E_j \in E(G_j)$, is called a ``Mingling Edge Sequence in $\zeta$" if for each $j \in \n$, $E_{j-1}$ \hyperlink{Mingling}{mingles} with $E_j$. A mingling edge sequence $\{ E_j \}_{j \in \n}$ in $\zeta$ is called a ``Strongly Mingling Edge Sequence in $\zeta$" if there exists $K \in \n$ (called the ``Starting Level of $\{ E_j \}_{j \in \n}$'s Strong Mingling") such that, for each integer $j < K$, $f_j |_{E_j}$ is a homeomorphism onto $E_{j+1}$.
\end{definition}

\vspace{4mm}

Note that if $\zeta$ is an expanding split sequence, then each mingling edge sequence in $\zeta$ cannot be strongly mingling. 

\vspace{4mm}

\begin{proposition}
    \label{Expanding solenoids do not support atomic masses}
    Let $X$ be an expanding solenoid. Then $X$ does not contain any finite partial leaves, and therefore, every transverse measure on $X$ is non-atomic.
\end{proposition}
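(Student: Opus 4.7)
The plan is to prove the statement in two stages: first, that an expanding split sequence admits no finite partial leaves, and second, that the absence of finite partial leaves forces every transverse measure to be non-atomic.

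For the first stage, I argue by contrapositive. Suppose $L$ is a finite partial leaf of level-$0$-length $m$; by Remark \ref{Types of Partial Leaves} and Remark \ref{Partial Leaves are Immersed 1-Manifolds in X}, $L$ is an injectively immersed $1$-manifold that is either a topological circle or a topological open arc whose two missing endpoints in $X$ are singularities, and $\pi_j(L) \subset G_j$ is an immersed connected curve at every level $j$. Since $V(G_j) \leq 2(n-1)$ and $E(G_j) \leq 3(n-1)$ uniformly in $j$ (Lemma \ref{upperbound to natural vertices}), the ``loop pattern'' of $\pi_j(L)$ — the cyclic (or linear) ordered list of natural edges traversed, together with the incidences at natural vertices — is drawn from a finite set of possibilities. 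Moreover each pseudo-singularity on $L$ projects to a natural vertex at every sufficiently negative level (by stabilization and properness), and these projections are preserved as natural vertices of $G_{j+1}$ by $f_j$. Fix any non-pseudo-singular interior point $x \in L$, and for each $j$ let $e_j$ be the natural edge of $G_j$ carrying $x_j$. Because $f_j$ sends the loop-arc of $\pi_j(L)$ inside $e_j$ homeomorphically onto the loop-arc of $\pi_{j+1}(L)$ inside $e_{j+1}$, the sequence $\{e_j\}_{j \in \n}$ is a mingling edge sequence in $\zeta$.

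The main technical hurdle, and what I expect to be the principal obstacle, is promoting this to a \emph{strongly} mingling sequence. The only obstruction at level $j$ is that $e_j$ could be one of the two folding edges of $f_j$, since otherwise $f_j|_{e_j}$ is an isometry whose image is a connected arc with endpoints at natural vertices of $G_{j+1}$ and no interior natural vertices, forcing $f_j(e_j) = e_{j+1}$. To rule out this obstruction for all sufficiently negative $j$, the plan is to pigeonhole the $j$-indexed sequence of loop patterns onto a subsequence on which the pattern is constant, then argue that any fold touching a loop edge along this subsequence must strictly alter the next-level pattern — either by merging two loop edges (shortening the cyclic word) or by merging a loop edge with a non-loop edge (introducing a new splitting vertex onto $\pi_{j+1}(L)$). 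Refining so that these pattern changes cannot be ``undone'' before the next sample level yields a contradiction unless $f_j$ fixes every loop edge for all sufficiently negative $j$, which is exactly strong mingling, contradicting Definition \ref{def: Expanding Solenoids} applied to $e_0$.

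For the second stage, suppose $\mu \in TM(X)$ has an atom. Chasing Definitions \ref{Transverse Semi-Measures} and \ref{Transverse Measures} together with the flow-invariance clause, one extracts a non-singular $y \in X$ carrying atomic mass $a := \mu(\{y\}) > 0$, and flow invariance then propagates this atom to every point of the partial leaf $L_y$ of $y$. By the first stage $L_y$ is infinite, so by Remark \ref{Types of Partial Leaves} its graph-immersion $\Gamma_{L_y} \to X$ is a one-sided or bi-infinite $1$-manifold injected into the compact space $X$, and hence has an accumulation point $y^* \in X$. Choose a tunnel neighborhood $T \cong C \times I$ of $y^*$ with $C$ totally disconnected and leaf-segments of $T$ of the form $\{c\} \times I$. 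Since each path-component of $L_y \cap T$ is an open arc contained in a single leaf-segment of $T$, if only finitely many $c \in C$ contributed then $L_y \cap T$ would be a finite union of arcs and could not accumulate at $y^*$. Hence infinitely many leaf-segments of $T$ meet $L_y$, so every cross-section transversal $H = C \times \{t\}$ of $T$ contains infinitely many points of $L_y$, each of $\mu$-mass $a > 0$, forcing $\mu_H(H) = \infty$ and contradicting item 1 of Definition \ref{Transverse Semi-Measures}.
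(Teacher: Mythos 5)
Your overall strategy coincides with the paper's: reduce the first claim to the non-existence of a strongly mingling edge sequence, and get the second claim by propagating an atom along an infinite partial leaf until some turn transversal carries infinitely many atoms of equal mass. Your second stage is essentially the paper's argument (the paper reaches the infinite turn transversal by pigeonholing the infinitely many optimal leaf segments of $L_y$ over the finitely many level $0$ optimal turns rather than via an accumulation point, and it handles an atom at a singularity exactly as you do, through item 3 of Definition \ref{Transverse Semi-Measures}); that part is fine.

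The genuine gap is in your first stage, and you partly flag it yourself. First, the dichotomy ``the only obstruction at level $j$ is that $e_j$ is a folding edge'' is false: even when $e_j$ is not a folding edge, $f_j|_{e_j}$ can fail to be a homeomorphism \emph{onto a natural edge of} $G_{j+1}$, because a fold at a valence-$3$ folding vertex $v$ adjacent to $e_j$ identifies two of the three germs at $v$, so $f_j(v)$ has valence $2$ and $f_j(e_j)$ is a proper subarc of the natural edge $e_{j+1}$. Since Definition \ref{Mingling Edge Sequences} requires $f_j|_{E_j}$ to be a homeomorphism \emph{onto} $E_{j+1}$, this case must be ruled out for all sufficiently negative $j$ as well, and your case analysis does not address it. Second, the step you call the ``main technical hurdle'' --- showing that the folding-edge obstruction occurs only finitely often --- is left as a plan (``pigeonhole the loop patterns \dots\ argue the pattern must strictly change \dots\ refine so the changes cannot be undone'') rather than carried out, and as stated it is not clear it closes: a fold can have a folding edge equal to $e_j$ while leaving the combinatorial ``loop pattern'' of $\pi_j(L)$ unchanged (the folded initial segments need not meet $\pi_j(L)\cap e_j$ at all), so constancy of the pattern along a subsequence does not by itself force $f_j$ to avoid $e_j$. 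The paper's route sidesteps the single-edge bookkeeping entirely: for a finite partial leaf $L$ it works with the whole projected edge path $\pi_j(L)$ and uses that the identifications made by $\pi_j|_L$ stabilize, so that $f_j|_{\pi_j(L)}$ is eventually a homeomorphism onto $\pi_{j+1}(L)$, from which a strongly mingling edge sequence (and hence a contradiction with Definition \ref{def: Expanding Solenoids}) is extracted. To repair your version you would need either to adopt that global viewpoint or to supply an actual finiteness argument for both obstructions above.
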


\begin{proof}
    Let $\zeta$ be an expanding split sequence. If $X = X(\zeta)$ contains a finite partial leaf $L$, then there exists $K \in \n$, such that for each integer $j < K$, $\pi_j (L)$ is a finite edge path in $G_j$ where, $f_j |_{\pi_j (L)}$ is a homeomorphism onto $\pi_{j+1} (L) \subseteq G_{j+1}$, 
    which implies the existence of a strongly mingling edge sequence in $\zeta$. Thus $X(\zeta)$ cannot contain any finite leaves.

    \vspace{4mm}

    Now suppose $\mu$ is a transverse measure on $X$ such that $\mu (\{ x \}) = \epsilon > 0$ for some $x \in X$. 

    \vspace{4mm}
    
    (Case a.) Suppose $x \notin Sing(X)$. Then there exists a unique partial leaf $L$ of $X$ containing $x$. Since $L$ is not finite, there exists $q \in G_0$ such that $L \cap F_q$ is not a finite set. Since $q$ has only finitely many \hyperlink{optimal turns}{level $0$ optimal turns} containing it, there must exist at least one level $0$ optimal turn $I$ containing $q$, such that, $L \cap \hyperlink{Turn Transversals}{H_{I,q}} $ is not finite. Then since the \hyperlink{Turn Transversals}{turn transversal} $H_{I,q}$ contains infinitely many distinct points flow equivalent to $x$, we have $\mu (H_{I,q}) > \infty$. This contradicts the fact that $\mu$ restricted to each turn transversal of $X$ is a finite borel measure. 
    
    \vspace{4mm}
    
    (Case b.) Suppose $x \in Sing(X)$. Let $V$ be a singular plaque centered at $x$, and let $A \subset V - \{x\}$ be a set, such that for each open prong $P$ of $V$, $A$ contains exactly one point from $P$. Then from Definition \ref{Transverse Semi-Measures}, we have that $\mu (x) = \sum_{y \in A} \mu (\{y\})$. Then $\mu (\{y\}) > 0$ must be true for at least one $y \in A$, which leads to the contradiction at the end of case a.
\end{proof}

\vspace{4mm}

The next remark and corollary, lay out the consequences of the above result in the context of fully mingling split sequences.

\vspace{4mm}

\begin{remark}[Fully Mingling Split Sequences Induce Expanding Solenoids]
    \label{FM implies Expanding}
    Let $\zeta$ be a stabilized proper split sequence. Suppose $\zeta$ is not expanding. i.e. there exists, a strongly mingling edge sequence $\mathcal{E} := \{ E_j \}_{j \in \n}$ in $\zeta$ with $K \in \n$, where $K$ is the starting level of $\mathcal{E}$'s strong mingling.
    Then for each integer $j < K$, $E_j$ does not mingle with any natural edge in $E (G_K) - \{ E_K \}$. Thus $\zeta$ is not fully mingling.
\end{remark}


\vspace{4mm}

\begin{corollary}
    Let $\zeta$ be a proper stabilized split sequence. If $\zeta$ is fully mingling, then each transverse measure on $X(\zeta)$ is non-atomic.
\end{corollary}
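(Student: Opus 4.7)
The plan is to observe that this corollary follows immediately by chaining the two preceding results, so essentially no new work is required beyond invoking them in the right order. First I would apply Remark \ref{FM implies Expanding}, which asserts that every fully mingling proper stabilized split sequence induces an expanding solenoid. Applying this to our hypothesis that $\zeta$ is fully mingling yields that $X(\zeta)$ is an expanding solenoid.

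Next I would invoke Proposition \ref{Expanding solenoids do not support atomic masses}, which states that every transverse measure on an expanding solenoid is non-atomic (with the intermediate fact being that expanding solenoids contain no finite partial leaves, ruling out the possibility of atomic support). Combining the two invocations, each transverse measure on $X(\zeta)$ is non-atomic, which is exactly the conclusion of the corollary.

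There is no real obstacle here; the corollary is a packaging statement. The only thing to double-check is that the hypotheses of Proposition \ref{Expanding solenoids do not support atomic masses} are indeed met — namely that $\zeta$ being proper and stabilized (as assumed here) is compatible with the setting of Remark \ref{FM implies Expanding} (which itself assumes $\zeta$ is proper and stabilized), so the chain of implications goes through without any additional hypotheses. Thus the proof is a two-line citation of previously established results.
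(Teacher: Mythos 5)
Your proof is correct and matches the paper's own argument exactly: the paper proves this corollary by citing Remark \ref{FM implies Expanding} (stated there in contrapositive form, but with the same content) together with Proposition \ref{Expanding solenoids do not support atomic masses}. Nothing further is needed.
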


\begin{proof}
    Follows from Prop. \ref{Expanding solenoids do not support atomic masses}, and Rem. \ref{FM implies Expanding}.
\end{proof}

\vspace{4mm}

In the next chapter, we build a mechanism that helps us envision the space of transverse measures of a given solenoid, using the collection of natural edges of level graphs, in a loose sense, as a coordinate space that encodes transverse measures. For this process to play out, one assumption we must make, is that all transverse measures involved are non-atomic. Thus, for the discussion in the next two chapters, we assume that the underlying $\zeta$ is an expanding split sequence. 

\vspace{4mm}




\begin{center}
    \section{The Space of Transverse Measures} \label{Ch: TM(X)}
\end{center}

\vspace{4mm}

\h For the rest of this text, we will assume that $X = X(\zeta)$ is the solenoid induced by a \hyperlink{Stabilized Split Sequence}{stabilized} split sequence,
\begin{equation*}
    \zeta  : G_0 \xleftarrow[]{f_{-1}} G_{-1} \xleftarrow[]{f_{-2}} G_{-2} \xleftarrow[]{f_{-3}} ....
\end{equation*}
that is \hyperlink{Strongly Proper Split Sequences}{strongly proper} and \hyperlink{Expanding Solenoids}{expanding}. Recall that $TM(X)$ denote the space of transverse measures of $X$.

\vspace{4mm}

The main goal of this chapter is proving that $TM(X)$ can be realized as the inverse limit of a certain sequence of convex cones [Theorem \ref{TM(X) is an inverse limit}]. Alternative proofs for this result can be found in \cite{namazi2014ergodicdecompositionsfoldingunfolding} and \cite{BedHilLus2020}, respectively carried out in the contexts of `\textit{supporting laminations of frequency currents}'  and `\textit{symbolic laminations of graph towers}'. We begin with setting up terminology for the specific linear maps and convex cones that will be used to encode transverse measures on $X$.

\vspace{4mm}

\begin{definition}[Weight Maps and Pseudo-Weight Cones]
    Let $j \in \n$, and let $E(G_j)$ denote the set of natural edges of $G_j$. The real vector space $\mathds{R}^{E(G_j)}$ spanned by $E(G_j)$ is called ``The Level $j$ Pseudo-Weight Space of $\zeta$" (or ``The Pseudo-Weight Space of $G_j$") and denoted ``$R_j (\zeta)$". The cone of non-negative vectors in $R_j (\zeta)$ is called ``The Level $j$ Pseudo-Weight Cone of $\zeta$" (or ``The Pseudo-Weight Cone of $G_j$") and denoted $\Lambda_j (\zeta)$. The linear map from $R_{j-1}$ to $R_j$ defined by the \hyperlink{transition matrix}{transition matrix} of $f_{j-1}$ is called the ``Level $j-1$ Weight Map of $\zeta$" and is denoted $T_{j-1}$.
\end{definition}

\vspace{4mm}

\begin{definition}[The Pseudo-Weight Cone Sequence of $\zeta$]
    ``The Weight Map Sequence of $\zeta$" is the sequence,
    \begin{equation*}
        WMS(\zeta)  : R_0 (\zeta) \xleftarrow[]{T_{-1}} R_{-1} (\zeta) \xleftarrow[]{T_{-2}} R_{-2} (\zeta) \xleftarrow[]{T_{-3}} ...
    \end{equation*}
    The inverse limit of $WMS(\zeta)$ (in the category of vector spaces) is called the ``Space of Pseudo-Weight Assignments on $\zeta$" and denoted ``$R(\zeta)$". \hypertarget{space of weight assignments}{ ``The Pseudo-Weight Cone Sequence of $\zeta$"} is the sequence,
    \begin{equation*}
        PCS(\zeta)  : \Lambda_0 (\zeta) \xleftarrow[]{T_{-1} |_{\Lambda_{-1} (\zeta)}} \Lambda_{-1} (\zeta) \xleftarrow[]{T_{-2} |_{\Lambda_{-2}}} \Lambda_{-2} (\zeta) \xleftarrow[]{T_{-3} |_{\Lambda_{-3}}} ...
    \end{equation*}
    Going forward, when we write out the above sequence, for each $j \in -\mathds{N}$, we will use the notation $T_j$ instead of $T_j |_{\Lambda_{j}(\zeta)}$, even though the appropriate restriction is what we mean to represent. The inverse limit of $PCS(\zeta)$ is called the ``Space of Weight Assignments on $\zeta$" and is denoted ``$\Lambda(\zeta)$". 
\end{definition}

\vspace{4mm}

The concept of individual ``Weight Assignments on $\zeta$" will be precisely defined in \ref{Weight Assignments}. More descriptive versions of all definitions contained in this summary, can be found scattered throughout this chapter.


\vspace{4mm}

Note that $R(\zeta)$ is a vector space and that $\Lambda(\zeta)$ has the structure of a convex cone as a subspace of $R(\zeta)$. We will elaborate more on this in sub-section \ref{sec: The Shape of TM(X)}. Furthermore $TM(X)$ has the structure of a convex cone, since it is the structure all spaces of measures naturally possess. Now we state our main result.

\vspace{4mm}

\begin{thm}
    \label{TM(X) is an inverse limit}
    The space of \hyperlink{transverse measure}{transverse measures of $X(\zeta)$} is isomorphic to the \hyperlink{space of weight assignments}{space of weight assignments of $\zeta$} as convex cones in vector spaces.
\end{thm}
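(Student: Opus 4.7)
The plan is to construct an explicit map $\Phi \colon TM(X) \to \Lambda(\zeta)$ by recording, for each transverse measure $\mu$, its mass on turn transversals of a canonical ``edge-interior'' type, verify this map is well-defined and linear with respect to the cone structures, and then establish injectivity and surjectivity separately.

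Forward direction. Given $\mu \in TM(X)$, for each $j \in -\mathds{N}^*$ and each natural edge $E \in E(G_j)$, pick any $q_E$ in the interior of $E$ and define $w_j(E) := \mu(F_{q_E})$. Since $q_E$ lies in the interior of a natural edge, the turn tunnel $Tun_I$ of the open interval $I$ around $q_E$ inside $E$ is (by Proposition \ref{ExTun are Tunnel Sets}) a product $C_I \times I$ whose cross-sections are precisely the fibers above points of $I$; thus $F_{q_E}$ is a turn transversal, and any two choices of interior point give flow-equivalent transversals, making $w_j(E)$ independent of $q_E$. The crucial compatibility $T_{j-1} w_{j-1} = w_j$ follows from the fact that, by strong properness, the preimage in $G_{j-1}$ of an interior point $q_E$ consists of finitely many points lying in the interiors of edges $E' \in E(G_{j-1})$, with multiplicities given exactly by the $(E,E')$-entry of the transition matrix of $f_{j-1}$; by Property \ref{Fibers are compact} and the definition of the level-$(j-1)$ standard partition (Remark \ref{The Topology of a Fiber}), $F_{q_E}$ decomposes as a finite disjoint union of these fibers, and $\mu$'s restriction to $F_{q_E}$ adds accordingly. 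So $\Phi(\mu) := (w_j)_{j \in -\mathds{N}^*}$ lies in $\Lambda(\zeta)$; linearity is immediate from linearity of $\mu$.

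Injectivity via the sub-turn basis. Let $H = H_{I,q}$ be any turn transversal and let $\mathcal{B}_H$ be its sub-turn basis (Definition \ref{Sub-Turn Basis}), which by Remark \ref{Properties of Sub-Turn Bases} is a tree basis with a binary tree partition breakdown. Each basic set in $\mathcal{B}_H$ is itself a turn transversal $H_{I',q'}$ at some deeper level $j'$, and by passing to a further refinement we may arrange that $q'$ lies in the interior of a natural edge $E'$ of $G_{j'}$, so $H_{I',q'} = F_{q'}$ and $\mu(H_{I',q'}) = w_{j'}(E')$. Thus $\mu$ is determined by the weights $(w_j)$ on the algebra generated by $\mathcal{B}_H$, and since $H$ has a one-point compactification in $X$ (Property \ref{H has a point compactification}) the Carathéodory uniqueness theorem extends this to the Borel $\sigma$-algebra of $H$. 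Every transversal of $X$ is flow-equivalent to a Borel subset of a turn transversal (Definition \ref{Transversals}), and transverse measures respect flow equivalence (Definition \ref{Transverse Semi-Measures}); expansion of $\zeta$ rules out atoms (Proposition \ref{Expanding solenoids do not support atomic masses}), so the turn-transversal data determines $\mu$ completely. Hence $\Phi$ is injective.

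Surjectivity. Given $(w_j) \in \Lambda(\zeta)$, I want to build $\mu$ turn-transversal by turn-transversal. For each turn transversal $H$, define a finitely-additive set function on $\mathcal{B}_H$ by declaring $\mu_H(H_{I',q'}) := w_{j'}(E')$ whenever $q'$ is chosen in the interior of the edge $E' \in E(G_{j'})$; extend to the algebra generated by $\mathcal{B}_H$ using the disjoint-union decompositions of Remark \ref{Properties of Sub-Turn Bases}. Finite additivity among the basic sets is forced by the compatibility $T_{j-1}w_{j-1}=w_j$: refining one basic set into its children in the tree partition corresponds precisely to expressing $w_{j'}(E')$ as a sum $\sum w_{j'-1}(E'')$ over the edges $E''$ covering $E'$. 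Since $H$ has a compact one-point compactification and the tree partitions have finite fiber counts at each level, the generated algebra is a compact class, so the premeasure is automatically $\sigma$-additive and extends uniquely by Carathéodory to a finite Borel measure on $H$. Piecing together these measures for all turn transversals and extending to arbitrary transversals via flow maps defines $\mu$.

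The main obstacle is the last gluing step: showing that the measures $\mu_H$ assembled independently across different turn transversals are genuinely consistent under arbitrary flow map compositions (Definition \ref{Flow Equivalence}). The plan for this is to reduce every flow equivalence to a chain of elementary moves that identify two turn transversals $H_{I,q}$ and $H_{I,q'}$ inside a single turn tunnel along the canonical tunnel parameterization, plus moves identifying $H_{I,q}$ with a disjoint union of its sub-turn-basis elements at a deeper level. Both elementary moves preserve the edge-weight encoding by construction, so composition preserves it. Once consistency is checked, the inverse map $(w_j) \mapsto \mu$ is linear, cone-preserving, and inverse to $\Phi$, completing the isomorphism of convex cones.
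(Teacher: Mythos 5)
Your overall strategy is the same as the paper's: encode a transverse measure by its edge weights, recover it on turn transversals via the decomposition into edge transversals, and use Carath\'eodory's extension theorem on the tree basis $\mathcal{B}_H$ together with flow compatibility. Your forward map (fibers over edge-interior points coincide with the paper's $E$-transversals under strong properness), your injectivity argument, and your cone-structure verification line up with Lemma \ref{Transverse Measures to Weight Assignments}, Proposition \ref{turn transversals in terms of edge transversals}, Lemma \ref{edge transversals with consistent weights}, and Proposition \ref{TM(X) = inv lim ACS}.

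The gap is in the surjectivity half. Your rule $\mu_H(H_{I',q'}) := w_{j'}(E')$ only assigns values to those elements of $\mathcal{B}_H$ that are edge transversals, and the claim that one can ``pass to a further refinement'' so that every basic set becomes an edge transversal is false: when $I$ is not an edge turn, $Tun_I$ may contain a pseudo-singular leaf segment whose projection contains a natural vertex at \emph{every} level, so the corresponding basic sets never refine into edge transversals alone --- by Proposition \ref{turn transversals in terms of edge transversals} such a turn transversal is a countably infinite disjoint union of maximal edge transversals plus possibly one residual point. Consequently (i) the value of $\mu_H$ on such a basic set (in particular on $H$ itself) must be defined as a countable sum $\sum_i w(E_i)$, which lives outside the finitely generated algebra you work in, and (ii) you must prove this sum is finite before any extension theorem (compact-class or otherwise) applies. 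Neither point is automatic: the finiteness is exactly Case b of Lemma \ref{Total Weight Exteding to Turn Transversals}, where strong properness is used to embed $H_{I,q}$ into the full fiber $F_q = H_{E,q}$ over a natural edge $E$ adjacent to $I$, and the weight equations then bound the sum by $w(E)$. The residual point is harmless only because expansion forces non-atomicity [Proposition \ref{Expanding solenoids do not support atomic masses}], which you invoke for injectivity but not here. With these two additions your construction coincides with the paper's and goes through.
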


\vspace{4mm}


In Section \ref{Sec: Edge Weights and Weight Assignments}, 
we will discuss the fundamental building blocks necessary to
prove the theorem, and then we will show that each transverse measure $\mu$ on $X$ uniquely induces a weight assignment $w_{\mu}$ on $\zeta$ [Lemma \ref{Transverse Measures to Weight Assignments}].

\vspace{4mm}

In Section \ref{sec: Edge Weights and Transverse Measures}, we will prove that each weight assignment on $\zeta$ uniquely determines a transverse measure on $X$ [Proposition \ref{weight assignment to transverse measure}], and thus, we can envision $TM(X)$ as the space of all possible weight assignments on $\zeta$. The main proposition [\ref{turn transversals in terms of edge transversals}] that contributes to this proof, states that each turn transversal $H$ of $X$ can be uniquely expressed as the disjoint union of countably many edge transversals [Def. \ref{Edge Tunnel}] and a finite set. 

\vspace{4mm}

In Section \ref{sec: The Shape of TM(X)}, we will show that the bijection we construct from $TM(X)$ to $\Lambda(\zeta)$ preserves their respective convex cone structures
[Proposition \ref{TM(X) = inv lim ACS}]. 

\vspace{4mm}

Later, we apply this machinery in Chapter \ref{Ch: A Criterion for Unique Ergodcity} to provide a unique ergodicity criterion for solenoids. 

\vspace{4mm}

We begin the chapter with preliminary definitions. Once all the necessary terminology is laid out in Section \ref{Sec: Edge Weights and Weight Assignments}, an outline of the main argument contained in Section \ref{sec: Edge Weights and Transverse Measures}, is provided in Remark \ref{Weight Assignments and Transverse Measures}. 

\vspace{4mm}

\subsection{Edge Weights and Weight Assignments}
\label{Sec: Edge Weights and Weight Assignments}

\vspace{4mm}

Here, we lay out some preliminary concepts and terminology. As we move through the discussion in this section, more detailed overviews of the chapter will be provided, as necessary terminology is established.


\vspace{4mm}

\subsubsection*{Edge Weights}
\label{Sec: Edge Weights}

\vspace{4mm}

\begin{definition}[Edge Tunnels and Edge Transversals]
    \label{Edge Tunnel}
    \hypertarget{Edge Tunnel}{Let} $j \in \n$, and let $I$ be a level $j$ \hyperlink{turn in zeta}{standard turn in $\zeta$}. If $I$ is contained in the interior of a natural edge of $G_j$, then $I$ is called a ``Level $j$ Edge Turn in $\zeta$", and $Tun_I$ is called a ``Level $j$ Edge Tunnel in $X$". A turn transversal that is a \hyperlink{cross-section}{cross-section} of an edge tunnel is called an ``Level $j$ Edge Transversal in $X$". More specifically, when $E$ is the natural edge in $G_j$ whose interior contains the turn $I$, each cross-section of $Tun_I$ shall be called an ``$E-$transversal in $X$".
\end{definition}





\vspace{4mm}

Next, we confirm the following rather unsurprising fact about edge transversals, that will ensure that the concept of an ``Edge Weight" [to be laid out in Definition \ref{Edge Weights}] is unambiguous. 

\vspace{4mm}

\begin{lemma}
    \label{edge transversals with consistent weights}
    Let $\mu$ be a transverse measure on $X(\zeta)$, and $E$ a natural edge of the level $j$ graph of $\zeta$ for some $j \in \n$. Let $H_1$ and $H_2$ be two distinct $E-$transversals in $X$. Then, $\mu (H_1) = \mu (H_2)$.
\end{lemma}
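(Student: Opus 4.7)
Plan of proof. The plan is first to show that each $E$-transversal coincides with a full fiber of $X$ over a point of $\mathrm{Interior}(E)$, and then to show that $\mu$ assigns the same measure to all such fibers by chaining flow equivalences within $\pi_j^{-1}(\mathrm{Interior}(E))$.

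Write $H_i = F_{q_i} \cap Tun_{I_i}$ with $I_i$ an edge turn containing $q_i$ in $\mathrm{Interior}(E)$. The first step is to establish that $H_i = F_{q_i}$. Since $q_i \in \mathrm{Interior}(E)$, $q_i$ is not a natural vertex, and by strong properness no pre-singularity of $X$ can project to it, so every $x \in F_{q_i}$ is a leaf interior point. For such $x$, the plaque $Plaque(\pi_j^{-1}(I_i), x)$ is non-singular (hence a $2$-pronged star set by Lemma \ref{shapse of plaques 2}), and by Remark \ref{More Consequences of Stabilization} it is mapped homeomorphically by $\pi_j$ onto its image in $G_j$. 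A short chase through the maximal star chain representing this plaque shows that the image is all of $I_i$, so the plaque is a leaf segment taking $I_i$, giving $x \in Tun_{I_i}$ and therefore $H_i = F_{q_i}$.

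The second step is to connect $q_1$ to $q_2$ by a chain of standard edge turns. The closed sub-arc $\gamma \subset \mathrm{Interior}(E)$ joining $q_1$ and $q_2$ is compact, and every point of $\mathrm{Interior}(E)$ lies in some standard edge turn contained in $\mathrm{Interior}(E)$ (choose a small enough open subinterval of $\mathrm{Interior}(E)$ whose non-center portion avoids the finitely many special points of $G_j$ nearby). A finite subcover of $\gamma$, ordered along $\gamma$, yields a chain $J_0, J_1, \ldots, J_n$ of standard edge turns in $\mathrm{Interior}(E)$ with $q_1 \in J_0$, $q_2 \in J_n$, and $J_k \cap J_{k+1} \neq \emptyset$. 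Pick $r_k \in J_k \cap J_{k+1}$ for $k = 0, \ldots, n-1$, and set $r_{-1} := q_1$, $r_n := q_2$.

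The third step assembles a flow equivalence between $F_{q_1}$ and $F_{q_2}$. Within each $Tun_{J_k}$, the canonical tunnel parameterization from Remark \ref{Canonical Tunnel Parameterizations} provides a homeomorphism $F_{r_{k-1}} \cap Tun_{J_k} \to F_{r_k} \cap Tun_{J_k}$ realized as a flow map composition in the sense of Definition \ref{Flow Map}. By the first step these cross-sections coincide with the full fibers $F_{r_{k-1}}$ and $F_{r_k}$, so the individual pieces for $k = 0, 1, \ldots, n$ concatenate into a single flow map composition from $F_{q_1}$ to $F_{q_2}$. Item 2 of Definition \ref{Transverse Semi-Measures} then gives $\mu(H_1) = \mu(F_{q_1}) = \mu(F_{q_2}) = \mu(H_2)$. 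The main obstacle will be the first step: verifying that the plaque genuinely projects onto all of $I_i$ rather than a proper sub-arc; this rests on the expanding hypothesis (so that no partial leaf terminates inside $\mathrm{Interior}(E)$) together with the local product structure of extended turn tunnels from Proposition \ref{ExTun are Tunnel Sets} and the absence of backtracking in $\zeta$.
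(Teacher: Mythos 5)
Your proof is correct and follows essentially the same route as the paper's: cover the arc of $Interior(E)$ joining the two base points by a finite chain of overlapping standard edge turns and concatenate flow maps, with your extra observation that each $E$-transversal is a full fiber (left implicit in the paper, but contained in the two-pronged case of Lemma \ref{turn tunnels are open}) making the gluing at the overlap points cleaner. One small correction: the fact that each plaque of $\pi_j^{-1}(I_i)$ projects onto all of $I_i$ comes from stabilization together with the plaque/no-backtracking structure (Lemmas \ref{shapse of plaques 1}, \ref{shapse of plaques 2}, \ref{turn tunnels are open}), not from the expanding hypothesis, whose role is only to exclude finite partial leaves and atoms.
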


\begin{proof}
    Assume the set-up of the above lemma. Let $I := Interior(E)$. If $I$ is a standard turn in $\zeta$ (i.e. $I$ is a turn of a standard star neighborhood, and therefore contains at most one \hyperlink{special points}{special point}), then the conclusion of the lemma follows from the fact that, in the same turn tunnel every cross section is \hyperlink{Flow Equivalence}{flow equivalent} to every other cross section. Now suppose that $I$ contains more than one special point. Since 
    properness of $\zeta$ assures that $I$ can only contain finitely many special points (and since strong properness implies properness), 
    there exists a finite collection of level $j$ standard turns $\{I_i \}_{i=1}^m$ in $\zeta$ (all contained in $I$) for some $m \in \mathds{N}+1$, such that,
    \begin{enumerate}
        \item for each $i \in \{1,..., m-1\}$, $I_i \cap I_{i+1}$ is an embedded open interval in $I \subset G_j$,
        \item there exist $p,q \in I$ where $H_1 = \hyperlink{Turn Transversals}{H_{I_1 ,p}}$ and $H_2 = \hyperlink{Turn Transversals}{H_{I_m ,q}}$.
    \end{enumerate}
     Thus, we can construct an appropriate \hyperlink{Flow Equivalence}{flow map composition} that takes $H_1$ to $H_2$. 
\end{proof}



\vspace{4mm}

\begin{definition}[Edge Weights]
    \label{Edge Weights}
    \hypertarget{edge weights}{Let} $\mu$ be a \hyperlink{transverse measure}{transverse measure} on $X(\zeta)$, and $E$ a natural edge of a level graph of $\zeta$. Then the ``Edge Weight of $E$ rel $(X(\zeta),\mu)$" denoted ``$\mu (E)$" is the quantity given by $\mu (H)$ where $H$ is any \hyperlink{Edge Tunnel}{$E$-transversal of $X$}. 
\end{definition}

\vspace{4mm}

The main result in this chapter [that is laid out in Section \ref{sec: Edge Weights and Transverse Measures}] will establish that given a transverse measure $\mu$ on a strongly proper expanding solenoid $X$, by recording only the edge weights rel $(X,\mu)$, we can recover the transverse measure induced by $\mu$ on any transversal of $X$. 
Thus in a sense a space of possible edge weights on $X$ can be perceived as a coordinate space for transverse measures on $X$. The possible edge weights in this context will be a collection of numbers that satisfy a few desired criteria called the ``Weight Equations" [to be laid out in Definition \ref{Weight Assignments}].

\vspace{4mm}




\subsubsection*{Weight Assignments}
\label{sec: Weight Assignments}

\vspace{4mm}





In order to define the titular concept, we will first establish some preliminary terminology, and refine the idea of mingling established in Definition \ref{Mingling}. 

\vspace{4mm}

\begin{definition}[Edge-Bases, and Mingling Numbers]
    \label{Edge-Bases, and Mingling Numbers}
    \hypertarget{Edge-Bases, and Mingling Numbers}{Let} $\zeta$ be a split sequence. For each $j \in \n$, we call the set $E(G_j)$ of all the natural edges of $G_j$, the ``Level $j$ Edge-Base of $\zeta$". And we call the collection $\mathcal{E} (\zeta) = \bigsqcup_{j \in \n} E(G_j)$ the ``Edge Space of $\zeta$". 
    Now let $j,k \in \n$ such that $k < j$. Furthermore, let $E_j \in E(G_j)$ and $E_k \in E(G_k)$. We call the non-negative integer given by $``M^{\zeta}(E_k,E_j)":= (Interior(E_k)) \bigcap ((f^k_j)^{-1} (Interior(E_j)))$, the ``Mingling Number of $E_k$ with $E_j$ rel $\zeta$" (i.e. loosely speaking, $M^{\zeta}(E_k,E_j)$ is the number of times the image of $E_k$ goes over $E_j$). 
\end{definition}

\vspace{4mm}

\begin{definition}[Weight Assignments]
    \label{Weight Assignments} 
    \hypertarget{Weight Assignments}{Let} $\zeta$ be a strongly proper split sequence,
    and let $w : \mathcal{E} (\zeta) \longrightarrow [0, \infty)$ a function.
    For $j \in \n$, and $E_j \in E(G_{j})$ we call the equation 
    \begin{equation*}
        w (E_j) = \sum_{E \in E(G_{j-1})} ( \ M^{\zeta}(E,E_j) \ w(E) \ )
    \end{equation*}
    the ``Level $j$ Weight Equation of $E$ rel $(\zeta, w)$".
    We say that $w$ is a ``Weight Assignment on $\zeta$" if all the weight equations rel $(\zeta, w)$ are true.
\end{definition}





\vspace{4mm}

Note that in the above context, when $w$ is a weight assignment on $\zeta$, for any pair of levels $j,k \in \n$ such that $k < j$, and for any $E_j \in G_j$, $w$ also satisfies what we shall call ``The Extended Weight Equation of $E_j$ with level $k$ rel $(\zeta, w)$",  given by,
\begin{equation*}
    w (E_j) = \sum_{E \in E(G_k)} ( \hspace{0.8mm} M^{\zeta}(E,E_j)  \hspace{0.8mm} w(E)  \hspace{0.8mm} ).
\end{equation*}





\vspace{4mm}

\begin{lemma}
    \label{Transverse Measures to Weight Assignments}
    Each transverse measure on $X(\zeta)$ defines a weight assignment on $\zeta$.
\end{lemma}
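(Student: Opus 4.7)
My plan is to define the candidate weight assignment by $w_\mu(E) := \mu(E)$ for each natural edge $E$ in any level graph of $\zeta$, where $\mu(E)$ is the edge weight from Definition \ref{Edge Weights}. This is well-defined by Lemma \ref{edge transversals with consistent weights}, and $w_\mu$ takes values in $[0, \infty)$ since $\mu$ restricted to any edge transversal is a finite borel measure. The only thing remaining is to verify the weight equation at every level $j \in -\mathds{N}$ and every $E_j \in E(G_j)$.

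Fix such a $j$ and $E_j$. I would choose a standard turn $I$ contained in $\text{Interior}(E_j)$ (shrinking if necessary so that $I$ avoids the finitely many special points that may lie in $\text{Interior}(E_j)$), pick $q \in I$, and consider the edge transversal $H := H_{I,q}$, so that $w_\mu(E_j) = \mu(H)$. The core combinatorial step is to show the disjoint-union identity
\[
H_{I,q} \;=\; \bigsqcup_{\tilde{I}} H_{\tilde{I},\, \tilde{q}_{\tilde{I}}},
\]
where $\tilde{I}$ ranges over all level $j{-}1$ pre-turns of $I$ and $\tilde{q}_{\tilde{I}} \in \tilde{I}$ is the unique preimage of $q$ under $f_{j-1}|_{\tilde{I}}$. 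For ``$\supseteq$'' note that any leaf segment taking $\tilde{I}$ also takes $I$, so $\text{Tun}_{\tilde{I}} \subseteq \text{Tun}_I$. For ``$\subseteq$'', given $x \in H_{I,q}$, $x$ lies on a leaf segment $L$ taking $I$; because $\zeta$ has no backtracking [Definition \ref{Split Sequence}], $\pi_{j-1}(L)$ is a level $j{-}1$ pre-turn of $I$ containing the point $x_{j-1}$ with $f_{j-1}(x_{j-1}) = q$, which identifies the unique pre-turn and preimage fitting $x$.

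The enumeration of pre-turns then uses strong properness: since $f_{j-1}$ is a graph map, natural vertices of $G_{j-1}$ map to natural vertices of $G_j$, so $f_{j-1}^{-1}(\text{Interior}(E_j))$ is contained in the union of the natural-edge interiors of $G_{j-1}$. For each $E \in E(G_{j-1})$, the set $\text{Interior}(E) \cap f_{j-1}^{-1}(\text{Interior}(E_j))$ is, by definition, a disjoint union of exactly $M^\zeta(E, E_j)$ open intervals, each mapped homeomorphically onto $\text{Interior}(E_j)$ by $f_{j-1}$ (local injectivity on edge interiors). Intersecting with $I$ yields exactly $M^\zeta(E, E_j)$ pre-turns of $I$ in $\text{Interior}(E)$, each an edge turn whose $E$-transversal contributes weight $w_\mu(E)$. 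Summing over $E \in E(G_{j-1})$ and invoking countable additivity of the borel measure $\mu$ restricted to $H$ (together with the fact that each $H_{\tilde{I}, \tilde{q}_{\tilde{I}}}$ is an open, hence borel, subset of $H$ by Lemma \ref{turn tunnels are open} and Property \ref{Topology of a Turn Transversal}) gives
\[
w_\mu(E_j) \;=\; \mu(H) \;=\; \sum_{E \in E(G_{j-1})} M^\zeta(E, E_j)\, w_\mu(E),
\]
which is precisely the level $j$ weight equation for $E_j$. The main obstacle I anticipate is justifying the disjoint-union decomposition cleanly; everything else is bookkeeping. In particular, one must verify that the sub-turn $I$ can indeed be chosen so the pre-turn count matches the mingling number $M^\zeta(E, E_j)$ defined on the entire interior of $E_j$, which is where strong properness (ensuring a clean edge-path structure for $f_{j-1}$) is essential.
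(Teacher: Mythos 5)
Your proposal is correct and follows essentially the same route as the paper's (much terser) proof: the paper simply observes that strong properness lets one write each level $j$ edge transversal as a finite disjoint union of lower-level edge transversals and concludes the weight equations hold, while you carry out exactly that decomposition at the $j \to j-1$ step and verify that the number of pre-turns landing in each $E \in E(G_{j-1})$ matches the mingling number $M^{\zeta}(E,E_j)$. The extra details you supply (disjointness of the pre-turn transversals, the role of strong properness in keeping preimages inside edge interiors) are the right ones and fill in what the paper leaves implicit.
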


\begin{proof}
    Let $X(\zeta)$ be a strongly proper expanding solenoid and $\mu$ a transverse measure on $X(\zeta)$. For each $j \in \n$, and for each natural edge $E$ of $G_j$, let $w_{\mu} (E) :=$ \hyperlink{edge weights}{the edge weight of $E$ rel $(X(\zeta),\mu)$}. Note that, since $\zeta$ is strongly proper, for each $j \in \n$, for each level $j$ edge transversal $H$ in $X$, and for each integer $k < j$, $H$ can be expressed as a disjoint union of finitely many level $k$ edge transversals in $X$. Then $w_{\mu}$ is a well-defined function from $\mathcal{E} (\zeta)$ to $[0, \infty)$ that satisfies the \hyperlink{Weight Assignments}{weight equations rel $\zeta$}.
\end{proof}

\vspace{4mm}

\begin{definition}[The Weight Assignment Induced by a Transverse Measure]
    \label{The Weight Assignment Induced by a Transverse Measure}
    \hypertarget{The Weight Assignment Induced by a Transverse Measure}{Let} $\mu$ be a transverse measure on $X$. The weight assignment constructed using $\mu$ in the above proof shall be called the ``Weight Assignment Induced by $\mu$" and denoted ``$w(\mu)$". 
\end{definition}

We will show in Section \ref{sec: Edge Weights and Transverse Measures} that each weight assignment on $\zeta$ defines a unique transverse measure on $X(\zeta)$.


\subsection{Each Weight Assignment Determines a Transverse Measure}
\label{sec: Edge Weights and Transverse Measures}

\vspace{4mm}

In this section we will show that, given a transverse measure $\mu$ on $X$, by just recording all the \hyperlink{edge weights}{edge weights rel $(X,\mu)$}, we can recover the measure on any transversal (i.e \hyperlink{The Weight Assignment Induced by a Transverse Measure}{$w(\mu)$} uniquely determines $\mu$). Furthermore, we will show that, given any weight assignment $w$ on $\zeta$, we can construct a unique transverse measure $\mu_w$ on $X$ such that $\hyperlink{The Weight Assignment Induced by a Transverse Measure}{w(\mu_w)} = w$ [Proposition \ref{weight assignment to transverse measure}].
Therefore, the collection of all weight assignments on $\zeta$ can be viewed as a coordinate space for $TM(X)$.

\vspace{4mm}


The above statements will follow from the five claims laid out in Remark \ref{Weight Assignments and Transverse Measures}. But first, to assist with the oncoming discussion, in subsection \ref{sec: Flow Compatible Pre Measures}, we will borrow some facts from measure theory \cite{alma991018098349703276}, then lay out a few preliminary definitions adopting those concepts to our setting. 

\vspace{4mm}

\subsubsection{Flow Compatible Pre Measures}
\label{sec: Flow Compatible Pre Measures}

\vspace{4mm}

The following two remarks [\ref{Extension Theorem}, \ref{Pre-Measures on Tree Bases}] lay the foundation to establishing a useful precursor to transverse measures on $X$, called ``Flow Compatible Pre Measures on $X$" [Def. \ref{Pre Measures on X}]. While the following result is borrowed from \cite{alma991018098349703276}, in order to not conflate terminology, where Royden used the phrase 
``A Measure on an Algebra" we shall use the phrase ``A Pre Measure on an Algebra". 


\vspace{4mm}

\begin{remark}[Pre Measures and Carathéodory's Extension Theorem]
    \label{Extension Theorem}
    Let $Y$ be a set. A collection $\mathcal{B}$ of subsets of $Y$ is called a ``Algebra on $Y$" if, 
    \begin{enumerate}
        \item $\emptyset \in \mathcal{B}$, 
        \item for each $A, C \in \mathcal{B}$, $A \cup C \in \mathcal{B}$, and $Y - A \in \mathcal{B}$. 
    \end{enumerate}
    The stated criteria ensure that $\mathcal{B}$ will be closed under finite unions, and finite intersections. (The pair $(Y, \mathcal{B})$ is called a ``Field of Sets".)

    \vspace{4mm}
    
    Given a set $Y$ and an algebra $\mathcal{B}$ on $Y$, a ``Pre Measure on $(Y , \mathcal{B})$" is a non-negative extended real valued function $\mu$ on $\mathcal{B}$, such that,
    \begin{enumerate}
        \item[i.] $\mu (\emptyset) = 0$, and,
        \item[ii.] for each countable index set $\mathds{J}$, and for each countable collection of disjoint sets $\{ A_i \}_{i \in \mathds{J}} \subseteq \mathcal{B}$ 
        such that $\bigsqcup_{i \in \mathds{J}} A_i  \subset \mathcal{B}$, 
        we have $\mu ( \bigsqcup_{i \in \mathds{J}} A_i ) = \sum_{i \in \mathds{J}} \mu (A_i) $.
    \end{enumerate}
\end{remark}

\vspace{4mm}

Carathéodory's Extension Theorem \cite{alma991018098349703276} states that, when $Y$ is a set, each finite pre measure on an algebra $\mathcal{B}$ of $Y$, can be uniquely extended to a finite measure on the $\sigma-$algebra generated by $\mathcal{B}$ in $Y$. 

\vspace{4mm}

Recall from Remark \ref{Properties of Sub-Turn Bases}, that for each turn transversal $H$ of $X$, the \hyperlink{Sub-Turn Basis}{sub-turn basis of $H$} (denoted $\mathcal{B}_H$) is a \hyperlink{Tree Bases}{tree basis}. The following remark will use Carathéodory's Extension Theorem to show that a ``Pre Measure on a Tree Basis" (defined in the remark) can be uniquely extended to a measure on the $\sigma-$algebra generated by $\mathcal{B}_H$. 

\vspace{4mm}

\begin{remark}[A Pre Measure on a Tree Basis Extends Uniquely to a Measure on the Generated $\sigma-$Algebra]
    \label{Pre-Measures on Tree Bases}
    Let $Y$ be a topological space generated by a \hyperlink{Tree Bases}{tree basis} $\mathcal{B}$. We call a non-negative real valued function $\mu$ on $\mathcal{B}$, \hypertarget{Pre-Measures on Tree Bases}{a} ``Finite Pre-Measure on $(Y,\mathcal{B})$" if, 
    for each countable index set $\mathds{J} \subseteq \mathds{N}$, and for each 
    collection of disjoint sets $\{ A_i \}_{i \in \mathds{J}} \subset \mathcal{B}$ such that $\bigsqcup_{i \in \mathds{J}} A_i \in \mathcal{B}$ , 
    $\mu ( \bigsqcup_{i \in \mathds{J}} A_i ) = \sum_{i \in \mathds{J}} \mu (A_i) $. 

    \vspace{4mm}
    
    Now let $\mathcal{B}'$ be the algebra generated by $\mathcal{B}$. It follows from Remark \ref{Properties of Sub-Turn Bases}, that $\mathcal{B}' = \{A \subseteq Y : A$ is a finite union of elements in $\mathcal{B} \} = \{A \subseteq Y : A$ is a finite disjoint union of elements in $\mathcal{B} \}$. 
    Then for each countable index set $\mathds{J} \subseteq \mathds{N}$, and for each countable disjoint collection $\{ A_i \}_{i \in \mathds{J}} \subseteq \mathcal{B}$ such that $\bigsqcup_{i \in \mathds{J}} A_i \in \mathcal{B}' - \mathcal{B}$, let $\mu ( \bigsqcup_{i \in \mathds{J}} A_i ) := \sum_{i \in \mathds{J}} \mu (A_i)$. Note that this sum is finite, since it's less than or equal to $\mu (Y)$. 
    Then, it follows from Remark \ref{Extension Theorem}, that each finite pre measure on a tree basis $\mathcal{B}$ of the underlying space $Y$, uniquely extends to a finite measure on the $\sigma-$algebra generated by $\mathcal{B}$ in $Y$.
\end{remark}

\vspace{4mm}




We naturally wish to break down transverse measures on $X$ to the smallest and simplest components out of which they can be assembled. We can capture the essence of a transverse measure $\mu$ by observing the measure it induces on each \hyperlink{Turn Transversals}{turn transversal} $H$ of $X$, each of which, can then be encoded in the underlying pre measure on its \hyperlink{Sub-Turn Basis}{sub-turn basis} $\mathcal{B}_H$ [Def. \ref{Sub-Turn Basis}]. The next definition and lemma [\ref{Pre Measures on X}, \ref{flow compatible pre measures to transverse measures}] will aid us in phrasing the aforementioned process in more precise language. 


\vspace{4mm}

\begin{definition}[Flow Compatible Pre Measures on $X$]
    \label{Pre Measures on X}
    Let ``$\mathcal{TT}(X)$" denote the collection of all turn transversals of $X$.
    \hypertarget{Pre Measures on X}{A} ``Pre Measure on $X$" is a non-negative real function $\mu : \mathcal{TT}(X) \longrightarrow [0, \infty)$ such that, for each $H \in \mathcal{TT}(X)$, $\mu$ restricted $\mathcal{B}_H$ (denoted ``$\mu_H$") is a pre measure on $(H, \mathcal{B}_H)$ [Remark \ref{Pre-Measures on Tree Bases}].
    \hypertarget{Flow Compatible Pre Measures}{We} call a pre measure $\mu$ on $X$ a ``Flow Compatible Pre Measure on $X$" if, for each $H, H' \in \mathcal{TT}(X)$ that are flow equivalent to each other, $\mu (H) = \mu (H')$.
\end{definition}

\vspace{4mm}

\begin{lemma}
    \label{flow compatible pre measures to transverse measures}
    Each \hyperlink{Flow Compatible Pre Measures}{flow compatible pre measure} on $X(\zeta)$ determines a unique transverse measure on $X(\zeta)$.
\end{lemma}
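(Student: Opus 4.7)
The plan is to construct a transverse measure from a flow compatible pre measure $\mu$ in three stages: first extend $\mu$ from each sub-turn basis $\mathcal{B}_H$ to the Borel $\sigma$-algebra of $H$; then push forward to general transversals via flow maps; finally verify the axioms of Definition 4.2.9 and uniqueness. For the first stage, fix a turn transversal $H$ of $X$. By Property 4.2.5 the sub-turn basis $\mathcal{B}_H$ generates the topology of $H$, and by Remark 4.2.8 it is a tree basis with a binary tree partition breakdown. By construction $\mu_H$ is a finite pre measure on $(H, \mathcal{B}_H)$, and $\mu(H) < \infty$ because $H$ is compact or admits a one-point compactification (Property 4.2.4). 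Remark 6.2.2 then produces a unique finite Borel measure $\tilde{\mu}_H$ on $H$ extending $\mu_H$.

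For the second stage, a general transversal $F$ has the form $F = \phi(A)$ where $\phi : C_I \to Tun_I$ is a flow map and $A \subseteq C_I$ is Borel (Definition 4.2.3). I set $\mu(F) := \tilde{\mu}_{C_I}(A)$. Well-definedness requires: if $F = \phi_1(A_1) = \phi_2(A_2)$ with $\phi_i : C_{I_i} \to Tun_{I_i}$, then $\tilde{\mu}_{C_{I_1}}(A_1) = \tilde{\mu}_{C_{I_2}}(A_2)$. The homeomorphism $\phi_2^{-1} \circ \phi_1|_{A_1}$ is a flow map composition from $A_1$ onto $A_2$, so this reduces to showing that for any flow map composition $\Phi : H \to H'$ between turn transversals and any Borel $B \subseteq H$ we have $\tilde{\mu}_H(B) = \tilde{\mu}_{H'}(\Phi(B))$.

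The hard part will be bootstrapping the flow compatibility axiom, which in Definition 6.2.3 is stated only for turn transversals as a whole, to an identity at the Borel level. The key observation is that each element $B \in \mathcal{B}_H$ is itself a turn transversal, so in principle flow compatibility applies to it directly, but $\Phi(B)$ need not sit inside the sub-turn basis of $H'$. To circumvent this I will refine $B$ into a countable disjoint union $B = \bigsqcup_k B_k$ of sub-turn transversals small enough that $\Phi|_{B_k}$ is (up to a single flow map) a cross-section swap, so that each $\Phi(B_k)$ is itself a turn transversal flow equivalent to $B_k$. Flow compatibility of $\mu$ then gives $\mu(B_k) = \mu(\Phi(B_k))$, and countable additivity on both sides yields $\tilde{\mu}_H(B) = \tilde{\mu}_{H'}(\Phi(B))$. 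Since the identity holds on the algebra generated by $\mathcal{B}_H$, Carathéodory uniqueness upgrades it to all Borel subsets, establishing well-definedness and condition (2) of Definition 4.2.6 simultaneously.

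Finally, condition (1) of Definition 4.2.6 holds by construction. Because $\zeta$ is expanding and strongly proper, Proposition 5.2.7 ensures that no transverse measure on $X$ can carry atomic masses, so I assign $\mu(\{s\}) = 0$ for each singularity $s$ and take the zero turn weight system on $X$, verifying condition (3) and compatibility in Definition 4.2.9 vacuously. For uniqueness, any transverse measure $\nu$ on $X$ whose restriction to $\mathcal{TT}(X)$ coincides with the given pre measure must agree with the constructed measure on each sub-turn basis $\mathcal{B}_H$, hence on all Borel subsets of each $H$ by Remark 6.2.2, hence on every transversal via the defining flow maps; so the transverse measure determined by $\mu$ is unique.
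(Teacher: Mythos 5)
Your proof is correct and follows essentially the same route as the paper's: Carath\'eodory-extend the pre measure on each turn transversal using the tree basis $\mathcal{B}_H$, define the measure on general transversals by pushforward along flow map compositions, set singularity masses to zero via Proposition \ref{Expanding solenoids do not support atomic masses}, and obtain uniqueness from uniqueness of the extension. The only difference is that you supply the well-definedness/flow-invariance argument that the paper asserts in a single line; your reduction to cross-section swaps is consistent with Definition \ref{Flow Equivalence}, whose requirement that the intermediate transversals lie inside turn transversals already forces each constituent flow map to have constant parameter function on its domain.
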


\begin{proof}
    Assume the set-up of the lemma. Let $\nu$ be a flow compatible pre measure on $X = X(\zeta)$. It follows from Prop. \ref{Expanding solenoids do not support atomic masses} that, since $\zeta$ is expanding, each transverse measure on $X$ is non-atomic [Def. \ref{Non Atomic TM}], and therefore each singularity of $X$ will have zero measure. It follows from Remark \ref{Pre-Measures on Tree Bases} that, for each turn transversal $H$ of $X$, the pre measure $\nu_H$ on $(H ,\mathcal{B}_H)$ (given by $\nu$ restricted to $\mathcal{B}_H$), uniquely extends to a measure $\mu_H$ on the $\sigma-$algebra generated by $\mathcal{B}_H$ in $H$. Since $\nu$ is flow compatible, when two turn transversals $H, H'$ are flow compatible, the measures $\mu_H , \mu_{H'}$ are preserved by any flow map composition taking $H$ to $H'$ or vice versa. 

    \vspace{4mm}
    
    Now define the transverse measure $\mu$ on $X$ as follows.
    \begin{enumerate}
        \item For each turn transversal $H$ of $X$, and for each borel subset $F$ of $H$, let $\mu(F) := \mu_H (F)$.
        \item For each transversal $\Tilde{H}$ of $X$ that is not a turn transversal, by definition, there exists a borel subset $F$ of a turn transversal $H$, and a flow map composition $\phi: F \longrightarrow \Tilde{H}$. Define $\mu_{\Tilde{H}}$ to be the push-forward measure induced on $\Tilde{H}$ by $\phi$ (where the measure on $F$ is $\mu_H$ restricted to the borel subsets of $F$).
        \item For each singularity $s$ of $X$, let $\mu (s) := 0$.
    \end{enumerate}
    Since flow equivalence is an equivalence relation on the set of transversals of $X$, and since $\nu$ is flow compatible, $\mu$ is well defined. 
\end{proof}

\vspace{4mm}

\subsubsection{Overview of Section}
\label{sec: Overview of Section 6.2}

\vspace{4mm}

In the rest of the section, we will show that each weight assignment on $X$ uniquely determines a transverse measure on $X$. The following remark organizes the proof of the aforementioned claim, and includes references to the supporting arguments contained in the rest of the section. 


\vspace{4mm}

\begin{remark}[Weight Assignments and Transverse Measures]
    \label{Weight Assignments and Transverse Measures}
     Using the concepts laid out above, now we will provide the steps to proving that each \hyperlink{Weight Assignments}{weight assignment} $w$ on $\zeta$ uniquely determines a \hyperlink{transverse measure}{transverse measure} $\mu_w$ on $X(\zeta)$. Suppose $w$ is a given weight assignment on $\zeta$.
     \begin{enumerate}
         \item $w$ allows us to uniquely define a non-negative real valued function $\overline{\mu}_w$ (which we shall call the ``Total Weight Function Induced by $w$"), on the collection of all \hyperlink{Edge Tunnel}{edge transversals} of $X$ [Def. \ref{Total Weight}].
         \item  Each \hyperlink{Turn Transversals}{turn transversal} of $X$ can be expressed as the disjoint union of countably many edge transversals and perhaps a pre-singularity [Prop. \ref{turn transversals in terms of edge transversals}]. Therefore, $\overline{\mu}_w$ uniquely extends to the collection of all turn transversals of $X$ [Lem. \ref{Total Weight Exteding to Turn Transversals}].
         \item Given a turn transversal $H$ of $X$, 
         since $w$ satisfies the \hyperlink{Weight Assignments}{weight equations rel $\zeta$}, 
         $\overline{\mu}_w$ restricted to $\mathcal{B}_H$ is a finite pre measure on $(H,\mathcal{B}_H)$ [Lemma \ref{Total Weight Exteding to Turn Transversals}]. Furthermore, $\overline{\mu}_w$ is a \hyperlink{Flow Compatible Pre Measures}{flow compatible pre measure} on $X$ [Lemma \ref{flow compatible pre measure given by the total weights}]. 
         \item From Carathéodory's Extension Theorem [Rem. \ref{Extension Theorem}] and its application to tree bases [Rem. \ref{Pre-Measures on Tree Bases}], we have that each flow compatible pre measure on $X$ uniquely extends to a transverse measure on $X$ [Lem. \ref{flow compatible pre measures to transverse measures}].
         \item Therefore, $\overline{\mu}_H$ uniquely extends to a transverse measure on $X$ [Prop. \ref{weight assignment to transverse measure}].
     \end{enumerate}
\end{remark}

\vspace{4mm}

\subsubsection{Turn Transversals in Terms of Edge Transversals}
\label{sec: Turn Transversals Expressed in Terms of Edge Transversals}

\vspace{4mm}

Here we will show that, each turn transversal can be written as a disjoint union of countably many edge transversals and a set $S$ where $S$ is either empty or consists of a single point [Prop. \ref{turn transversals in terms of edge transversals}]. 

\vspace{4mm}

We start by refining the concept of a leaf segment into two classes called ``Edge Leaf Segments" and ``Pre Singular Segments" [Def. \ref{Edge Leaf Segments and Pseudo Singular Segments}]. Then we show that, given a standard turn $I$ in $\zeta$, the union of all edge leaf segments that take $I$ (denoted ``\hyperlink{Edge-Tun}{$Tun_I^{Edge}$}"), is a countable disjoint union of \hyperlink{Edge Tunnel}{edge tunnels} [Lemma \ref{Edge Tun_I as a countable disjoint union}], which implies that $Tun_I$ is the disjoint union of countably many edge tunnels and up to one additional pre leaf segment [Prop. \ref{Tun_I as a countable disjoint union}].


\vspace{4mm}




Recall the concept of edge tunnels from Definition \ref{Edge Tunnel}.
To recognize when a given \hyperlink{leaf segments}{leaf segment} is contained in an edge tunnel and when it's not, we introduce the following terminology.

\vspace{4mm}

\begin{definition}[Edge Leaf Segments and Pseudo Singular Segments]
    \label{Edge Leaf Segments and Pseudo Singular Segments}
    \hypertarget{Edge Leaf Segments and Pseudo Singular Segments}{Let} $L$ be a leaf segment in $X$. If there exists a level $j \in \n$ such that $\pi_j (L) \subset G_j$ is contained in the interior of a natural edge in $G_j$, we call $L$ an ``Edge Leaf Segment of $X$". If there exists a $K \in \n$ such that for each integer $j \leq K$, $\pi_j (L)$ contains a natural vertex of $G_j$ (equivalently, $L$ contains a \hyperlink{Pre-Singularities}{pseudo-singularity}), we call $L$ a ``Pseudo Singular Segment of $X$". 
\end{definition}

\vspace{4mm}

Note that, since $\zeta$ is stabilized (specifically from \hyperlink{VSC}{the vertex stabilization criteria}), if $L$ is a pseudo singular segment, $\pi_j (L)$ contains a natural vertex for each $j \in \n$. Thus, a given leaf segment is either an edge leaf segment or a pseudo singular segment (never both). 


\vspace{4mm}

\begin{remark}[Turn Tunnels as Unions of Edge Tunnels and Pre Singular Segments]
    \label{Edge Tun PSing Tun Notation}
    \hypertarget{Edge Tun PSing Tun Notation}{Let} $I$ be a \hyperlink{turn in zeta}{standard turn in $\zeta$}. Then $Tun_I$ is the disjoint union of all edge leaf segments in $Tun_I$ \hypertarget{Edge-Tun}{(denoted ``$Tun_I^{Edge}$")} and all pre singular segments in $Tun_I$ (denoted ``$Tun_I^{PSing}$"). It follows from stabilization that $Tun_I^{PSing}$ is either empty or is a single pseudo singular segment.
\end{remark}

\vspace{4mm}

In Lemma \ref{Edge Tun_I as a countable disjoint union}, we will show that, $Tun_I^{Edge}$ can be expressed as a finite disjoint union of edge tunnels.

\vspace{4mm}

\begin{definition}[Maximal Edge Tunnels]
    \label{Maximal Edge Tunnels}
    \hypertarget{Maximal Edge Tunnels}{Let} $I$ be a \hyperlink{turn in zeta}{standard turn in $\zeta$}. Let $\Tilde{I}$ be a pre-turn of $I$ such that $Tun_{\Tilde{I}}$ is an edge tunnel. If there does not exist an edge tunnel $T$ such that $Tun_{\Tilde{I}} \subset T \subset Tun_I$, we call $Tun_{\Tilde{I}}$ a ``Maximal Edge Tunnel in $Tun_I$".
\end{definition}

\vspace{4mm}

Note that by definition, each pair of maximal edge tunnels in $Tun_I$ must not intersect. We will show in the next lemma that $Tun_I$ contains countably many maximal edge tunnels. 

\vspace{4mm}

\begin{lemma}
    \label{Edge Tun_I as a countable disjoint union}
    \hypertarget{Edge Tun_I as a countable disjoint union}{Let} $I$ be a \hyperlink{turn in zeta}{standard turn in $\zeta$}. $Tun_I^{Edge}$ can be expressed as a countable disjoint union of edge tunnels.
    More specifically, $Tun_I$ has countably many \hyperlink{Maximal Edge Tunnels}{maximal edge tunnels}, and $Tun^{Edge}_I$ is the union of its maximal edge tunnels. 
\end{lemma}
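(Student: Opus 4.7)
The plan is to produce, for each edge leaf segment in $Tun_I$, a canonically associated maximal edge tunnel, then argue that distinct maximal edge tunnels are disjoint and that the family of such tunnels is countable. First I would take any edge leaf segment $L \subseteq Tun_I^{Edge}$; by Definition \ref{Edge Leaf Segments and Pseudo Singular Segments} there is a level $j$ such that $\tilde I_j := \pi_j(L)$ lies in the interior of a natural edge of $G_j$. Because $L$ takes $I$ and $f^j_K$ maps $\tilde I_j$ homeomorphically onto $I$ (Remark \ref{Fold Compositions Restricted to Pre-Turns}), $\tilde I_j$ is a level-$j$ pre-turn of $I$, and $Tun_{\tilde I_j}$ is an edge tunnel containing $L$.

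Next I would extend $Tun_{\tilde I_j}$ to a maximal edge tunnel by ascending in level. For each integer $k$ with $j \le k \le K$, the image $f^j_k(\tilde I_j)$ is a level-$k$ pre-turn of $I$, giving the chain of inclusions $Tun_{\tilde I_j} \subseteq Tun_{f^j_k(\tilde I_j)} \subseteq Tun_I$. Let $j^\ast$ be the largest $k$ for which $f^j_k(\tilde I_j)$ still sits in the interior of a natural edge of $G_k$ and write $J^\ast := f^j_{j^\ast}(\tilde I_j)$. I would then verify that $Tun_{J^\ast}$ is maximal: any edge tunnel $T \subseteq Tun_I$ strictly containing it must, by comparing projections of leaf segments, be of the form $Tun_{J'}$ for a level-$k'$ pre-turn $J'$ with $k' > j^\ast$ satisfying $f^{j^\ast}_{k'}(J^\ast) \subseteq J'$; but by the choice of $j^\ast$ the image $f^{j^\ast}_{j^\ast+1}(J^\ast)$ already contains a natural vertex, so $J'$ cannot lie in a natural edge interior, contradicting that $Tun_{J'}$ is an edge tunnel. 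This shows every edge leaf segment sits inside some maximal edge tunnel, and conversely every leaf segment of an edge tunnel is, by construction, an edge leaf segment; hence $Tun_I^{Edge}$ equals the union of its maximal edge tunnels.

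For disjointness, if $L$ were a leaf segment in two maximal edge tunnels $Tun_A$ (level $j_A$) and $Tun_B$ (level $j_B \ge j_A$), then $\pi_{j_A}(L) = A$ and $\pi_{j_B}(L) = B$ force $f^{j_A}_{j_B}(A) = B$, so $Tun_A \subseteq Tun_B$, and the maximality of $Tun_A$ yields $Tun_A = Tun_B$. Countability is then immediate: Lemma \ref{pre-turn_shadows_are_star_set_unions} gives finitely many pre-turns of $I$ at each level, and there are countably many levels, so the set of candidate pre-turns, and hence the set of maximal edge tunnels, is countable. The main obstacle I anticipate is justifying the ``ascending'' step: a priori an edge tunnel containing $Tun_{J^\ast}$ might arise from a different pre-turn lineage at a lower level, and I must use the description of shadows of standard star neighborhoods from Remark \ref{Shadow Components} together with the pre-turn machinery to rule this out, confirming that within $Tun_I$ the containment relation on edge tunnels is determined entirely by the upward fold-images of pre-turns starting from $\tilde I_j$.
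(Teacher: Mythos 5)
Your proposal is correct and follows essentially the same route as the paper: associate to each edge leaf segment $L$ the highest level $k$ at which $\pi_k(L)$ is an edge turn, obtain the corresponding maximal edge tunnel, derive disjointness from maximality, and get countability from the finitely many pre-turns of $I$ at each of the countably many levels. Your treatment of the maximality verification and the disjointness step is somewhat more explicit than the paper's, but the underlying argument is the same.
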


\begin{proof}
     Let $K \in \n$, and let $I$ be a level $K$ standard turn in $\zeta$. Furthermore, let $\mathcal{EL}(I)$ denote the set of all edge leaf segments in $X$ that take $I$. If $I$ is an \hyperlink{Edge Tunnel}{edge turn}, then the conclusion of the lemma is true. Now assume $I$ is not an edge turn.

    \vspace{4mm}

    Each edge leaf segment that takes $I$, belong to some maximal edge tunnel in $Tun_I$. For each $L \in \mathcal{EL}(I)$, denote by $I^L$ the unique pre-turn of $I$ such that,
    \begin{enumerate}
        \item $L$ takes $I^L$, and,
        \item $Tun_{I^L}$ is a maximal edge tunnel in $Tun_I$.
    \end{enumerate}
    (i.e. let $I^L := \pi_k (L)$ where $k := Max \ \{ \ j \in \n + K : \pi_j (L)$ is an edge turn$ \ \}$.)
    Then, $Tun^{Edge}_I = \bigcup_{L \in  \mathcal{EL}(I)} (Tun_{I^L})$. 
    
    \vspace{4mm}
    
    \hypertarget{top edge turns}{Now} let $ Top\text{-}Edge\text{-}Turns (I) := \{ I^L : L \in \mathcal{EL}(I) \}$. It follows from the definition of maximal edge tunnels, that for each pair of distinct elements $J , J' \in Top\text{-}Edge\text{-}Turns (I)$, $(Tun_J) \bigcap (Tun_{J'}) = \emptyset$. Furthermore, since $\zeta$ has countably many levels, and since for each level $j \in -\mathds{N} + K$, there are only finitely many level $j$ pre-turns of $I$ [Lemma \ref{pre-turn_shadows_are_star_set_unions}], $Top$-$Edge$-$Turns (I)$ is a countable set.
\end{proof}

\vspace{4mm}

\begin{proposition}
    \label{Tun_I as a countable disjoint union}
    Let $I$ be a \hyperlink{turn in zeta}{standard turn in $\zeta$}. Then, we have that $Tun_I$ is equal to $(Tun^{Edge}_I) $ $\bigsqcup$ $ (Tun^{PSing}_I)$ where,
    \begin{itemize}
        \item $Tun^{Edge}_I$ is a disjoint union of countably many edge tunnels that are maximal in $Tun_I$, and,
        \item $Tun^{PSing}_I$ is either empty or is equal to a single pseudo singular segment.
    \end{itemize}
\end{proposition}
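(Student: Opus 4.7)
The plan is to deduce this proposition essentially as a direct consequence of the two preceding results, namely Remark \ref{Edge Tun PSing Tun Notation} and Lemma \ref{Edge Tun_I as a countable disjoint union}, so there is no substantial new work to do — the statement is a consolidation of what has already been established. First I would invoke the decomposition $Tun_I = Tun^{Edge}_I \sqcup Tun^{PSing}_I$ already recorded in Remark \ref{Edge Tun PSing Tun Notation}, which itself follows from the dichotomy in Definition \ref{Edge Leaf Segments and Pseudo Singular Segments}: every leaf segment in $Tun_I$, under projection to successive level graphs, either eventually lands inside the interior of a natural edge (giving an edge leaf segment) or projects to a set containing a natural vertex at every level (giving a pseudo singular segment).

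Next I would handle the $Tun^{PSing}_I$ piece by invoking the vertex stabilizing criterion in Remark \ref{The Stabilizing Hypotheses}: since $\zeta$ is stabilized, each fiber contains at most one pre-singularity, and each pre-singularity lies in at most one leaf segment taking $I$. Together these force $Tun^{PSing}_I$ to be either empty or the image of a single pseudo singular segment sustaining $I$, exactly as claimed.

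For the $Tun^{Edge}_I$ piece I would simply cite Lemma \ref{Edge Tun_I as a countable disjoint union}, which produces the countable index set $Top\text{-}Edge\text{-}Turns(I)$ and writes $Tun^{Edge}_I = \bigsqcup_{J \in Top\text{-}Edge\text{-}Turns(I)} Tun_J$ as a disjoint union of maximal edge tunnels. Disjointness and maximality are built into the construction of $I^L$ there, so nothing further is needed.

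I do not anticipate any real obstacle; the only thing worth flagging carefully is that the union $Tun^{Edge}_I \sqcup Tun^{PSing}_I$ truly exhausts $Tun_I$, which is just the observation that every leaf segment taking $I$ falls into exactly one of the two classes in Definition \ref{Edge Leaf Segments and Pseudo Singular Segments}, together with the fact established in Remark \ref{pre-leaf Segments and Path components} that each leaf segment in $Tun_I$ is itself a path component of $Tun_I$ and hence accounted for. Concluding with a one-line assembly of these ingredients completes the proof.
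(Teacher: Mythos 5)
Your proposal is correct and matches the paper's proof, which is exactly the one-line assembly "Follows from Remark \ref{Edge Tun PSing Tun Notation} and Lemma \ref{Edge Tun_I as a countable disjoint union}." The extra care you take in justifying exhaustiveness via the dichotomy of Definition \ref{Edge Leaf Segments and Pseudo Singular Segments} and the stabilization hypotheses is just an unpacking of what the cited remark already records, so nothing differs in substance.
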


\begin{proof}
    Follows from Remark \ref{Edge Tun PSing Tun Notation} and Lemma \ref{Edge Tun_I as a countable disjoint union}.
\end{proof}

\vspace{4mm}

\vspace{4mm}

\begin{definition}[Maximal Edge Transversals]
    \label{Maximal Edge Transversals}
    \hypertarget{Maximal Edge Transversals}{Let} $H$ be a turn transversal in $X$. A transversal $H_1 \subseteq H$ is called a ``Maximal Edge Transversal in $H$" if $H_1$ is an edge transversal such that there exist no edge transversal $H_2$ with $H_1 \subset H_2 \subset H$.
\end{definition}

\vspace{4mm}

\begin{proposition}
    \label{turn transversals in terms of edge transversals}
    Let $I$ be a standard turn in $\zeta$. Then for any $q \in I$, there exists a countable index set $\mathds{J}$ such that,
    the turn transversal $H_{I,q} = ( \bigsqcup_{i \in \mathds{J}} H_i )\bigsqcup S$ where,
    \begin{enumerate}
        \item for each $i \in \mathds{J}$, $H_i$ is a \hyperlink{Maximal Edge Transversals}{maximal edge transversal in $H_{I,q}$}, and,
        \item $S$ is either empty or contains a single point.
    \end{enumerate}
\end{proposition}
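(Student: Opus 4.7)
The plan is to cross-section the decomposition
\[ Tun_I \;=\; \Bigl(\bigsqcup_{J \in \text{Top-Edge-Turns}(I)} Tun_J\Bigr) \;\sqcup\; Tun_I^{PSing} \]
supplied by Proposition \ref{Tun_I as a countable disjoint union} at the fiber $F_q$. Since $H_{I,q} = F_q \cap Tun_I$ by definition, and intersection with $F_q$ distributes over disjoint unions, this immediately produces a partition of $H_{I,q}$ into the pieces $F_q \cap Tun_J$ (one for each $J \in \text{Top-Edge-Turns}(I)$) together with the leftover $S := F_q \cap Tun_I^{PSing}$.

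Next I would identify each piece. For $J \in \text{Top-Edge-Turns}(I)$ at some level $j$, Remark \ref{Fold Compositions Restricted to Pre-Turns} implies that $f^j_K|_J$ is a homeomorphism onto $I$, so $q$ has a unique preimage $q_J \in J$; one then checks that $F_q \cap Tun_J$ equals the turn transversal $H_{J, q_J}$, which is an edge transversal because $J$ is an edge turn. Setting $\mathds{J} := \text{Top-Edge-Turns}(I)$, this index set is countable by Lemma \ref{Edge Tun_I as a countable disjoint union}. For the leftover piece $S$: if $Tun_I^{PSing}$ is empty then so is $S$; otherwise $Tun_I^{PSing}$ is a single pseudo singular segment $L$ sustaining $I$, and Lemma \ref{Intersectig Fibers and Pre Leaf Segments} yields $|L \cap F_q| = 1$.

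The remaining task, and the main obstacle, is verifying that each $H_{J, q_J}$ is a maximal edge transversal in $H_{I,q}$. I would argue by contradiction: assume an edge transversal $H'$ satisfies $H_{J, q_J} \subsetneq H' \subseteq H_{I,q}$, and realize $H'$ as a cross-section of some edge tunnel $Tun_{J'}$ at a preimage of $q$ in $J'$. The goal is to deduce $Tun_J \subsetneq Tun_{J'} \subseteq Tun_I$, contradicting the maximality of $Tun_J$ as a maximal edge tunnel in $Tun_I$ (Definition \ref{Maximal Edge Tunnels}). The hard step is passing from containment of cross-sections to containment of ambient tunnels: one must show that if a point $y \in H_{J,q_J}$ also lies in $Tun_{J'}$, then the full leaf segment through $y$ in $Tun_J$ is contained in the leaf segment through $y$ in $Tun_{J'}$, which reduces to verifying that the image of $J$ under the appropriate fold composition is contained in $J'$. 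This nesting is best extracted from the canonical tunnel parameterizations of Remark \ref{Canonical Tunnel Parameterizations}, and the analogous argument applied to $H' \subseteq H_{I,q}$ then gives $Tun_{J'} \subseteq Tun_I$.
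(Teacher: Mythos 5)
Your proof follows the same route as the paper's: cross-section the decomposition of $Tun_I$ supplied by Proposition \ref{Tun_I as a countable disjoint union} along the fiber $F_q$, identify each resulting piece as the turn transversal $H_{J,q^J}$ of a maximal edge tunnel, and dispose of the leftover piece $S$ via Lemma \ref{Intersectig Fibers and Pre Leaf Segments}. The only difference is that you single out the maximality of each $H_{J,q^J}$ as an edge transversal in $H_{I,q}$ as a step requiring a separate contradiction argument, whereas the paper treats this as immediate from the maximality of the corresponding edge tunnels in $Tun_I$; your extra care there is reasonable but does not change the substance of the argument.
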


\begin{proof}
    Assume the set-up of the proposition. If $I$ is an edge turn then $H_{I,q}$ itself is an edge transversal. Now suppose that $I$ is not an edge turn. Then recall from Prop. \ref{Tun_I as a countable disjoint union}, that there exists a countable collection of pre-turns of $I$, $TET(I) := \hyperlink{top edge turns}{Top\text{-}Edge\text{-}Turns(I)}$ such that $\{ Tun_J : J \in TET(I) \}$ is the collection of maximal edge tunnels in $Tun_I$. For each $J \in TET(I)$, let $q^J :=$ the unique pre-image of $q$ contained in $J$.
    Then, 
    $H_{I,q} = F_q \ \cap \ Tun_I = F_q \ \cap \ ( (\bigsqcup_{J \in TET(I)} Tun_J ) \bigsqcup ( Tun^{PSing}_I )) = $ 
    $ (\bigsqcup_{J \in TET(I)} ( F_q \ \cap \ Tun_J )) \ \bigsqcup \ ( F_q \ \cap \ Tun^{PSing}_I )$ 
    $ = (\bigsqcup_{J \in TET(I)} ( H_{J , q^J} )) \ \bigsqcup \ S$, where $S := F_q \ \cap \ Tun^{PSing}_I $. Since $Tun^{PSing}_I$ is either empty or is a single leaf segment, and since each leaf segment of a turn tunnel intersects each cross-section exactly once [Lemma \ref{Intersectig Fibers and Pre Leaf Segments}], $S$ is either empty or consists of a single point.
\end{proof}



\vspace{4mm}

\begin{remark}[Transverse Measures Can Be Recorded Using the Collection of Edge Transversals]
    The above proposition shows that any transverse measure on $X$ can be, in a sense, encoded using only the edge transversals of $X$. More precisely stated, given a transverse measure $\mu$ on $X$ and a turn transversal $H$ of $X$ we can express the measure on $H$ as $\mu (H) = \sum_{i \in \mathds{N}} \mu (H_i) + \mu(S) $ where for each $i \in \mathds{N}$, $H_i$ is an edge transversal and $S$ is finite. 
    Since $X$ is an expanding solenoid,
    any finite set has zero measure [Prop. \ref{Expanding solenoids do not support atomic masses}], and thus we have $\mu (H) = \sum_{i \in \mathds{N}} \mu (H_i)$. Furthermore, since any transversal $\Tilde{H}$ in $X$ is flow equivalent to a borel subset of a turn transversal, and since the borel $\sigma-$algebra of any turn transversal is generated by the collection of turn transversals contained in it, we can recover the measure on $\Tilde{H}$ using the measure on only the turn transversals and thus using only the edge transversals. 
\end{remark}

\vspace{4mm}

The above remark only ensures that when a transverse measure on $X(\zeta)$ is given, we can rebuild the measure from the weight assignment it induces on $\zeta$. We will show in the next sub-section that given a weight assignment on $\zeta$ we can build the transverse measure that then induces that weight assignment.

\vspace{4mm}


\subsubsection{Weight Assignments and Transverse Measures}
\label{Sec: Each Weight Assignment Determines a Transverse Measure}

\vspace{4mm}

Here, we will show that given a weight assignment $w$ on $\zeta$, we can uniquely construct a transverse measure $\mu_w$ on $X$ using $w$. 

\vspace{4mm}

For the discussion in this section, suppose that $w$ is a given \hyperlink{Weight Assignments}{weight assignment} on $\zeta$. We will use $w$ to construct a real valued non-negative function $\overline{\mu}_w$  called the ``Total Weight Function Induced by $w$" on the collection of all \hyperlink{Edge Tunnel}{edge transversals} of $X$ [Def. \ref{Total Weight}], then we will show that $\overline{\mu}_w$ uniquely extends to the collection of all \hyperlink{Turn Transversals}{turn transversals} in $X$ [Lem. \ref{Total Weight Exteding to Turn Transversals}], and it determines a unique \hyperlink{Flow Compatible Pre Measures}{flow compatible pre measure} on $X$ [Lem. \ref{flow compatible pre measure given by the total weights}], which then determines a unique \hyperlink{transverse measure}{transverse measure} on $X$ [Lem. \ref{flow compatible pre measures to transverse measures}].

\vspace{4mm}

\begin{definition}[Total Weight Function Induced by a Weight Assignment]
    \label{Total Weight}
    \hypertarget{Total Weight}{Let} $w$ be a \hyperlink{Weight Assignments}{weight assignment} on $\zeta$. Then the ``Total Weight Function Induced by $w$" denoted by ``$\overline{\mu}_w$" is a real valued non-negative function on the set of \hyperlink{Edge Tunnel}{edge transversals} of $X$ defined as follows: For each natural edge $E$ of a level graph of $\zeta$, and each \hyperlink{Edge Tunnel}{$E-$transversal} $H$, $\overline{\mu}_w (H) := \hyperlink{Weight Assignments}{w (E)}$.
\end{definition}

\vspace{4mm}



\begin{lemma}
    \label{total weight function is flow compatible}
    Let $\overline{\mu}_w$ be the \hyperlink{Total Weight}{total weight function induced by the weight assignment $w$}, on the collection of edge transversals in $X$. If $H , H'$ are two edge transversals that are flow equivalent to each other, then $\overline{\mu}_w (H) = \overline{\mu}_w (H')$.
\end{lemma}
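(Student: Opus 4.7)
The plan is to prove the lemma in two stages: first establish a refinement formula showing that $\overline{\mu}_w$ is additive under decomposing an edge transversal into its level-$k$ pieces at a deeper level, and then use this together with a bijection of pieces induced by the flow equivalence.

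For the refinement formula, fix an edge transversal $H = H_{I_H, q_H}$ with $I_H \subset Interior(E)$ an edge turn at level $j$, and any $k < j$. Strong properness of $\zeta$ ensures that fold maps send natural vertices to natural vertices, so $(f^k_j)^{-1}(I_H)$ is a disjoint union of finitely many open intervals of $G_k$, each contained in the interior of a single natural edge of $G_k$. Hence every level-$k$ pre-turn $\tilde I$ of $I_H$ is an edge pre-turn contained in some $Interior(E_k)$, giving a finite disjoint decomposition
\[
    H \;=\; \bigsqcup_{\tilde I} H_{\tilde I,\, q_H^{\tilde I}}
\]
into level-$k$ edge transversals. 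Since $M^{\zeta}(E_k, E)$ counts exactly the number of such $\tilde I$ contained in $Interior(E_k)$, the extended weight equation rel $(\zeta, w)$ yields
\[
    \overline{\mu}_w(H) \;=\; w(E) \;=\; \sum_{E_k \in E(G_k)} M^{\zeta}(E_k, E)\, w(E_k) \;=\; \sum_{\tilde I} \overline{\mu}_w(H_{\tilde I,\, q_H^{\tilde I}}).
\]
Thus $\overline{\mu}_w$ is additive under refinement to any deeper level $k$.

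For the flow-equivalence part, I would induct on the length $m$ of the flow map composition expressing $\phi\colon H \to H'$ to reduce to the single-step case where $\phi$ is (the restriction to $H$ of) a flow map or its inverse in some turn tunnel $Tun_I$ at level $j_I$. Choose $k$ deeper than $j_I$, $j_H$, and $j_{H'}$. Because $\phi$ preserves leaf segments of $Tun_I$, and each such leaf segment lies in a unique level-$k$ pre-turn sub-tunnel of $Tun_I$, $\phi$ splits canonically as a disjoint union of flow maps on those sub-tunnels. Intersecting with the refinement decompositions of $H$ and $H'$ then produces a bijection between their level-$k$ edge-transversal pieces that pairs pieces corresponding to the same level-$k$ natural edge of $G_k$; the refinement formula closes the argument via $\overline{\mu}_w(H) = \sum_\alpha \overline{\mu}_w(H_\alpha) = \sum_\beta \overline{\mu}_w(H'_\beta) = \overline{\mu}_w(H')$.

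The main obstacle is verifying that this bijection genuinely pairs level-$k$ pieces corresponding to the same level-$k$ natural edge of $G_k$. The delicate case is when the leaf segment of some $x \in H$ inside $Tun_I$ happens to be the unique pseudo-singular segment of $Tun_I$: the flow trajectory then crosses a natural vertex of $G_k$ and swaps the underlying level-$k$ edge. By the vertex-stabilization criteria at most one such exceptional $x \in H$ can occur, and since $X$ is expanding (Proposition \ref{Expanding solenoids do not support atomic masses}) this single-point exception does not affect either of the weight sums: both sides are indexed by the purely combinatorial data of the level-$k$ pre-turn counts $M^{\zeta}(E_k, E)$ and $M^{\zeta}(E_k, E')$, and the refinement formula expresses each total weight as a fixed linear combination of $w(E_k)$'s determined by that combinatorics rather than by the routing of individual points. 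Away from the exceptional point the bijection preserves level-$k$ edges exactly, so summing the $w$-values via the refinement formula yields the desired identity $\overline{\mu}_w(H) = \overline{\mu}_w(H')$.
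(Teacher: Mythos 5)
Your refinement formula is correct, and the verification that strong properness makes every level-$k$ pre-turn of an edge turn an edge turn, with the pre-turn count matching $M^{\zeta}(E_k,E)$, is sound. The paper itself dispatches the lemma in one line by a different route: it observes that two flow-equivalent edge transversals must be cross-sections of the same edge tunnel, so both receive the value $w(E)$ directly from Definition \ref{Total Weight}, and no decomposition is needed. The real content of your decomposition argument is deferred in the paper to Lemmas \ref{Total Weight Exteding to Turn Transversals} and \ref{flow compatible pre measure given by the total weights}.

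There are two genuine gaps in your flow-equivalence half. First, the induction on the length $m$ of the flow map composition is not well-founded: by Definition \ref{Flow Equivalence} the intermediate transversals $F_1,\dots,F_{m-1}$ are merely Borel subsets of turn transversals, not edge transversals, and at this stage $\overline{\mu}_w$ is defined only on edge transversals. So the inductive statement ``$\overline{\mu}_w(F_{i-1})=\overline{\mu}_w(F_i)$'' is not even meaningful for the intermediate stages; your reduction is to a stronger, not-yet-formulated claim, which is essentially the content of the two later lemmas.

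Second, and more seriously, the claim that ``at most one such exceptional $x\in H$ can occur'' is false. Fix the level $k$ and the turn tunnel $Tun_{I'}$ through which a single flow step passes. A point $x$ is exceptional whenever the level-$k$ pre-turn $\pi_k(L^x)$ of $I'$ carrying its leaf segment crosses a natural vertex of $G_k$; for each such vertex-crossing pre-turn $\tilde{I}$, \emph{every} point of the cross-section of $Tun_{\tilde I}$ is exceptional, and this is typically a Cantor set of positive total weight. The vertex stabilization criteria control pre-singularities, i.e.\ leaf segments that cross a natural vertex at \emph{every} level below some level (there is indeed at most one such segment in $Tun_{I'}$); they say nothing about the number of leaf segments crossing a vertex at one \emph{fixed} level $k$, and no single choice of $k$ eliminates these, since the maximal edge levels of the countably many leaf segments in $Tun_{I'}$ are unbounded below. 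This is precisely why the paper decomposes a turn tunnel into \emph{countably many maximal edge tunnels at varying levels} plus one pseudo-singular segment [Prop.\ \ref{Tun_I as a countable disjoint union}, Prop.\ \ref{turn transversals in terms of edge transversals}] rather than into finitely many pieces at a fixed level. Your closing appeal to ``purely combinatorial data'' would require $M^{\zeta}(E_k,E)=M^{\zeta}(E_k,E')$ for all $E_k$, which you have not established and which does not hold for arbitrary pairs of edges; the flow equivalence must be used in an essential way, either via the paper's observation that it forces $H$ and $H'$ to lie over the same edge, or via the maximal-edge-tunnel decomposition, which the flow genuinely preserves because it preserves leaf segments.
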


\begin{proof}
    Note that an edge transversal can only be flow equivalent to another edge transversal, as long as they are both cross sections of the same edge tunnel.
    The rest follows from Lemma \ref{edge transversals with consistent weights}.
\end{proof}

\vspace{4mm}

\begin{lemma}
    \label{Total Weight Exteding to Turn Transversals}
    Let $\overline{\mu}_w$ be the total weight function induced by the weight assignment $w$, on the collection of edge transversals in $X$. Then  $\overline{\mu}_w$ uniquely extends to a finite pre measure on $X$ (i.e. $\overline{\mu}_w$ uniquely extends to a countably additive non-negative real valued function on the set of turn transversals of $X$).
\end{lemma}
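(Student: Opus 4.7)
My plan is to use Proposition \ref{turn transversals in terms of edge transversals} to extend $\overline{\mu}_w$ by summing edge weights over the canonical maximal-edge-transversal decomposition of each turn transversal, and then to verify that the extension restricts to a countably additive pre measure on every sub-turn basis $\mathcal{B}_H$.

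First, I would define the extension. Given any turn transversal $H = H_{I,q}$, I apply Proposition \ref{turn transversals in terms of edge transversals} to write
\[
    H \;=\; \Bigl(\bigsqcup_{i \in \mathds{J}} H_i\Bigr) \sqcup S,
\]
where $\mathds{J}$ is a countable index set, each $H_i$ is the cross section (at a pre-image of $q$) of a maximal edge tunnel $Tun_{J_i}$ whose top edge turn $J_i$ at level $j_i$ sits in a natural edge $E_{J_i} \in E(G_{j_i})$, and $|S| \leq 1$. Define
\[
    \overline{\mu}_w(H) \;:=\; \sum_{i \in \mathds{J}} w(E_{J_i}).
\]
When $H$ is itself an edge transversal, Definition \ref{Maximal Edge Tunnels} forces the sum to collapse to a single term equal to $w(E)$, so by Lemma \ref{edge transversals with consistent weights} the new definition is consistent with the original on the generating collection of edge transversals.

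Second, I would establish convergence of this sum. The top edge turns $J_i$ appear as leaves of a tree rooted at $I$, where the children of a node $J$ are the level-$(j_J - 1)$ pre-turns of $J$; this tree is finitely branching at each node because of Lemma \ref{pre-turn_shadows_are_star_set_unions}. For a cutoff $k \in -\mathds{N}^* + K$, I would apply the extended weight equation
\[
    w(E) \;=\; \sum_{E' \in E(G_k)} M^\zeta(E',E)\, w(E'), \qquad E \in E(G_K),
\]
iteratively down the pre-turn tree of $I$: at each level $k$, the quantity $w(E^1) + w(E^2)$ — where $E^1, E^2 \in E(G_K)$ are the two natural edges whose interiors contain the two open prongs of $I$ — decomposes into a partial sum accumulating the top-edge-turn weights encountered at levels $\geq k$ plus a non-negative residual contributed by the non-edge pre-turns at level $k$. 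Since the upper bound $w(E^1) + w(E^2)$ is independent of $k$ and every term is non-negative, the partial sums are uniformly bounded, and the series converges to a finite value.

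Third, I would verify countable additivity on each sub-turn basis. By Remark \ref{Properties of Sub-Turn Bases}, $\mathcal{B}_H$ carries a binary tree structure, so any countable disjoint decomposition $H = \bigsqcup_k H^k$ with $H^k \in \mathcal{B}_H$ corresponds to a cut of the underlying pre-turn tree. The maximality clause in Proposition \ref{turn transversals in terms of edge transversals} forces each $H_i$ to lie entirely inside exactly one $H^k$ and to be a maximal edge transversal of that $H^k$ as well, so regrouping gives
\[
    \sum_k \overline{\mu}_w(H^k) \;=\; \sum_k \sum_{i:\, H_i \subseteq H^k} w(E_{J_i}) \;=\; \sum_{i \in \mathds{J}} w(E_{J_i}) \;=\; \overline{\mu}_w(H),
\]
which is precisely the pre measure condition of Remark \ref{Pre-Measures on Tree Bases}. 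Uniqueness of the extension is immediate, since the decomposition in Proposition \ref{turn transversals in terms of edge transversals} is canonical. I expect the convergence step to be the main obstacle, because it requires carefully tracking how the single-level weight equations propagate through the multi-level tree of pre-turns and relating the resulting partial sums to the concrete bound $w(E^1) + w(E^2)$.
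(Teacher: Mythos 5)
Your definition of the extension, the uniqueness claim, and the finiteness argument all match the paper's proof in substance: the paper likewise sums the weights $w(E_{J_i})$ over the maximal edge transversals supplied by Proposition \ref{turn transversals in terms of edge transversals}, and bounds the partial sums using the weight equations. The only real difference in the finiteness step is cosmetic: the paper picks a single natural edge $E$ containing one prong of $I$, uses strong properness to identify the $E$-transversal $H_{E,q}$ with the whole fiber $F_q \supseteq H_{I,q}$, and bounds the sum by $w(E)$, whereas you bound by $w(E^1)+w(E^2)$ by iterating the extended weight equations down the pre-turn tree. Both work; the paper's choice just packages the bound as a single edge weight.

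There is, however, a flawed step in your countable-additivity verification (a step the paper itself leaves implicit). You assert that for a disjoint decomposition $H = \bigsqcup_k H^k$ with $H^k \in \mathcal{B}_H$, maximality forces each maximal edge transversal $H_i$ of $H$ to lie inside exactly one $H^k$. That is false in general: by item 3 of Remark \ref{Properties of Sub-Turn Bases} the only dichotomy is disjoint-or-nested, so the cut of the pre-turn tree may pass \emph{below} the node of $H_i$, in which case $H_i$ is itself partitioned into several $H^k$'s (e.g.\ take $H$ an edge transversal and split it into two lower-level sub-turn transversals). For those $H_i$ you must additionally check that $w(E_{J_i})$ equals the sum of $\overline{\mu}_w(H^k)$ over the pieces $H^k \subseteq H_i$, which is exactly the content of the extended weight equation of $E_{J_i}$ relative to the appropriate lower level (each such $H^k$ is again an edge transversal, so its own maximal-edge decomposition is trivial). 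Since you already invoke the extended weight equations for convergence, the repair is immediate, but as written the regrouping identity does not follow from the maximality clause alone.
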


\begin{proof}
    Assume the set-up of the lemma. Let $H$ be a turn transversal of $X$. From Proposition \ref{turn transversals in terms of edge transversals}, we have that $H = (\bigsqcup_{i \in \mathds{J}} H^i) \bigsqcup S$ where, 
    \begin{enumerate}
        \item[i.] $\mathds{J}$ is a countable set,
        \item[ii.] for each $i \in \mathds{N}$, $H^i$ is a \hyperlink{maximal}{maximal} edge transversal in $H$, and,
        \item[iii.] $S$ is either empty or finite.
    \end{enumerate}
    Define $\overline{\mu}_w (H)$ to be $\sum_{i \in \mathds{J}} \overline{\mu}_w (H^i)$. The uniqueness of this sum follows from the maximality of the edge transversals involved. 
    We will now show that this sum is finite.
    Let $j \in \n$ and let $I$ be a level $j$ standard turn in $\zeta$, such that $H$ is a cross-section of $Tun_I$. 

    \vspace{4mm}
    
    If (Case a.) $I$ is contained in the interior of a natural edge $E$ of $G_j$, then $\overline{\mu}_w (H) = w(E) < \infty$. 

    \vspace{4mm}
    
    Now suppose that (Case b.) $I$ is not an edge turn. 
    Let $E$ be a natural edge of $G_j$ that contains one of $I$'s prongs. (Here, we assume that $E$ denotes the interior of the natural edge mentioned.) Now consider $Tun_E$.
    Since $I \cap E \neq \emptyset$, we can choose a point $q \in I \cap E$. Consider the cross section $\Tilde{H} := H_{E,q}$ of $Tun_E$. Since $\zeta$ is strongly proper, $H_{E,q}$ is equal to the fiber $F_q$, and thus $H_{E,q} \supseteq H_{I,q}$.
    Then it follows that, for each $i \in \mathds{J}$, $\Tilde{H}$ contains an edge transversal $\Tilde{H}^i$ that is flow equivalent to $H^i$. Now, for $i \in \mathds{J}$, let $\Tilde{E}_i$ denote a natural edge such that $\Tilde{H}^i$ is an $\Tilde{E}^i-$transversal.

    \vspace{4mm}
    
    Then, from the definition of $\overline{\mu}_w$ [Definition \ref{Total Weight}], we have that, for each $i \in \mathds{J}$, $\overline{\mu}_w (H^i) = \overline{\mu}_w (\Tilde{H}^i) = \hyperlink{Weight Assignments}{w(\Tilde{E}^i)}$. Since $\bigsqcup_{i \in \mathds{N}} \Tilde{H}^i$ is a disjoint union of edge-transversals contained in $\Tilde{H}$, we have $\sum_{i \in \mathds{N}} \overline{\mu}_w (H^i) = \sum_{i \in \mathds{N}} \overline{\mu}_w (\Tilde{H}^i) < \overline{\mu}_w (\Tilde{H}) = w(E) < \infty$.
\end{proof}

\vspace{4mm}





\vspace{4mm}

\begin{lemma}
    \label{flow compatible pre measure given by the total weights}
    Let $w$ be a weight assignment on $\zeta$. The pre-measure $\overline{\mu}_w$ on $X$ defined in Lemma \ref{Total Weight Exteding to Turn Transversals}, is a \hyperlink{Flow Compatible Pre Measures}{flow compatible pre measure} on $X$.
\end{lemma}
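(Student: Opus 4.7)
My plan is to reduce flow compatibility to the already-established Lemma \ref{total weight function is flow compatible} (flow compatibility on edge transversals) by using the canonical decomposition of turn transversals from Proposition \ref{turn transversals in terms of edge transversals}. The proof proceeds in two stages: first handle the ``in-tunnel'' case, where two turn transversals are both cross-sections of the same turn tunnel, and then bootstrap to the general case using the definition of flow equivalence as a finite composition of flow maps.

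For the base case, suppose $H = H_{I,q}$ and $H' = H_{I,q'}$ are two cross-sections of the same turn tunnel $Tun_I$. By Proposition \ref{Tun_I as a countable disjoint union}, $Tun_I$ admits a canonical decomposition $Tun_I = (\bigsqcup_{J \in TET(I)} Tun_J) \sqcup Tun^{PSing}_I$ as a countable disjoint union of maximal edge tunnels plus (at most) one pseudo singular segment. Restricting this decomposition to the fibers $F_q$ and $F_{q'}$ respectively, we obtain $H = \bigsqcup_{J \in TET(I)} H_{J, q^J} \sqcup S_q$ and $H' = \bigsqcup_{J \in TET(I)} H_{J, (q')^J} \sqcup S_{q'}$, where $q^J$ and $(q')^J$ are the unique pre-images of $q$ and $q'$ in $J$, and $S_q, S_{q'}$ are at most single points. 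For each $J \in TET(I)$, let $E_J$ denote the natural edge whose interior contains $J$; then both $H_{J,q^J}$ and $H_{J,(q')^J}$ are $E_J$-transversals, so by Definition \ref{Total Weight} each contributes $w(E_J)$ to the pre-measure. Since $X$ is expanding, the pseudo-singular remainder contributes nothing (recall that edge weights are all that enter the definition of $\overline{\mu}_w$). Summing yields $\overline{\mu}_w(H) = \sum_J w(E_J) = \overline{\mu}_w(H')$.

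For the general case, a flow map composition $\phi = \phi_m \circ \cdots \circ \phi_1$ taking a turn transversal $H$ to a turn transversal $H'$ factors through intermediate transversals $F_1, \dots, F_{m-1}$, each contained in some turn transversal, with each $\Phi_i$ being a flow map of some turn tunnel $Tun_{I_i}$ (or its inverse). The idea is to refine both $H$ and $H'$ into countable disjoint unions of maximal edge transversals (Proposition \ref{turn transversals in terms of edge transversals}) and track how $\phi$ matches them. Since each $\Phi_i$ moves points along leaf segments within a single tunnel, the composition $\phi$ is leaf-preserving. A sub-edge-transversal of $H$ flows through the chain without ever crossing the pseudo-singular locus (edge transversals lie in the non-singular part of each intermediate tunnel), so at each stage it is mapped into a (possibly finer) disjoint union of edge transversals inside the next intermediate transversal. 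By iterating this and then refining symmetrically from the $H'$ side, one obtains a common refinement $H = \bigsqcup_k A_k$ and $H' = \bigsqcup_k B_k$ with $\phi(A_k) = B_k$, each $A_k, B_k$ an edge transversal, and $A_k$ flow equivalent to $B_k$. Lemma \ref{total weight function is flow compatible} then gives $\overline{\mu}_w(A_k) = \overline{\mu}_w(B_k)$ for each $k$, and countable additivity (established in Lemma \ref{Total Weight Exteding to Turn Transversals}) gives $\overline{\mu}_w(H) = \overline{\mu}_w(H')$.

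The main obstacle will be the last refinement step: carefully checking that the flow through the intermediate $F_i$'s really does match maximal edge transversals in $H$ to (possibly subdivided) edge transversals in $H'$ in a clopen, countable way. The weight equations play a hidden role here, in that when a leaf segment passes through a fold, its ``local natural edge'' at one level may correspond to several at another level, and one needs the extended weight equations to ensure the total weight is preserved under such refinements. I expect this can be handled by first proving the base case above, then reducing each individual $\phi_i$ to a combination of (i) passing between a turn transversal and one of its sub-edge-transversals (trivially weight-preserving by Definition \ref{Total Weight}) and (ii) the in-tunnel base case, so the general compatibility drops out by induction on the length $m$ of the flow map composition.
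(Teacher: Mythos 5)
Your proof takes essentially the same route as the paper: the paper reduces flow compatibility to comparing two cross-sections $H_{I,p}$ and $H_{I,q}$ of a single turn tunnel and then matches their maximal edge transversals term by term (each pair being $E$-transversals of the same natural edge, hence both receiving $w(E)$), which is exactly your base case. Your additional discussion of how a general flow map composition reduces to this in-tunnel case via common refinements into edge transversals fills in a step the paper simply asserts with ``it is enough to show,'' so your argument is, if anything, more explicit where the paper is terse.
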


\begin{proof}

    Assume the set-up of the lemma.
    Let $H, H'$ be a pair of flow equivalent turn transversals of $X$, and let $\phi : H \longrightarrow H'$ be a flow map composition. 
    To show that $\overline{\mu}_w (H) = \overline{\mu}_w (\phi (H))$, it is enough to show that for each \hyperlink{turn in zeta}{standard turn $I$ in $\zeta$}, and for each $p,q \in I$, $ \overline{\mu}_w (\hyperlink{Turn Transversals}{H_{I,p}}) = \overline{\mu}_w (\hyperlink{Turn Transversals}{H_{I,q}}) $. Recall from Lemma \ref{Tun_I as a countable disjoint union} that, $Tun_I = Tun_I^{Edge} \bigsqcup Tun_I^{PSing}$ where $Tun_I^{PSing}$ is the union of finitely many leaf segments and $Tun_I^{Edge} = \bigsqcup_{i \in \mathds{N}} Tun_{I_i}$ where for each $i \in \mathds{N}$, $Tun_{I_i}$ is a \hyperlink{maximal}{maximal} edge tunnel in $Tun_I$. For each $i \in \mathds{N}$, we have $\overline{\mu}_w (H_{I_i , p_i}) = \overline{\mu}_w (H_{I_i , q_i})$ where $p_i$ (resp. $q_i$) is the unique pre-image of $p$ (resp. $q$) in $I_i$. And these maximal edge transversals uniquely determine the total weight function value of $H$ and $H'$ to be equal.

    
    
\end{proof}


\vspace{4mm}

\begin{proposition}
    \label{weight assignment to transverse measure}
    Each weight assignment $w$ on $\zeta$ determines a unique transverse measure $\mu_w$ on $X$ such that $\hyperlink{The Weight Assignment Induced by a Transverse Measure}{w(\mu_w)} = w$.
\end{proposition}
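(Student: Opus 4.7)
The plan is to assemble the machinery already developed earlier in this section, which was specifically engineered for this proposition. Given a weight assignment $w$ on $\zeta$, I would first invoke Definition \ref{Total Weight} to build the total weight function $\overline{\mu}_w$ on the collection of edge transversals of $X$, by setting $\overline{\mu}_w(H) := w(E)$ whenever $H$ is an $E$-transversal. Lemma \ref{total weight function is flow compatible} ensures this is well-defined on edge transversals, and Lemma \ref{Total Weight Exteding to Turn Transversals} extends $\overline{\mu}_w$ uniquely to a finite non-negative function on all turn transversals of $X$, using the countable decomposition of each turn transversal given by Proposition \ref{turn transversals in terms of edge transversals} (together with the fact that expanding solenoids carry no finite partial leaves, so the residual set $S$ is negligible).

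Next, I would verify that $\overline{\mu}_w$, restricted to the sub-turn basis $\mathcal{B}_H$ of each turn transversal $H$, is a finite pre measure in the sense of Remark \ref{Pre-Measures on Tree Bases}. Countable additivity on $\mathcal{B}_H$ follows because any disjoint decomposition of an element of $\mathcal{B}_H$ into elements of $\mathcal{B}_H$ can be refined so that each piece is a maximal edge transversal of $H$; the weight equations built into Definition \ref{Weight Assignments} (and their extended forms across arbitrary levels) are precisely what guarantees that the weights of the pieces sum to the weight of the whole. Combined with Lemma \ref{flow compatible pre measure given by the total weights}, this shows $\overline{\mu}_w$ is a flow compatible pre measure on $X$.

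With $\overline{\mu}_w$ so established, Lemma \ref{flow compatible pre measures to transverse measures} applies directly and produces a unique transverse measure $\mu_w$ on $X$ whose restriction to each $\mathcal{B}_H$ extends $\overline{\mu}_w$. To complete the proof, I would check that $w(\mu_w) = w$: this is essentially tautological, since for each natural edge $E$ in a level graph of $\zeta$ and each $E$-transversal $H$, we have $\mu_w(H) = \overline{\mu}_w(H) = w(E)$, which is exactly the edge weight $w(\mu_w)(E)$ in the sense of Definition \ref{The Weight Assignment Induced by a Transverse Measure}.

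Uniqueness of $\mu_w$ follows by reversing the construction: any transverse measure $\nu$ on $X$ with $w(\nu) = w$ must agree with $w$ on every edge transversal, hence with $\overline{\mu}_w$ on every element of every sub-turn basis $\mathcal{B}_H$, and thus with $\mu_w$ on each $\mathcal{B}_H$; the Carathéodory extension invoked in Lemma \ref{flow compatible pre measures to transverse measures} is unique, forcing $\nu = \mu_w$ on each turn transversal, and then on all transversals via flow equivalence. I expect the main technical obstacle to be the countable-additivity verification in the second paragraph: one must check that when maximal edge transversals are re-partitioned by passing to a deeper level, the weight equations (applied along the relevant chain of fold compositions) deliver exact additivity rather than just a finite upper bound. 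This is where the strong properness assumption and the extended weight equations combine to rule out any mass accumulating on pseudo-singular residues.
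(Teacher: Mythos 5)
Your proposal assembles exactly the machinery the paper builds in Section 6.2 and follows the same route as the paper's proof, which simply cites Lemma \ref{flow compatible pre measure given by the total weights} and Lemma \ref{flow compatible pre measures to transverse measures} (the chain Definition \ref{Total Weight} $\to$ Lemma \ref{Total Weight Exteding to Turn Transversals} $\to$ flow compatibility $\to$ Carath\'eodory extension is precisely the outline given in Remark \ref{Weight Assignments and Transverse Measures}). Your added verifications that $w(\mu_w)=w$ and of uniqueness are consistent with the paper's intent and do not constitute a different argument.
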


\begin{proof}
    Follows from Lemma \ref{flow compatible pre measure given by the total weights} and Lemma \ref{flow compatible pre measures to transverse measures}.
\end{proof}


    

\vspace{4mm}

This concludes the first half of the proof of Theorem \ref{TM(X) is an inverse limit}. Having established that there exists a bijection between the space of weight assignments and $TM(X)$, we will dedicate the next section to laying out descriptive definitions of pseudo weight cones and the space of weight assignments, and showing that the aforementioned bijection preserves all the necessary structures processed by the spaces involved.




\vspace{4mm}

\subsection{The Shape of $TM(X)$}
\label{sec: The Shape of TM(X)}

\vspace{4mm}

Recall that, for the given solenoid $X(\zeta)$, $TM(X)$ denotes the space of transverse measures on $X$ as established in Definition \ref{Transverse Measures}.

\vspace{4mm}

It is well understood that every space of measures has the structure of a convex cone contained in the vector space that is the corresponding space of $signed$ $measures$. This is elaborated upon more in Section \ref{sec: $TM(X)$ as a Convex Cone}. 
But first, we will construct $TM(X)$ using the collection of weight assignments  on $\zeta$ [Definition \ref{Weight Assignments}]. 
The perspective that we build in the rest of this chapter will be used directly in Section \ref{Sec: An Upperbound to the Dimension of $TM(X)$} in showing that $TM(X)$ is finite dimensional [Proposition \ref{TM(X) is finite dimensional}].

\vspace{4mm}

In order to investigate the structure of \hyperlink{transverse measure}{$TM(X)$}, we will realize weight assignments on $\zeta$ as sequences of vectors that follow the weight equations that were laid out in Definition \ref{Weight Assignments}.  

\vspace{4mm}

The mechanism we're about to build in this chapter, is very similar to the concept of vector towers developed in \cite{BedHilLus2020}.

\vspace{4mm}

\subsubsection{Inverse Limit of $PCS(\zeta)$}

\vspace{4mm}

Recall that we assume $\zeta : G_0 \xleftarrow[]{f_{-1}} G_{-1} \xleftarrow[]{f_{-2}} G_{-2} \xleftarrow[]{f_{-3}} ...$ to be a strongly proper stabilized split sequence.
Here, we define a sequence of linear maps between convex cones called the ``Pseudo-Weight Cone Sequence of $\zeta$" [Def. \ref{ACS}] whose inverse limit can be perceived as the space in which \hyperlink{Weight Assignments}{weight assignments of $\zeta$} live.



\vspace{4mm}


\begin{definition}[Pseudo-Weight Spaces and Pseudo-Weight Cones]
    \label{Pre-Weight Cones}
    \hypertarget{Pre-Weight Cones}{For each} $j \in \n$, 
    recall that the set of natural edges of $G_j$ is denoted by $E(G_j)$.
    Then the real vector space spanned by the set of labels $E(G_j)$, shall be called ``The Pseudo-Weight Space of $\zeta$ at Level $j$" and denoted ``$R_j (\zeta)$" or simply ``$R_j$" when there is no ambiguity. (i.e. $R_j (\zeta) := \mathds{R}^{E(G_j)}$.) Furthermore, for each $j \in \n$, ``The Pseudo-Weight Cone of $\zeta$ at level $j$" denoted ``$\Lambda_j (\zeta)$" (or ``$\Lambda_j$" when there is no ambiguity) is the non-negative cone of $R_j$. 
\end{definition}

\vspace{4mm}



\begin{definition}[Transition Matrices]
    \label{transition matrix}
    \hypertarget{transition matrix}{Given} two core graphs $G, H$, and a 
    composition of folds
    $f : G \longrightarrow H$, the ``Transition Matrix of $f$" is the  the matrix $M(f)$ that is determined by the following;
    \begin{enumerate}
        \item the columns of $M(f)$ are indexed by the un-directed natural edges of $G$,
        \item the rows of $M(f)$ are indexed by the un-directed natural edges of $H$, and,
        \item the entry of $M(f)$ corresponding to $i^{\text{th}}$ row and $k^{\text{th}}$ column is given by the number of connected components of $E_k \cap f^{-1} (E_i)$ where $E_k$ is the interior of the edge in $G$ indexed by $k$, and $E_i$ is the interior of the  edge in $H$ indexed by $i$. 
    \end{enumerate}
\end{definition}

\vspace{4mm}

\begin{definition}[Weight Maps and Pseudo-Weight Spaces]
    \label{Weight Maps and Pseudo Weight Spaces}
    For each $j \in -\mathds{N}$, the ``Weight Map of $\zeta$ at Level $j$" is the linear map denoted ``$T_j$" from $R_j(\zeta)$ to $R_{j+1}(\zeta)$ given by the transition matrix of the fold map $f_j$, where $R_j(\zeta)$ and $R_{j+1}(\zeta)$ are the \hyperlink{Pre-Weight Cones}{pseudo-weight spaces} at levels $j$ and $j+1$ respectively as established in Definition \ref{Pre-Weight Cones}.
\end{definition}

\vspace{4mm}

Next, we give a name to the sequence of linear maps we obtained from the definition above. Our goal here, in a sense, is expressing $TM(X)$ in terms of coordinates given by weight assignments.


\vspace{4mm}

\begin{definition}[The Pseudo-Weight Cone Sequence of $\zeta$]
    \label{ACS}
    \hypertarget{ACS}{Consider} the notations established in the above three definitions. Let,
    \begin{equation*}
        \zeta: G_0 \xleftarrow[]{f_{-1}} G_{-1} \xleftarrow[]{f_{-2}} G_{-2} \xleftarrow[]{f_{-3}} ...
    \end{equation*}
    be a strongly proper split sequence. Then the sequence,
    \begin{equation*}
        R_0 (\zeta)\xleftarrow[]{T_{-1}} R_{-1} (\zeta) \xleftarrow[]{T_{-2}} R_{-2} (\zeta) \xleftarrow[]{T_{-3}} ...
    \end{equation*}
    is called the ``Weight Map Sequence of $\zeta$" and is denoted ``$WMS(\zeta)$", whose inverse limit (in the category of modules) shall be called the ``Space of Pseudo-Weight Assignments on $\zeta$" denoted ``$R(\zeta)$".

    \vspace{4mm}
    
    Furthermore, the sequence $\Lambda_0 (\zeta)\xleftarrow[]{T_{-1}} \Lambda_{-1} (\zeta) \xleftarrow[]{T_{-2}} \Lambda_{-2} (\zeta) \xleftarrow[]{T_{-3}} ...$ is called the ``Pseudo-Weight Cone Sequence of $\zeta$" and is denoted ``$PCS(\zeta)$", whose inverse limit (in the category of topological spaces) shall be called the ``Space of Weight Assignments on $\zeta$" denoted ``$\Lambda(\zeta)$". 
\end{definition}

\vspace{4mm}


\begin{remark}[$\Lambda(\zeta)$ as a Subspace of a Vector Space]
    \label{Lambda(zeta) as a Subspace of a Vector Space}
    Note that each point in $R(\zeta)$ can be written in the form of $(w_0 , w_{-1} , w_{-2} , ...)$ where for each $j \in \n$, $w_j \in R_j (\zeta)$ and $T_{j-1} (w_{j-1}) = w_j$. Scalar multiplication and vector addition in $R(\zeta)$ is consistent with the coordinate-wise scalar multiplication and vector addition coming from $R_j$ for each $j \in \n$.

    \vspace{4mm}
    
    Furthermore, note that even though $\Lambda(\zeta)$ as an inverse limit, only manifests as a topological space, since it is a subspace of the vector space $R(\zeta)$, elements of $\Lambda(\zeta)$ still have a well defined notion of scalar multiplication and vector addition inherited from $R(\zeta)$. We will show in Lemma \ref{Lambda is a convex cone} that $\Lambda(\zeta)$ is a convex cone in $R(\zeta)$. 
\end{remark}


\vspace{4mm}



 \begin{figure}[h!]
  \centering
\center{\includegraphics[height=100mm]{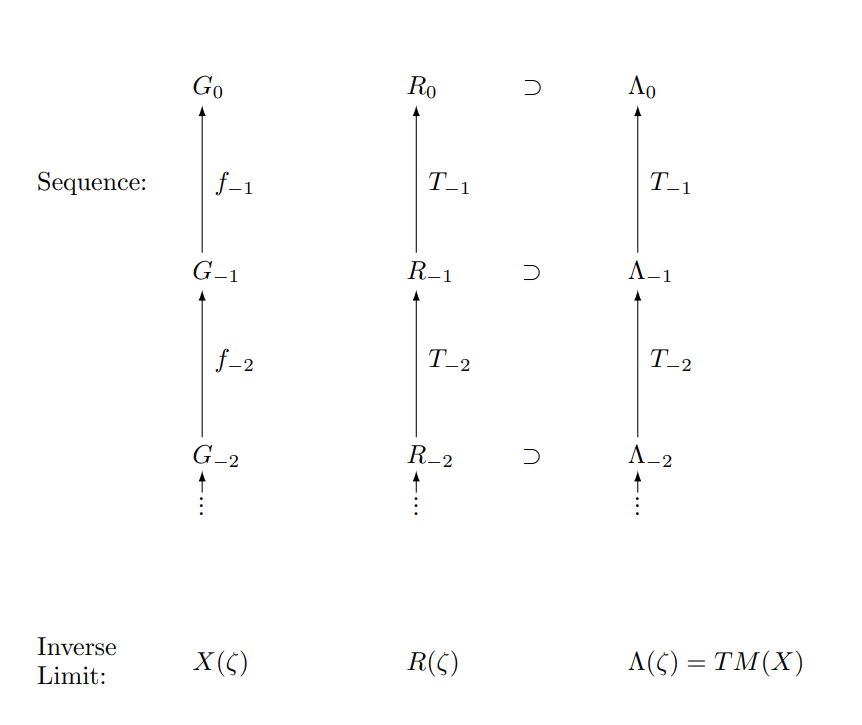}}
  \caption{The Inverse Limit of the Pseudo-Weight Cone Sequence}
\end{figure}


\vspace{4mm}

The following is merely a more descriptive version of Proposition \ref{weight assignment to transverse measure}.

\vspace{4mm}

\begin{proposition}
    \label{TM(X) as a set}
    Let $X(\zeta)$ be a strongly proper solenoid and let $\Lambda(\zeta)$ be the inverse limit of \hyperlink{ACS}{ the pseudo-weight cone sequence of $\zeta$}. $TM(X) = \Lambda(\zeta)$ as sets.
\end{proposition}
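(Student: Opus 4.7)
The plan is to combine the two correspondences already established between transverse measures and weight assignments, together with the observation that a weight assignment on $\zeta$ is literally the same data as a point of the inverse limit $\Lambda(\zeta)$.

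First I would unpack what a weight assignment is in the language of $\Lambda(\zeta)$. A function $w : \mathcal{E}(\zeta) \to [0, \infty)$ is the same information as a sequence $(w_0, w_{-1}, w_{-2}, \ldots)$ where $w_j := w|_{E(G_j)} \in \Lambda_j(\zeta)$ for each $j \in -\mathds{N}^*$. The level $j$ weight equation from Definition \ref{Weight Assignments} reads
\[
w(E_j) \;=\; \sum_{E \in E(G_{j-1})} M^{\zeta}(E, E_j)\, w(E)
\qquad (E_j \in E(G_j)),
\]
which is exactly the coordinate-wise statement $T_{j-1}(w_{j-1}) = w_j$, since $T_{j-1}$ is the linear map whose matrix in the natural edge bases is the transition matrix of $f_{j-1}$. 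So the weight equations hold for $w$ if and only if the sequence $(w_j)_{j \in -\mathds{N}^*}$ is compatible with the bonding maps of $PCS(\zeta)$, i.e. it is a point of $\Lambda(\zeta)$. This identifies the set of weight assignments on $\zeta$ with $\Lambda(\zeta)$ as sets.

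Next I would invoke the two results of Sections \ref{Sec: Edge Weights and Weight Assignments} and \ref{sec: Edge Weights and Transverse Measures}. Lemma \ref{Transverse Measures to Weight Assignments} provides a map $\Phi : TM(X) \to \Lambda(\zeta)$, $\mu \mapsto w(\mu)$, where $w(\mu)(E)$ is the common $\mu$-measure of any $E$-transversal (well-defined by Lemma \ref{edge transversals with consistent weights}). Proposition \ref{weight assignment to transverse measure} provides a map $\Psi : \Lambda(\zeta) \to TM(X)$, $w \mapsto \mu_w$, satisfying $\Phi \circ \Psi = \mathrm{id}_{\Lambda(\zeta)}$. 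The remaining step is to check $\Psi \circ \Phi = \mathrm{id}_{TM(X)}$; that is, any $\mu \in TM(X)$ is determined by its edge weights.

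For this I would use Proposition \ref{turn transversals in terms of edge transversals}: every turn transversal $H$ of $X$ decomposes as $H = (\bigsqcup_{i \in \mathds{J}} H_i) \sqcup S$, with the $H_i$ maximal edge transversals in $H$ and $S$ either empty or a single point. Because $\zeta$ is expanding, Proposition \ref{Expanding solenoids do not support atomic masses} guarantees that $\mu$ is non-atomic, so $\mu(S) = 0$ and countable additivity gives $\mu(H) = \sum_i \mu(H_i) = \sum_i w(\mu)(E_i)$, where $E_i$ is the natural edge hosting $H_i$. Thus $\mu$ and $\mu_{w(\mu)}$ agree on the sub-turn basis $\mathcal{B}_H$, and by the Carathéodory extension discussion in Remark \ref{Pre-Measures on Tree Bases} they agree on the full Borel $\sigma$-algebra of $H$. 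Flow equivalence then propagates this equality to every transversal and Definition \ref{Transverse Semi-Measures}(3) forces agreement on singularities as well. The main place to be careful is the bookkeeping that edge transversals generate enough of the $\sigma$-algebra of a turn transversal, but this is exactly the content of Property \ref{Topology of a Turn Transversal} combined with Remark \ref{Properties of Sub-Turn Bases}, so the argument reduces to assembling pieces already in hand.
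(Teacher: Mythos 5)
Your argument is correct and follows essentially the same route as the paper: the paper's proof likewise identifies $\Lambda(\zeta)$ with the set of weight assignments and cites Lemma \ref{Transverse Measures to Weight Assignments} together with Proposition \ref{weight assignment to transverse measure} for the two directions of the bijection. You are somewhat more explicit than the paper in two places — unwinding the weight equations as compatibility with the bonding maps $T_{j-1}$, and verifying $\Psi \circ \Phi = \mathrm{id}_{TM(X)}$ via Proposition \ref{turn transversals in terms of edge transversals} and non-atomicity (which the paper relegates to a preceding remark) — but these are elaborations of the same argument, not a different one.
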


\begin{proof}
    $\Lambda(\zeta)$, by design, as a set is the collection of all weight assignments on $\zeta$. We showed in Lemma \ref{Transverse Measures to Weight Assignments} that each element of $TM(X)$ induces a weight assignment. It follows from Proposition \ref{weight assignment to transverse measure} that each weight assignment on $\zeta$ uniquely defines a transverse measure $\mu_w$ that induces $w$ in return (i.e. $w(\mu_w) = w$). 
\end{proof}

\vspace{4mm}

In the next section, we will show that the aforementioned bijection between $TM(X)$ and $\Lambda(\zeta)$, preserves the necessary structures those two spaces possess [Proposition \ref{TM(X) = inv lim ACS}]. 


\vspace{4mm}

\subsubsection{$TM(X)$ as a Convex Cone}
\label{sec: $TM(X)$ as a Convex Cone}

\vspace{4mm}


Since elements of $TM(X)$ are real measures that have an inherent notion of addition and an inherent notion of scalar multiplication by non-negative reals, even when we aren't looking at a wider real vector space that may contain $TM(X)$, we can still check if $TM(X)$ satisfies the criterion of being a convex cone of a real vector space.

\vspace{4mm}

Recall that a subspace $C$ of a real vector space is a convex cone if and only if, for each $v, w \in C$ and $\alpha , \beta \in [0, \infty)$, $\alpha v + \beta w \in C$. Just by the virtue of being a space of measures, $TM(X)$ satisfies this criterion. However, since we plan to use $\Lambda(\zeta)$ [Definition \ref{ACS}] in place of $TM(X)$ [Definition \ref{Transverse Measures}] later on, we will show that, in addition to being isomorphic as sets [Proposition \ref{TM(X) as a set}], $\Lambda(\zeta)$ and $TM(X)$ are isomorphic as convex cones as well [Lemma \ref{Lambda is a convex cone} , Proposition \ref{TM(X) = inv lim ACS}]. 

\vspace{4mm}

We will start by showing that $\Lambda(\zeta)$ is a convex cone as a subspace of \hyperlink{Pre-Weight Cones}{$R(\zeta)$}.

\vspace{4mm}

\begin{lemma}
    \label{Lambda is a convex cone}
    Given a strongly proper solenoid $X$, $\Lambda(\zeta)$ is a convex cone. 
\end{lemma}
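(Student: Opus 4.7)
The plan is to verify directly that $\Lambda(\zeta)$, viewed as a subset of the real vector space $R(\zeta)$ (as in Remark \ref{Lambda(zeta) as a Subspace of a Vector Space}), is closed under non-negative scalar multiplication and vector addition, which is precisely the criterion for being a convex cone.

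First I would recall the relevant structure: a typical element of $R(\zeta)$ has the form $w = (w_0, w_{-1}, w_{-2}, \ldots)$ with $w_j \in R_j(\zeta)$ and $T_{j-1}(w_{j-1}) = w_j$ for every $j \in -\mathds{N}$, where scalar multiplication and addition are performed coordinate-wise. Such a $w$ lies in $\Lambda(\zeta)$ precisely when every coordinate $w_j$ lies in the non-negative cone $\Lambda_j(\zeta) \subset R_j(\zeta)$.

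Next I would take two elements $w = (w_j)_{j \in -\mathds{N}^*}, v = (v_j)_{j \in -\mathds{N}^*} \in \Lambda(\zeta)$, and two scalars $\alpha, \beta \in [0, \infty)$, and check the two defining closure properties of $\alpha w + \beta v$. For each $j \in -\mathds{N}^*$, the coordinate $\alpha w_j + \beta v_j$ lies in $\Lambda_j(\zeta)$ because $\Lambda_j(\zeta)$ is itself the non-negative cone in $R_j(\zeta)$ (Definition \ref{Pre-Weight Cones}) and hence trivially a convex cone. For the compatibility with the weight maps, I would invoke the linearity of each $T_{j-1}$ (they are defined via transition matrices, see Definitions \ref{transition matrix} and \ref{Weight Maps and Pseudo Weight Spaces}) to compute
\begin{equation*}
    T_{j-1}(\alpha w_{j-1} + \beta v_{j-1}) = \alpha T_{j-1}(w_{j-1}) + \beta T_{j-1}(v_{j-1}) = \alpha w_j + \beta v_j,
\end{equation*}
which shows that the sequence of coordinates is compatible with $PCS(\zeta)$, and hence $\alpha w + \beta v \in \Lambda(\zeta)$.

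There is essentially no obstacle here; the statement is a formal consequence of the fact that inverse limits of convex cones under linear maps (that preserve the cones, which is automatic since the transition matrices have non-negative entries) are themselves convex cones inside the ambient inverse-limit vector space. The only minor point worth noting explicitly in the write-up is that the transition matrices indeed have non-negative entries, so $T_j(\Lambda_j(\zeta)) \subseteq \Lambda_{j+1}(\zeta)$ for each $j \in -\mathds{N}$; this ensures the restrictions $T_j|_{\Lambda_j(\zeta)}$ in $PCS(\zeta)$ are well-defined, so that $\Lambda(\zeta)$ is a genuine inverse limit in its own right rather than merely a subset of $R(\zeta)$ cut out by non-negativity.
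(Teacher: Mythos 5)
Your proof is correct and follows essentially the same argument as the paper: verify coordinate-wise that $\alpha w_j + \beta v_j \in \Lambda_j(\zeta)$ since each $\Lambda_j(\zeta)$ is a convex cone, and use linearity of the weight maps $T_{j-1}$ to check compatibility with the inverse sequence. The additional observation about the non-negativity of the transition matrices is a harmless (and reasonable) extra remark, but the core argument is identical to the paper's.
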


\begin{proof}
    Let $\alpha , \beta \in [0, \infty)$ and $v, w \in \Lambda (\zeta)$ where $v = (v_0 , v_{-1} , v_{-2}, ...) $ and $ w = (w_0 , w_{-1} , w_{-2}, ...)$. Note that $\alpha v + \beta w = (\alpha v_0 + \beta w_0 , \alpha v_{-1} + \beta w_{-1} , \alpha v_{-2} + \beta w_{-2}, ...)$. For each $j \in \n$, since $\Lambda_j (\zeta)$ is a convex cone, we have $\alpha v_j + \beta w_j \in \Lambda_j (\zeta)$. Therefore $\alpha v + \beta w \in \Pi_{j \in \n} \Lambda_j (\zeta)$. Furthermore, for each $j \in \n$, $T_{j-1} (\alpha v_{j-1} + \beta w_{j-1}) = \alpha T_{j-1} (v_{j-1}) + \beta T_{j-1} (w_{j-1} ) = \alpha v_j + \beta w_j $. Thus $\alpha v + \beta w \in \Lambda(\zeta)$
\end{proof}

\vspace{4mm}

\begin{proposition}
    \label{TM(X) = inv lim ACS}
    Let $X(\zeta)$ be a strongly proper solenoid and consider $PCS(\zeta)$. There exists a bijection from $TM(X)$ to $\Lambda(\zeta) = \varprojlim PCS(\zeta)$ that preserves the operations of vector addition and scalar multiplication by non-negative reals.
\end{proposition}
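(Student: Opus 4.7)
The plan is to take the set-theoretic bijection already constructed in Proposition \ref{TM(X) as a set} and verify that it respects the cone operations on both sides. Let $\Phi : TM(X) \to \Lambda(\zeta)$ be the map $\mu \mapsto w(\mu)$, where $w(\mu)$ is the weight assignment induced by $\mu$ (Definition \ref{The Weight Assignment Induced by a Transverse Measure}); its inverse $\Psi : \Lambda(\zeta) \to TM(X)$ sends $w$ to the transverse measure $\mu_w$ constructed in Proposition \ref{weight assignment to transverse measure}. We already know from Lemma \ref{Lambda is a convex cone} that $\Lambda(\zeta)$ is closed under non-negative linear combinations, and $TM(X)$ is closed under such combinations simply by virtue of being a space of measures, so the statements $\Phi(\alpha \mu + \beta \nu) = \alpha \Phi(\mu) + \beta \Phi(\nu)$ and $\Psi(\alpha w + \beta w') = \alpha \Psi(w) + \beta \Psi(w')$ make sense.

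First I would check that $\Phi$ is additive and positively homogeneous. Fix $\mu, \nu \in TM(X)$ and $\alpha, \beta \in [0,\infty)$. For each natural edge $E$ of any level graph $G_j$, pick an $E$-transversal $H$; by Lemma \ref{edge transversals with consistent weights} the value of any transverse measure on such an $H$ is independent of the choice. Then by definition of $w(\cdot)$ (Definition \ref{Edge Weights}),
\begin{equation*}
w(\alpha\mu+\beta\nu)(E) = (\alpha\mu+\beta\nu)(H) = \alpha\mu(H)+\beta\nu(H) = \alpha w(\mu)(E) + \beta w(\nu)(E),
\end{equation*}
which is exactly the statement $\Phi(\alpha\mu+\beta\nu) = \alpha\Phi(\mu)+\beta\Phi(\nu)$ coordinate-wise in $\Lambda(\zeta) \subset \prod_{j} R_j(\zeta)$.

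For the converse direction, one could either invoke that $\Phi$ is a bijection (so its inverse automatically preserves the operations, since $\Phi$ does and every cone operation is determined by its inputs) or repeat the argument using the explicit construction of $\mu_w$. The cleanest version uses the constructive route: $\mu_w$ is the unique extension of the flow compatible pre-measure $\overline{\mu}_w$ on edge transversals (Lemma \ref{flow compatible pre measure given by the total weights}, Lemma \ref{flow compatible pre measures to transverse measures}), and on each edge transversal $H$ for $E$ we have $\overline{\mu}_w(H) = w(E)$. Thus $\overline{\mu}_{\alpha w + \beta w'}(H) = (\alpha w + \beta w')(E) = \alpha \overline{\mu}_w(H) + \beta \overline{\mu}_{w'}(H)$. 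Since Carathéodory extension is itself linear in the underlying pre-measure (a routine fact, since the outer measure defined from a pre-measure is positively linear), the extensions satisfy $\mu_{\alpha w + \beta w'} = \alpha \mu_w + \beta \mu_{w'}$ on every turn transversal, and then on every transversal via flow equivalence.

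I do not expect any serious obstacle here; the proposition is essentially a bookkeeping consequence of the already-established bijection together with the linearity of the map ``restrict to an edge transversal and take total mass''. The one subtle point worth checking carefully is that the extension from the pre-measure $\overline{\mu}_w$ on edge transversals to a full transverse measure (via the tree-basis version of Carathéodory in Remark \ref{Pre-Measures on Tree Bases}) is genuinely linear in $w$; this follows because, for each turn transversal $H$, the countable decomposition $H = \bigsqcup_{i \in \mathds{J}} H_i \sqcup S$ from Proposition \ref{turn transversals in terms of edge transversals} is intrinsic to $H$ (independent of $w$) and $S$ has measure zero for every non-atomic transverse measure (Proposition \ref{Expanding solenoids do not support atomic masses}), so $\mu_w(H) = \sum_{i \in \mathds{J}} w(E_i)$ depends linearly on $w$.
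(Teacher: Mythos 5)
Your proposal is correct and follows essentially the same route as the paper: both start from the already-established bijection $\mu \mapsto w(\mu)$ and verify coordinate-wise, via the edge weights $\mu(E) = \mu(H)$ for an $E$-transversal $H$, that non-negative linear combinations are preserved. Your extra remarks on the inverse direction and on the linearity of the Carath\'eodory extension are harmless but not needed, since a bijection that preserves the operations between two sets closed under them automatically has an operation-preserving inverse.
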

    
\begin{proof}
    Let $\zeta$ be the split sequence $G_0 \xrightarrow[]{f_{0}} G_{1} \xrightarrow[]{f_{1}} G_{2} \xrightarrow[]{f_{2}} ...$. For each $j \in \n$, let $d_j \in \mathds{N}$ be the number of natural edges in $G_j$, and suppose we have indexed the set of natural edges of $G_j$ as follows: $E^j_1 , E^j_2 , ... , E^j_{d_j}$. For each transverse measure $\mu$ on $X(\zeta)$, for each $j \in \n$ and for each $i \in \{1,...,d_j\}$, let the edge weight of $E^j_i$ rel $(X(\zeta),\mu)$ [Definition \ref{Edge Weights}] be denoted by $\mu(E^j_i)$. Recall from Definitions \ref{Weight Maps and Pseudo Weight Spaces} and \ref{ACS}, that for each $j \in \n$, $R_j (\zeta)$ is the real vector space generated by the basis of labels $\{ E^j_i : i = 1, ... , d_j \}$ and $\Lambda_j (\zeta)$ is the non-negative cone of $R_j (\zeta)$. 

    \vspace{4mm}
    
    Now define $h : TM(X) \longrightarrow \Lambda(\zeta)$ as follows. For each $\mu \in TM(X)$, let $h(\mu) := (h_0 (\mu) , h_{-1} (\mu) , h_{-2} (\mu) , ... ) \in \Pi_{j \in \n} \Lambda_j (\zeta)$ such that, for each $j \in \n$, $h_j (\mu) := ( \mu (E^j_1) , \mu (E^j_2) , ... , \mu (E^j_{d_j}))$. Since edge-weights rel $\mu$ satisfy the weight equations [Lemma \ref{Transverse Measures to Weight Assignments}], $h(\mu) \in \Lambda (\zeta)$. 

    \vspace{4mm}

    From Lemma \ref{Transverse Measures to Weight Assignments} and Proposition \ref{weight assignment to transverse measure}, we have that $h$ is a bijection. To show that $h$ preserves scalar multiplication, choose $\alpha \in [0, \infty)$. (We do not have to consider negative multiples as a negative multiple of a measure is no longer a measure, and thus would no longer be an element of $TM(X)$.) Now consider the transverse measure $\alpha \mu \in TM(X)$. $h (\alpha \mu) = ( h_0 (\alpha \mu) , h_{-1} (\alpha \mu) , h_{-2} ( \alpha \mu) , ... )$  such that, for each $j \in \n$, $h_j ( \alpha \mu) := (\alpha \mu (E^j_1) , \alpha \mu (E^j_2) , ... , \alpha \mu (E^j_{d_j}))$. On the other hand, $\alpha h (\mu) = \alpha (h_0 (\mu) , h_{-1} (\mu) , h_{-2} (\mu) , ... ) = (\alpha h_0 (\mu) , \alpha h_{-1} (\mu) , \alpha h_{-2} (\mu) , ... ) $ according to the coordinate-wise scalar multiplication in $R(\zeta)$. 

    \vspace{4mm}

    Now to show that $h$ preserves addition, choose $m , m' \in TM(X)$. 
    Note that, 
    \begin{align*}
        h (\mu) + h (\mu') &= (h_0 (\mu) , h_{-1} (\mu) , h_{-2} (\mu) , ... )  + (h_0 (\mu') , h_{-1} (\mu') , h_{-2} (\mu') , ... ) \\
        &= (h_0 (\mu) + h_0 (\mu') , h_{-1} (\mu) + h_{-1} (\mu') , h_{-2} (\mu) + h_{-2} (\mu') , ... ),
    \end{align*}
        
    while $h (\mu + \mu') = (h_0 (\mu + \mu') , h_{-1} (\mu + \mu') , h_{-2} (\mu + \mu') , ... )$ where for each $j \in \n$ and for each $i \in \{1,...,d_j\}$, $(\mu + \mu') (E^j_i) = \mu (E^j_i) + \mu' (E^j_i) $.
    
\end{proof}

\vspace{4mm}


This concludes the proof of Theorem \ref{TM(X) is an inverse limit}, and thus from here on out, we shall use $TM(X)$ and $\Lambda(\zeta)$ interchangeably.
This perspective results in the following immediate consequence.  

\vspace{4mm}

\begin{corollary}
    \label{coarse ub for dim TM(X)}
    Let $n \in \mathds{N}$ and let $\zeta: G_0 \xleftarrow[]{f_{-1}} G_{-1} \xleftarrow[]{f_{-2}} G_{-2} \xleftarrow[]{f_{-3}} ...$ be a strongly proper expanding split sequence of rank $n$. Then, 
    \begin{equation*}
        dim(TM(X(\zeta))) \leq 3(n-1).
    \end{equation*}
\end{corollary}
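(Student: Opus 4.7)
The plan is to combine Theorem~\ref{thm1} with the edge-count bound of Lemma~\ref{upperbound to natural vertices}. By Theorem~\ref{thm1}, $TM(X)$ sits inside the ambient vector space $R(\zeta) = \varprojlim WMS(\zeta)$ as the convex cone $\Lambda(\zeta)$, so it suffices to show $\dim R(\zeta) \leq 3(n-1)$. For each $j \in \n$, $R_j(\zeta) = \mathds{R}^{E(G_j)}$ has dimension $d_j = |E(G_j)|$, and Lemma~\ref{upperbound to natural vertices} gives $d_j \leq 3(n-1)$ because every level graph of $\zeta$ is a rank-$n$ core graph; the task is therefore to collapse the inverse limit to a single finite-dimensional ``stable layer'' inside some $R_K(\zeta)$.

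For each $j \in \n$ I would define the stable image
\begin{equation*}
    W_j := \bigcap_{k \leq j} T^k_j\bigl(R_k(\zeta)\bigr) \;\subseteq\; R_j(\zeta),
\end{equation*}
where $T^k_j := T_{j-1} \circ \cdots \circ T_k$ (with $T^j_j = \mathrm{id}$). Since the subspaces $T^k_j(R_k)$ form a decreasing chain inside the finite-dimensional $R_j$, this intersection is achieved at some finite stage. A direct linear-algebra check gives $T^{j'}_j(W_{j'}) = W_j$ for every $j' \leq j$, so $\dim W_j$ is non-decreasing as $j \to -\infty$; being bounded above by $3(n-1)$, it stabilizes at some common value $d_\infty$ for all $j \leq K$, where $K \in \n$ is a fixed integer at which stabilization has occurred.

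I would then show that the projection $\pi_K : R(\zeta) \to R_K(\zeta)$ is a linear bijection onto $W_K$. Surjectivity onto $W_K$ follows from the definition of $W_K$ as an eventually-constant intersection, together with the Mittag-Leffler property for finite-dimensional inverse systems. For injectivity, suppose $(w_j) \in R(\zeta)$ satisfies $w_K = 0$. Forward composition gives $w_j = T^K_j(w_K) = 0$ for all $j \geq K$. For $j < K$, the coordinate $w_j$ automatically lies in $W_j$ (being in the image of every $T^{k}_j$ with $k \leq j$), and the restricted map $T^j_K|_{W_j} : W_j \to W_K$ is a linear surjection between vector spaces of the same dimension $d_\infty$, hence an isomorphism, so $T^j_K(w_j) = 0$ forces $w_j = 0$. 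This yields $R(\zeta) \cong W_K$ as vector spaces, whence $\dim TM(X) \leq \dim R(\zeta) = \dim W_K \leq d_K \leq 3(n-1)$.

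The main obstacle will be the Mittag-Leffler-style stabilization: verifying that the dimensions $\dim W_j$ are monotone and that the restricted maps $T^{j'}_j|_{W_{j'}}$ become isomorphisms at sufficiently negative levels. Once this is in hand, the passage from $R(\zeta)$ to the subcone $\Lambda(\zeta) \subseteq R(\zeta)$ costs nothing, since the dimension of a convex cone is by definition the dimension of its linear span, and the span of $\Lambda(\zeta)$ inherits the bound as a subspace of $R(\zeta)$.
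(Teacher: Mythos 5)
Your proof is correct, and the overall strategy coincides with the paper's: bound $\dim R_j(\zeta) = |E(G_j)|$ by $3(n-1)$ for every $j \in \n$ via Lemma \ref{upperbound to natural vertices}, then argue that the dimension of the inverse limit cannot exceed the level-wise bound. Where you differ is in how the second step is executed. The paper simply cites Proposition \ref{TM(X) is finite dimensional}, which argues by contradiction: $d+1$ linearly independent elements of $TM(X)=\Omega(\zeta)$ would project to $d+1$ linearly independent vectors at some level $J$, hence at every level below $J$, contradicting the existence of a level whose ambient dimension is the smallest recurring dimension $d$. You instead run a self-contained Mittag--Leffler argument on the stable images $W_j = \bigcap_{k \leq j} T^k_j(R_k(\zeta))$, showing that the projection $R(\zeta) \to W_K$ becomes an isomorphism once $\dim W_j$ has stabilized. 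Your route carries slightly more machinery but also buys more: it identifies $R(\zeta)$ with a concrete subspace $W_K \subseteq R_K(\zeta)$ rather than merely bounding its dimension, and it needs no ``recurring dimension'' bookkeeping, since any single level past stabilization suffices. The step you flag as the main obstacle --- monotonicity of $\dim W_j$ and surjectivity of the restricted maps $T^{j'}_j|_{W_{j'}}$ --- is indeed the crux, but it is the standard Mittag--Leffler property for inverse systems of finite-dimensional vector spaces (a decreasing chain of nonempty affine subspaces of a finite-dimensional space stabilizes and so has nonempty intersection), so there is no gap; the final passage from $R(\zeta)$ to the cone $\Lambda(\zeta) = TM(X)$ is immediate since the dimension of a cone is that of its linear span.
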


\vspace{4mm}

\begin{proof}
    Assume the set-up of the corollary. For each $j \in \n$, $| E(G_j) |$ and therefore the dimension of $\mathds{R}^{E(G_j)}$ is bounded above by $3(n-1)$ [Lemma \ref{upperbound to natural vertices}]. The rest follows from the fact that $TM(X) = \Lambda(\zeta)$ is bounded above by the minimum recurring dimension in $\{ \mathds{R}^{E(G_j)} \}_{j \in \n}$.
\end{proof}

\vspace{4mm}

In the next chapter, we will provide a finer upperbound to the dimension of $TM(X)$ and establish a criterion under which a minimal solenoid is uniquely ergodic.

\vspace{4mm}


\vspace{4mm}




\vspace{4mm}

\begin{center}
    \section{A Criterion for Unique Ergodicity} \label{Ch: A Criterion for Unique Ergodcity}
\end{center}

\vspace{4mm}

\h Here, we will continue to assume that $X = X(\zeta)$ the solenoid induced by a \hyperlink{Stabilized Split Sequence}{stabilized}, \hyperlink{Strongly Proper Split Sequences}{strongly proper} split sequence 
\begin{equation*}
    \zeta  : G_0 \xleftarrow[]{f_{-1}} G_{-1} \xleftarrow[]{f_{-2}} G_{-2} \xleftarrow[]{f_{-3}} ...
\end{equation*}
and that, $X$ is  and \hyperlink{def: Expanding Solenoids}{expanding}.

\vspace{4mm}

This chapter is dedicated to proving the following theorem.

\vspace{4mm}

\begin{thm}
    Semi-Normal Solenoids are Uniquely Ergodic.
\end{thm}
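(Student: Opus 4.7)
The plan is to combine the inverse-limit identification $TM(X) = \Lambda(\zeta)$ from Theorem \ref{TM(X) is an inverse limit} with a Perron--Frobenius / Birkhoff--Hopf contraction argument powered by the recurring positive matrix $M$ supplied by semi-normality. Unique ergodicity of $X$ means $\dim TM(X) = 1$, which via the isomorphism of convex cones in Proposition \ref{TM(X) = inv lim ACS} is equivalent to showing $\Lambda(\zeta)$ is a single ray. The standing hypotheses are already in place: semi-normality implies full mingling, which implies expanding by Remark \ref{FM implies Expanding}, and also yields topological minimality through Proposition \ref{The Mingling Lemma} as a free bonus, so all earlier machinery in the chapter applies.

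First I would restrict attention to the subsequence of levels $\{J_k\}_{k \in \mathds{N}}$ furnished by semi-normality. By hypothesis, the cones $\Lambda_{J_k}(\zeta)$ are all canonically identified with the standard non-negative cone $\mathds{R}^d_{\geq 0}$, and for every $k$ the weight map from level $J_{2k}$ up to level $J_{2k-1}$ (obtained by composing the relevant $T_j$'s across the block) is exactly multiplication by the positive $d \times d$ matrix $M$. For $w = (w_0, w_{-1}, w_{-2}, \ldots) \in \Lambda(\zeta)$, telescoping across consecutive blocks yields
\begin{equation*}
    w_{J_1} \;=\; M^{\,k-1}\, w_{J_{2k-1}} \;\in\; M^{\,k-1}\,\mathds{R}^d_{\geq 0}
\end{equation*}
for every $k \geq 1$, so the image of $\Lambda(\zeta)$ in $\Lambda_{J_1}$ under the projection $\pi_{J_1}$ lies inside the nested intersection $\bigcap_{n \geq 0} M^n\,\mathds{R}^d_{\geq 0}$. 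Now Birkhoff--Hopf applies: a strictly positive matrix acts as a strict contraction on the projectivized non-negative cone $\mathds{R}^d_{\geq 0}/\mathds{R}_{>0}$ in the Hilbert metric, with contraction coefficient $\tau(M) < 1$ depending only on the ratios of entries of $M$. Iterating collapses projective diameters to zero, so the nested intersection is a single ray, namely the Perron--Frobenius eigenray of $M$. Hence $\pi_{J_k}(\Lambda(\zeta))$ is at most one-dimensional for every $k$.

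The main obstacle will be bridging from ``each projection $\pi_{J_k}(\Lambda(\zeta))$ is one-dimensional'' to ``$\Lambda(\zeta)$ itself is one-dimensional'', since the intermediate transition maps $T_j$ between consecutive $J_k$ are not assumed positive and need not be injective. To close this gap I would show that $\pi_{J_1}: \Lambda(\zeta) \to \Lambda_{J_1}$ is already injective: for $j \geq J_1$ this is immediate because $w_j$ is produced by the forward composition of weight maps applied to $w_{J_1}$; for $j < J_1$, given two weight assignments $w, w'$ agreeing at level $J_1$, I pick $k$ with $j \geq J_{2k}$ and observe that $w_{J_{2k}}, w'_{J_{2k}}$ both lie on the one-dimensional cone $\pi_{J_{2k}}(\Lambda(\zeta))$ and map under $M^{k-1}$ (up to the intermediate factors) to the same vector $w_{J_1} = w'_{J_1}$, so the proportionality constant between them is forced to be $1$, hence $w_{J_{2k}} = w'_{J_{2k}}$, and then $w_j = w'_j$ follows by going forward. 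Therefore $\dim \Lambda(\zeta) \leq 1$, and since any non-zero weight assignment corresponds to a non-trivial transverse measure, $\dim TM(X) = 1$, which is precisely unique ergodicity.
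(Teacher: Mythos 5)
Your overall strategy is the paper's: isolate the recurring positive matrix supplied by semi-normality, use projective contraction to collapse the relevant weight cones to rays, and then pass from the finite levels to the inverse limit. But there is a concrete error in your second paragraph. The telescoping identity $w_{J_1} = M^{k-1} w_{J_{2k-1}}$ is false in general: semi-normality only asserts that each block map $T^{J_{2k}}_{J_{2k-1}}$ equals $M$, while the maps $T^{J_{2k+1}}_{J_{2k}}$ \emph{between} consecutive blocks are arbitrary non-negative transition matrices $U_k$, not the identity. The correct identity is $w_{J_1} = M\,U_1\,M\,U_2 \cdots U_{k-1}\,M\,(w_{J_{2k}})$, so the containment $\pi_{J_1}(\Lambda(\zeta)) \subseteq \bigcap_{n} M^n\bigl(\mathds{R}^d_{\geq 0}\bigr)$ does not follow, and the clean Perron--Frobenius statement ``the intersection is the eigenray of $M$'' is not available. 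The repair is standard but must be said: one needs that the interleaved non-negative matrices do not expand the projective metric (or at least do not undo the definite contraction produced by each application of $M$), so that the nested images $M U_1 M \cdots U_{k-1} M(\mathds{R}^d_{\geq 0})$ still have projective diameter tending to $0$. This is precisely the content of the paper's Lemma \ref{Veech Lemma}, which is stated for products of exactly this interleaved form $B U_1 B U_2 \cdots B U_i B(\Lambda^m)$; as written, your argument silently assumes the $U_k$ away.

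The rest of your argument is a legitimate variant of the paper's. Where the paper closes the gap between ``each $\Omega_{J_k}(\zeta)$ is one-dimensional'' and ``$\Omega(\zeta)$ is one-dimensional'' by citing Proposition \ref{TM(X) is finite dimensional} (the smallest recurring dimension of the weight cones bounds $\dim TM(X)$), you instead prove injectivity of $\pi_{J_1}$ on $\Lambda(\zeta)$ directly. That works, with two small points you should make explicit: (i) the one-dimensionality you establish is at the \emph{target} levels $J_{2k-1}$ of the positive blocks, and you need $\Omega_{J_{2k}} = T^{J_{2k+1}}_{J_{2k}}(\Omega_{J_{2k+1}})$ to transport it to the even-indexed levels you use; and (ii) in the case $w_{J_1}=0$ you need that no transition matrix of a fold composition kills a non-zero non-negative vector (every column has a positive entry, since every natural edge maps over some natural edge), so that $w_{J_1}=0$ forces $w=0$. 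With the contraction step corrected as above, your proof is sound and is essentially the paper's argument with a hand-rolled substitute for Proposition \ref{TM(X) is finite dimensional}.
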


\vspace{4mm}

We will define the criterion of \hyperlink{Semi-Noramal Solenoids}{semi-normality} in Subsection \ref{sec: The Criterion of Semi-Normality}.

\vspace{4mm}

In general, a system of the form (space, structure, a structure-preserving map or flow) is called uniquely ergodic if the system can be endowed with only one measure (or transverse measure when appropriate) up to scaling. Further details on ergodic systems, discussed in a general context, can be found in \cite{walters1982introduction}. Here, we will lay out what unique ergodicity means in the context of solenoids. 

\vspace{4mm}

\begin{definition}[Uniquely Ergodic Solenoids]
    \label{def: Uniquely Ergodic Solenoids}
    \hypertarget{def: Uniquely Ergodic Solenoids}{Given} a proper solenoid $X$, we say that ``$X$ is uniquely ergodic" if $X$ has only one \hyperlink{transverse measure}{transverse measure} up to scaling. This is equivalent to saying that \hyperlink{def: TM(X)}{$TM(X)$} is one dimensional.
\end{definition}

\vspace{4mm}

Section \ref{sec: Preliminary Concepts and a Summary of The Main Argument} lays out all the necessary terminology [Subsections \ref{sec: The Sequence of Weight Cones} and \ref{sec: The Criterion of Semi-Normality}] alongside an extended summary of the main proof [Subsection \ref{sec: A Summary of the Main Argument}]. Section \ref{sec: Supporting Arguments} provides the supporting arguments to the main result, and Section \ref{sec: Semi-Normal Solenoids are Uniquely Ergodic} provides the formal proof of the main result. 

\vspace{4mm}

\subsection{Preliminary Concepts and a Summary of The Main Argument}
\label{sec: Preliminary Concepts and a Summary of The Main Argument}

\vspace{4mm}

Recall that for any expanding solenoid $X$, $TM(X)$ can be thought of as the inverse limit of a sequence of linear maps on convex cones [Definition \ref{ACS}, Proposition \ref{TM(X) = inv lim ACS}].

\vspace{4mm}

Here, to show that semi-normal solenoids are uniquely ergodic [Theorem \ref{main thm}], we will adopt a similar methodology to how W. Veech gave a criterion for unique ergodicity of Interval Exchange Transformations in his 1978 paper \cite{Veech1978}. We use a lemma by W. Veech that appeared in the aforementioned paper, which we will lay out alongside a detailed proof, in Subsection \ref{sec: W. Veech's Lemma}.

\vspace{4mm}

Subsection \ref{sec: A Summary of the Main Argument} will provide an outline of the argument in this chapter. But first, we establish some preliminary concepts.

\vspace{4mm}


\subsubsection{The Sequence of Weight Cones}
\label{sec: The Sequence of Weight Cones}

\vspace{4mm}

We start with a necessary refinement of the \hyperlink{ACS}{pre-weight cone sequence} of $\zeta$.

\vspace{4mm}

\begin{notation}[Composition Maps]
    \label{Composition Maps}
    \hypertarget{Composition Maps}{For} each $j \in \n$, let $S_j$ denote a set and let $T_{j-1} : S_{j-1} \longrightarrow S_j$ denote a function. Consider the sequence $S_0 \xleftarrow[]{T_{-1}} S_{-1} \xleftarrow[]{T_{-2}} S_{-2} \xleftarrow[]{T_{-3}} ...$. In this context, for each $i,j \in \n$ such that $i < j$, the composition $T_{j-1} \circ ... \circ T_i : S_i \longrightarrow S_j$ shall be denoted ``$T^i_j$".
\end{notation}

\vspace{4mm}

 \begin{figure}[htpb]
  \centering
\center{\includegraphics[height=90mm]{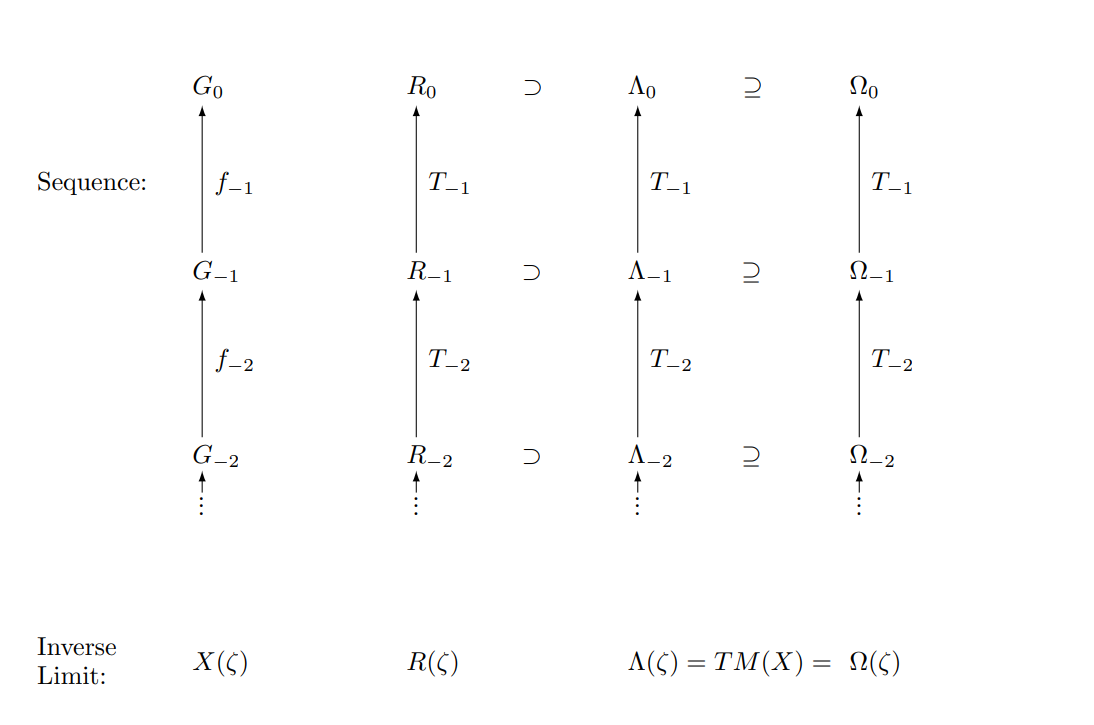}}
  \caption{The Inverse Limit of the Weight Cone Sequence}
  \label{The Inverse Limit of the Weight Cone Sequence}
\end{figure}

\vspace{4mm}

\begin{definition}[Weight Cones]
    \label{Weight Cones}
    \hypertarget{Weight Cones}{Let} $\zeta$ be a strongly proper split sequence and consider its $PCS(\zeta): \Lambda_0 (\zeta)\xleftarrow[]{T_{-1}} \Lambda_{-1} (\zeta) \xleftarrow[]{T_{-2}} \Lambda_{-2} (\zeta) \xleftarrow[]{T_{-3}} ...$ [Definition  \ref{ACS}]. 
    For each $K \in \n$ the ``Weight Cone of $\zeta$ at Level $K$" is given by,
    \begin{equation*}
        \Omega_K (\zeta) := \bigcap_{i \in -\mathds{N} + K} T^i_{K} (\Lambda_i (\zeta)).
    \end{equation*}
    We shall refer to $\Omega_K (\zeta)$ as ``$\Omega_K$" when there is no ambiguity. 
\end{definition}

\vspace{4mm}

\begin{remark}[$TM(X) = \Omega (\zeta)$]
    \label{TM(X) = Omega(zeta)}
    \hypertarget{TM(X) = Omega(zeta)}{Assume} the set-up of the above definition. Note that for each $j \in -\mathds{N}$, $T_j (\Omega_j) = \Omega_{j+1}$, and $\Omega_j = $ the projection of $\Lambda(\zeta)$ in $\Lambda_j (\zeta)$. Therefore, the inverse limit of the sequence of weight cones  $\Omega_0 (\zeta)\xleftarrow[]{T_{-1}} \Omega_{-1} (\zeta) \xleftarrow[]{T_{-2}} \Omega_{-2} (\zeta) \xleftarrow[]{T_{-3}} ...$, denoted ``$\Omega(\zeta)$" is equal to $\Lambda(\zeta) = TM(X)$. This sequence shall be referred to as the ``Weight Cone Sequence of $\zeta$" and shall be denoted ``$WCS(\zeta)$". 
\end{remark}

\vspace{4mm}

In Subsection \ref{Sec: An Upperbound to the Dimension of $TM(X)$}, we lay out an upperbound for the dimension of $TM(X)$ using the weight cone sequence defined above.

\vspace{4mm}

\subsubsection{The Criterion of Semi-Normality}
\label{sec: The Criterion of Semi-Normality}

\vspace{4mm}

Now, we shall lay out the criterion of semi-normality, first for a sequence of linear maps [Definition \ref{Semi-Normal Sequences of Linear Maps}], then for a strongly proper split sequence [Definition \ref{Semi-Noramal Solenoids}].

\vspace{4mm}

\begin{definition}[Semi-Normal Sequences of Linear Maps]
    \label{Semi-Normal Sequences of Linear Maps}
    \hypertarget{Semi-Normal Sequences of Linear Maps}{Let} $\{ d_j \}_{j \in \n}$ be a sequence of positive integers that has at least one constant subsequence.
    Furthermore, let $\mathcal{L} : \mathds{R}^{d_0} \xleftarrow[]{T_{-1}} \mathds{R}^{d_{-1}} \xleftarrow[]{T_{-2}} \mathds{R}^{d_{-2}} \xleftarrow[]{T_{-3}} ...$ be a sequence of linear maps. 
    Given $d \in \mathds{N}$, and a linear map $L : \mathds{R}^{d} \longrightarrow \mathds{R}^{d}$, we say that ``$L$ Recurs in $\mathcal{L}$" if,
    there exists a strictly decreasing sequence $\{J_k\}_{k \in \mathds{N}}$ of non-positive integers such that, for each $k \in \mathds{N}$,
    \begin{enumerate}
        \item $d_{J_k} = d$, and,
        \item $L = T^{J_{2k}}_{J_{2k-1}}$.
    \end{enumerate}
    Furthermore, \hypertarget{positive linear map}{we} say that a linear map is positive, if there exists a positive matrix (i.e. a matrix with all positive entries) representing the linear map. 
    We say that ``$\mathcal{L}$ is a Semi-Normal Sequence of Linear Maps" if there exists a recurring positive linear map in $\mathcal{L}$.
\end{definition}


\vspace{4mm}

It follows from Lemma \ref{upperbound to natural vertices}, that for each strongly proper split sequence $\zeta$, its weight map sequence $R_0 (\zeta)\xleftarrow[]{T_{-1}} R_{-1} (\zeta) \xleftarrow[]{T_{-2}} R_{-2} (\zeta) \xleftarrow[]{T_{-3}} ...$ has at least one \hyperlink{Lowest Recurring Dimension}{recurring dimension} (as laid out more precisely in Remark \ref{WMS(zeta) has a Smallest Recurring Dimension}).


\vspace{4mm}

\begin{definition}[Semi-Normal Split Sequences and Semi-Normal Solenoids]
    \label{Semi-Noramal Solenoids}
    \hypertarget{Semi-Noramal Solenoids}{Given} a strongly proper, expanding split sequence $\zeta$, we say that ``$\zeta$ is a Semi-Normal Split Sequence" and that ``$X(\zeta)$ is a Semi-Normal Solenoid" if $\zeta$'s weight map sequence is a semi-normal sequence of linear maps.
\end{definition}

\vspace{4mm}

\subsubsection{A Summary of the Main Argument}
\label{sec: A Summary of the Main Argument}

\vspace{4mm}

Having laid out all the necessary terminology, here, we provide an overview of the main argument that is formally laid out as the proof of Theorem \ref{main thm} in Section \ref{sec: Semi-Normal Solenoids are Uniquely Ergodic}. The supporting arguments linked here are provided in Section \ref{sec: Supporting Arguments}.

\vspace{4mm}

\hypertarget{A Summary of the Main Argument}{Suppose} that $\zeta$ is a semi-normal split sequence. We aim to prove that $X(\zeta)$ is uniquely ergodic, by showing that $TM(X) = \hyperlink{Weight Cones}{\Omega (\zeta)}$ is $1-$dimensional.
    \begin{enumerate}
    \item A Lemma by W. Veech (laid out in Subsection \ref{sec: W. Veech's Lemma}, and originally from \cite{Veech1978}), implies that, given $m \in \mathds{N}$, and a sequence of linear maps, $\mathcal{L} : \mathds{R}^m \xleftarrow[]{T_{-1}} \mathds{R}^m \xleftarrow[]{T_{-2}} \mathds{R}^m \xleftarrow[]{T_{-3}} ...$, where $\Lambda^m$ denotes the non-negative cone of $\mathds{R}^m$, if there exists a linear map that recurs in $\mathcal{L}$, then $\bigcap_{i \in -\mathds{N}} T^i_{0} (\Lambda^m) \subset \Lambda^m$ is one-dimensional. 
    \item We will show in Proposition \ref{TM(X) is finite dimensional}, that the smallest recurring dimension of the weight cones (laid out more precisely in Subsection \ref{Sec: An Upperbound to the Dimension of $TM(X)$}) is an upper-bound for the dimension of $TM(X)$.
    \item Since $\zeta$ is semi-normal, it follows from item 1, that there exists a strictly decreasing sequence $\{J_k \}_{k \in \mathds{N}}$ of non-positive numbers such that, for each $k \in \mathds{N}$, $\Omega_{J_k} (\zeta) \subset R_{J_k} (\zeta)$ is $1-$dimensional (as laid out more precisely in the proof of Theorem \ref{main thm}). This combined with item 2, imply that the dimension of $\Omega (\zeta) = TM(X)$ is bounded above by $1$. 
    \end{enumerate}


\vspace{4mm}

\subsection{Supporting Arguments}
\label{sec: Supporting Arguments}

\vspace{4mm}

\subsubsection{W. Veech's Lemma on One Dimensional Weight Cones}
\label{sec: W. Veech's Lemma}

\vspace{4mm}

Here, we lay out the lemma by Veech (that was mentioned in item 1. of the outline in section \ref{sec: A Summary of the Main Argument}), and provide a detailed proof which is a slightly extended version of Veech's original proof.
First, we establish some preliminary definitions.

\vspace{4mm}

\begin{definition}[$\delta$-boundedness]
    \label{delta boundedness}
    \hypertarget{delta boundedness}{Let} $\delta > 0$, $m \in \mathds{N}$, and let $M$ be an $m \times m $ matrix that is positive. For each $i,j \in \{ 1,...,m \}$, let $m_{ij}$ denote the entry in $M$ that's in $i^{th}$ row and $j^{th}$ column. We say that $M$ is ``$\delta-$bounded" if for each $i, j, k \in \{ 1,...,m \}$, we have $m_{ij} \leq \delta m_{ik}$ and $m_{ij} \leq \delta m_{kj}$.
\end{definition}

\vspace{4mm}



The main tool used in this subsection is the following inequality that is borrowed from \cite{Veech1978} alongside with Lemma \ref{Veech Lemma}.  

\vspace{4mm}

\begin{remark}[$\delta-$Boundedness Inequality]
    \label{delta-Boundedness Inequality}
    Let $m \in \mathds{N}$, consider the Euclidean space $\mathds{R}^m$, and let $\Lambda^m$ denote the non-negative cone of $\mathds{R}^m$.
    Let $d$ denote the Euclidean metric on $\mathds{R}^m$. Fore each $x,y \in \mathds{R}^m - \{0\}$, let 
    $D(x,y) := d( \frac{x}{|x|} , \frac{y}{|y|} )$ where $|.|$ denotes the Euclidean norm on $\mathds{R}^m$. \hypertarget{Projective Distance}{We} call $D(x,y)$ the ``Projective Distance between $x$ and $y$". Now let $B$ be an $m \times m$ matrix that is $\delta-$bounded for some $\delta > 0$. Let $u , v \in \Lambda^m$. Then the following inequality is true.
    \begin{equation*}
        D( Bu , Bv ) \leq D(u,v) + ln \left( \frac{1 + \delta e^{-D(u,v)}}{1 + \delta e^{D(u,v)}} \right)
    \end{equation*}
\end{remark}

\vspace{4mm}

A proof of the following lemma appears in \cite{Veech1978}, but for the reader's convenience, we shall include a slightly more descriptive version of it here.

\vspace{4mm}

\begin{lemma}
    \label{Veech Lemma}
    Let $\delta > 0$, $m \in \mathds{N}$, let $\Lambda^m$ be the non-negative cone in $\mathds{R}^m$ and let $B$ be an $m \times m$ matrix that is $\delta-$bounded. For each $i \in \mathds{N}$ let $U_i$ be a non-negative $m \times m$ matrix. Then $\bigcap_{i=1}^{\infty} B U_1 B U_2 B ... B U_i B (\Lambda^m) =: \Omega_0$ is one dimensional.
\end{lemma}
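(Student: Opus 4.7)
The plan is to show that $\Omega_0$ has zero projective $D$-diameter, which forces it to be contained in a single ray of $\Lambda^m$ and hence to be one-dimensional. The key tool is the $\delta$-boundedness inequality of Remark \ref{delta-Boundedness Inequality}, which asserts that each application of $B$ strictly contracts projective distance: $D(Bx,By) \leq f(D(x,y))$ with $f(d) < d$ for every $d > 0$.

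A first, uniform, bound comes directly from the $\delta$-boundedness of $B$. Every nonzero vector in $B(\Lambda^m)$ has its coordinates pairwise comparable within a factor of $\delta$, so $B(\Lambda^m) \cap S^{m-1}$ is a compact subset of the interior of the positive cell of $S^{m-1}$, with finite $D$-diameter $R = R(\delta)$. Since $P_i := B U_1 B U_2 \cdots B U_i B$ both begins and ends with $B$, we have $P_i(\Lambda^m) \subseteq B(\Lambda^m)$ for every $i$, so $\Omega_0 \subseteq B(\Lambda^m)$ already has $D$-diameter at most $R$, independently of $i$.

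To push this diameter to zero, fix $u, v \in \Omega_0$ and, for each $i$, choose $a_i, b_i \in \Lambda^m$ with $u = P_i a_i$ and $v = P_i b_i$. I would track the projective distance of the corresponding intermediate products step-by-step through the $2i+1$ factors of $P_i$. Each of the $i+1$ applications of $B$ contracts projective distance by the function $f$, which on the bounded range $[0,R]$ satisfies $f(d) \leq \lambda \, d$ for some $\lambda = \lambda(\delta) < 1$ obtained by comparing $f(d)/d$ with its limiting values at the endpoints of $[0,R]$. If the intermediate matrices $U_k$ were non-expansive in $D$, it would follow at once that $D(u,v) \leq \lambda^{i+1} R \to 0$ as $i \to \infty$, giving $D(u,v) = 0$ and proving that $\Omega_0$ is a single ray.

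The main obstacle is precisely that a generic non-negative matrix $U_k$ need not be non-expansive in the Euclidean projective metric $D$. The standard way around this is to carry out the contraction argument in the Hilbert projective metric $d_H$ on the open positive cone, in which every non-negative matrix is automatically a non-expansion and $B$, being $\delta$-bounded and positive, has a uniform contraction factor $\lambda_H = \lambda_H(\delta) < 1$. Then $\mathrm{diam}_{d_H}(P_i(\Lambda^m)) \leq \lambda_H^{\,i+1} \cdot \mathrm{diam}_{d_H}(B(\Lambda^m))$, which tends to zero. Since $d_H$ and $D$ are bi-Lipschitz equivalent on the relatively compact set $B(\Lambda^m) \cap S^{m-1}$, this transfers to $\mathrm{diam}_D(P_i(\Lambda^m)) \to 0$, forcing $D(u,v) = 0$ and completing the proof that $\Omega_0$ is one-dimensional.
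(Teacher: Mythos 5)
Your proof is correct, but it follows a genuinely different route from the paper's. The paper pulls $u,v\in\Omega_0$ backwards through the factors of the product, asserts that the resulting sequence of projective distances of preimage pairs is monotonically non-decreasing (hence convergent to some $\sigma$), and then lets $i\to\infty$ in the inequality of Remark \ref{delta-Boundedness Inequality} to force $\sigma=0$. You instead push diameters forward, and you replace the Euclidean projective distance $D$ by the Hilbert projective metric, in which every non-negative $U_k$ is a non-expansion (Birkhoff) and the $\delta$-bounded factor $B$ is a uniform contraction; the Hilbert diameter of $P_i(\Lambda^m)$ then decays geometrically, so $\Omega_0$ lies on a single ray. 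The obstacle you isolate is real and worth flagging: half of the inequalities underlying the paper's monotonicity claim are precisely the assertion that each $U_k$ is $D$-non-expansive at the chosen pair, and a matrix as simple as $\mathrm{diag}(1,M)$ applied to $(1,\epsilon)$ and $(1,2\epsilon)$ with $M\epsilon=1$ shows this can fail for the Euclidean projective distance. Your Hilbert-metric detour is the standard repair, at the price of importing Birkhoff's contraction theorem, which is external to the paper; the paper's route is self-contained given the one quoted inequality. Two details would make your write-up airtight: note that each $P_i(\Lambda^m)$ is a finitely generated, hence closed, cone, so the nested compacta $P_i(\Lambda^m)\cap S^{m-1}$ have nonempty intersection and $\Omega_0$ is genuinely a ray rather than $\{0\}$ (this silently assumes no product $P_i$ is the zero map, an assumption the paper's proof also makes); and observe that when some $U_k$ has zero rows, the images $U_k x$ and $U_k y$ of interior points share the same support, so their Hilbert distance is finite and the non-expansion inequality still applies before $B$ restores positivity.
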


\begin{proof}
    Let $u , v \in \Omega_0$. We will show that $D(u,v) = 0$. Choose two sequences of pairs $\{ (u_i , v_i) \}_{i \in \mathds{N}} , \{ (w_i , z_i) \}_{i \in \mathds{N}} \subset \Lambda^m \times \Lambda^m$ as follows. Let $(u_1 , v_1) := (u,v)$ and let $(w_1 , z_1)$ be a point in $B^{-1} (u) \times B^{-1} (v)$. 
    We will choose the rest inductively. Let $k \in \mathds{N} + 1$, and suppose that $\{ (u_i , v_i) \}_{i =1}^{k-1} , \{ (w_i , z_i) \}_{i = 1}^{k-1} \subset \Lambda^m \times \Lambda^m$ have been already chosen. Then choose $(u_k , v_k)$ from $(U_{k-1})^{-1} (w_{k-1}) \times (U_{k-1})^{-1} ( z_{k-1})$, and then choose $(w_k , z_k)$ from $B^{-1} (u_{k}) \times B^{-1} (v_{k})$. 

    \vspace{4mm}

    Now consider the sequence $\{ t_i \}_{i \in \mathds{N}}$ of reals defined by the following: for each $k \in \mathds{N}$, $t_{2k -1} := D(u_k , v_k )$, and $t_{2k} := D(w_k , z_k )$. Note that, since $B$ is positive, and since for each $i \in \mathds{N}$, $U_i$ is non-negative, $\{ t_i \}_{i \in \mathds{N}}$ is a monotonically non-decreasing sequence. Furthermore, since the \hyperlink{Projective Distance}{projective distance} between any two non-negative vectors, is always contained in $[0,1]$, $t_i \in [0,1]$ for each $i \in \mathds{N}$. Thus, $\{ t_i \}_{i \in \mathds{N}}$ converges to some limit $\sigma \in [0,1]$. Since both $ \{ D(u_i , v_i ) \}_{i \in \mathds{N}} , \{ D(w_i , z_i ) \}_{i \in \mathds{N}}$ are subsequences of $\{ t_i \}_{i \in \mathds{N}}$, they must also converge to $\sigma$. 

    \vspace{4mm}

    Assume $D(u,v) := \alpha > 0$. Then since $\{ t_i \}_{i \in \mathds{N}}$ is monotonically non-decreasing, $\sigma > 0$. From the $\delta-$Boundedness Inequality [Remark \ref{delta-Boundedness Inequality}], we obtain that, for each $i \in \mathds{N}$,
    \begin{equation*}
        D( Bw_i , Bz_i ) \leq D(w_i,z_i) + ln \left( \frac{1 + \delta e^{-D(w_i,z_i)}}{1 + \delta e^{D(w_i,z_i)}} \right)
    \end{equation*}
    i.e.
    \begin{equation*}
        D( u_i , v_i ) \leq D(w_i,z_i) + ln \left( \frac{1 + \delta e^{-D(w_i,z_i)}}{1 + \delta e^{D(w_i,z_i)}} \right)
    \end{equation*}
    As $i$ approaches $\infty$, we obtain,
    \begin{equation*}
        \sigma \leq \sigma + ln \left( \frac{1 + \delta e^{-\sigma}}{1 + \delta e^{\sigma}} \right)
    \end{equation*}
    which means, when $\sigma$ is positive, we have,
    \begin{equation*}
        \sigma \leq \sigma - \epsilon
    \end{equation*}
    where $\epsilon > 0$. This is a contradiction. Thus $D(u,v) = 0$ which implies that $u ,v$ are part of the same ray in $\Lambda^m$
\end{proof}

\vspace{4mm}

While the above result uses a sequence of linear maps between Euclidean spaces of the same dimension, we plan to apply this result in the context of weight maps where the dimension of the vector spaces involved can vary. The result [Lemma \ref{TM(X) is finite dimensional}] in the following Subsection is meant to overcome this obstacle. 


\vspace{4mm}

\subsubsection{An Upperbound to the Dimension of $TM(X)$}
\label{Sec: An Upperbound to the Dimension of $TM(X)$}

\vspace{4mm}

\vspace{4mm}

In Subsection \ref{sec: Semi-Normal Solenoids are Uniquely Ergodic}, we will use Lemma \ref{Veech Lemma} coupled with the following proposition to recognize instances when $ \Omega (\zeta) = TM(X)$ is one dimensional.

\vspace{4mm}

\begin{definition}[Lowest Recurring Dimension]
    \label{Lowest Recurring Dimension}
    \hypertarget{Lowest Recurring Dimension}{For} each $j \in \n$, let $S_j$ denote either a real vector space or a convex cone of a real real vector space, and suppose that the dimension of $S_j$ is $d_j \in \mathds{N}$.  Furthermore, let $\mathcal{S} : S_0 \xleftarrow[]{T_{-1}} S_{-1} \xleftarrow[]{T_{-2}} S_{-2} \xleftarrow[]{T_{-3}} ...$ be a sequence of linear maps. Let $D_{\mathcal{S}} :=  \{ d \in \mathds{N} : \{ d_j \}_{j \in \n} $ has a constant subsequence where that constant is $d \}$ be called the ``Set of Recurring Dimensions of $\mathcal{S}$". If $D_{\mathcal{S}}$ is non-empty, we call the minimum of $D_{\mathcal{S}}$ the ``Smallest Recurring Dimension in $\mathcal{S}$".
\end{definition}

\vspace{4mm}

\begin{remark}[$WMS(\zeta)$ has a Smallest Recurring Dimension]
    \label{WMS(zeta) has a Smallest Recurring Dimension}
    Let $n \in \mathds{N}$. Let $\zeta$ be a proper split sequence of rank $n$. Consider the weight map sequence associated with $\zeta$, $WMS(\zeta) : R_0 (\zeta)\xleftarrow[]{T_{-1}} R_{-1} (\zeta) \xleftarrow[]{T_{-2}} R_{-2} (\zeta) \xleftarrow[]{T_{-3}} ...$. For each $j \in \n$, let $d_j$ be the dimension of $R_j$. Recall from that, for each $j \in \n$, $d_j =$ the number of natural edges in the level graph $G_j$ of $\zeta$. It follows from Lemma \ref{upperbound to natural vertices}, that the number of natural edges of a rank $n$ core graph is bounded below and above. Therefore, $WMS(\zeta)$ must have at least one recurring dimension. 
\end{remark}

\vspace{4mm}

\begin{proposition}
    \label{TM(X) is finite dimensional}
    Let $\zeta$ be a strongly proper split sequence. Consider the sequence of weight cones  $\Omega_0 (\zeta)\xleftarrow[]{T_{-1}} \Omega_{-1} (\zeta) \xleftarrow[]{T_{-2}} \Omega_{-2} (\zeta) \xleftarrow[]{T_{-3}} ...$ and its inverse limit $\Omega (\zeta)$. Let the smallest recurring dimension in $WCS(\zeta)$ be $d \in \mathds{N}$. Then $d$ is an upperbound to the dimension of $\Omega (\zeta) = TM(X)$.
\end{proposition}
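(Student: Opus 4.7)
The plan is to track how the dimensions of the weight cones $\Omega_j(\zeta)$ evolve along the sequence $WCS(\zeta)$, show that they must stabilize, and then observe that after stabilization the canonical map from the inverse limit $\Omega(\zeta)$ down to a single weight cone becomes a bijection. This will identify $\dim(\Omega(\zeta))$ with the smallest recurring dimension.

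First, I would recall from Remark \ref{TM(X) = Omega(zeta)} that $T_j|_{\Omega_j}:\Omega_j\to\Omega_{j+1}$ is surjective for every $j\in-\mathds{N}$. A surjective linear map cannot increase the dimension of the linear span of a convex cone, so the sequence $\{\dim(\Omega_j)\}_{j\in-\mathds{N}^*}$ is monotone non-decreasing as $j\to-\infty$. By Lemma \ref{upperbound to natural vertices}, each $\dim(\Omega_j)\le 3(n-1)$ where $n$ is the rank of $\zeta$, so this integer sequence is bounded and must stabilize: there exist $K^*\in-\mathds{N}^*$ and $D\in\mathds{N}$ with $\dim(\Omega_j)=D$ for all $j\le K^*$. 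Since only finitely many levels lie above $K^*$, any dimension appearing infinitely often must coincide with the stabilized value $D$; so $D$ is the unique recurring dimension of $WCS(\zeta)$, forcing $D=d$.

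The key step will be to upgrade these surjections to bijections once the dimension has stabilized. For $j\le K^*$, the linear map $T_j$ carries $\mathrm{span}(\Omega_j)$ onto $\mathrm{span}(\Omega_{j+1})$ between two real vector spaces of the same dimension $D$; hence $T_j$ restricts to a linear isomorphism of the spans, and in particular $T_j|_{\Omega_j}:\Omega_j\to\Omega_{j+1}$ is a bijection. Composing, $T^i_j|_{\Omega_i}:\Omega_i\to\Omega_j$ is bijective for all $i\le j\le K^*$. This forces the canonical projection from $\Omega(\zeta)$ onto $\Omega_j$ to be a bijection for every $j\le K^*$: surjectivity is immediate from Remark \ref{TM(X) = Omega(zeta)}, and injectivity follows because any $(w_i)_{i\in-\mathds{N}^*}\in\Omega(\zeta)$ is determined by $w_j$ via forward application of the weight maps for $i>j$ and via the inverses of the $T^i_j|_{\Omega_i}$ for $i<j$. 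Therefore $\dim(\Omega(\zeta))=\dim(\Omega_j)=D=d$, which is actually equality rather than merely the claimed inequality. The main technical subtlety is the passage from surjection to bijection at the stabilized level, which rests on the elementary linear-algebraic fact that a surjective linear map between finite-dimensional real vector spaces of equal dimension is an isomorphism; everything else is a routine interplay between monotone stabilization and the inverse-limit construction.
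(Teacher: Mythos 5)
Your proof is correct, but it takes a genuinely different route from the paper's. The paper argues by direct contradiction: if $\Omega(\zeta)$ contained $d+1$ linearly independent elements, their projections would be linearly independent at some level $J$ and hence (since linear maps cannot turn dependent sets into independent ones) at every level $j\le J$, forcing $\dim(\Omega_j)\ge d+1$ for all such $j$ and contradicting the recurrence of the dimension $d$. You instead exploit the surjectivity $T_j(\Omega_j)=\Omega_{j+1}$ asserted in Remark \ref{TM(X) = Omega(zeta)} to show the sequence $\{\dim(\Omega_j)\}$ is monotone non-decreasing as $j\to-\infty$ and bounded, hence stabilizes at some $D$, which is then the unique recurring dimension; upgrading the surjections to bijections past the stabilization level gives the stronger conclusion $\dim(\Omega(\zeta))=d$ rather than merely $\le d$. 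What each approach buys: yours yields equality and a clean structural picture (the projection to any sufficiently deep weight cone is a bijection), but it leans on both halves of Remark \ref{TM(X) = Omega(zeta)} --- that $T_j(\Omega_j)=\Omega_{j+1}$ and that $\Omega_j$ is the full projection of $\Lambda(\zeta)$ --- neither of which is proved in the paper and both of which require a compactness/finite-intersection argument (the $\Omega_j$ are infinite nested intersections, so surjectivity of $T_j$ on each $T^i_j(\Lambda_i)$ does not formally pass to the intersection without, e.g., noting that fibers $T_j^{-1}(y)\cap\Lambda_j$ are compact because every column of a transition matrix is nonzero). The paper's argument needs only the trivial inclusion $T^i_j(\Omega_i)\subseteq\Omega_j$ and is therefore more self-contained. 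Two minor points to tidy: the bijectivity of $T_j|_{\Omega_j}$ requires both $j$ and $j+1$ to be at or below the stabilization level (an off-by-one that does not affect your use of the composites $T^i_j$ for $i<j\le K^*$), and your equality claim should be flagged as contingent on the surjectivity of the projections from the inverse limit, since the proposition as stated only asserts an upper bound.
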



\begin{proof}
    Assume the set-up of the proposition. Suppose $\Omega (\zeta)$ contains a set $\{w^i \}_{i=1}^{d+1}$ of $d+1$ linearly independent vectors (with respect to the vector space structure laid out in Remark \ref{Lambda(zeta) as a Subspace of a Vector Space} for $R(\zeta) \supset \Omega (\zeta)$). Then there must be a level $J \in \n$ such that their projections $\{w^i_J \}_{i=1}^{d+1}$ into $\Omega_J (\zeta) \subset R_J (\zeta)$ are linearly independent, and furthermore, for each integer $j < J$, $\{w^i_j \}_{i=1}^{d+1}$ is a collection of $d+1$ linearly independent vectors in $\Omega_j (\zeta) \subset R_j (\zeta)$. But since $d$ is a recurring dimension, it follows that there exists an integer $K < J$ such that $\Omega_K$ is of dimension $d$.
\end{proof}

\vspace{4mm}

The next section lays out a more detailed version of the summary in Subsection \ref{sec: A Summary of the Main Argument} in the form of a proof for the main theorem [\ref{main thm}]. 



\subsection{Semi-Normal Solenoids are Uniquely Ergodic}
\label{sec: Semi-Normal Solenoids are Uniquely Ergodic}

\vspace{4mm}

\vspace{4mm}

Here, we lay out the main theorem and its proof.

\vspace{4mm}

\subsubsection{The Main Theorem}
\label{sec: The Main Theorem}

\vspace{4mm}

\begin{thm}
    \label{main thm}
    \hypertarget{main thm}{Semi-normal} solenoids 
    are uniquely ergodic. 
\end{thm}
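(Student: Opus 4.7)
The plan is to combine the semi-normality hypothesis with Veech's Lemma (Lemma \ref{Veech Lemma}) and the finite-dimensional upper-bound of Proposition \ref{TM(X) is finite dimensional} to conclude $\dim TM(X)\le 1$.

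First I unpack Definition \ref{Semi-Noramal Solenoids}: there exist $d\in\mathds{N}$, a positive $d\times d$ matrix $M$ representing a linear map $L\colon\mathds{R}^d\to\mathds{R}^d$, and a strictly decreasing sequence $\{J_k\}_{k\in\mathds{N}}$ of non-positive integers such that $G_{J_k}$ has exactly $d$ natural edges and $L=T^{J_{2k}}_{J_{2k-1}}$ for every $k$. Choosing a labelling of the natural edges of each $G_{J_k}$ identifies $R_{J_k}(\zeta)$ with $\mathds{R}^d$; setting $\delta:=(\max_{i,j}m_{ij})/(\min_{i,j}m_{ij})$, Definition \ref{delta boundedness} shows $M$ is $\delta$-bounded. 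For each $k\ge 1$ set $U_k:=T^{J_{2k+1}}_{J_{2k}}$, a non-negative $d\times d$ matrix (weight maps are transition matrices of fold compositions, whose entries are non-negative mingling numbers). Telescoping through the selected subsequence yields, for every $k\ge 1$,
\[
T^{J_{2k}}_{J_1}\;=\;L\circ U_1\circ L\circ U_2\circ L\circ\cdots\circ L\circ U_{k-1}\circ L,
\]
which in coordinates is precisely the operator $M\,U_1\,M\,U_2\cdots M\,U_{k-1}\,M$ controlled by Lemma \ref{Veech Lemma}.

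Applying that lemma with $B:=M$ produces a one-dimensional intersection
\[
\bigcap_{k\ge 1}T^{J_{2k}}_{J_1}\bigl(\Lambda_{J_{2k}}(\zeta)\bigr)\;\subset\;R_{J_1}(\zeta).
\]
Since $\Omega_{J_1}(\zeta)\subseteq\bigcap_{k}T^{J_{2k}}_{J_1}(\Lambda_{J_{2k}}(\zeta))$ by Definition \ref{Weight Cones}, I obtain $\dim\Omega_{J_1}(\zeta)\le 1$. Re-running the argument based at each $J_{2m-1}$ (using the tail $\{J_{k+2(m-1)}\}_{k\ge 1}$, which still witnesses semi-normality with the same $M$ and $\delta$) gives $\dim\Omega_{J_{2m-1}}(\zeta)\le 1$ for every $m\in\mathds{N}$. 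By pigeonhole some value in $\{0,1\}$ is attained infinitely often by $\{\dim\Omega_j(\zeta)\}_j$, so the smallest recurring dimension of $WCS(\zeta)$ is at most $1$, and Proposition \ref{TM(X) is finite dimensional} gives $\dim TM(X)\le 1$. For non-triviality, Proposition \ref{The Mingling Lemma} (applied to the fully-mingling sequence underlying semi-normality) forces $X$ to be topologically minimal, and a standard compactness/accumulation argument on normalized edge-weight vectors in $\Lambda_0(\zeta)$ produces at least one non-zero transverse measure, so $\dim TM(X)=1$.

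The main obstacle is the combinatorial bookkeeping behind the telescoping: one must verify that the composition $T^{J_{2k}}_{J_1}$ really does alternate $L$ and $U_i$ with $L$ at both ends (which is what forces the indexing over the \emph{even}-labelled $J_{2k}$), and that under a single fixed coordinate identification $R_{J_k}(\zeta)\cong\mathds{R}^d$ for all $k$, the matrix representing $L$ at every occurrence is literally $M$ rather than a conjugate of it, which requires choosing the edge-labelling coherently across the levels $\{J_k\}$. Everything else is a direct assembly of Lemma \ref{Veech Lemma} and Proposition \ref{TM(X) is finite dimensional}.
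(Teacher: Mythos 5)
Your proposal is correct and follows essentially the same route as the paper's proof: unpack semi-normality into a recurring $\delta$-bounded positive matrix $L=M$ alternating with non-negative matrices $U_k$ along the subsequence $\{J_k\}$, apply Lemma \ref{Veech Lemma} to conclude $\dim\Omega_{J_{2m-1}}(\zeta)\le 1$ at infinitely many levels, and invoke Proposition \ref{TM(X) is finite dimensional} via the smallest recurring dimension of $WCS(\zeta)$. Your explicit telescoping of $T^{J_{2k}}_{J_1}$, the pigeonhole step, and the closing non-triviality argument (that $TM(X)\neq\{0\}$, so the dimension is exactly $1$) are welcome refinements of details the paper leaves implicit, but they do not change the argument's structure.
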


\begin{proof}
    Let $\zeta$ be a semi-normal split sequence [Definition \ref{Semi-Noramal Solenoids}] and $X = X(\zeta)$ the solenoid induced by $\zeta$. Semi-normality implies that $\zeta$ satisfies full mingling [Definition \ref{Full Mingling}], and therefore, from the mingling lemma [\ref{The Mingling Lemma}], we have that $X$ is a minimal solenoid. It follows from minimality that $\hyperlink{Transverse Measures}{TM(X)} = \hyperlink{ACS}{\Lambda (\zeta)} = \hyperlink{TM(X) = Omega(zeta)}{\Omega (\zeta)}$ [Proposition \ref{TM(X) = inv lim ACS}, Remark \ref{TM(X) = Omega(zeta)}]. 
    We aim to prove that $X(\zeta)$ is uniquely ergodic, by showing that $TM(X) = \hyperlink{Weight Cones}{\Omega (\zeta)}$ is $1-$dimensional.

    \vspace{4mm}

    Consider the weight cone sequence of $\zeta$, $WCS(\zeta) : \Omega_0 (\zeta)\xleftarrow[]{T_{-1}} \Omega_{-1} (\zeta) \xleftarrow[]{T_{-2}} \Omega_{-2} (\zeta)$ $\xleftarrow[]{T_{-3}} ...$ [Definition \ref{Weight Cones}], and its inverse limit $\Omega (\zeta)$.
    Recall from Proposition \ref{TM(X) is finite dimensional}, that the smallest recurring dimension in $WCS(\zeta)$ [Definition \ref{Lowest Recurring Dimension}] is an upper-bound for the dimension of $TM(X)$. We aim to show that the smallest recurring dimension in $WCS(\zeta)$ is one.

    \vspace{4mm}
    
    From semi-normality [Definition \ref{Semi-Normal Sequences of Linear Maps}], it follows that there exist, a recurring dimension $d \in \mathds{N}$ in the weight map sequence of $\zeta$, $ R_0 (\zeta)\xleftarrow[]{T_{-1}} R_{-1} (\zeta) \xleftarrow[]{T_{-2}} R_{-2} (\zeta) \xleftarrow[]{T_{-3}} ...$, a \hyperlink{positive linear map}{positive linear map} $L : \mathds{R}^{d} \longrightarrow \mathds{R}^{d}$, and
    a strictly decreasing sequence $\{J_k\}_{k \in \mathds{N}}$ of non-positive integers such that, for each $k \in \mathds{N}$,
    \begin{enumerate}
        \item $d_{J_k} = d$, and,
        \item $L = \hyperlink{Composition Maps}{T^{J_{2k}}_{J_{2k-1}}}$.
    \end{enumerate}
    Now consider the sequence of pseudo-weight cones given by,
    \begin{equation*}
        \Lambda_{J_1} (\zeta)\xleftarrow[]{T^{J_2}_{J_1}} \Lambda_{J_2} (\zeta) \xleftarrow[]{T^{J_3}_{J_2}} \Lambda_{J_3} (\zeta) \xleftarrow[]{T^{J_4}_{J_3}} ....
        \vspace{4mm}
    \end{equation*}
    Note that, for each $k \in \mathds{N}$, $\Lambda_{J_k} (\zeta)$ is the non-negative cone of $\mathds{R}^d$, $T^{J_{2k}}_{J_{2k-1}} = L$, and $T^{J_{2k+1}}_{J_{2k}}$ is a linear map represented by a non-negative $d \times d$ matrix. Note that the positive matrix corresponding to $L$ is $\delta-$bounded [Definition \ref{delta boundedness}] for some $\delta > 0$. Thus, it follows from Lemma \ref{Veech Lemma}, that for each $k \in \mathds{N}$, $\Omega_{J_{2k-1}} (\zeta)$ is one dimensional in $R_{J_{2k-1}} (\zeta)$.
\end{proof}

\vspace{4mm}

\vspace{4mm}

\vspace{4mm}

This leads to two naturally arising follow-up questions.
\begin{enumerate}
    \item Are there examples of minimal yet non uniquely ergodic solenoids?
    \item Can we construct a space of solenoids, define a natural measure on it, and claim that almost every solenoid is uniquely ergodic?
\end{enumerate}

\vspace{4mm}

If we replace the notion of ``solenoids" above with ``interval exchange maps" (resp. ``measured foliations on compact hyperbolic surfaces"), the first question was answered positively in \cite{Keynes1976} by Keynes and Newton (resp. a positive answer is inferred by \cite{Keynes1976} since each example of an IET can be used to construct a measured foliation with similar behavior), and the second was answered positively in \cite{413dc63a-6607-304b-a389-80bc0b0e5018}, \cite{bb0e1df0-cb17-3718-8ef6-9df709f6df42} independantly by Veech and Masur (resp. in \cite{bb0e1df0-cb17-3718-8ef6-9df709f6df42} by Masur). Furthermore, for two classes of objects (namely $\mathds{R}-$Trees and Currents on Free Groups) related to dynamics of $Out(F_n)$ (the outer automorphism group of the rank$-n$ free group), the first question was answered positively in \cite{Bestvina_2024} by Bestvina, Gupta and Tao. However, for many of the classes of dynamical systems that can be used to study dynamics of $Out(F_n)$, the second question still remains open.


\vspace{4mm}










\bibliography{references} 







\end{document}